\let\oldtocsection=\tocsection
\let\oldtocsubsection=\tocsubsection 
\let\oldtocsubsubsection=\tocsubsubsection
\renewcommand{\tocsection}[2]{\vspace{0.5em}\hspace{0em}\oldtocsection{#1}{#2}}
\renewcommand{\tocsubsection}[2]{\vspace{0.5em}\hspace{1em}\oldtocsubsection{#1}{#2}}
\renewcommand{\tocsubsubsection}[2]{\vspace{0.5em}\hspace{2em}\oldtocsubsubsection{#1}{#2}}
\def\R{\mathbb R}
\def\Q{\mathbb Q}
\newcounter{dummy} 
\numberwithin{dummy}{section}
\newtheorem{Theorem}[dummy]{Theorem}
\newtheorem{defi}[dummy]{Definition}
\newtheorem{lemma}[dummy]{Lemma}
\newtheorem{Proposition}[dummy]{Proposition}
\newtheorem{Remark}[dummy]{Remark}
\numberwithin{equation}{section}
\newcommand\xqed[1]{\leavevmode\unskip\penalty9999 \hbox{}\nobreak\hfill\quad\hbox{#1}}
\def\nline{\\ \noalign{\medskip}}
\title[Wave equation with Kelvin-Voigt  damping and time delay]{Stability results for an elastic-viscoelastic waves interaction systems with localized Kelvin-Voigt  damping and with an internal or boundary  time delay} 
\author {Mouhammad Ghader$^{1}$}
\author{Rayan Nasser$^{1,2}$}
\author{Ali Wehbe$^{1}$}
\address{$^1$Lebanese University, Faculty of sciences 1, Khawarizmi Laboratory of  Mathematics and Applications-KALMA, Hadath-Beirut, Lebanon.}
\address{$^2$Université de Bretagne-Occidentale, France.}
\email{mhammadghader@hotmail.com, rayan.nasser94@hotmail.com,  ali.wehbe@ul.edu.lb}
\keywords{Wave equation; Kelvin-Voigt damping; Time delay; Semigroup; Stability.}
\subjclass{35L05; 35B35;   93D15;   93D20}
\begin{document}

\begin{abstract}
We investigate the stability of a one-dimensional wave equation with non smooth localized internal viscoelastic damping of Kelvin-Voigt type and with boundary or localized internal delay feedback. The main novelty in this paper is that the Kelvin-Voigt and the delay damping are both localized via non smooth coefficients. In the case that the Kelvin-Voigt damping is localized faraway from the tip and the wave is subjected to a locally distributed internal or boundary delay feedback, we prove that the energy of the system decays polynomially of type $t^{-4}$. However, an exponential decay of the energy of the system is established provided that the Kelvin-Voigt damping is localized near a part of the boundary and a time delay damping acts on the second boundary. While, when the Kelvin-Voigt and the internal delay damping are both localized via non smooth coefficients near the tip, the energy of the system decays polynomially of type $t^{-4}$. Frequency domain arguments combined with piecewise multiplier techniques are employed. 
\end{abstract}
         
\maketitle
\pagenumbering{roman}
     \vspace{-0.5cm} 
\maketitle
\tableofcontents
\clearpage
\pagenumbering{arabic}
\setcounter{page}{1}
\newpage
\section{Introduction}\label{NG-S-1} 
Viscoelastic materials feature intermediate characteristics between purely elastic and purely viscous behaviors, {\it i.e.} they display both behaviors when undergoing deformation. In wave equations, when the viscoelastic controlling parameter is null, the viscous property vanishes and the wave equation becomes a pure elastic wave equation. However, time delays arise in many applications and practical problems like physical, chemical, biological, thermal and economic phenomena, where an arbitrary small delay may destroy the well-posedness of the problem and destabilize it. Actually, it is well-known that simplest delay equations  of parabolic type,
$$u_t (x,t) = \Delta u (x, t - \tau), $$
\noindent or hyperbolic type
$$u_{tt} (x,t) = \Delta u (x, t - \tau), $$
\noindent with a delay parameter $\tau > 0,$ are not well-posed. Their instability is due to the existence of a sequence of initial data remaining bounded, while the corresponding solutions go to infinity in an exponential manner at a fixed time (see \cite{dreher2009,JORDAN2008414}). \\[0.1in]
\indent   The stabilization of a wave equation with Kelvin-Voigt type damping and internal or boundary time delay has attracted the attention of many authors in the last five years. Indeed, in 2016 Messaoudi {\it et al.} studied the stabilization of a wave equation with global Kelvin-Voigt damping and internal time delay in the multidimensional case (see \cite{messaoudi2016}), and they obtained an exponential stability result. In the same year, Nicaise {\it et al.} in \cite{Nicaise-Pignotti16} considered the multidimensional wave equation with localized Kelvin-Voigt damping and mixed boundary condition with time delay. They obtained an exponential decay of the energy regarding that the damping is acting on a neighborhood of part of the boundary via a smooth coefficient. Also, in 2018, Anikushyn {\it et al.} in \cite{Anikushyn2018} considered the stabilization of a wave equation with global viscoelastic material subjected to an internal strong time delay where a global exponential decay rate was obtained. Thus, it seems to us that there are no previous results concerning the case of wave equations with internal localized  Kelvin-Voigt type damping  and boundary or internal time delay, especially in the absence of smoothness of the damping coefficient even in the one dimensional case. So, we are interested in studying the stability of elastic wave equation with local   Kelvin–Voigt damping and with boundary or internal time delay (see Systems \eqref{NG-E(1.1)} and  \eqref{NG-E(1.2)}). \\[0.1in]
\indent This paper investigates the study of the stability of a string with Kelvin-Voigt type damping localized via non-smooth coefficient and subjected to a localized internal or boundary time delay. Indeed, in the first part of this paper,  we study the stability of elastic wave equation with local   Kelvin–Voigt damping,  boundary feedback and time delay term  at the boundary,  {\it i.e.} we consider the following system
\begin{equation}\label{NG-E(1.1)}
\left\{
\begin{array}{ll}
 {U}_{tt}(x,t) -  \left[ \kappa\, {U}_x(x,t) +\delta_1 \chi_{(\alpha,\beta)}\,  {U}_{xt}(x,t)\right]_x = 0, & (x,t)\in  (0,L) \times (0,+\infty), \nline
    {U}(0,t)=0, & t\in   (0,+\infty), \nline
    {U}_x(L,t)=-\delta_3 U_t(L,t)-\delta_2 U_t(L,t-\tau), & t\in   (0,+\infty), \nline
    \left({U}(x,0),{U}_t(x,0)\right)= \left(U_0(x),U_1(x)\right),   & x\in  (0,L),\nline
 \displaystyle{U_t(L,t) = f_0(L,t) },&\displaystyle{t\in (-\tau,0) }, 
\end{array}
\right.
\end{equation}
where $L,\ \tau,\ \delta_1$ and $\delta_3$  are strictly positive constant numbers, $\delta_2$ is a non zero real number and the initial data $(U_0,U_1,f_0)$ belongs to a suitable space.  
 Here $0\leq\alpha <\beta<L$ and $U=u\chi_{(0,\alpha)}+v\chi_{(\alpha,\beta)}+w\chi_{(\beta,L)}$, with $\chi_{(a,b)}$ is the characteristic function of the interval $(a,b).$ We assume that there exist strictly positive constant numbers $\ \kappa_1,\ \kappa_2,\ \kappa_3$, such that $\kappa=\kappa_1\chi_{(0,\alpha)}+\kappa_2\chi_{(\alpha,\beta)}+\kappa_3\chi_{(\beta,L)}.$
In fact, here we will consider two cases. In the first case, we divide the bar into 3 pieces; the first piece is an elastic part, the second piece is the viscoelastic part and in the third piece, the time delay feedback is effective at the ending point of the piece,  {\it i.e.} we consider the case $\alpha>0$ (see Figure \ref{Fig1}). While, in the second case, we divide the bar into 2 pieces;  the first piece is the viscoelastic part and in the second piece the time delay feedback is effective at the ending point of the piece, {\it i.e.} we consider the case $\alpha=0$ (see Figure \ref{Fig2}).  Remark, here,  in both cases, the Kelvin–Voigt damping   is effective on a part of the piece  and the time delay is effective at $L$.
\newpage
\begin{figure}[h!]
     \begin{center}
     {
\begin{tikzpicture}

\draw[thick,-,blue](-5,-1)--(5,-1);
\draw[thick,-,blue](-5,1)--(5,1);
\draw[-,blue](-2,0.6)--(2,0.6);
\draw[-,blue](-2,0.2)--(2,0.2);
\draw[-,blue](-2,-0.2)--(2,-0.2);
\draw[-,blue](-2,-0.6)--(2,-0.6);
\draw[-,blue](-2,-1)--(2,-1);
\draw[-,blue](-2,-1)--(-2,1);
\draw[-,blue](-1.6,-1)--(-1.6,1);
\draw[-,blue](-1.2,-1)--(-1.2,1);
\draw[-,blue](-0.8,-1)--(-0.8,1);
\draw[-,blue](-0.4,-1)--(-0.4,1);
\draw[-,blue](0,-1)--(0,1);
\draw[-,blue](0.4,-1)--(0.4,1);
\draw[-,blue](0.8,-1)--(0.8,1);
\draw[-,blue](1.2,-1)--(1.2,1);
\draw[-,blue](1.6,-1)--(1.6,1);
\draw[-,blue](2,-1)--(2,1);
\draw[|-|>,blue](-5,-1.5)--(5,-1.5);

\draw[-,blue](2,-1)--(2,1);

\node at (-2,-1.5) {${\color{blue}\bullet}$};
\node at (2,-1.5) {${\color{blue}\bullet}$};

\draw[->] (5,1)--(4.6,1.2);

\coordinate [label=below:\scalebox{0.5}{$ \alpha$}](a1)  at (-2,-1.5);
\coordinate [label=below:\scalebox{0.5}{$ \beta$}](a2)  at (2,-1.5);
\coordinate [label=below:\scalebox{0.5}{$ v(x)$}](a3)  at (0,-1);
\coordinate [label=below:\scalebox{0.5}{$ u(x)$}](a4)  at (-3.5,-1);
\coordinate [label=below:\scalebox{0.5}{$ w(x)$}](a5)  at (3.5,-1);
\coordinate [label=below:\scalebox{0.5}{$0$}](a6)  at (-5,-1.5);
\coordinate [label=below:\scalebox{0.5}{$L$}](a7)  at (5,-1.5);

\node at (-3.5,1.2) {\scalebox{0.6}{Elastic part}};
\node at (0,1.2) {\scalebox{0.6}{Viscoelastic part}};
\node at (3.4,1.2) {\scalebox{0.6}{Boundary delay feedback}};

 { [color=blue,pattern color=blue]
\draw[thick,blue] (-5,1.2) -- (-5,-1.2);
\fill [blue, pattern = north east lines] (-5,1.2) rectangle (-5.2,-1.2);
}
 { [color=blue,pattern color=blue]
\draw[ultra thick,red] (5,1.2) -- (5,-1.2);
\fill [blue, pattern = north west lines] (5,1.2) rectangle (5.2,-1.2);
}

\end{tikzpicture}
 }
     \caption{K-V damping  is acting localized in the internal of the body and time delay feedback is effective at $L$}

     \label{Fig1}
      \end{center}
      \end{figure}
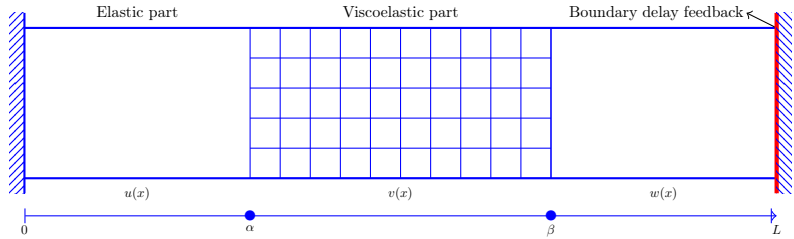
\begin{figure}[h!]
     \begin{center}
     {
\begin{tikzpicture}

\draw[thick,-,blue](-2,-1)--(5,-1);
\draw[thick,-,blue](-2,1)--(5,1);
\draw[-,blue](-2,0.6)--(2,0.6);
\draw[-,blue](-2,0.2)--(2,0.2);
\draw[-,blue](-2,-0.2)--(2,-0.2);
\draw[-,blue](-2,-0.6)--(2,-0.6);
\draw[-,blue](-2,-1)--(2,-1);
\draw[-,blue](-2,-1)--(-2,1);
\draw[-,blue](-1.6,-1)--(-1.6,1);
\draw[-,blue](-1.2,-1)--(-1.2,1);
\draw[-,blue](-0.8,-1)--(-0.8,1);
\draw[-,blue](-0.4,-1)--(-0.4,1);
\draw[-,blue](0,-1)--(0,1);
\draw[-,blue](0.4,-1)--(0.4,1);
\draw[-,blue](0.8,-1)--(0.8,1);
\draw[-,blue](1.2,-1)--(1.2,1);
\draw[-,blue](1.6,-1)--(1.6,1);
\draw[-,blue](2,-1)--(2,1);
\draw[|-|>,blue](-2,-1.5)--(5,-1.5);

\draw[-,blue](2,-1)--(2,1);

\draw[->] (5,1)--(4.6,1.2);
\node at (2,-1.5) {${\color{blue}\bullet}$};

\coordinate [label=below:\scalebox{0.5}{$ 0$}](a1)  at (-2,-1.5);
\coordinate [label=below:\scalebox{0.5}{$ \beta$}](a2)  at (2,-1.5);
\coordinate [label=below:\scalebox{0.5}{$ v(x)$}](a3)  at (0,-1);
\coordinate [label=below:\scalebox{0.5}{$ w(x)$}](a5)  at (3.5,-1);
\coordinate [label=below:\scalebox{0.5}{$L$}](a7)  at (5,-1.5);

\node at (0,1.2) {\scalebox{0.6}{Viscoelastic part}};
\node at (3.4,1.2) {\scalebox{0.6}{Boundary delay feedback}};

 { [color=blue,pattern color=blue]
\draw[thick,blue] (-2,1.2) -- (-2,-1.2);
\fill [blue, pattern = north east lines] (-2,1.2) rectangle (-2.2,-1.2);
}
 { [color=blue,pattern color=blue]
\draw[ultra thick,red] (5,1.2) -- (5,-1.2);
\fill [blue, pattern = north west lines] (5,1.2) rectangle (5.2,-1.2);
}

\end{tikzpicture}
 }
     \caption{K-V damping  is acting localized near the boundary of the body and time delay feedback is effective at $L$}
     \label{Fig2}
      \end{center}
      \end{figure}
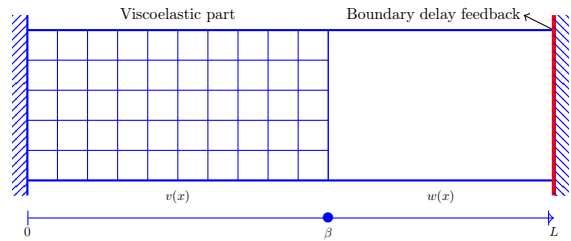
$\vspace{-0.4cm}\\$      
\noindent In the second part of this paper, we study the stability of elastic wave equation with local  Kelvin–Voigt damping and local internal time delay. This system takes the following form
\begin{equation}\label{NG-E(1.2)}
\left\{
\begin{array}{ll}
 {U}_{tt}(x,t) -  \left[ \kappa\, {U}_x(x,t) + \chi_{(\alpha,\beta)}\left(\delta_1\, {U}_{xt}(x,t)+\delta_2\, {U}_{xt}(x,t-\tau)\right)\right]_x = 0, & (x,t)\in  (0,L) \times (0,+\infty), \nline
    {U}(0,t)=U(L,t)=0, & t\in   (0,+\infty), \nline
    \left({U}(x,0),{U}_t(x,0)\right)= \left(U_0(x),U_1(x)\right),   & x\in  (0,L),\nline
 \displaystyle{U_t(x,t) = f_0(x,t) },&\displaystyle{(x,t)\in (0,L)\times (-\tau,0) }, 
\end{array}
\right.
\end{equation}
where $L,\ \tau$ and $\delta_1$  are strictly positive constant numbers, $\delta_2$ is a non zero real number  and the initial data $(U_0,U_1,f_0)$ belongs to a suitable space.  Here $0<\alpha <\beta<L$ and $U=u\chi_{(0,\alpha)}+v\chi_{(\alpha,\beta)}+w\chi_{(\beta,L)}$, with $\chi_{(a,b)}$ is the characteristic function of the interval $(a,b).$ We assume that there exist strictly positive constant numbers $\ \kappa_1,\ \kappa_2,\ \kappa_3$, such that $\kappa=\kappa_1\chi_{(0,\alpha)}+\kappa_2\chi_{(\alpha,\beta)}+\kappa_3\chi_{(\beta,L)}.$ In fact, here we will divide the bar into 3 pieces; the first piece is an elastic part, in the second piece the Kelvin–Voigt damping and  the time delay  are effective and the third piece is an elastic part (see Figure \ref{Fig3}). So, the  Kelvin–Voigt damping and  the time delay  are effective on $(\alpha,\beta)$.   
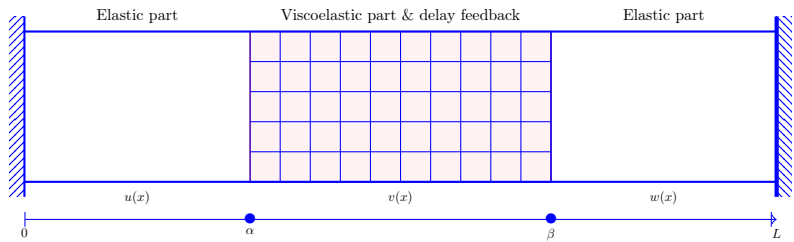
\begin{figure}[h!]
     \begin{center}
     {
\begin{tikzpicture}
  
\filldraw[
        draw=red,%
        fill=red!5,%
    ]         
           (-2,-1)--(2,-1)--(2,1)--(-2,1)
            -- cycle; 

\draw[thick,-,blue](-5,-1)--(5,-1);
\draw[thick,-,blue](-5,1)--(5,1);
\draw[-,blue](-2,0.6)--(2,0.6);
\draw[-,blue](-2,0.2)--(2,0.2);
\draw[-,blue](-2,-0.2)--(2,-0.2);
\draw[-,blue](-2,-0.6)--(2,-0.6);
\draw[-,blue](-2,-1)--(2,-1);
\draw[-,blue](-2,-1)--(-2,1);
\draw[-,blue](-1.6,-1)--(-1.6,1);
\draw[-,blue](-1.2,-1)--(-1.2,1);
\draw[-,blue](-0.8,-1)--(-0.8,1);
\draw[-,blue](-0.4,-1)--(-0.4,1);
\draw[-,blue](0,-1)--(0,1);
\draw[-,blue](0.4,-1)--(0.4,1);
\draw[-,blue](0.8,-1)--(0.8,1);
\draw[-,blue](1.2,-1)--(1.2,1);
\draw[-,blue](1.6,-1)--(1.6,1);
\draw[-,blue](2,-1)--(2,1);
\draw[|-|>,blue](-5,-1.5)--(5,-1.5);

\draw[-,blue](2,-1)--(2,1);

\node at (-2,-1.5) {${\color{blue}\bullet}$};
\node at (2,-1.5) {${\color{blue}\bullet}$};

\coordinate [label=below:\scalebox{0.5}{$ \alpha$}](a1)  at (-2,-1.5);
\coordinate [label=below:\scalebox{0.5}{$ \beta$}](a2)  at (2,-1.5);
\coordinate [label=below:\scalebox{0.5}{$ v(x)$}](a3)  at (0,-1);
\coordinate [label=below:\scalebox{0.5}{$ u(x)$}](a4)  at (-3.5,-1);
\coordinate [label=below:\scalebox{0.5}{$ w(x)$}](a5)  at (3.5,-1);
\coordinate [label=below:\scalebox{0.5}{$0$}](a6)  at (-5,-1.5);
\coordinate [label=below:\scalebox{0.5}{$L$}](a7)  at (5,-1.5);

\node at (-3.5,1.2) {\scalebox{0.6}{Elastic part}};
\node at (0,1.2) {\scalebox{0.6}{Viscoelastic part $\&$ delay feedback}};
\node at (3.5,1.2) {\scalebox{0.6}{Elastic part}};

 { [color=blue,pattern color=blue]
\draw[thick,blue] (-5,1.2) -- (-5,-1.2);
\fill [blue, pattern = north east lines] (-5,1.2) rectangle (-5.2,-1.2);
}
 { [color=blue,pattern color=blue]
\draw[ultra thick,blue] (5,1.2) -- (5,-1.2);
\fill [blue, pattern = north west lines] (5,1.2) rectangle (5.2,-1.2);
}

\end{tikzpicture}
 }
     \caption{Local internal K-V damping and  Local internal delay feedback}
     \label{Fig3}
      \end{center}
      \end{figure}
$\vspace{-0.1cm}\\$
\indent In the literature, Datko {\it et al.}  studied in 1985 the one dimensional wave equation which models the vibrations of a string fixed at one end and free at the other one (see \cite{datko1985}). The system is given by the following:
\begin{equation}\label{EQ Datko 1985}
\left\{
\begin{array}{ll}
 {u}_{tt}(x,t) -  u_{xx}(x,t) + 2 a u_t (x,t) + a^2 u(x,t) = 0, & (x,t)\in  (0,1) \times (0,+\infty), \nline
    {u}(0,t)=0, & t\in   (0,+\infty), \nline
    {u}_x(1,t)=-\kappa u_t(1,t-\tau), & t\in   (0,+\infty), 
\end{array}
\right.
\end{equation}
where the delay parameter $\tau$ is strictly positive, $a>0$ and $\kappa > 0$. So, the above system models a string having a boundary feedback with delay at the free end. They showed that if $ \kappa \left({e^{2a} + 1}\right) < { e^{2a} -1},$ then System \eqref{EQ Datko 1985} is strongly stable for all small enough delays. However, if  $\kappa \left({e^{2a} + 1}\right) > { e^{2a} -1},$ then there exists an open  set $D$ dense in $(0, +\infty)$, such that for all $\tau$ in $D$, System \eqref{EQ Datko 1985} admits exponentially unstable solutions.  Moreover, in the absence of delay in System \eqref{EQ Datko 1985} ({\it i.e} $\tau = 0$)  and $a \geq 0, \kappa \geq 0$, its energy decays exponentially to zero under the condition $a^2 + \kappa^2 > 0$ (see \cite{Chen1979}). In 1990, Datko in \cite{DATKO1990} considered  the boundary feedback stabilization of a one-dimensional wave equation with time delay (see Example 3.5 in \cite{DATKO1990}). The system is given by the following:
\begin{equation}\label{EQ Datko 1990}
\left\{
\begin{array}{ll}
 {u}_{tt}(x,t) -  u_{xx}(x,t) - \delta u_{xxt}(x,t) = 0, & (x,t)\in  (0,1) \times (0,+\infty), \nline
    {u}(0,t)=0, & t\in   (0,+\infty), \nline
    {u}_x(1,t)=-\kappa u_t(1,t-\tau), & t\in   (0,+\infty),
\end{array}
\right.
\end{equation}
where $\tau>0,$ $\kappa>0$ and $\delta>0$. He proved that System \eqref{EQ Datko 1990} is unstable for an arbitrary small value of $\tau$. In 2006, Xu {\it et al.}  in \cite{Xu2006} investigated the following closed loop system with homogeneous Dirichlet boundary condition at $x=0$ and delayed Neumann boundary feedback at $x=1$:
\begin{equation}\label{Eq Xu 2006)}
\left\{
\begin{array}{ll}
 {u}_{tt}(x,t) -  u_{xx}(x,t) = 0, & (x,t)\in  (0,1) \times (0,+\infty), \nline
    {u}(0,t)=0, & t\in   (0,+\infty), \nline
    {u}_x(1,t)=-\kappa u_t(1,t)-\kappa (1 - \mu)u_t(1,t-\tau), & t\in   (0,+\infty), \nline
    \left({u}(x,0),{u}_t(x,0)\right)= \left(u_0(x),u_1(x)\right),   & x\in  (0,1),\nline
 \displaystyle{u_t(1,t) = f_0(1,t) },&\displaystyle{t\in (-\tau,0) }.
\end{array}
\right.
\end{equation}
The above system represents a wave equation that is fixed at one end and subjected to a boundary control input possessing a partial time delay of weight $(1 - \mu)$ at the  other end. They proved the following stability results:
\begin{enumerate}
\item[1.]  If $\mu > 2^{-1},$ then  System \eqref{Eq Xu 2006)} is uniformly stable.
\item[2.]   If $\mu = 2^{-1}$ and  $\tau \in \Q \cap (0,1)$, then System \eqref{Eq Xu 2006)} is unstable.
\item[3.] If $\mu = 2^{-1}$ and $\tau \in (\R \setminus \Q )\cap (0,1)$, then System \eqref{Eq Xu 2006)} is asymptotically stable.
\item[4.] If $\mu < 2^{-1},$ then System \eqref{Eq Xu 2006)} is always unstable.
\end{enumerate}
 Later on, in 2008, Guo and Xu in \cite{Guo2008} studied the stabilization of a wave equation in the 1-D case where it is effected by a boundary control and output observation suffering from time delay. The system is given by the following:
\begin{equation*}\label{Eq Guo 2008)}
\left\{
\begin{array}{ll}
 {u}_{tt}(x,t) -  u_{xx}(x,t) = 0, & (x,t)\in  (0,1) \times (0,+\infty), \nline
    {u}(0,t)=0, & t\in   (0,+\infty), \nline
    {u}_x(1,t)=w(t), & t\in   (0,+\infty), \nline
    \left({u}(x,0),{u}_t(x,0)\right)= \left(u_0(x),u_1(x)\right),   & x\in  (0,1),\nline
 y(t) = u_t (1, t - \tau), & t\in   (0,+\infty),
\end{array}
\right.
\end{equation*}
where $w$ is the control and $y$ is the output observation. Using the separation principle, the authors proved that the above delayed system is exponentially stable. In 2010, Gugat  in \cite{gugat2010} studied the wave equation which models a string of length $L$ that is rigidly fixed at one end and stabilized with a boundary feedback and constant delay at the other end. The problem is described by the following system
\begin{equation*}
\left\{
\begin{array}{ll}
 {u}_{tt}(x,t) - c^2 u_{xx}(x,t) = 0, & (x,t)\in  (0,L) \times (0,+\infty), \nline
    {u}(0,t)=0, & t\in   (0,+\infty), \nline
    {u}_x(L,t)=0, & t\in   (0, 2L\, c^{-1}), \nline
    {u}_x(L,t)=c^{-1}\, \lambda u_t\left(L,t - 2L\, c^{-1}\right), & t\in   (2L\, c^{-1},+ \infty), \nline
    \left({u}(x,0),{u}_t(x,0), u(0,0)\right)= \left(u_0(x),u_1(x), 0 \right),   & x\in  (0,L),
  \end{array}
\right.
\end{equation*}
where $\lambda$ is a real number and $c>0$. Gugat  proved that the above system is exponentially stable.  In 2011, J. Wang {\it et al.} in \cite{wang2011} studied the stabilization of a wave equation under boundary control and collocated observation with time delay. The system is given by the following:
\begin{equation*}\label{Wang 2011}
\left\{
\begin{array}{ll}
 {u}_{tt}(x,t) -  u_{xx}(x,t) = 0, & (x,t)\in  (0,1) \times (0,+\infty), \nline
    {u}(0,t)=0, & t\in   (0,+\infty), \nline
    {u}_x(1,t)=\kappa u_t(1,t - \tau), & t\in   (0,+ \infty), \nline
    \left({u}(x,0),{u}_t(x,0)\right)= \left(u_0(x),u_1(x) \right),   & x\in  (0,1).
  \end{array}
\right.
\end{equation*}
They showed that if the delay is equal to even multiples of the wave propagation time, then the above closed loop system is exponentially stable under sufficient and necessary conditions for $\kappa$. Else, if the delay is an odd multiple of the wave propagation time, thus the closed loop system is unstable. In 2013, H. Wang {\it et al.} in \cite{wang2013},   studied System \eqref{Eq Xu 2006)} under the feedback control law $ u_t(1,t) = w(t) $ provided that the weight of the feedback with delay is a real  $\beta$ and that of the feedback without delay is a real $\alpha$. They found a feedback control law that stabilizes exponentially the system for any $|\alpha| \neq |\beta|$, by modifying the velocity feedback into the form $u(t) = \beta w_t(1,t) + \alpha f (w(.,t),w_t(.,t))$, where $f$ is a linear functional. Finally, in 2017, Xu {\it et al.} in \cite{Xie2017}, studied the stability problem of a one dimensional wave equation with internal control and boundary delay term
\begin{equation*}\label{Eq Xu 2017)}
\left\{
\begin{array}{ll}
 {u}_{tt}(x,t) -  u_{xx}(x,t) + 2 \alpha u_t(x,t)= 0, & (x,t)\in  (0,1) \times (0,+\infty), \nline
    {u}(0,t)=0, & t\in   (0,+\infty), \nline
    {u}_x(1,t)=\kappa u_t(1,t-\tau), & t\in   (0,+\infty), \nline
    \left({u}(x,0),{u}_t(x,0)\right)= \left(u_0(x),u_1(x)\right),   & x\in  (0,1),\nline
 \displaystyle{u_t(1,t) = f_0(1,t) },&\displaystyle{t\in (-\tau,0) },
\end{array}
\right.
\end{equation*}
where $\tau>0$, $\alpha > 0$ and $\kappa$ is real. Based on the idea of Lyapunov  functional, they proved exponential stability of the above system under a certain relationship between $\alpha$ and $\kappa$.  
 \\[0.1in]
\indent Going to the multidimensional case, the stability of wave equation with time delay has been studied in \cite{Nicaise2006,ammari2010,pignotti2012,messaoudi2016,Nicaise-Pignotti16,Anikushyn2018,ammari2017,ammari2018}. In 2006, Nicaise and Pignotti in \cite{Nicaise2006} studied the multidimensional wave equation considering two cases. The first case concerns a wave equation with boundary feedback and a delay term at the boundary 
\begin{equation}\label{EQ Nicaise 2006-1}
\left\{
\begin{array}{ll}
 {u}_{tt}(x,t) -  \Delta u (x,t) = 0, & (x,t)\in  \Omega \times (0,+\infty), \nline
    {u}(x,t)=0, & (x,t) \in  \Gamma_D \times (0,+\infty), \nline
     \frac{\partial u}{\partial \nu} (x,t) = -\mu_1 u_t(x,t)-\mu_2 u_t(x,t-\tau), & (x,t) \in  \Gamma_N \times (0,+\infty), \nline
    \left({u}(x,0),{u}_t(x,0)\right)= \left(u_0(x),u_1(x)\right),   & x\in  \Omega,\nline
 \displaystyle{u_t(x,t) = f_0(x,t) },&\displaystyle{(x,t) \in \Gamma_N \times (-\tau, 0) }.
\end{array}
\right.
\end{equation}
The second case concerns a wave equation with an internal feedback and a delayed velocity term ({\it i.e} an internal delay) and a mixed Dirichlet-Neumann boundary condition 
\begin{equation}\label{EQ Nicaise 2006-2}
\left\{
\begin{array}{ll}
 {u}_{tt}(x,t) -  \Delta u (x,t) + \mu_1 u_t (x,t) + \mu_2 u_t (x, t - \tau) = 0, & (x,t)\in  \Omega \times (0,+\infty), \nline
    {u}(x,t)=0, & (x,t) \in  \Gamma_D \times (0,+\infty), \nline
     \frac{\partial u}{\partial \nu} (x,t)= 0, & (x,t) \in  \Gamma_N \times (0,+\infty), \nline
    \left({u}(x,0),{u}_t(x,0)\right)= \left(u_0(x),u_1(x)\right),   & x\in  \Omega,\nline
 \displaystyle{u_t(x,t) = f_0(x,t) },&\displaystyle{(x,t) \in \Omega \times (-\tau,0) }. 
\end{array}
\right.
\end{equation}
In both systems, $\tau, \ \mu_1, \ \mu_2$ are strictly positive constants, $\partial u/\partial \nu$ is the partial derivative, $\Omega$ is an open bounded domain of $\R^N$  with a boundary $\Gamma$ of class $C^2$ and $\Gamma = \Gamma _ D \cup \Gamma_N$, such that $\Gamma _ D \cap \Gamma_N = \emptyset$.  Under the assumption that the weight of the feedback with delay is smaller than that without delay $(\mu_2 < \mu_1)$, they obtained an exponential decay of the energy of both Systems \eqref{EQ Nicaise 2006-1} and \eqref{EQ Nicaise 2006-2}. On the contrary, if the previous assumption does not hold $({\it i.e}\ \mu_2\geq\mu_1)$, they found a sequence of delays for which the energy of some solutions does not tend to zero (see \cite{benhassi2009} for the treatment of Problem \eqref{EQ Nicaise 2006-2} in more general abstract form). In 2009, Nicaise {\it et al.} in \cite{Nicaise2009} studied System \eqref{EQ Nicaise 2006-1} in the one dimensional case where the delay time $\tau$ is a function depending on time  and they established an exponential stability result under the condition that the derivative of the decay function is upper bounded by a constant $d<1$ and assuming that $\mu_2 < \sqrt{1 - d} \ \mu_1$. In 2010, Ammari {\it et al.} in \cite{ammari2010} studied the wave equation with interior delay damping and dissipative undelayed boundary condition in an open domain $\Omega$ of $\R^N,\, N \geq 2.$ The system is given by the following:
\begin{equation}\label{EQ Ammari 2010}
\left\{
\begin{array}{ll}
 {u}_{tt}(x,t) -  \Delta u (x,t) + a u_t (x,t - \tau)=0 , & (x,t)\in  \Omega \times (0,+\infty), \nline
    {u}(x,t)=0, & (x,t) \in  \Gamma_0\times (0,+\infty), \nline
     \frac{\partial u}{\partial \nu} (x,t)=  - \kappa u_t(x,t), & (x,t) \in  \Gamma_1\times  (0,+\infty), \nline
    \left({u}(x,0),{u}_t(x,0)\right)= \left(u_0(x),u_1(x)\right),   & x\in  \Omega,\nline
 \displaystyle{u_t(x,t) = f_0(x,t) },&\displaystyle{(x,t) \in \Omega \times (-\tau,0) }, 
\end{array}
\right.
\end{equation}
where $\tau > 0$, $a>0$ and $ \kappa > 0$. Under the condition that $\Gamma_1$ satisfies the $\Gamma$-condition introduced in \cite{Lions88}, they proved that System  \eqref{EQ Ammari 2010} is uniformly asymptotically stable whenever the delay coefficient is sufficiently small. In 2012, Pignotti in \cite{pignotti2012} considered the wave equation with internal distributed time delay and local damping in a bounded and smooth domain $\Omega \subset \R^N, N \geq 1$.  The system is given by the following:
 \begin{equation}\label{EQ pignotti 2012}
\left\{
\begin{array}{ll}
 {u}_{tt}(x,t) -  \Delta u (x,t) + a \chi_{\omega} u_t(x,t) + \kappa u_t (x , t - \tau) = 0, & (x,t)\in  \Omega \times (0,+\infty), \nline
    {u}(x,t)=0, & (x,t) \in  \Gamma \times (0,+\infty), \nline
      \left({u}(x,0),{u}_t(x,0)\right)= \left(u_0(x),u_1(x)\right), & x\in  \Omega,\nline
 \displaystyle{u_t(x,t) = f(x,t) },&\displaystyle{(x,t) \in \Omega \times (-\tau,0) }, 
\end{array}
\right.
\end{equation}
where $\kappa $ real, $\tau>0$ and $a>0$. System \eqref{EQ pignotti 2012} shows that the damping is localized, indeed, it acts on a neighborhood of a part of the boundary of $\Omega$. Under the assumption that $|\kappa| < \kappa_0 < a,$ the author established an exponential decay rate. Later, in 2016, Messaoudi {\it et al.} in \cite{messaoudi2016} considered the stabilization of the following wave equation with strong time delay
\begin{equation*}\label{EQ messaoudi 2016}
\left\{
\begin{array}{ll}
 {u}_{tt}(x,t) -  \Delta u (x,t) - \mu_1 \Delta u_t (x,t) - \mu_2 \Delta u_t (x, t - \tau) = 0, & (x,t)\in  \Omega \times (0,+\infty), \nline
    {u}(x,t)=0, & (x,t) \in  \Gamma \times (0,+\infty), \nline
    \left({u}(x,0),{u}_t(x,0)\right)= \left(u_0(x),u_1(x)\right),   & x\in  \Omega,\nline
 \displaystyle{u_t(x,t) = f_0(x,t) },&\displaystyle{(x,t) \in \Omega \times (-\tau,0) },
\end{array}
\right.
\end{equation*}
where $\mu_1>0$ and $\mu_2$ is a non zero real number.
Under the assumption that $|\mu_2| < \mu_1 $, they obtained an exponential stability result. In addition, in the same year, Nicaise {\it et al.} in \cite{Nicaise-Pignotti16} studied the multidimensional wave equation with localized Kelvin-Voigt damping and mixed boundary condition with time delay
\begin{equation}\label{EQ Nicaise 2016}
\left\{
\begin{array}{ll}
 {u}_{tt}(x,t) -  \Delta u (x,t) - {\textrm{div}} (a(x) \nabla u_t) = 0, & (x,t)\in  \Omega \times (0,+\infty), \nline
    {u}(x,t)=0, & (x,t) \in  \Gamma_0 \times (0,+\infty), \nline
     \frac{\partial u}{\partial \nu} (x,t) = -a(x) \frac{\partial u_t}{\partial \nu} (x,t)-\kappa u_t(x,t-\tau), & (x,t) \in  \Gamma_1 \times (0,+\infty), \nline
    \left({u}(x,0),{u}_t(x,0)\right)= \left(u_0(x),u_1(x)\right),   & x\in  \Omega,\nline
 \displaystyle{u_t(x,t) = f_0(x,t) },&\displaystyle{(x,t) \in \Gamma_1 \times (-\tau, 0) },
\end{array}
\right.
\end{equation}
where $\tau > 0$, $\kappa$ is real, $a(x) \in L^{\infty}(\Omega)$ and $a(x) \geq a_0 > 0$ on $\omega$ such that $\omega \subset \Omega$ is an open neighborhood of $\Gamma_1$. Under an appropriate geometric condition on $\Gamma_1$ and assuming that $a \in C^{1,1}(\overline{\Omega})$, $\Delta a \in L^{\infty}(\Omega)$, they proved an exponential decay  of the energy of System \eqref{EQ Nicaise 2016}. Finally, in 2018, Anikushyn {\it et al.} in \cite{Anikushyn2018} considered an initial boundary value problem for a viscoelastic wave equation subjected to a strong time localized delay in a Kelvin-Voigt type. The system is given by the following:
\begin{equation*}\label{Eq Anykushyn 2018}
\left\{
\begin{array}{ll}
 {u}_{tt}(x,t) -  c_1 \Delta u (x,t) - c_2 \Delta u (x,t - \tau ) - d_1  \Delta u_t (x,t)- d_2 \Delta u_t (x,t - \tau )  = 0, & (x,t)\in  \Omega \times (0,+\infty), \nline
    {u}(x,t)=0, & (x,t) \in  \Gamma_0 \times (0,+\infty), \nline
     \frac{\partial u}{\partial \nu} (x,t) = 0, & (x,t) \in  \Gamma_1 \times (0,+\infty), \nline
    \left({u}(x,0),{u}_t(x,0)\right)= \left(u_0(x),u_1(x)\right),   & x\in  \Omega,\nline
 \displaystyle{u(x,t) = f_0(x,t) },&\displaystyle{(x,t) \in \Omega \times (-\tau, 0) }.
\end{array}
\right.
\end{equation*}
Under appropriate conditions on the coefficients, a global exponential decay rate is obtained. We can also mention that Ammari {\it et al.} in \cite{Ammari2015} considered the stabilization problem for an abstract equation with delay and a Kelvin-Voigt damping in 2015. The system is given by the following:
 \begin{equation*}\label{Abstarct Ammari 2015}
\left\{
\begin{array}{ll}
 {u}_{tt}(t) +  a \mathcal{B}\mathcal{B}^* u_t(t) + \mathcal{B}\mathcal{B}^* u(t - \tau),  & t \in   (0,+\infty), \nline
    \left({u}(0),{u}_t(0)\right)= \left(u_0,u_1 \right),   \nline
 \displaystyle{\mathcal{B}^* u(t) = f_0(t) },&\displaystyle{t \in  (-\tau,0) }, 
\end{array}
\right.
\end{equation*}
for an appropriate class of operator $\mathcal{B}$ and $a > 0.$ Using the frequency domain approach, they obtained an exponential stability result.  Finally, the transmission problem of a wave equation with global or local Kelvin-Voigt damping and without any time delay was studied by many authors in the one dimensional case (see \cite{chenLiuLiu-1998,Alves2013,Huang-1988,Alves2014,HASSINE201584,F.HASSINE2015,Portillo2017,rivera2018,Liu2016}) and in the multidimensional case (see \cite{NASSER2019272,Zhang2018,Tebou2012,Liu2006}) and polynomial and exponential stability results were obtained. In addition, the stability of wave equations on tree with local Kelvin-Voigt damping has been studied in \cite{Ammari2019}. \\[0.1in]
\indent Thus, as we confirmed in the beginning, the case  of wave equations with localized  Kelvin-Voigt type damping and boundary or internal time delay; as in our  Systems \eqref{NG-E(1.1)} and \eqref{NG-E(1.2)}, where the damping is acting in a non-smooth region is still an open problem.  The aim of the present paper consists in studying the stability of the  Systems \eqref{NG-E(1.1)} and \eqref{NG-E(1.2)}. For System  \eqref{NG-E(1.1)}, we consider two cases. Case one, if $\alpha>0$ (see Figure \ref{Fig1}), then using the semigroup theory of linear operators and a result obtained by Borichev and Tomilov, we show that the energy of the System \eqref{NG-E(1.1)} has a polynomial decay rate of type $t^{-4}$. Case two, if  $\alpha=0$ (see Figure \ref{Fig2}), then using the semigroup theory of linear operators and a result obtained by Huang and Prüss, we  prove an exponential decay  of the energy of System  \eqref{NG-E(1.1)}.  For System  \eqref{NG-E(1.2)},  by using the semigroup theory of linear operators and a result obtained by Borichev and Tomilov, we show that the energy of the System \eqref{NG-E(1.2)} has a polynomial decay rate of type $t^{-4}$.\\[0.1in]
\indent   This paper is organized as follows: In Section \ref{NG-S-2}, we study the stability of System \eqref{NG-E(1.1)}. Indeed,  in Subsection \ref{NG-S-2.1}, we consider the case $\alpha>0$. First,  we prove the well-posedness   of System \eqref{NG-E(1.1)}. Next, we prove the strong stability of the system in the lack of the compactness of the resolvent of the generator. Then, we establish a polynomial energy decay rate of type $t^{-4}$ (see Theorem \ref{NG-T-2.7}). In addition, in Subsection \ref{NG-S-2.2},  we consider the case $\alpha=0$ and we prove the exponential  stability of  system \eqref{NG-E(1.1)} (see Theorem \ref{NG-T-2.14}).  In Section \ref{NG-S-3},  we study the stability of System \eqref{NG-E(1.2)}. First,  we prove the well-posedness   of System \eqref{NG-E(1.2)}. Next,  we establish a polynomial energy decay rate of type $t^{-4}$ (see Theorem \ref{NG-T-3.2}). 
\section{Wave equation with local Kelvin-Voigt damping and with boundary delay feedback}\label{NG-S-2}
\noindent This section is devoted to our first aim, that is to study the stability of a wave equation with localized Kelvin-Voigt damping and boundary delay feedback (see System \eqref{NG-E(1.1)}). For this aim, let us introduce the auxiliary unknown
\begin{equation*}
\eta(L,\rho,t)=U_t(L,t-\rho\, \tau),\quad \rho\in(0,1),\ t>0.
\end{equation*}
Thus, Problem \eqref{NG-E(1.1)} is equivalent to
\begin{equation}\label{NG-E(2.1)}
\left\{
\begin{array}{ll}
 {U}_{tt}(x,t) -  \left[ \kappa\, {U}_x(x,t) + \delta_1 \chi_{(\alpha,\beta)}  {U}_{xt}(x,t)\right]_x = 0, & (x,t)\in  (0,L) \times (0,+\infty), \nline
 \tau \eta_t(L,\rho,t)+\eta_\rho(L,\rho,t)=0,& (\rho,t)\in (0,1)\times (0, + \infty),\nline
 
    {U}(0,t)=0, & t\in   (0,+\infty), \nline
    {U}_x(L,t)=-\delta_3 U_t(L,t)-\delta_2 \eta(L,1,t), & t\in   (0,+\infty), \nline
    \left({U}(x,0),{U}_t(x,0)\right)= \left(U_0(x),U_1(x)\right),   & x\in  (0,L),\nline
\eta(L,\rho,0)=f_0(L,-\rho\, \tau),& \rho\in (0,1).  
\end{array}
\right.
\end{equation}
Let $U$ be a smooth solution of System \eqref{NG-E(2.1)}, we associate its energy defined by
\begin{equation}\label{Energy1}
E(t)=\frac{1}{2}\int_0^L \left(|U_t|^2+\kappa |U_x|^2\right) dx+\frac{\tau}{2}\int_0^1|\eta|^2 d\rho.
\end{equation}
Multiplying the first  equation  of \eqref{NG-E(2.1)} by $U_t$, integrating over $(0,L)$ with respect to $x$, then using by parts integration and the boundary conditions in \eqref{NG-E(2.1)} at $x=0$ and at $x=L$, we get
\begin{equation}\label{NG-E(2.2)}
\frac{1}{2}\, \frac{d}{dt}\int_0^L  \left(|U_t|^2+\kappa |U_x|^2\right) dx=-\delta_1\int_\alpha^\beta   |U_{xt}|^2 dx-\kappa_3\delta_3 |U_t(L,t)|^2-\kappa_3\delta_2 \eta(L,1,t)\, U_{t}(L,t).
\end{equation}
Multiplying the second  equation  of \eqref{NG-E(2.1)} by $\eta$,  integrating  over $(0,1)$ with respect to $\rho$, then using the fact that $\eta(L,0,t)=U_t(L,t)$, we get
\begin{equation}\label{NG-E(2.3)}
\frac{\tau}{2}\, \frac{d}{dt}\int_0^1|\eta|^2 d\rho =-\frac{1}{2}|\eta(L,1,t)|^2+\frac{1}{2}|U_t(L,t)|^2.
\end{equation}
Adding \eqref{NG-E(2.2)} and \eqref{NG-E(2.3)}, we get
\begin{equation}\label{NG-E(2.4)}
\frac{d\,E(t)}{dt}=-\delta_1\int_\alpha^\beta   |U_{xt}|^2 dx-\left(\kappa_3\delta_3-\frac{1}{2}\right) |U_t(L,t)|^2-\kappa_3\delta_2 \eta(L,1,t)\, U_{t}(L,t)-\frac{1}{2}|\eta(L,1,t)|^2.
\end{equation}
For all $p>0$, we have 
\begin{equation}\label{NG-E(2.5)}
-\kappa_3\delta_2 \eta(L,1,t) U_t(L,t)\leq \frac{\kappa_3|\delta_2|\, |\eta(L,1,t)|^2}{2p}+\frac{\kappa_3|\delta_2|\, p\, |U_t(L,t)|^2}{2}.
\end{equation}
Inserting \eqref{NG-E(2.5)} in \eqref{NG-E(2.4)}, we get
\begin{equation}\label{NG-E(2.6)}
\frac{d\, E(t)}{dt} \leq -\delta_1\int_\alpha^\beta  |U_{xt}|^2 dx-\left(\frac{1}{2}-\frac{\kappa_3|\delta_2|}{2p}\right)|\eta(L,1,t)|^2
-\left(\kappa_3\delta_3-\frac{1}{2}-\frac{\kappa_3|\delta_2|\, p}{2}\right)|U_t(L,t)|^2.
\end{equation}
In the sequel, the assumption on $\kappa_3,\ \delta_1,\ \delta_2$ and $\delta_3$  will ensure that 
\begin{equation}\tag{H}
\kappa_3>0,\quad \delta_1>0,\quad \delta_3>0,\quad \delta_2\in\mathbb{R}^*,\quad \delta_3>\frac{1}{2\kappa_3},\quad  |\delta_2|<\frac{1}{\kappa_3}\sqrt{2\kappa_3\delta_3-1}.
\end{equation}
In this case, we easily check that there exists a strictly positive number $p$ satisfying
\begin{equation}\label{NG-E(2.7)}
\kappa_3 |\delta_2|<p<\frac{2}{\kappa_3|\delta_2|}\left(\kappa_3\delta_3-\frac{1}{2}\right),
\end{equation}
such that 
\begin{equation*}
\frac{1}{2}-\frac{\kappa_3|\delta_2|}{2p}>0\ \ \ \text{and}\ \ \ \kappa_3\delta_3-\frac{1}{2}-\frac{\kappa_3|\delta_2|\, p}{2}>0,
\end{equation*}
so that the energies of the strong solutions satisfy $E'(t)\leq0.$ Hence, System \eqref{NG-E(2.1)}  is dissipative in the sense that its energy is non increasing with respect to the time $t.$\\[0.1in]
For studying the stability of System \eqref{NG-E(2.1)}, we consider two cases. In Subsection \ref{NG-S-2.1}, we consider the first case, when the Kelvin-Voigt damping is localized in the internal of the body, {\it i.e.} $\alpha>0$. While in Subsection \ref{NG-S-2.2}, we consider the second, when the Kelvin-Voigt damping is localized near the boundary of the body, {\it i.e.} $\alpha=0$.
\subsection{Wave equation with  local Kelvin-Voigt damping far from the boundary  and with boundary delay feedback}\label{NG-S-2.1} 
\noindent In this subsection, we assume that there exist $\alpha$ and $\beta$ such that    $0<\alpha<\beta<L$, in this case, the Kelvin-Voigt damping  is localized in the  internal of the body (see Figure \ref{Fig1}).  For this aim, we denote the longitudinal displacement by $U$ and this displacement is divided into three parts
\begin{equation*}
U(x,t)= \left\{
        \begin{array}{ll}
           u(x,t), &  (x,t) \in \ (0,\alpha)\times (0, + \infty), \nline
            v(x,t), & (x,t) \in \ (\alpha,\beta)\times (0, + \infty) , \nline
            w(x,t), &  (x,t) \in \ (\beta,L)\times (0, + \infty).
        \end{array}
    \right.
\end{equation*}
In this case, System \eqref{NG-E(2.1)} is equivalent to the following system
\begin{eqnarray}
 u_{tt} -\kappa_1  u_{xx} = 0, &(x,t) \in  (0,\alpha) \times (0, + \infty), \label{NG-E(2.8)}\nline
 v_{tt} -\left(\kappa_2  v_{x} +\delta_1  v_{xt}\right)_x = 0, &(x,t) \in (\alpha, \beta ) \times (0, + \infty),\label{NG-E(2.9)}\nline
 w_{tt} -\kappa_3  w_{xx} = 0, &(x,t) \in  ( \beta, L )  \times (0, + \infty), \label{NG-E(2.10)}\nline
\tau \eta_t(L,\rho,t)+\eta_\rho(L,\rho,t)=0,& (\rho,t)\in (0,1)\times (0, + \infty), \label{NG-E(2.11)}
\end{eqnarray}
with the following boundary and transmission conditions
\begin{eqnarray}
u(0,t) = 0, & t\in (0,+\infty),\label{NG-E(2.12)}\nline
w_x(L,t) = - \delta_3 w_{t}(L,t) - \delta_2 \eta(L,1,t),& t\in (0, + \infty),
\label{NG-E(2.13)}\nline
u(\alpha,t ) = v(\alpha,t), &t\in  (0, + \infty), \label{NG-E(2.14)} \nline
v (\beta , t) = w (\beta , t), &t\in  (0, + \infty),\label{NG-E(2.15)} \nline
\kappa_2 v_x (\alpha , t) +\delta_1  v_{xt} (\alpha,t ) = \kappa_1 u_x (\alpha ,t), &t\in  (0, + \infty),\label{NG-E(2.16)}\nline
\kappa_2 v_x (\beta , t) + \delta_1 v_{xt} (\beta,t ) =\kappa_3 w_x (\beta ,t), &t\in  (0, + \infty),\label{NG-E(2.17)}
\end{eqnarray}
and with the following initial conditions 
\begin{eqnarray}
\left(u(x, 0 ),u_t(x,0)  \right) =\left(u_0(x), u_1 (x)\right), \quad & x\in  (0,\alpha),\label{NG-E(2.18)}
\nline
 \left(v(x, 0 ),v_t(x,0)  \right) =\left(v_0(x), v_1 (x)\right), \quad &x\in  (\alpha,\beta),\label{NG-E(2.19)}
 \nline
\left(w(x, 0 ),w_t(x,0)  \right) =\left(w_0(x), w_1 (x)\right), \quad &x\in   (\beta,L),\label{NG-E(2.20)}\nline
\eta(L,\rho,0)=f_0(L,-\rho\, \tau),& \rho\in (0,1),\label{NG-E(2.21)}
\end{eqnarray}
where the initial data $(u_0,u_1,v_0,v_1,w_0,w_1,f_0)$ belongs to a suitable Hilbert space. So, using \eqref{Energy1}, the energy of System \eqref{NG-E(2.8)}-\eqref{NG-E(2.21)} is given by
\begin{equation*}
E(t)=\frac{1}{2}\int_0^\alpha  \left(|u_t|^2+\kappa_1|u_x|^2\right) dx+\frac{1}{2}\int_\alpha^\beta  \left(|v_t|^2+\kappa_2|v_x|^2\right) dx+\frac{1}{2}\int_\beta^L \left(|w_t|^2+\kappa_3|w_x|^2\right) dx+\frac{\tau}{2}\int_0^1|\eta|^2 d\rho.
\end{equation*}
Similar to \eqref{NG-E(2.4)} and \eqref{NG-E(2.6)}, we get
\begin{equation*}
\begin{array}{lll}
\displaystyle{\frac{d\, E(t)}{dt}}
&=\displaystyle{-\delta_1\int_\alpha^\beta  |v_{xt}|^2 dx-\frac{1}{2}|\eta(L,1,t)|^2
-\kappa_3\delta_2 \eta(L,1,t) w_t(L,t)-\left(\kappa_3\delta_3-\frac{1}{2}\right)|w_t(L,t)|^2,}
\nline&\leq 
\displaystyle{-\delta_1\int_\alpha^\beta  |v_{xt}|^2 dx-\left(\frac{1}{2}-\frac{\kappa_3|\delta_2|}{2p}\right)|\eta(L,1,t)|^2
-\left(\kappa_3\delta_3-\frac{1}{2}-\frac{\kappa_3|\delta_2|\, p}{2}\right)|w_t(L,t)|^2,}

\end{array}
\end{equation*}
where $p$ is defined in \eqref{NG-E(2.7)}. Thus, under hypothesis {\rm(H)}, the System \eqref{NG-E(2.8)}-\eqref{NG-E(2.21)}  is dissipative in the sense that its energy is non increasing with respect to the time
$t.$ 
Now, we are in position to prove the existence and uniqueness of the solution of our system.
\subsubsection{Well-posedness of the problem}\label{NG-S-2.1.1}
We start this part by formulating System \eqref{NG-E(2.8)}-\eqref{NG-E(2.21)} as an abstract Cauchy problem. For this aim, let us define
\begin{equation*}
\begin{array}{ll}
\displaystyle{ \mathbb{H}^m =  H^m (0,\alpha) \times H^m (\alpha, \beta) \times H^m (\beta, L), \quad m = 1 , 2, }\nline 
\displaystyle{\mathbb{L}^2 =  L^2 (0,\alpha) \times L^2 (\alpha, \beta) \times L^2(\beta,L)},\nline
\displaystyle{  \mathbb{H}^1_L = \{(u,v,w) \in \mathbb{H}^1\ | \ u (0) = 0, \ u(\alpha) = v (\alpha), \ v(\beta) = w(\beta) \}.}
\end{array}
\end{equation*}
\begin{Remark}\label{NG-R-2.100} 
The Hilbert space $\mathbb{L}^2$ is equipped  with the norm:
\begin{equation*}
\left\|(u,v,w)\right\|^2_{\mathbb{L}^2}= \int_0^\alpha |u|^2dx+ \int_\alpha^\beta |v|^2dx+\int_\beta^L |w|^2dx.
\end{equation*}
Also, it is easy to check that the space $\mathbb{H}^1_L$ is Hilbert space over $\mathbb{C}$ equipped  with the norm:
\begin{equation*}
\left\|(u,v,w)\right\|^2_{\mathbb{H}^1_L}=\kappa_1 \int_0^\alpha |u_x|^2dx+\kappa_2 \int_\alpha^\beta |v_x|^2dx+\kappa_3 \int_\beta^L |w_x|^2dx.
\end{equation*}
Moreover, by Poincaré inequality we can easily  verify that there exists $C>0$, such that
\begin{equation*}
\left\|(u,v,w)\right\|_{\mathbb{L}^2}\leq C \left\|(u,v,w)\right\|_{\mathbb{H}^1_L}.
\end{equation*}
\xqed{$\square$}
\end{Remark}
\noindent We now define the Hilbert energy space ${\mathcal{H}}$  by
\begin{equation*}
{\mathcal{H}} = \mathbb{H}^1_L  \times \mathbb{L}^2 \times L^2(0,1)
\end{equation*}
\noindent equipped with the following inner product
\begin{equation*}
\langle \mathbb{U},\tilde{\mathbb{U}}\rangle _{{\mathcal{H}}} = \kappa_1 \int_{0}^{\alpha} u_x \overline{\tilde{u}}_x dx + \kappa_2 \int_{\alpha}^{\beta} v_x \overline{\tilde{v}}_x dx + \kappa_3 \int_{\beta}^{L} w_x \overline{\tilde{w}}_x dx+
   \int_{0}^{\alpha} y \overline{\tilde{y}} dx +  \int_{\alpha}^{\beta} z \overline{\tilde{z}} dx +  \int_{\beta}^{L} \phi \overline{\tilde{\phi}} dx +\tau \int_{0}^{1} \eta(L,\rho)\, \overline{\tilde{\eta}}(L,\rho) d \rho,
\end{equation*}
where $\mathbb{U}=( u,v,w,y ,z, \phi, \eta(L,\cdot) )\in{\mathcal{H}}$ and  $\tilde{\mathbb{U}}= ( \tilde{u}, \tilde{ v}, \tilde{w}, \tilde{y}, \tilde{z}, \tilde{\phi}, \tilde{\eta}(L,\cdot)) \in{\mathcal{H}}$. We use  $\|\mathbb{U}\|_{\mathcal{H}}$ to denote the corresponding norm. We  define the linear unbounded operator $\mathcal{A}: D(\mathcal{A}) \subset {\mathcal{H}}\longrightarrow {\mathcal{H}} $ by:
\begin{equation*}
\begin{array}{lll}
\displaystyle{D(\mathcal{A}) = \bigg\{(u,v,w,y,z,\phi, \eta(L,\cdot))\in {\mathbb{H}_L^1 \times \mathbb{H}_L^1 \times H^1 (0,1) }\ |  \ (u , \kappa_2 v  +\delta_1 z , w) \in \mathbb{H}^2,} 
\\ \noalign{\medskip}\hspace{1.9cm}
\displaystyle{ \kappa_2 v_x (\alpha) +\delta_1 z_x (\alpha)=\kappa_1 u _x (\alpha), \ \kappa_2 v_x (\beta) +\delta_1  z_x (\beta)= \kappa_3 w _x (\beta),}
 \\ \noalign{\medskip}\hspace{4.8cm}
\displaystyle{ w_x (L) = - \delta_3 \eta(L,0) - \delta_2 \eta (L,1), \ \phi(L) = \eta(L,0)\bigg\}}
\end{array}
\end{equation*}
and for all $\mathbb{U}=(u,v,w, y,z, \phi, \eta(L,\cdot))\in D(\mathcal{A})$
\begin{equation*}
\mathcal{A}\mathbb{U} = \left( y , z ,\phi, \kappa_1 u_{xx}, (\kappa_2  v_{x} +\delta_1 z_{x})_x , \kappa_3 w_{xx} , - \tau^{-1} \eta_{\rho} (L,\cdot)\right).
\end{equation*}
If $\mathbb{U}=(u,v,w,u_t,v_t,w_t, \eta(L,\cdot))$ is a regular solution of System \eqref{NG-E(2.8)}-\eqref{NG-E(2.21)}, then we transform this system into the following initial value problem
\begin{equation} \label{NG-E(2.22)}
\begin{cases}
\mathbb{U}_t &= \mathcal{A} \mathbb{U},\\
\mathbb{U}(0)&= \mathbb{U}_0,
\end{cases}
\end{equation}
where $\mathbb{U}_0 = (u_0, v_0 , w_0 , u_1, v_1, w_1, f_0(L,-\cdot \tau))\in{\mathcal{H}}.$ We now use  semigroup approach to establish well-posedness result for the  System \eqref{NG-E(2.8)}-\eqref{NG-E(2.21)}. According to Lumer-Phillips theorem (see \cite{Pazy01}), we need to prove that the operator $\mathcal{A}$ is m-dissipative in ${\mathcal{H}}$. Therefore, we prove the following proposition.
\begin{Proposition}\label{NG-R-2.1} 
Under hypothesis {\rm(H)}, the unbounded linear operator $\mathcal{A}$ is m-dissipative in the energy space ${\mathcal{H}}$.
\end{Proposition}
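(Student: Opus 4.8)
The plan is to verify the two defining properties of m-dissipativity separately: first that $\mathcal{A}$ is dissipative, i.e. $\mathrm{Re}\langle\mathcal{A}\mathbb{U},\mathbb{U}\rangle_{\mathcal H}\le 0$ for all $\mathbb{U}\in D(\mathcal{A})$, and then that $I-\mathcal{A}$ (or equivalently $\lambda I-\mathcal{A}$ for some $\lambda>0$) is surjective onto $\mathcal H$. For the dissipativity part, I would take $\mathbb{U}=(u,v,w,y,z,\phi,\eta(L,\cdot))\in D(\mathcal A)$, compute $\langle\mathcal{A}\mathbb{U},\mathbb{U}\rangle_{\mathcal H}$ directly from the inner product, integrate by parts in each of the three spatial pieces, and use the transmission conditions at $\alpha$ and $\beta$ (which make the interface boundary terms cancel) together with the boundary conditions at $0$ and $L$. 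The $\eta$-term contributes $-\tau^{-1}\int_0^1\eta_\rho\overline{\eta}\,d\rho$, whose real part is $-\tfrac12|\eta(L,1)|^2+\tfrac12|\eta(L,0)|^2=-\tfrac12|\eta(L,1)|^2+\tfrac12|\phi(L)|^2$. Collecting everything reproduces exactly the right-hand side of the energy identity \eqref{NG-E(2.4)} (with $w_t$ replaced by $\phi(L)$), and then the estimate \eqref{NG-E(2.6)}, so that under hypothesis {\rm(H)} and with $p$ chosen as in \eqref{NG-E(2.7)} one gets
\begin{equation*}
\mathrm{Re}\langle\mathcal{A}\mathbb{U},\mathbb{U}\rangle_{\mathcal H}\le -\delta_1\int_\alpha^\beta|z_x|^2\,dx-\Big(\tfrac12-\tfrac{\kappa_3|\delta_2|}{2p}\Big)|\eta(L,1)|^2-\Big(\kappa_3\delta_3-\tfrac12-\tfrac{\kappa_3|\delta_2|\,p}{2}\Big)|\phi(L)|^2\le 0.
\end{equation*}

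For the maximality part, given $F=(f_1,f_2,f_3,g_1,g_2,g_3,h)\in\mathcal H$ I would solve $(I-\mathcal A)\mathbb{U}=F$. Writing the system componentwise, the first three equations give $y=u-f_1$, $z=v-f_2$, $\phi=w-f_3$, which reduces the problem to an elliptic transmission system for $(u,v,w)$ of the form $u-\kappa_1 u_{xx}=f_1+g_1$ on $(0,\alpha)$, $v-((\kappa_2+\delta_1)v_x)_x=f_2+g_2+\delta_1(f_2)_{xx}$ on $(\alpha,\beta)$ (after substituting $z=v-f_2$, so the damping turns $\kappa_2 v_x+\delta_1 z_x$ into $(\kappa_2+\delta_1)v_x-\delta_1 (f_2)_x$), and $w-\kappa_3 w_{xx}=f_3+g_3$ on $(\beta,L)$, together with the transmission conditions at $\alpha,\beta$, the Dirichlet condition at $0$, and the Robin-type condition at $L$. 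The $\eta$-equation $\tau^{-1}\eta_\rho+\eta=h$ with $\eta(L,0)=\phi(L)=w(L)-f_3(L)$ is a linear first-order ODE in $\rho$ that is solved explicitly by $\eta(L,\rho)=\phi(L)e^{-\tau\rho}+\tau\int_0^\rho e^{-\tau(\rho-s)}h(L,s)\,ds$; in particular $\eta(L,1)$ is an affine function of $w(L)$, which is exactly what is needed to close the boundary condition $w_x(L)=-\delta_3\eta(L,0)-\delta_2\eta(L,1)$ in terms of $w$ alone. I would then set up the weak (variational) formulation of the elliptic transmission problem on $\mathbb{H}^1_L$, check that the associated sesquilinear form is continuous and coercive (the coercivity coming from the $\|\cdot\|_{\mathbb H^1_L}$-term plus the zeroth-order $\mathbb L^2$-term, with the boundary contribution at $L$ of the right sign once the $e^{-\tau}$ factor in $\eta(L,1)$ is accounted for — this is where hypothesis {\rm(H)}, or at least $\delta_3>0$, is used again to keep the boundary term non-harmful), apply Lax–Milgram to get a unique $(u,v,w)\in\mathbb H^1_L$, and finally recover elliptic regularity ($(u,\kappa_2 v+\delta_1 z,w)\in\mathbb H^2$) and the transmission/boundary conditions by the usual bootstrap, so that $\mathbb{U}\in D(\mathcal A)$.

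The main obstacle I expect is the maximality step, and within it the coercivity of the variational form: the delay boundary term $-\delta_2\eta(L,1)$ couples $w(L)$ to itself with a coefficient $-\delta_2 e^{-\tau}$ whose sign is not controlled, and one must show that after adding the genuinely dissipative $\delta_3$-term the net boundary contribution does not destroy coercivity — here a Young/trace inequality argument analogous to \eqref{NG-E(2.5)}–\eqref{NG-E(2.7)} under hypothesis {\rm(H)} should suffice, but it needs to be done carefully. A secondary subtlety is that the damping coefficient $\delta_1\chi_{(\alpha,\beta)}$ is non-smooth, so one must work piecewise and rely on the transmission conditions rather than global regularity; this is precisely why the state is split into $(u,v,w)$ and why $D(\mathcal A)$ only asks $\kappa_2 v+\delta_1 z\in H^2(\alpha,\beta)$ rather than $v\in H^2$. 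Once dissipativity and maximality are established, $\mathcal A$ generates a $C_0$-semigroup of contractions on $\mathcal H$ by the Lumer–Phillips theorem, which is the assertion of the proposition.
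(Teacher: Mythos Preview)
Your dissipativity argument is exactly the paper's: integrate by parts piecewise, cancel the interface terms via the transmission conditions, and bound the cross term $-\kappa_3\delta_2\,\mathrm{Re}(\eta(L,0)\overline{\eta(L,1)})$ by Young's inequality with the parameter $p$ of \eqref{NG-E(2.7)} to arrive at \eqref{NG-E(2.26)}.

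For maximality the paper takes a different and cleaner route. Instead of solving $(I-\mathcal A)\mathbb U=F$, it solves $-\mathcal A\,\mathbb U=F$, i.e.\ it shows directly that $0\in\rho(\mathcal A)$. This decouples everything: $y=-f_1$, $z=-f_2$, $\phi=-f_3$ are pure data, $\eta(L,\rho)=\tau\int_0^\rho f_7-f_3(L)$ is explicit, and consequently $w_x(L)=(\delta_3+\delta_2)f_3(L)-\tau\delta_2\int_0^1 f_7$ is \emph{pure data} as well --- there is no Robin coupling between $w_x(L)$ and $w(L)$. The resulting variational problem \eqref{NG-E(2.46)} has as its left-hand side exactly the $\mathbb H^1_L$-inner product, so coercivity is tautological and Lax--Milgram applies with no sign issue to worry about. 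Once $0\in\rho(\mathcal A)$ is known, surjectivity of $\lambda I-\mathcal A$ for small $\lambda>0$ follows by a Neumann-series/contraction argument, and m-dissipativity is concluded. This bypasses entirely the obstacle you (correctly) identify.

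Your approach also works, but the coercivity concern you raise is easier to dispatch than you suggest: under {\rm(H)} one has $|\delta_2|^2<\kappa_3^{-2}(2\kappa_3\delta_3-1)\le\delta_3^2$ (the last inequality being $(\kappa_3\delta_3-1)^2\ge0$), hence $|\delta_2|<\delta_3$ and therefore $\delta_3+\delta_2 e^{-\tau}>0$. The boundary term $\kappa_3(\delta_3+\delta_2 e^{-\tau})|w(L)|^2$ thus already has the right sign and no Young/trace absorption is needed. So both routes succeed; the paper's choice of $\lambda=0$ simply sidesteps the issue altogether.
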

\begin{proof}
For all $\mathbb{U} = (u,v,w,y,z, \phi,\eta(L,\cdot)) \in D(\mathcal{A}),$ we have
\begin{equation*}
\begin{array}{lll}
\displaystyle{\text{Re}\left<\mathcal{A} \mathbb{U},\mathbb{U}\right>_{\mathcal{H}} =
\kappa_1\text{Re}\int_{0}^{\alpha} \left(y_x \overline{{u}}_x+u_{xx} \overline{{y}}\right) dx +  \text{Re}\int_{\alpha}^{\beta} \left(\kappa_2 z_x \overline{{v}}_x+ (\kappa_2 v_x+\delta_1 z_x)_{x} \overline{{z}}\right) dx 
 }\nline

   \hspace{1.9cm}
 \displaystyle{  + \kappa_3 \text{Re}\int_{\beta}^{L} \left(\phi_x \overline{{w}}_x+ w_{xx}\overline{{\phi}}\right) dx - \text{Re}\int_{0}^{1} \eta_\rho(L,\rho)\, \overline{{\eta}}(L,\rho) d \rho.

 }
\end{array}
\end{equation*}
Here $\text{Re}$ is used to denote the real part of a complex number. Using by parts integration in the above equation, we get 
\begin{equation} \label{NG-E(2.23)}
\begin{array}{lll}
\displaystyle{\text{Re}\left<\mathcal{A} \mathbb{U},\mathbb{U}\right>_{\mathcal{H}} =
-\delta_1\int_\alpha^\beta|z_x|^2dx-\frac{1}{2}|\eta(L,1)|^2+\frac{1}{2}|\eta(L,0)|^2+\kappa_3\text{Re}\left(w_x(L)\overline{\phi}(L)\right)}\nline

   \hspace{2.4cm}
 \displaystyle{  -\kappa_1\text{Re}\left(u_x(0)\overline{y}(0)\right)
  +\text{Re}\left(\kappa_1 u_x(\alpha)\overline{y}(\alpha)-\kappa_2 v_x(\alpha)\overline{z}(\alpha)-\delta_1z_x(\alpha)\overline{z}(\alpha)\right) }\nline 
   \hspace{2.4cm}

 \displaystyle{
+ \text{Re}\left(\kappa_2v_x(\beta)\overline{z}(\beta)+\delta_1 z_x(\beta)\overline{z}(\beta)-\kappa_3 w_x(\beta)\overline{\phi}(\beta)\right).

 }
\end{array}
\end{equation}
On the other hand, since $\mathbb{U}\in D(\mathcal{A})$, we have
\begin{equation}\label{NG-E(2.24)}
\left\{
\begin{array}{lll}
\displaystyle{y(0)=0,\ y(\alpha)=z(\alpha),\ z(\beta)=\phi(\beta),}\nline
  \displaystyle{ \kappa_1 u_x(\alpha)-\kappa_2 v_x(\alpha)-\delta_1z_x(\alpha)=0,\ \kappa_2v_x(\beta)+\delta_1 z_x(\beta)-\kappa_3w_x(\beta)=0,}\nline
  \displaystyle{w_x (L) = - \delta_3 \eta(L,0) - \delta_2 \eta (L,1), \ \phi(L) = \eta(L,0).}

\end{array}
\right.
\end{equation}
Inserting \eqref{NG-E(2.24)} in \eqref{NG-E(2.23)}, we get
\begin{equation}\label{NG-E(2.25)}
\text{Re}\left<\mathcal{A} \mathbb{U},\mathbb{U}\right>_{\mathcal{H}} =
-\delta_1\int_\alpha^\beta|z_x|^2dx-\frac{1}{2}|\eta(L,1)|^2-\left(\kappa_3\delta_3-\frac{1}{2}\right)|\eta(L,0)|^2-\kappa_3\delta_2\text{Re} \left(\eta(L,0)\overline{\eta}(L,1)\right).
\end{equation}
Under hypothesis {\rm(H)}, we easily check that there exists  $p>0$ such that  
\begin{equation*}
\kappa_3|\delta_2| <p<\frac{2}{\kappa_3|\delta_2|}\left(\kappa_3\delta_3-\frac{1}{2}\right).
\end{equation*}
By Young's inequality, we get 
\begin{equation*}
-\kappa_3\delta_2\text{Re} \left(\eta(L,0)\overline{\eta}(L,1)\right)\leq \frac{\kappa_3|\delta_2|\, |\eta(L,1)|^2}{2p}+\frac{\kappa_3|\delta_2|\, p\, |\eta(L,0)|^2}{2}.
\end{equation*}
Inserting the above inequality in \eqref{NG-E(2.25)}, we get
\begin{equation}\label{NG-E(2.26)}
\text{Re}\left<\mathcal{A} \mathbb{U},\mathbb{U}\right>_{\mathcal{H}}  \leq -\delta_1\int_\alpha^\beta  |z_{x}|^2 dx-\left(\frac{1}{2}-\frac{\kappa_3|\delta_2|}{2p}\right)|\eta(L,1)|^2
-\left(\kappa_3\delta_3-\frac{1}{2}-\frac{\kappa_3|\delta_2|\, p}{2}\right)|\eta(L,0)|^2.
\end{equation}
From the construction of $p$, we have
\begin{equation*}
\frac{1}{2}-\frac{\kappa_3|\delta_2|}{2p}>0 \ \ \ \text{and}\ \ \ \kappa_3\delta_3-\frac{1}{2}-\frac{\kappa_3|\delta_2|\, p}{2}>0.
\end{equation*}
Therefore, from \eqref{NG-E(2.26)}, we get 
\begin{equation*}
\text{Re}\left<\mathcal{A} \mathbb{U},\mathbb{U}\right>_{\mathcal{H}}  \leq 0,
\end{equation*}
which implies that $\mathcal{A}$ is dissipative. Now, let us go on with maximality. Let $F= (f_1,f_2,f_3,f_4,f_5,f_6,f_7(L,\cdot)) \in {\mathcal{H}}$ we look for $U= (u,v,w, y,z , \phi,\eta(L,\cdot)) \in D(\mathcal{A})$ solution of the equation
\begin{equation}\label{0dra}
-\mathcal{A}U=F.
\end{equation}
\noindent Equivalently, we consider the following system
\begin{eqnarray}
-y &=& f_1, \label{NG-E(2.27)}\\
-z &=& f_2, \label{NG-E(2.28)}\\
-\phi &=& f_3,\label{NG-E(2.29)}\\
- \kappa_1 u_{xx} &=& f_4, \label{NG-E(2.30)}\\
- \left(\kappa_2 v_x +\delta_1  z_x\right)_{x} &=&  f_5, \label{NG-E(2.31)}\\
-\kappa_3 w_{xx} &=&  f_6, \label{NG-E(2.32)}\\
\eta_\rho (L,\rho)&=&\tau f_7(\rho).\label{NG-E(2.33)}
\end{eqnarray}
In addition, we consider the following boundary conditions  
\begin{eqnarray}
u(0) = 0, \quad u( \alpha ) = v (\alpha), \quad v (\beta) = w(\beta),\label{NG-E(2.34)}\\
\kappa_2 v_x (\alpha) +\delta_1  z_x (\alpha)=\kappa_1  u _x (\alpha), \quad  \kappa_2 v_x (\beta) +\delta_1 z_x (\beta)=\kappa_3 w _x (\beta), \label{NG-E(2.35)}\\
w_x(L)=-\delta_3 \eta(L,0)-\delta_2 \eta(L,1),\label{NG-E(2.36)}\\
\eta(L,0)=\phi(L).\label{NG-E(2.37)}
\end{eqnarray}
From \eqref{NG-E(2.27)}-\eqref{NG-E(2.29)} and the fact that $F\in \mathcal{H}$, it is clear that $(y,z,\phi)\in \mathbb{H}^1_L$. Next, from \eqref{NG-E(2.29)}, \eqref{NG-E(2.37)} and the fact that $f_3\in H^1(\beta,L)$, we get
\begin{equation*}
\eta(L,0)=\phi(L)=-f_3(L).
\end{equation*}
From the above equation and Equation \eqref{NG-E(2.33)}, we can determine
\begin{equation*}
\eta(L,\rho)=\tau \int_0^{\rho}f_7(\xi)\, d\xi  -f_3(L).
\end{equation*}
It is clear that $\eta(L,\cdot)\in H^1(0,1)$ and $\eta(L,0)=\phi(L)=-f_3(L)$. Inserting the above equation  in \eqref{NG-E(2.36)}, then System \eqref{NG-E(2.27)}-\eqref{NG-E(2.37)} is equivalent to
\begin{eqnarray}
y = -f_1,\quad z = -f_2,\quad \phi =- f_3,\  \eta(L,\rho)=\tau \int_0^{\rho}f_7(\xi)\, d\xi  -f_3(L),\label{NG-E(2.38)}\\
-\kappa_1 u_{xx} =f_4,\label{NG-E(2.39)} \\
- \left(\kappa_2 v_x +\delta_1  z_x\right)_{x} =  f_5,\label{NG-E(2.40)}\\
-\kappa_3 w_{xx} =  f_6,\label{NG-E(2.41)}\\
u(0) = 0, \quad u( \alpha ) = v (\alpha), \quad v (\beta) = w(\beta),\label{NG-E(2.42)}\\
\kappa_2 v_x (\alpha) + \delta_1 z_x (\alpha)= \kappa_1 u _x (\alpha), \quad  \kappa_2 v_x (\beta) +\delta_1 z_x (\beta)=\kappa_3 w _x (\beta),\label{NG-E(2.43)}\\
w_x(L)=\left(\delta_3+\delta_2\right) f_3(L)-\tau\delta_2\int_0^{1}f_7(\xi)\, d\xi.\label{NG-E(2.44)}
\end{eqnarray}
Let $\left(\varphi, \psi, \theta\right)\in \mathbb{H}^1_L$.  Multiplying Equations \eqref{NG-E(2.39)}, \eqref{NG-E(2.40)}, \eqref{NG-E(2.41)} by $\overline{\varphi}$, $\overline{\psi}$, $\overline{\theta}$, integrating  over $(0,\alpha),$ $(\alpha,\beta)$ and $(\beta,L)$ respectively,  taking the sum, then using by parts integration, we get
\begin{equation}\label{NG-E(2.45)}
\begin{array}{ll}
\displaystyle{
\kappa_1\int_0^{\alpha} u_x\overline{\varphi}_xdx+\int_{\alpha}^{\beta} (\kappa_2 v_x+\delta_1 z_x)\overline{\psi}_xdx+\kappa_3\int_{\beta}^L
w_x\overline{\theta}_xdx+\kappa_1u_x(0)\overline{\varphi}(0)}\nline

\displaystyle{-\kappa_1 u_x(\alpha)\overline{\varphi}(\alpha)
+(\kappa_2v_x(\alpha)+\delta_1z_x(\alpha))\overline{\psi}(\alpha)
-(\kappa_2v_x(\beta)+\delta_1z_x(\beta))\overline{\psi}(\beta)+\kappa_3 w_x(\beta)\overline{\theta}(\beta)
}\nline
\displaystyle{
=\int_0^{\alpha} f_4\overline{\varphi}dx+\int_{\alpha}^{\beta} f_5\overline{\psi}dx+\int_{\beta}^L
f_6\overline{\theta}dx+\kappa_3w_x(L)\overline{\theta}(L).}
\end{array}
\end{equation}
From the fact that $\left(\varphi, \psi, \theta\right)\in \mathbb{H}^1_L,$ we have
\begin{equation*}
\varphi(0)=0,\quad \varphi(\alpha)=\psi(\alpha),\quad \theta(\beta)=\psi(\beta).
\end{equation*}
Inserting the above equation  in \eqref{NG-E(2.45)}, then  using \eqref{NG-E(2.38)} and \eqref{NG-E(2.42)}-\eqref{NG-E(2.44)}, we get
\begin{equation}\label{NG-E(2.46)}
\begin{array}{ll}
\displaystyle{
\kappa_1\int_0^{\alpha} u_x\overline{\varphi}_xdx+\kappa_2\int_{\alpha}^{\beta} v_x\overline{\psi}_xdx+\kappa_3\int_{\beta}^L
w_x\overline{\theta}_xdx=\int_0^{\alpha} f_4\overline{\varphi}dx}\nline

\displaystyle{
+\int_{\alpha}^{\beta} \left(\delta_1(f_2)_x\overline{\psi}_x+f_5\overline{\psi}\right)dx+\int_{\beta}^L
f_6\overline{\theta}dx+\kappa_3\left(\left(\delta_3+\delta_2\right) f_3(L)-\tau\delta_2\int_0^{1}f_7(\xi)\, d\xi\right)\overline{\theta}(L).}
\end{array}
\end{equation}
We can easily verify that the left hand side of  \eqref{NG-E(2.46)} is a bilinear continuous coercive form on $\mathbb{H}^1_L \times \mathbb{H}^1_L$,  and the right hand side of   \eqref{NG-E(2.46)} is a linear continuous form on $\mathbb{H}^1_L$. Then, using Lax-Milgram theorem, we deduce that there exists $(u,v,w) \in \mathbb{H}^1_L  $ unique solution of the variational Problem \eqref{NG-E(2.46)}. Using standard arguments, we can show that $(u,\kappa_2v+\delta_1z , w) \in \mathbb{H}^2$. Finally, by seting $y = -f_1,$ $z = -f_2,$ $\phi =- f_3$ and  $\eta(L,\rho)=\tau \int_0^{\rho}f_7(\xi)\, d\xi  -f_3(L)$ and by applying the classical elliptic regularity we deduce that $U=(u,v,w, y,z,\phi,\eta(L,\cdot))\in D({\mathcal{A}})$ is solution of Equation \eqref{0dra}. To conclude, we need to show the uniqueness of $U$. So, let $U=(u,v,w, y,z,\phi,\eta(L,\cdot))\in D({\mathcal{A}})$ be a solution of \eqref{0dra} with $F=0$, then we directly deduce that $y = z = \phi =\eta(L,\rho)=0$ and that $(u,v,w) \in \mathbb{H}^1_L  $ satisfies Problem \eqref{NG-E(2.46)} with zero in the right hand side. This implies that $u=v=w=0$,   in other words, ker $\mathcal{A} =\{0\}$ and $0$ belongs to the resolvent set $\rho(\mathcal{A})$ of $\mathcal{A}$.  Then, by contraction principale, we easily deduce that $R(\lambda I -\mathcal{A} ) = {\mathcal{H}}$ for sufficiently small $\lambda>0 $. This, together with the dissipativeness of $\mathcal{A}$, imply that   $D\left(\mathcal{A}\right)$ is dense in ${\mathcal{H}}$   and that $\mathcal{A}$ is m-dissipative in ${\mathcal{H}}$ (see Theorems 4.5, 4.6 in  \cite{Pazy01}). The proof is thus complete.
\end{proof}
\noindent Thanks to Lumer-Philips theorem (see \cite{Pazy01}), we deduce that $\mathcal{A}$ generates a $C_0-$semigroup of contractions $e^{t\mathcal{A}}$ in ${\mathcal{H}}$ and therefore Problem \eqref{NG-E(2.8)}-\eqref{NG-E(2.21)} is well-posed. Then we have the following result:
\begin{Theorem}\label{NG-T-2.3} 
Under hypothesis {\rm(H)}, for any $\mathbb{U}_0 \in {\mathcal{H}},$ Problem \eqref{NG-E(2.22)} admits a unique weak solution, $\mathbb{U}(x,\rho,t)=e^{t\mathcal{A}}\mathbb{U}_0(x,\rho)$, such that 
\begin{equation*}
\mathbb{U}\in C^0(\R^+, {\mathcal{H}}).
\end{equation*}
\noindent Moreover, if $\mathbb{U}_0 \in D(\mathcal{A}),$ then
\begin{equation*}
\mathbb{U} \in C^1 (\R^+, {\mathcal{H}}) \cap C^0(\R^+, D(\mathcal{A})).
\end{equation*}
\end{Theorem}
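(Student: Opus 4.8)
The plan is to obtain Theorem \ref{NG-T-2.3} as an immediate consequence of Proposition \ref{NG-R-2.1} together with the classical Lumer--Phillips generation theorem and the standard regularity theory for $C_0$-semigroups. Since Proposition \ref{NG-R-2.1} already establishes, under hypothesis {\rm(H)}, that $\mathcal{A}$ is m-dissipative in the Hilbert space ${\mathcal{H}}$ (i.e.\ $\mathcal{A}$ is dissipative and $R(\lambda I - \mathcal{A}) = {\mathcal{H}}$ for some $\lambda > 0$) and that $D(\mathcal{A})$ is dense in ${\mathcal{H}}$, the Lumer--Phillips theorem (see \cite{Pazy01}) applies verbatim and yields that $\mathcal{A}$ generates a $C_0$-semigroup of contractions $\left(e^{t\mathcal{A}}\right)_{t\geq 0}$ on ${\mathcal{H}}$. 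This is essentially the whole argument; there is no genuine obstacle, the work having been done in the preceding proposition.

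From there I would read off the two assertions of the theorem from the general abstract theory of the abstract Cauchy problem \eqref{NG-E(2.22)}. For arbitrary initial datum $\mathbb{U}_0 \in {\mathcal{H}}$, the semigroup produces the mild (weak) solution $\mathbb{U}(t) = e^{t\mathcal{A}}\mathbb{U}_0$, which depends continuously on $t$, so $\mathbb{U}\in C^0(\R^+,{\mathcal{H}})$; moreover uniqueness of the weak solution follows from uniqueness of mild solutions of a linear Cauchy problem with a generator. When $\mathbb{U}_0 \in D(\mathcal{A})$, the orbit stays in $D(\mathcal{A})$ and $t\mapsto e^{t\mathcal{A}}\mathbb{U}_0$ is continuously differentiable with $\frac{d}{dt}e^{t\mathcal{A}}\mathbb{U}_0 = \mathcal{A}e^{t\mathcal{A}}\mathbb{U}_0$, which is exactly the statement $\mathbb{U}\in C^1(\R^+,{\mathcal{H}}) \cap C^0(\R^+, D(\mathcal{A}))$; here one uses that $D(\mathcal{A})$ equipped with the graph norm is continuously embedded in ${\mathcal{H}}$ and that $e^{t\mathcal{A}}$ commutes with $\mathcal{A}$ on $D(\mathcal{A})$.

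The only point that merits a line of care — and the closest thing to a subtlety — is the identification of the abstract solution of \eqref{NG-E(2.22)} with an actual (classical, when $\mathbb{U}_0\in D(\mathcal{A})$) solution of the PDE system \eqref{NG-E(2.8)}--\eqref{NG-E(2.21)}. This is handled by noting that the correspondence $\mathbb{U}=(u,v,w,u_t,v_t,w_t,\eta(L,\cdot))$ was built precisely so that $\mathbb{U}_t = \mathcal{A}\mathbb{U}$ encodes \eqref{NG-E(2.8)}--\eqref{NG-E(2.11)}, while membership of $\mathbb{U}(t)$ in $D(\mathcal{A})$ encodes the boundary and transmission conditions \eqref{NG-E(2.12)}--\eqref{NG-E(2.17)} and the compatibility $\phi(L)=\eta(L,0)$; the initial conditions \eqref{NG-E(2.18)}--\eqref{NG-E(2.21)} are encoded in $\mathbb{U}(0)=\mathbb{U}_0$. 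Hence the well-posedness of \eqref{NG-E(2.22)} is exactly the well-posedness of the original problem, and the proof is complete by invoking \cite{Pazy01}.
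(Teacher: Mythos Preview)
Your proposal is correct and follows essentially the same approach as the paper: the authors simply remark that, thanks to Proposition \ref{NG-R-2.1} and the Lumer--Phillips theorem (see \cite{Pazy01}), $\mathcal{A}$ generates a $C_0$-semigroup of contractions on ${\mathcal{H}}$, and state Theorem \ref{NG-T-2.3} as the immediate consequence. Your additional lines spelling out the standard semigroup regularity for $\mathbb{U}_0\in{\mathcal{H}}$ versus $\mathbb{U}_0\in D(\mathcal{A})$ and the identification with the PDE system are a bit more detailed than what the paper writes, but the argument is the same.
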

\subsubsection{Strong Stability}\label{NG-S-2.1.2}
Our main result in this part is the following theorem. 
\begin{Theorem} \label{NG-T-2.4} 
Under hypothesis {\rm(H)}, the $C_0-$semigroup of contractions $e^{t\mathcal{A}}$ is strongly stable on the Hilbert space ${\mathcal{H}}$ in the sense that
\begin{equation*}
 \lim\limits_{t \to + \infty} || e^{t\mathcal{A}} \mathbb{U}_0||_{{\mathcal{H}}} = 0, \qquad \qquad \forall \ \mathbb{U}_0\in {\mathcal{H}}.
\end{equation*}
\end{Theorem}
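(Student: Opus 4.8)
The plan is to apply the classical Arendt--Batty / Lyubich--Vu criterion: since $\mathcal{A}$ generates a bounded $C_0$-semigroup on $\mathcal H$ (it is m-dissipative by Proposition \ref{NG-R-2.1}), strong stability follows once we show that $\mathcal{A}$ has no eigenvalues on the imaginary axis and that $i\R \cap \sigma(\mathcal A)$ contains no point of the continuous spectrum; in fact, because $0\in\rho(\mathcal A)$ was already established and the resolvent is not compact here, the cleanest route is to prove the two conditions
\[
\text{(i)}\quad i\R \cap \sigma_p(\mathcal A)=\emptyset,
\qquad
\text{(ii)}\quad i\R \subset \rho(\mathcal A).
\]
Condition (ii) for $\lambda=0$ is already known; for $\lambda\in\R^*$ one argues that $i\lambda I-\mathcal A$ is injective (this is (i)), has dense range, and is bounded below, so it is surjective. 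The bulk of the work is (i): show that if $\mathcal A\mathbb U = i\lambda \mathbb U$ with $\lambda\in\R$ and $\mathbb U\in D(\mathcal A)$, then $\mathbb U=0$.

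For step (i), I would take the inner product $\langle \mathcal A\mathbb U,\mathbb U\rangle_{\mathcal H}=i\lambda\|\mathbb U\|_{\mathcal H}^2$ and take real parts; since the left side has nonpositive real part given by the dissipation identity \eqref{NG-E(2.25)}--\eqref{NG-E(2.26)}, we immediately get
\[
\int_\alpha^\beta |z_x|^2\,dx = 0,\qquad \eta(L,1)=0,\qquad \eta(L,0)=0,
\]
using hypothesis (H) to ensure the two coefficients are strictly positive. From $z = i\lambda v$ on $(\alpha,\beta)$ and $z_x\equiv 0$ we get $v_x\equiv 0$ on $(\alpha,\beta)$ (when $\lambda\neq0$), hence $v$ is constant there; combined with $\eta(L,\cdot)$ satisfying the transport equation $\eta_\rho = -i\lambda\tau\,\eta$ with $\eta(L,0)=0$, we get $\eta\equiv 0$, so in particular $w_x(L)=-\delta_3\eta(L,0)-\delta_2\eta(L,1)=0$ and $\phi(L)=\eta(L,0)=0$, i.e. $w_t(L)=0$. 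Now the reduced system becomes: on $(0,\alpha)$ and $(\beta,L)$ we have the Helmholtz equation $\kappa_i (\cdot)_{xx} + \lambda^2(\cdot)=0$ (elastic pieces), on $(\alpha,\beta)$ the equation $-\lambda^2 v - (\kappa_2 v_x+\delta_1 z_x)_x=0$ which, since $v$ is constant and $z=i\lambda v$, forces $\lambda^2 v=0$, hence $v\equiv0$ on $(\alpha,\beta)$; then the transmission conditions \eqref{NG-E(2.14)}--\eqref{NG-E(2.17)} at $x=\alpha$ give $u(\alpha)=0$, $u_x(\alpha)=0$, and at $x=\beta$ give $w(\beta)=0$, $w_x(\beta)=0$. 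By the uniqueness theorem for the ODE $\kappa_3 w_{xx}+\lambda^2 w=0$ with zero Cauchy data at $\beta$ we conclude $w\equiv0$ on $(\beta,L)$, and similarly (using $u(0)=0$ and $u(\alpha)=u_x(\alpha)=0$) the ODE for $u$ has zero Cauchy data at $\alpha$, so $u\equiv 0$ on $(0,\alpha)$. Hence $\mathbb U=0$. The case $\lambda=0$ is covered already by $\ker\mathcal A=\{0\}$ from Proposition \ref{NG-R-2.1}.

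For step (ii) with $\lambda\in\R^*$, I would solve $(i\lambda I-\mathcal A)\mathbb U=F$ by the same variational (Lax--Milgram) scheme as in the proof of Proposition \ref{NG-R-2.1}: eliminate $y=i\lambda u-f_1$, $z=i\lambda v-f_2$, $\phi=i\lambda w-f_3$, solve the transport equation for $\eta(L,\cdot)$ explicitly, and reduce to a sesquilinear form on $\mathbb H^1_L\times\mathbb H^1_L$; coercivity holds for $|\lambda|$ small by perturbation and for all $\lambda\in\R^*$ after noting injectivity (from (i)) plus the Fredholm alternative, since the lower-order terms make the form a compact perturbation of a coercive one. Actually the slicker argument: $i\R$ is connected, $0\in\rho(\mathcal A)$, and $\rho(\mathcal A)$ is open; if some $i\lambda_0\in\sigma(\mathcal A)$ then, the resolvent being unbounded near it, one extracts a sequence $\mathbb U_n\in D(\mathcal A)$ with $\|\mathbb U_n\|_{\mathcal H}=1$ and $(i\lambda_n I-\mathcal A)\mathbb U_n\to0$; the dissipation identity forces $z_{n,x}\to0$ in $L^2(\alpha,\beta)$, $\eta_n(L,1)\to0$, $\eta_n(L,0)\to0$, and then a compactness/ODE argument on the elastic pieces (exactly mirroring step (i), with error terms) yields $\|\mathbb U_n\|_{\mathcal H}\to0$, a contradiction. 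I expect \textbf{step (i)}, and specifically the propagation of the "zero Cauchy data" through the two transmission interfaces into the elastic pieces, to be the main obstacle — one must be careful that the non-smoothness of $\kappa$ and of $\chi_{(\alpha,\beta)}$ does not obstruct the regularity needed to invoke ODE uniqueness, but since on each subinterval the coefficients are constant this reduces to elementary linear ODE theory.
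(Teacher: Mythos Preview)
Your proposal is correct and follows essentially the same route as the paper. Step (i) is exactly the paper's Lemma \ref{NG-L-2.5}: dissipation forces $z_x\equiv0$ and $\eta(L,0)=\eta(L,1)=0$, then the transport equation kills $\eta$, the equation on $(\alpha,\beta)$ kills $v$, and the transmission conditions hand zero Cauchy data to the Helmholtz ODEs on the elastic pieces. For step (ii), the paper does precisely your first suggestion (Lemma \ref{NG-L-2.6}): it writes the variational form as $a=a_1+a_2$ with $a_1$ coercive and proves $a_2$ defines a compact operator (using the embedding $(\mathbb H^r_L)'\hookrightarrow(\mathbb H^1_L)'$ for $\tfrac12<r<1$ to handle the boundary trace term $w(L)\overline{\theta}(L)$), then invokes the Fredholm alternative and reduces $\ker A=\{0\}$ back to Lemma \ref{NG-L-2.5}. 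Your ``slicker'' sequence/contradiction alternative for (ii) would also work, but is in fact heavier here: to drive $\|\mathbb U_n\|_{\mathcal H}\to0$ from the dissipation information you would need the multiplier estimates developed later in the polynomial-stability section (Lemmas \ref{NG-L-2.10}--\ref{NG-L-2.13}), whereas the Fredholm route avoids all of that.
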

\noindent For the proof of Theorem \ref{NG-T-2.4}, according to Theorem \ref{NG-T-A.2}, we need to prove that the operator  $\mathcal{A}$ has no pure imaginary eigenvalues and $\sigma(\mathcal{A}) \cap i\R$ contains only a countable number of continuous spectrum of $\mathcal{A}$. The argument for Theorem \ref{NG-T-2.4} relies on the subsequent lemmas. 
\begin{lemma}\label{NG-L-2.5} 
Under hypothesis {\rm(H)}, for $\lambda\in\mathbb{R},$ we have $ i \lambda I - \mathcal{A}$ is injective, \it{i.e.}
\begin{equation*}
{\rm{ker}} ( i \lambda I - \mathcal{A}) = \{0\}, \qquad \forall \lambda \in \R. 
\end{equation*}
\end{lemma}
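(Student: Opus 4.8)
\emph{Plan.} Fix $\lambda\in\R$ and let $\mathbb{U}=(u,v,w,y,z,\phi,\eta(L,\cdot))\in D(\mathcal{A})$ satisfy $\mathcal{A}\mathbb{U}=i\lambda\mathbb{U}$; the aim is to show $\mathbb{U}=0$. The case $\lambda=0$ is already covered by Proposition \ref{NG-R-2.1}, where it was shown that $\ker\mathcal{A}=\{0\}$ (indeed $0\in\rho(\mathcal{A})$), so from here on I assume $\lambda\neq0$. Reading the eigenvalue equation componentwise gives $y=i\lambda u$, $z=i\lambda v$, $\phi=i\lambda w$, together with $\kappa_1u_{xx}=-\lambda^2u$ on $(0,\alpha)$, $(\kappa_2v_x+\delta_1z_x)_x=-\lambda^2v$ on $(\alpha,\beta)$, $\kappa_3w_{xx}=-\lambda^2w$ on $(\beta,L)$, and $\eta_\rho(L,\cdot)=-i\lambda\tau\,\eta(L,\cdot)$.

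The first step is to extract the dissipated quantities. Taking the real part of $\langle\mathcal{A}\mathbb{U},\mathbb{U}\rangle_{\mathcal{H}}=i\lambda\|\mathbb{U}\|_{\mathcal{H}}^2$ gives $\text{Re}\langle\mathcal{A}\mathbb{U},\mathbb{U}\rangle_{\mathcal{H}}=0$; since the three coefficients in the upper bound \eqref{NG-E(2.26)} are strictly positive under hypothesis {\rm(H)} (with $p$ chosen as in \eqref{NG-E(2.7)}), this forces $z_x\equiv0$ on $(\alpha,\beta)$ and $\eta(L,0)=\eta(L,1)=0$. Next I propagate these to the damped piece and the delay equation: because $z=i\lambda v$ with $\lambda\neq0$, $z_x\equiv0$ yields $v_x\equiv0$, hence $\kappa_2v_x+\delta_1z_x\equiv0$ on $(\alpha,\beta)$, and plugging this into $(\kappa_2v_x+\delta_1z_x)_x=-\lambda^2v$ gives $v\equiv0$, and then $z=i\lambda v\equiv0$, on $(\alpha,\beta)$; solving the transport ODE $\eta_\rho=-i\lambda\tau\,\eta$ with $\eta(L,0)=0$ gives $\eta(L,\cdot)\equiv0$.

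It remains to propagate the vanishing of $v$ across the interfaces into the two elastic pieces. At $x=\alpha$ the conditions encoded in $D(\mathcal{A})$ read $u(\alpha)=v(\alpha)$ and $\kappa_1u_x(\alpha)=\kappa_2v_x(\alpha)+\delta_1z_x(\alpha)$, and by the previous step the right-hand sides vanish, so $u(\alpha)=0$ and $u_x(\alpha)=0$. Thus $u$ solves the linear second-order ODE $\kappa_1u_{xx}+\lambda^2u=0$ on $(0,\alpha)$ with zero Cauchy data at $x=\alpha$, and uniqueness for this initial value problem gives $u\equiv0$ on $(0,\alpha)$, hence $y=i\lambda u\equiv0$. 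The symmetric argument at $x=\beta$, using $v(\beta)=w(\beta)$ and $\kappa_3w_x(\beta)=\kappa_2v_x(\beta)+\delta_1z_x(\beta)$, gives $w(\beta)=w_x(\beta)=0$, hence $w\equiv0$ on $(\beta,L)$ and $\phi=i\lambda w\equiv0$. Collecting everything, $\mathbb{U}=0$, that is ${\rm ker}(i\lambda I-\mathcal{A})=\{0\}$.

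All the computations are elementary; the only delicate point is the last step, where one must be sure that the transmission conditions really deliver \emph{zero} Cauchy data $(u,u_x)$ at $x=\alpha$ and $(w,w_x)$ at $x=\beta$ before invoking ODE uniqueness. This is where it matters that the Kelvin-Voigt term makes $z_x$ — and not merely $v_x$ — vanish on $(\alpha,\beta)$, so that the coupled fluxes $\kappa_1u_x(\alpha)=\kappa_2v_x(\alpha)+\delta_1z_x(\alpha)$ and $\kappa_3w_x(\beta)=\kappa_2v_x(\beta)+\delta_1z_x(\beta)$ coming from the definition of $D(\mathcal{A})$ genuinely vanish.
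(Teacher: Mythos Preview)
Your proof is correct and follows essentially the same route as the paper's: use the dissipativity identity \eqref{NG-E(2.26)} to kill $z_x$, $\eta(L,0)$, and $\eta(L,1)$; propagate to obtain $v\equiv z\equiv0$ on $(\alpha,\beta)$ and $\eta(L,\cdot)\equiv0$; then use the transmission conditions in $D(\mathcal{A})$ to get zero Cauchy data for $u$ at $\alpha$ and for $w$ at $\beta$, and conclude by ODE uniqueness. The only cosmetic difference is that the paper also records the redundant boundary information $w(L)=w_x(L)=0$ coming from $\phi(L)=\eta(L,0)=0$ and $w_x(L)=-\delta_3\eta(L,0)-\delta_2\eta(L,1)=0$ before concluding $w\equiv0$, whereas you invoke Cauchy uniqueness directly from the data at $\beta$; your version is slightly leaner and equally valid.
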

\begin{proof}
From Proposition \ref{NG-R-2.1}, we have $0 \in \rho(\mathcal{A}).$ We still need to show the result for $\lambda \in \R^*.$ Suppose that there exists a real number $\lambda \neq 0$ and $ \mathbb{U} = (u,v,w,y,z, \phi,\eta(L,\cdot)) \in D(\mathcal{A})$ such that
\begin{equation}\label{NG-E(2.47)}
\mathcal{A} \mathbb{U} = i \lambda \mathbb{U}.
\end{equation}
First, similar to Equation \eqref{NG-E(2.26)}, we have
\begin{equation*}
0= \text{Re}\left<\mathcal{A} \mathbb{U},\mathbb{U}\right>_{\mathcal{H}}  \leq -\delta_1\int_\alpha^\beta  |z_{x}|^2 dx-\left(\frac{1}{2}-\frac{\kappa_3|\delta_2|}{2p}\right)|\eta(L,1)|^2
-\left(\kappa_3\delta_3-\frac{1}{2}-\frac{\kappa_3|\delta_2|\, p}{2}\right)|\eta(L,0)|^2\leq0.
\end{equation*}
Thus, 
\begin{equation}\label{NG-E(2.48)}
z_x = 0 \quad \textrm{in} \ (\alpha,\beta)\ \ \ \text{and} \ \ \ \eta(L,1)=\eta(L,0)=0.
\end{equation}
Next, writing \eqref{NG-E(2.47)} in a detailed form gives
\begin{eqnarray}
y = i \lambda u ,&x\in  (0,\alpha), \label{NG-E(2.49)}\\
z = i \lambda v, & x\in (\alpha,\beta) , \label{NG-E(2.50)}\\
w = i \lambda \phi,& x\in   (\beta,L) , \label{NG-E(2.51)}\\
\kappa_1 u_{xx} = i  \lambda y, & x\in (0,\alpha), \label{NG-E(2.52)}\\
\left(\kappa_2 v_x + \delta_1 z_x\right)_{x} = i  \lambda z, & x\in (\alpha,\beta), \label{NG-E(2.53)}\\
\kappa_3 w_{xx} = i \lambda \phi, & x\in (\beta,L), \label{NG-E(2.54)}\\
\eta_\rho(L,\rho)= -i\lambda \tau \eta(L,\rho),&\rho\in (0,1).\label{NG-E(2.55)}
\end{eqnarray}
From \eqref{NG-E(2.55)} and \eqref{NG-E(2.48)}, we get
\begin{equation}\label{NG-E(2.56)}
\eta(L,\cdot)=\eta(L,0)e^{-i\lambda\tau \cdot}=0 \quad \textrm{in} \ (0,1).
\end{equation}
Combining \eqref{NG-E(2.48)} with \eqref{NG-E(2.50)}, we get that
\begin{equation}\label{NG-E(2.57)}
v_x=z_x = 0 \quad \textrm{in} \ (\alpha,\beta).
\end{equation}
Thus,
\begin{equation*}
v_{xx} =z_{xx}= 0 \quad \textrm{in} \ (\alpha,\beta).
\end{equation*}
Inserting the above result in \eqref{NG-E(2.53)}, then taking into consideration \eqref{NG-E(2.50)},   we obtain
\begin{equation}\label{NG-E(2.58)}
v =z= 0  \quad \textrm{in} \ (\alpha,\beta).
\end{equation}
From the definition of $D(\mathcal{A})$ and using \eqref{NG-E(2.56)}-\eqref{NG-E(2.58)}, we get
\begin{equation*}
\left\{
\begin{array}{lll}
u(\alpha) = v (\alpha)=0,\quad    w(\beta)=v (\beta) =0,\quad y(\alpha) = z (\alpha)=0,\quad  \phi (\beta)= z (\beta) =0,\nline
\kappa_1 u_x(\alpha)=\kappa_2 v_x(\alpha)+\delta_1 z_x(\alpha)=0,\quad \kappa_3 w_x(\beta)=k_2 v_x(\beta)+\delta_1 z_x(\beta)=0,\nline
w(L)=i\lambda\phi(L)=i\lambda \eta(L,0)=0, \quad w_x(L) = - \delta_3 \eta(L,0) - \delta_2 \eta(L,1) = 0.
\end{array}
\right.
\end{equation*}
Combining \eqref{NG-E(2.49)} with \eqref{NG-E(2.52)} and \eqref{NG-E(2.51)} with \eqref{NG-E(2.54)} and using the above equation  as boundary conditions,  we get
\begin{equation*}
\left\{
\begin{array}{lll}
\displaystyle{u_{xx} + \frac{\lambda^2 }{\kappa_1} u = 0, \quad\ x\in   (0,\alpha),}\nline
\displaystyle{ u(0) = u (\alpha) = u_x (\alpha) = 0,} 
\end{array}\right. \qquad
\left\{
\begin{array}{lll}
\displaystyle{w_{xx} + \frac{ \lambda^2 }{\kappa_3} w = 0, \quad x\in  (\beta,L),}\nline
\displaystyle{ w(\beta) =w(L)= w_x (\beta) = w_x (L) =0}.
\end{array}\right. 
\end{equation*}
Thus, 
\begin{equation}\label{NG-E(2.59)}
u(x) = 0 \quad \forall\,x\in   (0,\alpha)
 \ \ \ \text{and}\ \ \
w(x) = 0 \quad \forall\,x\in    (\beta,L).
\end{equation}
Combining \eqref{NG-E(2.59)} with \eqref{NG-E(2.49)} and \eqref{NG-E(2.51)}, we obtain
\begin{equation*}
y(x) = 0 \quad \forall\,x\in   (0,\alpha)
 \ \ \ \text{and}\ \ \
\phi(x) = 0 \quad \forall\,x\in    (\beta,L).
\end{equation*} 
Finally, from the above result,  \eqref{NG-E(2.56)}, \eqref{NG-E(2.58)} and \eqref{NG-E(2.59)}, we get that  $\mathbb{U} = 0.$ The proof is thus complete.
\end{proof}
\begin{lemma} \label{NG-L-2.6} 
Under hypothesis {\rm(H)}, for $\lambda\in\mathbb{R},$ we have $ i \lambda I - \mathcal{A}$ is surjective, \it{i.e.} 
\begin{equation*}
R(i \lambda I - \mathcal{A}) = {\mathcal{H}},\qquad \forall\,\lambda \in \R.
\end{equation*}
\end{lemma}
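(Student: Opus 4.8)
The plan is to fix an arbitrary $F=(f_1,f_2,f_3,f_4,f_5,f_6,f_7(L,\cdot))\in\mathcal H$ and produce $\mathbb U=(u,v,w,y,z,\phi,\eta(L,\cdot))\in D(\mathcal A)$ solving $(i\lambda I-\mathcal A)\mathbb U=F$. By Proposition \ref{NG-R-2.1} we have $0\in\rho(\mathcal A)$, so we may assume $\lambda\neq0$. Writing the equation componentwise forces $y=i\lambda u-f_1$, $z=i\lambda v-f_2$, $\phi=i\lambda w-f_3$, while the transport line becomes $\eta_\rho(L,\rho)+i\lambda\tau\,\eta(L,\rho)=\tau f_7(L,\rho)$ with $\eta(L,0)=\phi(L)=i\lambda w(L)-f_3(L)$. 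First I would integrate this first-order ODE in $\rho$ explicitly, which expresses $\eta(L,\cdot)$ --- and in particular $\eta(L,1)$ --- as an affine function of the single scalar $w(L)$ plus a term depending only on $F$; inserting this into $w_x(L)=-\delta_3\eta(L,0)-\delta_2\eta(L,1)$ turns the last boundary condition into a Robin condition $w_x(L)+i\lambda\bigl(\delta_3+\delta_2e^{-i\lambda\tau}\bigr)w(L)=d_F$, with $d_F$ depending only on $F$.

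After these eliminations the resolvent equation is equivalent to a boundary-transmission problem for $(u,v,w)$: namely $\kappa_1u_{xx}+\lambda^2u=-i\lambda f_1-f_4$ on $(0,\alpha)$, $\bigl((\kappa_2+i\lambda\delta_1)v_x-\delta_1(f_2)_x\bigr)_x+\lambda^2v=-i\lambda f_2-f_5$ on $(\alpha,\beta)$, $\kappa_3w_{xx}+\lambda^2w=-i\lambda f_3-f_6$ on $(\beta,L)$, together with $u(0)=0$, the interface conditions at $\alpha$ and $\beta$ as in $D(\mathcal A)$, and the Robin condition above at $L$. Exactly as in the proof of Proposition \ref{NG-R-2.1} (see \eqref{NG-E(2.45)}--\eqref{NG-E(2.46)}), the natural variational formulation on $\mathbb H^1_L\times\mathbb H^1_L$ has sesquilinear form $a_\lambda=a_0+k_\lambda$, where $a_0((u,v,w),(\varphi,\psi,\theta))=\kappa_1\int_0^\alpha u_x\overline{\varphi}_x\,dx+(\kappa_2+i\lambda\delta_1)\int_\alpha^\beta v_x\overline{\psi}_x\,dx+\kappa_3\int_\beta^L w_x\overline{\theta}_x\,dx$ and $k_\lambda$ gathers the zeroth-order volume terms $-\lambda^2\bigl(\int_0^\alpha u\overline{\varphi}\,dx+\int_\alpha^\beta v\overline{\psi}\,dx+\int_\beta^L w\overline{\theta}\,dx\bigr)$ together with the rank-one boundary term $i\lambda\kappa_3\bigl(\delta_3+\delta_2e^{-i\lambda\tau}\bigr)w(L)\overline{\theta}(L)$. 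Since $\mathrm{Re}(\kappa_2+i\lambda\delta_1)=\kappa_2>0$, the form $a_0$ is continuous and coercive on the Hilbert space $\mathbb H^1_L$ (here one uses Remark \ref{NG-R-2.100} so that the $\mathbb H^1_L$-norm controls the full $\mathbb H^1$-norm), hence defines an isomorphism by Lax-Milgram; and since $\mathbb H^1_L\hookrightarrow\mathbb L^2$ is compact and the trace $w\mapsto w(L)$ is bounded, the operator associated with $k_\lambda$ through the Riesz representation is compact on $\mathbb H^1_L$.

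Thus $a_\lambda$ differs from an isomorphism by a compact operator, so by the Fredholm alternative the variational problem is uniquely solvable for every continuous right-hand side if and only if the homogeneous problem $a_\lambda((u,v,w),\cdot)=0$ admits only the trivial solution. Any homogeneous solution $(u,v,w)$, completed by $y=i\lambda u$, $z=i\lambda v$, $\phi=i\lambda w$ and $\eta(L,\rho)=i\lambda w(L)e^{-i\lambda\tau\rho}$, produces --- after the standard elliptic-regularity step showing $(u,\kappa_2 v+\delta_1 z,w)\in\mathbb H^2$ and recovering the interface and boundary relations --- an element of $D(\mathcal A)$ lying in $\ker(i\lambda I-\mathcal A)$, which is $\{0\}$ by Lemma \ref{NG-L-2.5}; in particular $(u,v,w)=0$. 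Hence $a_\lambda$ is uniquely solvable, giving $(u,v,w)\in\mathbb H^1_L$; elliptic regularity then yields $(u,\kappa_2 v+\delta_1 z,w)\in\mathbb H^2$, and defining $y,z,\phi,\eta$ by the formulas above recovers $\mathbb U\in D(\mathcal A)$ with $(i\lambda I-\mathcal A)\mathbb U=F$. The main obstacle is that neither obvious tool works directly: $a_\lambda$ is not coercive because of the $-\lambda^2$ zeroth-order term, while $\mathcal A^{-1}$ is not compact because of the transport/delay component $\eta$ (as already noted in the discussion of strong stability). The delicate point is therefore to strip off the transport variable explicitly, reduce to a genuinely elliptic problem where the Fredholm alternative does apply, and then verify that the kernel of the reduced problem is faithfully represented inside $\ker(i\lambda I-\mathcal A)$ so that Lemma \ref{NG-L-2.5} can be invoked.
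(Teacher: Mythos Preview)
Your proposal is correct and follows essentially the same route as the paper: eliminate $y,z,\phi,\eta$ to reduce to a variational problem on $\mathbb H^1_L$, split the sesquilinear form into a coercive principal part plus a compact perturbation, apply the Fredholm alternative, and feed any homogeneous solution back into $\ker(i\lambda I-\mathcal A)=\{0\}$ via Lemma~\ref{NG-L-2.5}. The only cosmetic difference is that you observe directly that the boundary contribution is rank-one (hence compact) and invoke Rellich for the volume part, whereas the paper routes the compactness of $a_2$ through fractional spaces $\mathbb H^r_L$ with $\tfrac12<r<1$; your argument is slightly more elementary but equivalent.
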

\begin{proof}
Since $0 \in \rho(\mathcal{A}),$ we still need to show the result for $\lambda \in \R^*.$ For any $F = (f_1,f_2,f_3,f_4,f_5,f_6,f_7(L,\cdot))\in{\mathcal{H}}$ and $\lambda \in \R^*,$ we prove the existence of $\mathbb{U} = (u,v,w,y,z,\phi,\eta(L,\cdot)) \in D(\mathcal{A})$ solution for the following equation
\begin{equation*}
(i \lambda I - \mathcal{A}) \mathbb{U} = F.
\end{equation*}
\noindent Equivalently, we consider the following problem 
\begin{eqnarray}
 y =i \lambda u- f_1 & \textrm{in} \quad H^1(0,\alpha) , \label{NG-E(2.60)}\\
 z =i \lambda v- f_2 & \textrm{in} \quad H^1(\alpha,\beta), \label{NG-E(2.61)}\\
 \phi =i \lambda w- f_3&\textrm{in} \quad H^1(\beta,L) , \label{NG-E(2.62)}\\
i \lambda y - \kappa_1 u_{xx} =  f_4 & \textrm{in} \quad L^2(0,\alpha) , \label{NG-E(2.63)}\\
i \lambda z -(\kappa_2 v_x + \delta_1 z_x)_{x}=  f_5 & \textrm{in} \quad L^2(\alpha,\beta), \label{NG-E(2.64)}\\
i \lambda \phi - \kappa_3 w_{xx} = f_6 & \textrm{in} \quad L^2(\beta,L) , \label{NG-E(2.65)}\\
 \eta_\rho(L,\cdot)+i\tau \lambda\eta(L,\cdot)= \tau f_7(L,\cdot) & \textrm{in} \quad L^2(0,1),\label{NG-E(2.66)}
\end{eqnarray}
with  the following boundary conditions  
\begin{eqnarray}
u(0) = 0, \quad u( \alpha ) = v (\alpha), \quad v (\beta) = w(\beta),\label{NG-E(2.67)}\\
\kappa_2 v_x (\alpha) +\delta_1  z_x (\alpha)=\kappa_1  u _x (\alpha), \quad  \kappa_2 v_x (\beta) +\delta_1 z_x (\beta)=\kappa_3 w _x (\beta), \label{NG-E(2.68)}\\
w_x(L)=-\delta_3 \eta(L,0)-\delta_2 \eta(L,1),\label{NG-E(2.69)}\\
\eta(L,0)=\phi(L).\label{NG-E(2.70)}
\end{eqnarray}
It follows from \eqref{NG-E(2.66)}, \eqref{NG-E(2.70)} and \eqref{NG-E(2.62)} that
\begin{equation}\label{NG-E(2.71)}
\eta(L,\rho)=\left(i\lambda w(L)-f_3(L)\right) e^{-i\tau\lambda\, \rho}+\tau \int_0^\rho e^{i\tau\lambda\left(\xi-\rho\right)} f_7(L,\xi)d\xi. 
\end{equation}
Inserting \eqref{NG-E(2.60)}-\eqref{NG-E(2.62)} and \eqref{NG-E(2.71)} in \eqref{NG-E(2.63)}-\eqref{NG-E(2.70)} and deriving \eqref{NG-E(2.61)} with respect to $x$, we get
\begin{eqnarray}
\displaystyle{- \lambda^2 u  -\kappa_1 u_{xx} =  i \lambda f_1+ f_4},\label{NG-E(2.72)}\nline
\displaystyle{- \lambda^2 v -\left(\kappa_2 v_x+\delta_1 z_x\right)_{x}  = i \lambda f_2+f_5},\label{NG-E(2.73)}\nline
\displaystyle{- \lambda^2 w  - \kappa_3 w_{xx} = i \lambda f_3 +f_6  },\label{NG-E(2.74)}\nline
 \displaystyle{z_x =i \lambda v_x- (f_2)_x },\label{NG-E(2.75)}\nline
\displaystyle{u(0) = 0, \quad u( \alpha ) = v(\alpha),\quad  \kappa_1  u _x (\alpha)=\kappa_2 v_x (\alpha) +\delta_1  z_x (\alpha)},\label{NG-E(2.76)}\nline
\displaystyle{ w(\beta)=v(\beta),\quad  \kappa_3 w _x (\beta)=\kappa_2 v_x (\beta) +\delta_1 z_x (\beta)},\label{NG-E(2.77)}\nline
\displaystyle{w_x(L)=-i\lambda \left(\delta_3+\delta_2 e^{-i\tau\lambda}\right) w(L)+\left(\delta_3+\delta_2 e^{-i\tau\lambda}\right) f_3(L)-\tau \delta_2\int_0^1 e^{i\tau\lambda\left(\xi-1\right)} f_7(L,\xi)d\xi
}.\label{NG-E(2.78)}
\end{eqnarray}
Let $\left(\varphi, \psi, \theta\right)\in \mathbb{H}^1_L$.  Multiplying Equations \eqref{NG-E(2.72)}, \eqref{NG-E(2.73)}, \eqref{NG-E(2.74)} by $\overline{\varphi}$, $\overline{\psi}$, $\overline{\theta}$, integrating  over $(0,\alpha),$ $(\alpha,\beta)$  and $(\beta,L)$ respectively,  taking the sum, then using by parts integration, we get
\begin{equation}\label{NG-E(2.79)}
\begin{array}{ll}
\displaystyle{
\kappa_1\int_0^{\alpha} u_x\overline{\varphi}_xdx+\int_{\alpha}^{\beta} (\kappa_2 v_x+\delta_1 z_x)\overline{\psi}_xdx+\kappa_3\int_{\beta}^L
w_x\overline{\theta}_xdx+\kappa_1u_x(0)\overline{\varphi}(0)}\nline

\displaystyle{-\kappa_1 u_x(\alpha)\overline{\varphi}(\alpha)
+(\kappa_2v_x(\alpha)+\delta_1z_x(\alpha))\overline{\psi}(\alpha)
-(\kappa_2v_x(\beta)+\delta_1z_x(\beta))\overline{\psi}(\beta)+\kappa_3 w_x(\beta)\overline{\theta}(\beta)
}\nline
\displaystyle{
-\lambda^2\int_0^\alpha u\overline{\varphi}dx-\lambda^2\int_\alpha^\beta  v\overline{\psi}dx-\lambda^2\int_\beta^L  w\overline{\theta}dx-\kappa_3w_x(L)\overline{\theta}(L)}\nline

\displaystyle{
=\int_0^{\alpha} \left(i \lambda f_1+ f_4\right)\overline{\varphi}dx+\int_{\alpha}^{\beta} \left(i \lambda f_2+ f_5\right)\overline{\psi}dx+\int_{\beta}^L
\left(i \lambda f_3+ f_6\right)\overline{\theta}dx.}
\end{array}
\end{equation}
From the fact that $\left(\varphi, \psi, \theta\right)\in \mathbb{H}^1_L,$ we have
\begin{equation*}
\varphi(0)=0,\quad \varphi(\alpha)=\psi(\alpha),\quad \theta(\beta)=\psi(\beta).
\end{equation*}
Inserting the above equation in \eqref{NG-E(2.79)}, then using \eqref{NG-E(2.75)}-\eqref{NG-E(2.78)}, we get
\begin{equation}\label{NG-E(2.80)}
a\left(\left(u, v, w\right),\left(\varphi, \psi, \theta\right)\right)=\mathrm{F}\left(\varphi, \psi, \theta\right),\quad \forall \, \left(\varphi, \psi, \theta\right)\in \mathbb{H}^1_L,
\end{equation}
where 
\begin{equation*}
\begin{array}{ll}
\displaystyle{\mathrm{F}\left(\varphi, \psi, \theta\right)
=\int_0^{\alpha} \left(i \lambda f_1+ f_4\right)\overline{\varphi}dx+\int_{\alpha}^{\beta} \left(i \lambda f_2+ f_5\right)\overline{\psi}dx+ \delta_1 \int_{\alpha}^{\beta}  (f_2)_x\overline{\psi}_x  dx}
\nline
\displaystyle{
+\int_{\beta}^L
\left(i \lambda f_3+ f_6\right)\overline{\theta}dx+
\kappa_3\left(\left(\delta_3+\delta_2 e^{-i\tau\lambda}\right) f_3(L)-\tau \delta_2\int_0^1 e^{i\tau\lambda\left(\xi-1\right)} f_7(L,\xi)d\xi\right)\overline{\theta}(L)
}

\end{array}
\end{equation*}
and
\begin{equation*}
a\left(\left(u, v, w\right),\left(\varphi, \psi, \theta\right)\right)=a_{1}\left(\left(u, v, w\right),\left(\varphi, \psi, \theta\right)\right)+a_{2}\left(\left(u, v, w\right),\left(\varphi, \psi, \theta\right)\right),
\end{equation*}
such that 
\begin{equation*}
\left\{
\begin{array}{ll}
\displaystyle{
a_{1}\left(\left(u, v, w\right),\left(\varphi, \psi, \theta\right)\right)=\kappa_1\int_0^{\alpha} u_x\overline{\varphi}_xdx+\left(\kappa_2+ i\delta_1 \lambda\right)\int_{\alpha}^{\beta}  v_x \overline{\psi}_x   dx+\kappa_3\int_{\beta}^L
w_x\overline{\theta}_xdx,}\nline

\displaystyle{
a_{2}\left(\left(u, v, w\right),\left(\varphi, \psi, \theta\right)\right)=-\lambda^2\int_0^\alpha u\overline{\varphi}dx-\lambda^2\int_\alpha^\beta  v\overline{\psi}dx-\lambda^2\int_\beta^L  w\overline{\theta}dx+i\kappa_3\lambda \left(\delta_3+\delta_2 e^{-i\tau\lambda}\right) w(L)\overline{\theta}(L)}.
\end{array}
\right.
\end{equation*}
Let $\left(\mathbb{H}^1_L\right)'$ be the dual space of $\mathbb{H}^1_L$. We define the operators $\mathrm{A}$, $\mathrm{A}_1$ and $\mathrm{A}_2$ by
\begin{equation*}
\left\{
\begin{array}{lll}
\mathrm{A}\,:\, \mathbb{H}^1_L\to \left(\mathbb{H}^1_L\right)'
\nline
(u,v,w)\to \mathrm{A}(u,v,w)
\end{array}\right.
\quad\left\{
\begin{array}{lll}
\mathrm{A}_1\,:\, \mathbb{H}^1_L\to \left(\mathbb{H}^1_L\right)'
\nline
(u,v,w)\to \mathrm{A}_1(u,v,w)
\end{array}\right.
\quad\left\{
\begin{array}{lll}
\mathrm{A}_2\,:\, \mathbb{H}^1_L\to \left(\mathbb{H}^1_L\right)'
\nline
(u,v,w)\to \mathrm{A}_2(u,v,w)
\end{array}\right.
\end{equation*}
such that 
\begin{equation}\label{NG-E(2.81)}
\left\{
\begin{array}{lll}
\displaystyle{
\left(\mathrm{A}(u,v,w)\right)\left(\varphi, \psi, \theta\right)=a\left(\left(u, v, w\right),\left(\varphi, \psi, \theta\right)\right),}&\displaystyle{\forall \, \left(\varphi, \psi, \theta\right)\in \mathbb{H}^1_L,}\nline

\displaystyle{
\left(\mathrm{A}_1(u,v,w)\right)\left(\varphi, \psi, \theta\right)=a_1\left(\left(u, v, w\right),\left(\varphi, \psi, \theta\right)\right),}&\displaystyle{ \forall \, \left(\varphi, \psi, \theta\right)\in \mathbb{H}^1_L,}\nline

\displaystyle{
\left(\mathrm{A}_2(u,v,w)\right)\left(\varphi, \psi, \theta\right)=a_2\left(\left(u, v, w\right),\left(\varphi, \psi, \theta\right)\right),}&\displaystyle{ \forall \, \left(\varphi, \psi, \theta\right)\in \mathbb{H}^1_L.}

\end{array}\right.
\end{equation}
Our aim is to prove that  the operator $\mathrm{A}$  is an isomorphism. For this aim, we proceed the proof in three steps.\\[0.1in]
\textbf{Step 1.} In this step we proof that the operator $\mathrm{A}_1$ is an isomorphism.  For this aim, according to \eqref{NG-E(2.81)}, we have
\begin{equation*}
a_1\left(\left(u, v, w\right),\left(\varphi, \psi, \theta\right)\right)=\kappa_1\int_0^{\alpha} u_x\overline{\varphi}_xdx+\left(\kappa_2+ i\delta_1 \lambda\right)\int_{\alpha}^{\beta}  v_x \overline{\psi}_x   dx+\kappa_3\int_{\beta}^L
w_x\overline{\theta}_xdx
\end{equation*}
We can easily verify that $a_1$  is a bilinear continuous coercive form on $\mathbb{H}^1_L \times \mathbb{H}^1_L$. Then, by Lax-Milgram lemma, the operator $\mathrm{A}_1$ is an isomorphism.\\[0.1in]
\textbf{Step 2.} In this step we proof that the operator $ \mathrm{A}_2$ is compact. First,  for $\frac{1}{2}<r<1$, we introduce the Hilbert space $\mathbb{H}^r_L$ by
\begin{equation*}
  \mathbb{H}^r_L = \{(\varphi,\psi,\theta) \in H^r (0,\alpha) \times H^r (\alpha, \beta) \times H^r (\beta, L)\ | \ \varphi (0) = 0, \ \varphi(\alpha) = \psi (\alpha), \ \psi(\beta) = \theta(\beta) \}.
\end{equation*}
Thus by trace theorem, there exists $C>0$,  such that
\begin{equation}\label{NG-E(2.82)}
|\theta(L)|\leq C\left\|(\varphi,\psi,\theta)\right\|_{ \mathbb{H}^r_L}.
\end{equation}
Now, according to \eqref{NG-E(2.81)}, we have
\begin{equation*}
a_2\left(\left(u, v, w\right),\left(\varphi, \psi, \theta\right)\right)=-\lambda^2\int_0^\alpha u\overline{\varphi}dx-\lambda^2\int_\alpha^\beta  v\overline{\psi}dx-\lambda^2\int_\beta^L  w\overline{\theta}dx+i\kappa_3\lambda \left(\delta_3+\delta_2 e^{-i\tau\lambda}\right) w(L)\overline{\theta}(L).
\end{equation*}
Then, by using \eqref{NG-E(2.82)}, we get
\begin{equation*}
\left|a_2\left(\left(u, v, w\right),\left(\varphi, \psi, \theta\right)\right)\right|\leq 
C_1 \left\|(u,v,w)\right\|_{\mathbb{H}^1_L} \left\|(\varphi,\psi,\theta)\right\|_{\mathbb{L}^2}+C_1 \left\|(u,v,w)\right\|_{\mathbb{H}^1_L}\left\|(\varphi,\psi,\theta)\right\|_{ \mathbb{H}^r_L},
\end{equation*}
where $C_1>0$. Therefore,  for all $r\in (\frac{1}{2},1)$ there exists $C_2>0$, such that
\begin{equation*}
\left|a_2\left(\left(u, v, w\right),\left(\varphi, \psi, \theta\right)\right)\right|\leq 
C_2 \left\|(u,v,w)\right\|_{\mathbb{H}^1_L}\left\|(\varphi,\psi,\theta)\right\|_{ \mathbb{H}^r_L},
\end{equation*}
which implies that 
\begin{equation*}
 \mathrm{A}_2\in \mathcal{L}\left({\mathbb{H}^1_L},\left( \mathbb{H}^r_L\right)'\right).
\end{equation*}
Finally, using the   compactness embedding from $\left(\mathbb{H}^r_L\right)'$ into 
$\left(\mathbb{H}^1_L\right)'$ we deduce that $ \mathrm{A}_2$ is compact.\\[0.1in]
From steps 1 and 2, we get that the operator $ \mathrm{A}=\mathrm{A}_1+\mathrm{A}_2$ is a Fredholm operator of index zero  0. Consequently, by Fredholm alternative, proving  the operator $\mathrm{A}$  is an isomorphism reduces to proving  ${\rm{ker}} ( \mathrm{A}) = \{0\}$.\\[0.1in]
\textbf{Step 3.} In this step we proof that the ${\rm{ker}} ( \mathrm{A}) = \{0\}$. For this aim, let $(\tilde{u},\tilde{v},\tilde{w})\in {\rm{ker}} ( \mathrm{A})$, {\it i.e.}
\begin{equation*}
a\left(\left(\tilde{u}, \tilde{v}, \tilde{w}\right),\left(\varphi, \psi, \theta\right)\right)=0,\quad \forall \, \left(\varphi, \psi, \theta\right)\in \mathbb{H}^1_L.
\end{equation*}
Equivalently,
\begin{equation*}
\begin{array}{ll}
\displaystyle{
\int_0^{\alpha} \left(\kappa_1\tilde{u}_x\overline{\varphi}_x-\lambda^2\tilde{u}\overline{\varphi}\right) dx+\int_{\alpha}^{\beta}  \left(\left(\kappa_2+ i\delta_1 \lambda\right)\tilde{v}_x \overline{\psi}_x-\lambda^2\tilde{v}\overline{\psi}\right)   dx

+\int_{\beta}^L\left(\kappa_3\tilde{w}_x\overline{\theta}_x-\lambda^2  \tilde{w}\overline{\theta}\right)dx}\nline

\displaystyle{
+i\kappa_3\lambda \left(\delta_3+\delta_2 e^{-i\tau\lambda}\right) \tilde{w}(L)\overline{\theta}(L)=0,\quad \forall \, \left(\varphi, \psi, \theta\right)\in \mathbb{H}^1_L.}

\end{array}
\end{equation*}
Then, we find that
\begin{equation*}
\left\{
\begin{array}{ll}
\displaystyle{- \lambda^2 \tilde{u}  -\kappa_1 \tilde{u}_{xx} =  0},\nline
\displaystyle{- \lambda^2\tilde{v} -\left(\kappa_2 +i\delta_1 \lambda\right)\tilde{v}_{xx}  =0},\nline
\displaystyle{- \lambda^2 \tilde{w}  - \kappa_3 \tilde{w}_{xx} = 0   },\nline
\displaystyle{\tilde{u}(0) = 0, \quad \tilde{u}( \alpha ) =\tilde{ v}(\alpha),\quad \kappa_1 \tilde{u} _x (\alpha)=(\kappa_2+i\delta_1\lambda)\tilde{v}_x(\alpha)},\nline
\displaystyle{ \tilde{w}(\beta)=\tilde{v}(\beta),\quad  \kappa_3 \tilde{w} _x (\beta)=(\kappa_2+i\delta_1\lambda)\tilde{v}_x(\beta)},\nline
\displaystyle{\tilde{w}_x(L)=-i\lambda \left(\delta_3+\delta_2 e^{-i\tau\lambda}\right) \tilde{w}(L)
}.

\end{array}\right.
\end{equation*}
Therefore, the vector $\tilde{V}$ define by
\begin{equation*}
\tilde{V}=\left(\tilde{u},\tilde{v},\tilde{w},i\lambda \tilde{u},i\lambda\tilde{v},i\lambda \tilde{w}, i\lambda \tilde{w}(L) e^{-i\tau\lambda\, \cdot}\right)
\end{equation*}
belongs to $D(\mathcal{A})$ and we have
\begin{equation*}
i\lambda\tilde{V}-\mathcal{A}\tilde{V}=0.
\end{equation*}
Thus,  $\tilde{V}\in {\rm{ker}} ( i \lambda I - \mathcal{A})$, therefore by Lemma \ref{NG-L-2.5} , we get $\tilde{V}=0$, this implies that $\tilde{u}=0,\ \tilde{v}=0$ and $\tilde{w}=0$,  so ${\rm{ker}} ( \mathrm{A}) = \{0\}$.\\[0.1in]
Therefore, from step 3 and  Fredholm alternative, we get that the operator $\mathrm{A}$  is an isomorphism. It easy to see that the operator $\mathrm{F}$ is continuous form on $\mathbb{H}^1_L$. Consequently,  Equation \eqref{NG-E(2.80)} admits a unique solution $(u,v,w)\in \mathbb{H}^1_L$.  Thus, using \eqref{NG-E(2.60)}-\eqref{NG-E(2.62)}, \eqref{NG-E(2.71)}  and a classical regularity arguments, we conclude that $(i \lambda I - \mathcal{A}) \mathbb{U} = F$ admits a unique solution $\mathbb{U}\in D\left(\mathcal{A}\right)$. The proof is thus complete.
\end{proof}
\noindent \textbf{Proof of Theorem \ref{NG-T-2.4}.}  Form Lemma \ref{NG-L-2.5}, we have that the operator $\mathcal{A}$ has no pure imaginary eigenvalues and by Lemma \ref{NG-L-2.6}, $R(i \lambda I - \mathcal{A}) = {\mathcal{H}}$ for all $\lambda \in \R.$ Therefore, the closed graph theorem implies that $\sigma(\mathcal{A}) \cap i \R = \emptyset.$ Thus, we get the conclusion by applying Theorem \ref{NG-T-A.2} of Arendt and Batty. The proof is thus complete. \xqed{$\square$}
\subsubsection{Polynomial Stability}\label{NG-S-2.1.3}
In this part, we will prove the polynomial stability of System \eqref{NG-E(2.8)}-\eqref{NG-E(2.21)}. Our main result in this part is the following theorem.
\begin{Theorem}\label{NG-T-2.7}  
Under hypothesis {\rm(H)}, for all initial data $\mathbb{U}_0 \in D(\mathcal{A}),$ there exists a constant $C>0$ independent of $\mathbb{U}_0$ such that the energy of System \eqref{NG-E(2.8)}-\eqref{NG-E(2.21)} satisfies the following estimation
 \begin{equation}\label{NG-E(2.83)}
 E(t) \leq  \frac{C}{t^4} \left\|\mathbb{U}_0\right\|^2_{D(\mathcal{A})}, \quad \forall t>0.
 \end{equation}
 \end{Theorem}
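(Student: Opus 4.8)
The plan is to apply the Borichev--Tomilov theorem (the frequency-domain characterization of polynomial decay), which reduces the claim $E(t)\le C t^{-4}\|\mathbb{U}_0\|_{D(\mathcal{A})}^2$ to establishing the resolvent estimate
\begin{equation*}
\sup_{\lambda\in\R}\ \frac{1}{|\lambda|^{1/2}}\ \left\|(i\lambda I-\mathcal{A})^{-1}\right\|_{\mathcal{L}(\mathcal{H})} < +\infty,
\end{equation*}
since a decay rate of $t^{-4}$ corresponds to the exponent $\ell=1/2$ via $t^{-2/\ell}$. We already know from Theorem \ref{NG-T-2.4} (strong stability) and Lemma \ref{NG-L-2.6} that $i\R\subset\rho(\mathcal{A})$, so the obstruction is purely quantitative: a uniform bound on the resolvent along the imaginary axis with the prescribed loss of half a power of $\lambda$. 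I would argue by contradiction: suppose there exist sequences $\lambda_n\in\R$ with $|\lambda_n|\to\infty$ (the bounded-$\lambda$ regime being handled by continuity of the resolvent and compactness) and $\mathbb{U}_n=(u_n,v_n,w_n,y_n,z_n,\phi_n,\eta_n(L,\cdot))\in D(\mathcal{A})$ with $\|\mathbb{U}_n\|_{\mathcal H}=1$ such that
\begin{equation*}
|\lambda_n|^{1/2}\,(i\lambda_n I-\mathcal{A})\mathbb{U}_n =: |\lambda_n|^{1/2}F_n \longrightarrow 0 \quad\text{in }\mathcal{H},
\end{equation*}
and then derive $\|\mathbb{U}_n\|_{\mathcal H}\to 0$, a contradiction.

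The next step is to extract dissipation information. Taking the real part of $\langle (i\lambda_n I-\mathcal A)\mathbb U_n,\mathbb U_n\rangle_{\mathcal H}$ and using the dissipativity identity \eqref{NG-E(2.26)} gives, up to constants,
\begin{equation*}
\delta_1\int_\alpha^\beta |z_{n,x}|^2\,dx + |\eta_n(L,1)|^2 + |\eta_n(L,0)|^2 \ \lesssim\ \mathrm{Re}\,\langle F_n,\mathbb U_n\rangle_{\mathcal H} = o\!\left(|\lambda_n|^{-1/2}\right).
\end{equation*}
Hence $\int_\alpha^\beta|z_{n,x}|^2 = o(|\lambda_n|^{-1/2})$, $\eta_n(L,0)=o(|\lambda_n|^{-1/4})$, $\eta_n(L,1)=o(|\lambda_n|^{-1/4})$, and since $\eta_n(L,0)=\phi_n(L)=y_n(L)$-type boundary trace $=i\lambda_n w_n(L)+o(1)$, this forces $\lambda_n w_n(L)=o(|\lambda_n|^{-1/4})$, i.e. a good bound on the boundary trace of $w_n$ and $w_{n,t}$ at $L$. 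Combining $z_n=i\lambda_n v_n - f_{2,n}$ with the control on $z_{n,x}$ yields $\int_\alpha^\beta|v_{n,x}|^2 = o(|\lambda_n|^{-5/2})$, so $v_n$ is essentially constant on $(\alpha,\beta)$ to high order; plugging into the equation for $v_n$ recovers $\|v_n\|_{L^2(\alpha,\beta)}$ small, and control of $z_n$ in $L^2(\alpha,\beta)$. This is the ``easy'' part — the damped middle piece is killed directly.

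The substantive work is then a \emph{unique-continuation / energy-propagation} argument through the two undamped elastic pieces $(0,\alpha)$ and $(\beta,L)$, carried out by piecewise multiplier techniques as announced in the abstract. On $(\beta,L)$, $w_n$ solves $\lambda_n^2 w_n + \kappa_3 w_{n,xx} = -(i\lambda_n f_{3,n}+f_{6,n})$ with the Neumann-type data at $L$ controlled by the delay estimate above and the transmission data at $\beta$ controlled by the smallness of $v_n,z_n$ there; multiplying by $\overline{w_{n,x}}$ (the standard $x$-multiplier) and by $\overline{w_n}$, and integrating, one propagates the smallness of the trace at $\beta$ and at $L$ into $\int_\beta^L(|\lambda_n w_n|^2+\kappa_3|w_{n,x}|^2) = o(1)$; symmetrically for $u_n$ on $(0,\alpha)$ using the Dirichlet condition at $0$ and transmission at $\alpha$. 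Finally assembling these, and using $y_n=i\lambda_n u_n-f_{1,n}$, $\phi_n = i\lambda_n w_n - f_{3,n}$, one obtains $\|\mathbb U_n\|_{\mathcal H}=o(1)$, contradicting $\|\mathbb U_n\|_{\mathcal H}=1$. The main obstacle — and the place where the exponent $1/2$ (hence $t^{-4}$ rather than, say, $t^{-2}$) is pinned down — is the bookkeeping of the powers of $\lambda_n$ through the transmission conditions at $\alpha$ and $\beta$: because the Kelvin--Voigt coefficient is only $L^\infty$ (a characteristic function, non-smooth at $\alpha,\beta$), one cannot integrate by parts across the interfaces, and one must be careful that the multiplier estimates on the elastic pieces only require the traces that the dissipation actually controls, with no better-than-$|\lambda_n|^{1/2}$ loss; getting a consistent set of inequalities that closes is the crux.
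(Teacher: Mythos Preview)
Your overall framework is correct and matches the paper's: Borichev--Tomilov reduces the claim to the resolvent bound $\|(i\lambda I-\mathcal A)^{-1}\|=O(|\lambda|^{1/2})$, argued by contradiction with a normalized sequence, and the dissipativity identity \eqref{NG-E(2.26)} gives $\int_\alpha^\beta|z_{n,x}|^2$, $|\eta_n(L,0)|^2$, $|\eta_n(L,1)|^2=o(\lambda_n^{-1/2})$, hence $\int_\alpha^\beta|v_{n,x}|^2=o(\lambda_n^{-5/2})$ and $|w_{n,x}(L)|^2=o(\lambda_n^{-1/2})$. Your final paragraph also correctly identifies that the exponent $1/2$ is pinned down at the interfaces.

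The gap is in your order of propagation and in the sentence ``this is the `easy' part --- the damped middle piece is killed directly.'' Knowing $z_{n,x}$ is small in $L^2(\alpha,\beta)$ only says $z_n$ is close to an \emph{unknown} constant; testing the equation $i\lambda z_n-(\kappa_2 v_{n,x}+\delta_1 z_{n,x})_x=\lambda_n^{-1/2}f_{5,n}$ against $\bar z_n/(i\lambda_n)$ produces the boundary terms $\lambda_n^{-1}(\kappa_2 v_{n,x}+\delta_1 z_{n,x})\bar z_n$ at $\alpha$ and at $\beta$, and neither factor is controlled pointwise by what you have so far. Likewise, your phrase ``transmission data at $\beta$ controlled by the smallness of $v_n,z_n$ there'' inverts the logic: you do not yet know $v_n,z_n$ are small at $\beta$. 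The paper resolves this by propagating in the order $L\to(\beta,L)\to\beta\to(\alpha,\beta)\to\alpha\to(0,\alpha)$. First, the multiplier $x\,\overline{w_{n,x}}$ on $(\beta,L)$ uses \emph{only} the already-small traces at $x=L$ and yields as a by-product $|w_{n,x}(\beta)|^2,|\phi_n(\beta)|^2=o(\lambda_n^{-1/2})$; transmission then gives $|\kappa_2 v_{n,x}(\beta)+\delta_1 z_{n,x}(\beta)|^2,|z_n(\beta)|^2=o(\lambda_n^{-1/2})$. The middle piece is then the technical crux (the paper's Lemma~\ref{NG-L-2.11}): with a cut-off $g\in C^1([\alpha,\beta])$, $g(\beta)=-g(\alpha)=1$, two weighted multipliers on $(\alpha,\beta)$ bound $|z_n(\alpha)|^2+|z_n(\beta)|^2\le(\tfrac12\lambda_n^{1/2}+C)\int_\alpha^\beta|z_n|^2+o(\lambda_n^{-1})$ and $|\kappa_2 v_{n,x}+\delta_1 z_{n,x}|^2\big|_{\alpha}^{\beta}\le\tfrac12\lambda_n^{3/2}\int_\alpha^\beta|z_n|^2+o(1)$; feeding these back into the equation tested against $\bar z_n/(i\lambda_n)$ closes to $\int_\alpha^\beta|z_n|^2\le(\tfrac12+o(1))\int_\alpha^\beta|z_n|^2+o(\lambda_n^{-3/2})$, which is exactly where the half-power is fixed. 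Only after this bootstrap do the traces at $\alpha$ become available to run the multiplier $x\,\overline{u_{n,x}}$ on $(0,\alpha)$. In short, the middle piece is not easy and cannot be handled before $(\beta,L)$; it is where the real work (and the exponent) lives.
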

\noindent From Lemma \ref{NG-L-2.5}   and Lemma \ref{NG-L-2.6}, we have seen that $i \R \subset \rho(\mathcal{A}),$  then for the proof of Theorem \ref{NG-T-2.7}, according to Theorem \ref{NG-T-A.5} (part (ii)), we need to prove that
\begin{equation}\label{NG-E(2.84)}
\sup_{\lambda\in\mathbb{R}}\left\|\left(i\lambda I-\mathcal{A}\right)^{-1}\right\|_{\mathcal{L}\left(\mathcal{H}\right)}=O\left(|\lambda|^{\frac{1}{2}}\right).
\end{equation}
 We will argue by contradiction. Indeed, suppose  there exists 
 \begin{equation*}
 \left\{(\lambda_n,\mathbb{U}_n:=\left(u_n,v_n,w_n ,y_n,z_n,\phi_n,\eta_n(L,\cdot)\right))\right\}_{n\geq 1}\subset \mathbb{R}_{+}^*\times D\left(\mathcal{A}\right),
 \end{equation*}
 such that
\begin{equation}\label{NG-E(2.85)}
\lambda_n \to  + \infty, \quad \left\|\mathbb{U}_n\right\|_{{\mathcal{H}}} = 1
\end{equation}
and there exists sequence $\mathbb{F}_n:=(f_{1,n},f_{2,n},f_{3,n},f_{4,n}, f_{5,n},f_{6,n},f_{7,n}(L,\cdot)) \in \mathcal{H}$, such that
\begin{equation}\label{NG-E(2.86)}
\lambda_n^{\ell} (i \lambda_n I - \mathcal{A})\mathbb{U}_n =\mathbb{F}_n \to  0 \ \textrm{in} \ {\mathcal{H}}.
\end{equation}
In case that $\ell=\frac{1}{2}$, we will check condition \eqref{NG-E(2.84)}  by finding a contradiction with $\left\|\mathbb{U}_n\right\|_{{\mathcal{H}}} = 1$ such as $\left\| \mathbb{U}_n\right\|_{\mathcal{H}} =o(1).$
 From  now on, for simplicity, we drop the index $n$. By detailing Equation \eqref{NG-E(2.86)}, we get the following system
 \begin{eqnarray}
i \lambda u -y = \lambda^{-\ell}\, f_1 &\textrm{in} \quad H^1(0,\alpha), \label{NG-E(2.87)}\\
i \lambda v -z  =  \lambda^{-\ell}\, f_2 &\textrm{in} \quad H^1(\alpha,\beta), \label{NG-E(2.88)}\\
i \lambda w - \phi =  \lambda^{-\ell}\, f_3 & \textrm{in}\quad  H^1(\beta,L), \label{NG-E(2.89)}\\
i \lambda  y - \kappa_1 u_{xx} =    \lambda^{-\ell}\, f_4 & \textrm{in} \quad L^2(0,\alpha), \label{NG-E(2.90)}\\
i \lambda  z -\left(\kappa_2 v_x + \delta_1 z_x\right)_{x}=   \lambda^{-\ell}\, f_5& \textrm{in} \quad L^2(\alpha,\beta), \label{NG-E(2.91)}\\
i \lambda  \phi - \kappa_3 w_{xx}=    \lambda^{-\ell}\, f_6&\textrm{in} \quad L^2(\beta,L), \label{NG-E(2.92)}\\
 \eta_\rho(L,\cdot)+i\tau \lambda\eta(L,\cdot)= \tau \lambda^{-\ell} f_7(L,\cdot) &\textrm{in} \quad L^2(0,1).\label{NG-E(2.93)}
\end{eqnarray}
Remark that, since  $\mathbb{U}=(u,v,w,y,z, \phi,\eta(L,\cdot)) \in D(\mathcal{A})$, we have the following boundary conditions  
\begin{equation}\label{NG-E(2.94)}
\left\{
\begin{array}{ll}

\displaystyle{\left|u_x(\alpha)\right| = \kappa_1 ^{-1}\left|\kappa_2 v_x(\alpha) + \delta_1 z_x(\alpha)\right|},& \displaystyle{ \left|y (\alpha)\right|= \left|z (\alpha)\right|,}
\nline
\displaystyle{\left|w_x(\beta)\right| = \kappa_3 ^{-1}\left|\kappa_2 v_x(\beta) + \delta_1 z_x(\beta)\right|},& \displaystyle{ \left|z(\beta)\right|= \left|\phi (\beta)\right|}
\end{array}\right.
\end{equation}
and
\begin{equation}\label{NG-E(2.95)}
w_x (L) = - \delta_3 \eta(L,0) - \delta_2 \eta (L,1), \quad \phi(L) = \eta(L,0).
\end{equation}
The proof of Theorem \ref{NG-T-2.7}   is divided into several lemmas.
\begin{lemma}\label{NG-L-2.8}  
Under hypothesis {\rm(H)}, for all $\ell\geq0$, the solution $(u,v,w, y,z , \phi,\eta(L,\cdot))\in D(\mathcal{A})$ of Equations \eqref{NG-E(2.87)}-\eqref{NG-E(2.93)} satisfies the following asymptotic behavior  estimations
\begin{eqnarray}
&\int_\alpha^\beta  |z_{x}|^2 =o\left(\lambda ^{-\ell}\right),\label{NG-E(2.96)}\\
&|\phi(L)|^2=|\eta(L,0)|^2=o\left(\lambda ^{-\ell}\right),\quad |\eta(L,1)|^2=o\left(\lambda ^{-\ell}\right)\label{NG-E(2.97)},\\
&\int_{\alpha}^{\beta} |v_x|^2 dx = o\left(\lambda ^{-\ell-2}\right), \label{NG-E(2.98)}\\
&|w_x (L)|^2=o\left(\lambda ^{-\ell}\right).\label{NG-E(2.99)}
\end{eqnarray}
\end{lemma}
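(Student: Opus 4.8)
The plan is to derive all four estimates from a single dissipation identity together with the resolvent equations \eqref{NG-E(2.87)}--\eqref{NG-E(2.93)} and the boundary conditions built into $D(\mathcal{A})$. First I would take the inner product of \eqref{NG-E(2.86)} with $\mathbb{U}$ in $\mathcal{H}$, divide by $\lambda^\ell$, and take real parts. Since $\lambda\in\R$ the term $\text{Re}\left<i\lambda\mathbb{U},\mathbb{U}\right>_{\mathcal{H}}$ vanishes, so using $\|\mathbb{U}\|_{\mathcal{H}}=1$ from \eqref{NG-E(2.85)}, $\mathbb{F}\to0$ in $\mathcal{H}$ and Cauchy--Schwarz,
\begin{equation*}
-\text{Re}\left<\mathcal{A}\mathbb{U},\mathbb{U}\right>_{\mathcal{H}} = \lambda^{-\ell}\,\text{Re}\left<\mathbb{F},\mathbb{U}\right>_{\mathcal{H}} = o(\lambda^{-\ell}).
\end{equation*}
Since $\mathbb{U}\in D(\mathcal{A})$, inequality \eqref{NG-E(2.26)} applies verbatim, and under hypothesis {\rm(H)} the coefficients $\frac12-\frac{\kappa_3|\delta_2|}{2p}$ and $\kappa_3\delta_3-\frac12-\frac{\kappa_3|\delta_2|\,p}{2}$ are strictly positive for the $p$ fixed in \eqref{NG-E(2.7)}. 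Comparing the two displays yields at once \eqref{NG-E(2.96)} together with $|\eta(L,0)|^2=o(\lambda^{-\ell})$ and $|\eta(L,1)|^2=o(\lambda^{-\ell})$; since $\phi(L)=\eta(L,0)$ by \eqref{NG-E(2.95)}, this is exactly \eqref{NG-E(2.97)}.

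Next, differentiating \eqref{NG-E(2.88)} in $x$ gives $i\lambda v_x = z_x + \lambda^{-\ell}(f_2)_x$ on $(\alpha,\beta)$, hence
\begin{equation*}
\int_\alpha^\beta|v_x|^2\,dx \leq \frac{2}{\lambda^2}\int_\alpha^\beta|z_x|^2\,dx + \frac{2}{\lambda^{2\ell+2}}\int_\alpha^\beta|(f_2)_x|^2\,dx.
\end{equation*}
Because $(f_1,f_2,f_3)\in\mathbb{H}^1_L$ and $\mathbb{F}\to0$ in $\mathcal{H}$, we have $\int_\alpha^\beta|(f_2)_x|^2\,dx=o(1)$, so with \eqref{NG-E(2.96)} and $\ell\geq0$ this proves \eqref{NG-E(2.98)}. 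Finally, the boundary condition \eqref{NG-E(2.95)} gives $|w_x(L)|^2\leq 2\delta_3^2|\eta(L,0)|^2+2\delta_2^2|\eta(L,1)|^2$, so \eqref{NG-E(2.97)} immediately yields \eqref{NG-E(2.99)}.

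There is essentially no obstacle in this lemma --- it is the routine opening move of the frequency-domain contradiction argument, and the gain of two powers of $\lambda$ in \eqref{NG-E(2.98)} comes for free from the relation $z=i\lambda v-\lambda^{-\ell}f_2$. The only points requiring a little care are the $o(\cdot)$-bookkeeping --- in particular that $\mathbb{F}\to0$ in $\mathcal{H}$ controls the full $H^1$-norm of $f_2$, not just its $L^2$-norm --- and rechecking, as already done around \eqref{NG-E(2.7)}, that hypothesis {\rm(H)} makes the two boundary contributions in \eqref{NG-E(2.26)} genuinely positive. The substantive work, namely estimating $\int_0^\alpha|u_x|^2$, $\int_\beta^L|w_x|^2$ and the velocity terms, and ultimately contradicting $\|\mathbb{U}\|_{\mathcal{H}}=1$, is postponed to the subsequent lemmas, where piecewise multiplier techniques enter.
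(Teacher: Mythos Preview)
Your proof is correct and follows essentially the same route as the paper: take the real part of $\langle (i\lambda I-\mathcal{A})\mathbb{U},\mathbb{U}\rangle_{\mathcal{H}}$, invoke the dissipation inequality \eqref{NG-E(2.26)} to get \eqref{NG-E(2.96)}--\eqref{NG-E(2.97)}, differentiate \eqref{NG-E(2.88)} for \eqref{NG-E(2.98)}, and read \eqref{NG-E(2.99)} off the boundary condition \eqref{NG-E(2.95)}. Your write-up is in fact slightly more explicit than the paper's in spelling out the triangle-inequality bound for $\int_\alpha^\beta|v_x|^2$ and in noting that $\mathbb{F}\to0$ in $\mathcal{H}$ controls the $H^1$-seminorm of $f_2$.
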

\begin{proof}
Taking the inner product of \eqref{NG-E(2.86)}  with $\mathbb{U}$ in $\mathcal{H}$, then using the fact that $\mathbb{U}$ is uniformly bounded in $\mathcal{H}$, we get
\begin{equation*}
-\text{Re}\left<\mathcal{A}\mathbb{U} , \mathbb{U}\right>_{{\mathcal{H}}}=\text{Re}\left<(i \lambda I - \mathcal{A})\mathbb{U} , \mathbb{U}\right>_{{\mathcal{H}}}  =o\left(\lambda ^{-\ell}\right),
\end{equation*}
Now, under hypothesis {\rm(H)}, similar to Equation \eqref{NG-E(2.26)}, we get
\begin{equation}\label{NG-E(2.100)}
0\leq \delta_1\int_\alpha^\beta  |z_{x}|^2 dx+C_1|\eta(L,1)|^2
+C_2|\eta(L,0)|^2\leq-\text{Re}\left<\mathcal{A} \mathbb{U},\mathbb{U}\right>_{\mathcal{H}}   =o\left(\lambda ^{-\ell}\right),
\end{equation}
where
\begin{equation*}
C_1=\frac{1}{2}-\frac{\kappa_3|\delta_2|}{2p}>0\ \ \ \text{and}\ \ \ C_2=\kappa_3\delta_3-\frac{1}{2} - \frac{\kappa_3|\delta_2|\, p}{2}>0.
\end{equation*}
Therefore, from \eqref{NG-E(2.100)}, we get \eqref{NG-E(2.96)} and \eqref{NG-E(2.97)}. Next, from \eqref{NG-E(2.88)}, \eqref{NG-E(2.96)} and the fact that $(f_2)_x\to 0$ in $L^2(\alpha,\beta)$, we get \eqref{NG-E(2.98)}. Finally, from \eqref{NG-E(2.95)} and \eqref{NG-E(2.97)}, we obtain \eqref{NG-E(2.99)}.  Thus, the proof of the lemma is complete.
\end{proof}
\begin{lemma}\label{NG-L-2.9}  
Under hypothesis {\rm(H)}, for all $\ell\geq0$, the solution $(u,v,w, y,z , \phi,\eta(L,\cdot))\in D(\mathcal{A})$ of Equations \eqref{NG-E(2.87)}-\eqref{NG-E(2.93)} satisfies the following asymptotic behavior  estimation
\begin{equation}\label{NG-E(2.101)}
\int_0^1  |\eta(L,\rho)|^2d\rho =o\left(\lambda ^{-\ell}\right).
\end{equation}
\end{lemma}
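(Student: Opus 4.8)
The plan is to integrate the transport equation \eqref{NG-E(2.93)} explicitly in the variable $\rho$, starting from the boundary value $\eta(L,0)=\phi(L)$ furnished by \eqref{NG-E(2.95)}, and then to estimate the resulting expression using the bounds already established in Lemma \ref{NG-L-2.8} together with the fact that $\mathbb{F}\to 0$ in $\mathcal{H}$.

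First I would solve the first order linear ODE \eqref{NG-E(2.93)}, that is $\eta_\rho(L,\cdot)+i\tau\lambda\,\eta(L,\cdot)=\tau\lambda^{-\ell}f_7(L,\cdot)$ with datum $\eta(L,0)=\phi(L)$, which yields the representation
\[
\eta(L,\rho)=\phi(L)\,e^{-i\tau\lambda\rho}+\tau\lambda^{-\ell}\int_0^\rho e^{i\tau\lambda(\xi-\rho)}f_7(L,\xi)\,d\xi,\qquad \rho\in(0,1),
\]
exactly as in \eqref{NG-E(2.71)} but with $f_7$ replaced by $\lambda^{-\ell}f_7$. Since $|e^{i\tau\lambda s}|=1$, the triangle inequality followed by the Cauchy--Schwarz inequality gives, uniformly for $\rho\in(0,1)$,
\[
|\eta(L,\rho)|\le |\phi(L)|+\tau\lambda^{-\ell}\int_0^1|f_7(L,\xi)|\,d\xi\le |\phi(L)|+\tau\lambda^{-\ell}\|f_7(L,\cdot)\|_{L^2(0,1)}.
\]

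Then, squaring, integrating over $\rho\in(0,1)$ and using $(a+b)^2\le 2a^2+2b^2$, I obtain
\[
\int_0^1|\eta(L,\rho)|^2\,d\rho\le 2|\phi(L)|^2+2\tau^2\lambda^{-2\ell}\|f_7(L,\cdot)\|_{L^2(0,1)}^2.
\]
By \eqref{NG-E(2.97)} the first term is $o(\lambda^{-\ell})$, and by \eqref{NG-E(2.86)} we have $\|f_7(L,\cdot)\|_{L^2(0,1)}=o(1)$, so the second term is $o(\lambda^{-2\ell})$, which is itself $o(\lambda^{-\ell})$ since $\lambda\to+\infty$ and $\ell\ge 0$. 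Adding the two contributions yields \eqref{NG-E(2.101)}.

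This step is essentially routine and I do not expect a genuine obstacle: the only care needed is in the bookkeeping of the Landau symbols for the forcing term, together with the elementary fact that $\lambda^{-2\ell}\le\lambda^{-\ell}$ for $\lambda\ge 1$ and $\ell\ge 0$. In particular the estimate uses no spectral information beyond Lemma \ref{NG-L-2.8} and the explicit solvability of the delay transport equation.
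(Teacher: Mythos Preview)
Your proof is correct and follows essentially the same approach as the paper: integrate the transport equation \eqref{NG-E(2.93)} explicitly from the initial value $\eta(L,0)=\phi(L)$, then combine the pointwise bound with \eqref{NG-E(2.97)} and $f_7(L,\cdot)\to 0$ in $L^2(0,1)$. The only cosmetic difference is that the paper writes $\eta(L,0)$ in place of $\phi(L)$ and squares before applying Cauchy--Schwarz, arriving at the same inequality $\int_0^1|\eta(L,\rho)|^2\,d\rho\le 2|\eta(L,0)|^2+2\tau^2\lambda^{-2\ell}\int_0^1|f_7(L,\xi)|^2\,d\xi$.
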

\begin{proof} It follows from \eqref{NG-E(2.93)} that
\begin{equation*}
\eta(L,\rho)=\eta(L,0) e^{-i\tau\lambda\, \rho}+\tau \lambda^{-\ell}\int_0^\rho e^{i\tau\lambda\left(\xi-\rho\right)} f_7(L,\xi)d\xi \quad \forall\ \rho\in(0,1).
\end{equation*}
By using Cauchy Schwarz inequality, we get
\begin{equation*}
|\eta(L,\rho)|^2\leq 2|\eta(L,0)|^2 +2\tau^2\lambda^{-2\ell}  \left(\int_0^1 |f_7(L,\xi)| d\xi\right)^2\leq 2|\eta(L,0)|^2 +2\tau^2\lambda^{-2\ell} \int_0^1 |f_7(L,\xi)|^2 d\xi\quad \forall\ \rho\in(0,1). 
\end{equation*}
Integrating  over $(0,1)$ with respect to $\rho$, then using \eqref{NG-E(2.97)} and the fact that $f_7(L,\cdot)\to0$ in $L^2(0,1)$, we get 
\begin{equation*}
\int_0^1|\eta(L,\rho)|^2d\rho \leq 2|\eta(L,0)|^2 +2\tau^2\, \lambda^{-2\ell} \int_0^1 |f_7(L,\xi)|^2 d\xi=o\left(\lambda ^{-\ell}\right),
\end{equation*}
hence, we get \eqref{NG-E(2.101)}. Thus, the proof of the lemma is complete.
\end{proof}
 \begin{lemma}\label{NG-L-2.10}  
Under hypothesis {\rm(H)}, for all $\ell\geq0$, the solution $(u,v,w, y,z , \phi,\eta(L,\cdot))\in D(\mathcal{A})$ of Equations \eqref{NG-E(2.87)}-\eqref{NG-E(2.93)} satisfies the following asymptotic behavior  estimations
\begin{eqnarray}
\int_{\beta }^{ L} |\phi|^2 dx = o\left(\lambda ^{-\ell}\right), \quad \int_{\beta }^{ L } |w_x|^2 dx = o\left(\lambda ^{-\ell}\right),\label{NG-E(2.102)}\\
|w_x (\beta)|^2=o\left(\lambda ^{-\ell}\right),\quad |\phi(\beta)|^2=o\left(\lambda ^{-\ell}\right) ,\label{NG-E(2.103)}\\
\left|\kappa_2 v_x(\beta) + \delta_1 z_x(\beta)\right|^2=o\left(\lambda ^{-\ell}\right),\quad  \left|z(\beta)\right|^2=o\left(\lambda ^{-\ell}\right).\label{NG-E(2.104)}
\end{eqnarray}
 \end{lemma}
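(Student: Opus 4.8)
The plan is to run a piecewise multiplier (energy) identity for the wave component $w$ on $(\beta,L)$, exploiting that the data at the endpoint $x=L$ is already controlled by Lemma~\ref{NG-L-2.8}, and then to transfer the resulting estimates back to the endpoint $x=\beta$ through the transmission conditions \eqref{NG-E(2.94)}. Recall that $\mathbb{U}\in D(\mathcal{A})$ gives $w\in H^2(\beta,L)$, that $\|\phi\|_{L^2(\beta,L)}$ and $\|w_x\|_{L^2(\beta,L)}$ are $O(1)$ by \eqref{NG-E(2.85)}, and that $f_3\to0$ in $H^1(\beta,L)$, $f_6\to0$ in $L^2(\beta,L)$.

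First I would, for a weight $q\in C^1([\beta,L])$ to be chosen, multiply \eqref{NG-E(2.92)} by $q\,\overline{w}_x$, integrate over $(\beta,L)$ and take real parts. The only term that needs care is $\mathrm{Re}\int_\beta^L i\lambda\phi\,q\,\overline{w}_x\,dx$: differentiating \eqref{NG-E(2.89)} in $x$ gives $i\lambda w_x=\phi_x+\lambda^{-\ell}(f_3)_x$, hence $i\lambda\,\overline{w}_x=-\overline{\phi}_x-\lambda^{-\ell}\overline{(f_3)_x}$, so that this term equals $-\tfrac12\big[q\,|\phi|^2\big]_\beta^L+\tfrac12\int_\beta^L q'|\phi|^2\,dx+o(\lambda^{-\ell})$, the remainder being bounded by $\lambda^{-\ell}\|q\|_\infty\|\phi\|_{L^2}\|(f_3)_x\|_{L^2}$. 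An integration by parts in $-\kappa_3\mathrm{Re}\int_\beta^L w_{xx}\,q\,\overline{w}_x\,dx$ contributes $-\tfrac{\kappa_3}{2}\big[q\,|w_x|^2\big]_\beta^L+\tfrac{\kappa_3}{2}\int_\beta^L q'|w_x|^2\,dx$, while the right-hand side only contributes $\lambda^{-\ell}\mathrm{Re}\int_\beta^L f_6\,q\,\overline{w}_x\,dx=o(\lambda^{-\ell})$. Altogether this yields
\begin{equation*}
-\tfrac12\big[q\,|\phi|^2\big]_\beta^L+\tfrac12\int_\beta^L q'|\phi|^2\,dx-\tfrac{\kappa_3}{2}\big[q\,|w_x|^2\big]_\beta^L+\tfrac{\kappa_3}{2}\int_\beta^L q'|w_x|^2\,dx=o(\lambda^{-\ell}).
\end{equation*}

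Next I would specialize $q$. Choosing $q\equiv1$ removes the interior integrals and leaves $\tfrac12|\phi(\beta)|^2+\tfrac{\kappa_3}{2}|w_x(\beta)|^2=\tfrac12|\phi(L)|^2+\tfrac{\kappa_3}{2}|w_x(L)|^2+o(\lambda^{-\ell})$, which is $o(\lambda^{-\ell})$ by \eqref{NG-E(2.97)} and \eqref{NG-E(2.99)}; since both summands are nonnegative this proves \eqref{NG-E(2.103)}. Choosing $q(x)=x-\beta$, so that $q(\beta)=0$ and $q'\equiv1$, and using again $|\phi(L)|^2=|w_x(L)|^2=o(\lambda^{-\ell})$, we obtain $\tfrac12\int_\beta^L|\phi|^2\,dx+\tfrac{\kappa_3}{2}\int_\beta^L|w_x|^2\,dx=o(\lambda^{-\ell})$, which is \eqref{NG-E(2.102)}. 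Finally, \eqref{NG-E(2.104)} follows immediately from \eqref{NG-E(2.103)} together with the transmission relations $|w_x(\beta)|=\kappa_3^{-1}|\kappa_2 v_x(\beta)+\delta_1 z_x(\beta)|$ and $|z(\beta)|=|\phi(\beta)|$ recorded in \eqref{NG-E(2.94)}.

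The main obstacle is precisely the term $\mathrm{Re}\int_\beta^L i\lambda\phi\,q\,\overline{w}_x\,dx$: a crude estimate would cost a full power of $\lambda$, so one must rewrite $i\lambda\,\overline{w}_x$ via \eqref{NG-E(2.89)} to absorb the $\lambda$ and leave only an $o(\lambda^{-\ell})$ remainder; everything else is a routine integration by parts over a bounded interval combined with the endpoint bounds already furnished by Lemma~\ref{NG-L-2.8}.
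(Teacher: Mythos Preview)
Your proof is correct and follows essentially the same multiplier strategy as the paper: multiply \eqref{NG-E(2.92)} by a weight times $\overline{w}_x$, replace $i\lambda\overline{w}_x$ via \eqref{NG-E(2.89)} to tame the $\lambda$, integrate by parts, and feed in the endpoint control at $x=L$ from Lemma~\ref{NG-L-2.8}. The only difference is cosmetic: the paper uses the single weight $q(x)=x$, which (since $x\ge\beta>0$ on $[\beta,L]$) produces all four nonnegative terms of \eqref{NG-E(2.102)}--\eqref{NG-E(2.103)} in one identity, whereas you split the job between $q\equiv1$ and $q(x)=x-\beta$.
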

\begin{proof} Multiplying Equation \eqref{NG-E(2.92)} by $x \overline{w}_x$ and integrating over $(\beta,L),$ we get
\begin{equation}\label{NG-E(2.105)}
i \lambda  \int_{\beta }^{ L}  x \phi \overline{w}_x dx - \kappa_3 \int_{\beta }^{ L}  x w_{xx} \overline{w}_x dx =  \lambda^{-\ell} \int_{\beta }^{ L}  x\, f_6\, \overline{w}_x dx.
\end{equation}
From \eqref{NG-E(2.89)}, we deduce that
\begin{equation*}
i \lambda \overline{w}_x = -\overline{\phi}_x - \lambda^{-\ell} (\overline{f_3})_x .
\end{equation*}
Inserting the above result in \eqref{NG-E(2.105)}, then using the fact that $\phi,\, w_x$ are uniformly bounded in $L^2(\beta,L)$ and $(f_3)_x,\, f_6$ converge to zero   in $L^2(\beta,L)$ gives
\begin{equation*}
-  \int_{\beta }^{ L}  x\, \phi \overline{\phi}_xdx  - \kappa_3 \int_{\beta }^{ L}  x w_{xx} \overline{w}_x dx = o\left(\lambda ^{-\ell}\right).
\end{equation*}
Taking the real part in the above equation, then using by parts integration, we get 
\begin{equation*}
\frac{1}{2} \int_{\beta }^{ L} |\phi|^2 dx + \frac{\kappa_3}{2} \int_{\beta }^{ L} |w_x|^2 dx +\frac{\beta}{2} \left ( \kappa_3 |w_x (\beta)|^2 + |\phi(\beta)|^2 \right ) 
= \frac{L}{2} \left ( \kappa_3 |w_x (L)|^2 + |\phi(L)|^2 \right )+o\left(\lambda ^{-\ell}\right)  .
\end{equation*}
Inserting \eqref{NG-E(2.97)} and \eqref{NG-E(2.99)}  in the above equation, we get 
\begin{equation*}
\frac{1}{2} \int_{\beta }^{ L} |\phi|^2 dx + \frac{\kappa_3}{2} \int_{\beta }^{ L} |w_x|^2 dx +\frac{\beta}{2} \left ( \kappa_3 |w_x (\beta)|^2 + |\phi(\beta)|^2 \right ) 
= o\left(\lambda ^{-\ell}\right),
\end{equation*}
hence, we get \eqref{NG-E(2.102)} and \eqref{NG-E(2.103)}. Finally, from \eqref{NG-E(2.94)} and \eqref{NG-E(2.103)}, we obtain \eqref{NG-E(2.104)}. The proof is thus complete.
 \end{proof}
\begin{lemma}\label{NG-L-2.11}  
Under hypothesis {\rm(H)}, for all $\ell\geq0$, the solution $(u,v,w, y,z , \phi,\eta(L,\cdot))\in D(\mathcal{A})$ of Equations \eqref{NG-E(2.87)}-\eqref{NG-E(2.93)} satisfies the following asymptotic behavior  estimations
\begin{eqnarray}
\int_{\alpha}^{ \beta} |z|^2 dx = o\left(\lambda^{-\min\left(2\ell+\frac{1}{2},\ell+1\right)}\right),\label{NG-E(2.106)}\nline
\left| z(\alpha)\right|^2=o\left(\lambda^{-\min\left(2\ell,\ell+\frac{1}{2}\right)}\right), \quad \left| z(\beta)\right|^2=o\left(\lambda^{-\min\left(2\ell,\ell+\frac{1}{2}\right)}\right),\label{NG-E(2.107)}\nline
\left|\kappa_2 v_x(\alpha) + \delta_1 z_x(\alpha)\right|^2=o\left(\lambda^{-\min\left(2\ell-1,\ell-\frac{1}{2}\right)}\right).\label{NG-E(2.108)}
\end{eqnarray}
\end{lemma}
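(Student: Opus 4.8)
\noindent\textit{Proof strategy for Lemma \ref{NG-L-2.11}.} All three estimates concern the viscoelastic piece $(\alpha,\beta)$; the inputs are the dissipation bound $\int_\alpha^\beta|z_x|^2\,dx=o(\lambda^{-\ell})$ of \eqref{NG-E(2.96)}, its consequence $\int_\alpha^\beta|v_x|^2\,dx=o(\lambda^{-\ell-2})$ of \eqref{NG-E(2.98)}, and the smallness at $x=\beta$ furnished by Lemma \ref{NG-L-2.10}, i.e. $|z(\beta)|^2=o(\lambda^{-\ell})$ and $|\kappa_2v_x(\beta)+\delta_1z_x(\beta)|^2=o(\lambda^{-\ell})$. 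Put $h:=\kappa_2v_x+\delta_1z_x$; by \eqref{NG-E(2.88)} one has $h=(\delta_1-i\kappa_2\lambda^{-1})z_x-i\kappa_2\lambda^{-1-\ell}(f_2)_x$, so $\|h\|_{L^2(\alpha,\beta)}=o(\lambda^{-\ell/2})$ by \eqref{NG-E(2.96)}, \eqref{NG-E(2.98)}. Multiplying \eqref{NG-E(2.91)} by $\overline z$, integrating over $(\alpha,\beta)$ and integrating by parts yields
\[
i\lambda\int_\alpha^\beta|z|^2\,dx=h(\beta)\overline z(\beta)-h(\alpha)\overline z(\alpha)-\int_\alpha^\beta h\,\overline z_x\,dx+\lambda^{-\ell}\int_\alpha^\beta f_5\,\overline z\,dx .
\]
Since $\|z\|_{L^2}\le\|\mathbb U\|_{\mathcal H}=1$, $f_5\to0$ in $L^2$, and using \eqref{NG-E(2.96)}, \eqref{NG-E(2.98)}, \eqref{NG-E(2.104)}, every term on the right except $h(\alpha)\overline z(\alpha)$ is $o(\lambda^{-\ell})$, whence $i\lambda\int_\alpha^\beta|z|^2\,dx=-h(\alpha)\overline z(\alpha)+o(\lambda^{-\ell})$.

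\smallskip
\noindent\textit{Control of the $x=\alpha$ boundary term (the main point).} Eliminating $v$ through \eqref{NG-E(2.88)}, the pair $(z,h)$ solves on $(\alpha,\beta)$ the first–order system $z_x=(\delta_1-i\kappa_2\lambda^{-1})^{-1}\!\big(h+i\kappa_2\lambda^{-1-\ell}(f_2)_x\big)$, $h_x=i\lambda z-\lambda^{-\ell}f_5$, whose characteristic exponents $\pm\mu$, $\mu=\sqrt{i\lambda/(\delta_1-i\kappa_2\lambda^{-1})}$, satisfy $\text{Re}\,\mu\sim\lambda^{1/2}\to+\infty$ while $\beta-\alpha$ stays fixed. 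Hence on this bounded interval the mode decaying inward from $\alpha$ dominates: the contribution of the data $z(\beta),h(\beta)$ is exponentially small and the forcing by $f_2,f_5$ is of lower order, so that $z_x(\alpha)=-\mu\,z(\alpha)+(\text{l.o.t.})$ and therefore $h(\alpha)=-\delta_1\mu\,z(\alpha)+(\text{l.o.t.})$; this is made precise either from the explicit solution formula on $(\alpha,\beta)$ or by testing the system against $e^{\pm\mu(x-\alpha)}$. Consequently
\[
h(\alpha)\overline z(\alpha)=-\sqrt{i\delta_1\lambda}\,|z(\alpha)|^2+(\text{l.o.t.})=-\sqrt{\tfrac{\delta_1\lambda}{2}}\,(1+i)\,|z(\alpha)|^2+(\text{l.o.t.}).
\]
Substituting this in the identity of the previous paragraph and separating real and imaginary parts gives $\sqrt{\delta_1\lambda/2}\,|z(\alpha)|^2=o(\lambda^{-\ell})$ and $\lambda\int_\alpha^\beta|z|^2\,dx=\sqrt{\delta_1\lambda/2}\,|z(\alpha)|^2+o(\lambda^{-\ell})$, hence $|z(\alpha)|^2=o(\lambda^{-\ell-\frac12})$ and $\int_\alpha^\beta|z|^2\,dx=o(\lambda^{-\ell-1})$.

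\smallskip
\noindent\textit{Conclusion.} These are the sharp bounds for $\ell\ge\frac12$; in the range $\ell<\frac12$ the (weaker, but there sufficient) rates in \eqref{NG-E(2.106)}--\eqref{NG-E(2.108)} follow already from \eqref{NG-E(2.96)}, Lemma \ref{NG-L-2.10}, and the trace/Gagliardo--Nirenberg inequality $|z(\cdot)|^2\le C\big(\|z\|_{L^2}\|z_x\|_{L^2}+\|z\|_{L^2}^2\big)$, which, applied to the $L^2$–bound just obtained and to \eqref{NG-E(2.96)}, also yields the estimates on $z(\alpha),z(\beta)$ in \eqref{NG-E(2.107)}. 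For the flux at $\alpha$: multiplying \eqref{NG-E(2.91)} by $\overline h$, integrating over $(\alpha,\beta)$ and taking the real part (using $\text{Re}\int_\alpha^\beta h_x\overline h\,dx=\frac12(|h(\beta)|^2-|h(\alpha)|^2)$, \eqref{NG-E(2.104)} and the expression of $h$ in terms of $z_x$) gives
\[
\tfrac12|h(\alpha)|^2+\tfrac{\kappa_2}{2}|z(\alpha)|^2=\lambda\delta_1\,\text{Im}\!\int_\alpha^\beta z\,\overline z_x\,dx+o(\lambda^{-\ell})\le\lambda\delta_1\|z\|_{L^2}\|z_x\|_{L^2}+o(\lambda^{-\ell}),
\]
and inserting \eqref{NG-E(2.106)} and \eqref{NG-E(2.96)} produces $|\kappa_2v_x(\alpha)+\delta_1z_x(\alpha)|^2=o\big(\lambda^{-\min(2\ell-1,\ell-\frac12)}\big)$, i.e. \eqref{NG-E(2.108)}.

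\smallskip
\noindent\textit{Main obstacle.} The multiplier identities are routine; the real difficulty is the boundary–layer step, namely justifying rigorously that $h(\alpha)=-\delta_1\mu\,z(\alpha)+(\text{l.o.t.})$ using only information coming from $(\alpha,\beta)$ (no elastic estimate on $(0,\alpha)$ being available at this stage) and transferring the $L^2$–smallness of $f_2,f_5$ to the needed pointwise quantities at $x=\alpha$; this is also what makes the exponents in \eqref{NG-E(2.106)}--\eqref{NG-E(2.108)} appear as minima of two competing rates, one optimal for $\ell\ge\frac12$ and the other for $\ell<\frac12$.
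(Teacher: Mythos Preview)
Your proposal has a genuine gap that you yourself identify as ``the main obstacle'': the boundary–layer relation $h(\alpha)=-\delta_1\mu\,z(\alpha)+(\text{l.o.t.})$ is asserted but not proved. Saying it ``is made precise either from the explicit solution formula on $(\alpha,\beta)$ or by testing the system against $e^{\pm\mu(x-\alpha)}$'' is not a proof; the forcing terms $f_5$ and $(f_2)_x$ are only controlled in $L^2$, and turning this into the needed pointwise statement at $x=\alpha$ with quantified remainder is exactly the hard part. Without it, your identity $i\lambda\int_\alpha^\beta|z|^2\,dx=-h(\alpha)\overline{z}(\alpha)+o(\lambda^{-\ell})$ yields nothing. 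Your fallback for $\ell<\tfrac12$ is also not correct as written: the trace inequality with $\|z_x\|_{L^2}=o(\lambda^{-\ell/2})$ and $\|z\|_{L^2}\le 1$ gives only $|z(\alpha)|^2=o(\lambda^{-\ell/2})$, which is \emph{weaker} than the required $o(\lambda^{-2\ell})$ for $0<\ell<\tfrac12$.

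The paper avoids the ODE/spectral analysis entirely by a purely multiplier argument that closes on itself. Choose $g\in C^1([\alpha,\beta])$ with $g(\beta)=-g(\alpha)=1$. Testing $z_x=i\lambda v_x-\lambda^{-\ell}(f_2)_x$ against $2g\overline z$ gives $|z(\alpha)|^2+|z(\beta)|^2\le (\tfrac12\lambda^{1/2}+C)\int_\alpha^\beta|z|^2\,dx+o(\lambda^{-\min(2\ell,\ell+\frac12)})$; testing \eqref{NG-E(2.91)} against $-2g\,\overline h$ gives $|h(\alpha)|^2+|h(\beta)|^2\le \tfrac12\lambda^{3/2}\int_\alpha^\beta|z|^2\,dx+o(\lambda^{-\ell+\frac12})$. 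Then testing \eqref{NG-E(2.91)} against $-i\lambda^{-1}\overline z$ and using Young's inequality on the two boundary products $\lambda^{-1}|h(\zeta)||z(\zeta)|$ with the splitting $\lambda^{-1/2}/2$ and $\lambda^{-3/2}/2$ feeds the two previous bounds back in and yields
\[
\Big(\tfrac12-\tfrac{C}{\lambda^{1/2}}\Big)\int_\alpha^\beta|z|^2\,dx\le o\big(\lambda^{-\min(2\ell+\frac12,\ell+1)}\big),
\]
which gives \eqref{NG-E(2.106)} for $\lambda$ large, and then \eqref{NG-E(2.107)}--\eqref{NG-E(2.108)} by substitution. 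No information about the relation between $h(\alpha)$ and $z(\alpha)$ is ever needed; both endpoints are handled symmetrically via the weight $g$. This is the missing idea in your argument.
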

\begin{proof} Let $g \in C^1\left([\alpha,\beta]\right)$ such that 
\begin{equation*}
g(\beta)=-g(\alpha) = 1,\quad \max \limits_ {x \in [\alpha, \beta]} |g(x)|= c_g\quad \text{and}\quad \max \limits_ {x \in [\alpha, \beta]} |g'(x)| = c_{g'},
\end{equation*}
where $c_g$ and $c_{g'}$ are strictly positive constant numbers independent from $\lambda$.
The proof is divided into three  steps.\\[0.1in]
\textbf{Step 1.} In this step, we prove the following asymptotic behavior estimate
\begin{equation}\label{NG-E(2.109)}
 |z(\beta)|^2+|z(\alpha)|^2 \leq\left(\frac{\lambda^{\frac{1}{2}}}{2}+2 c_{g'}\right)
 \int_\alpha^\beta  |z|^2 \, dx+o\left(\lambda^{-\min\left(2\ell,\ell+\frac{1}{2}\right)}\right).
\end{equation}
First, from \eqref{NG-E(2.88)}, we have
\begin{equation}\label{NG-E(2.110)}
 z_x  = i \lambda v_x- \lambda^{-\ell}\, (f_2)_x \quad \textrm{in} \quad L^2(\alpha,\beta).
\end{equation}
Multiplying \eqref{NG-E(2.110)} by $2\, g \overline{z}$ and integrating over $(\alpha,\beta),$ then taking the real part, we get
\begin{equation*}
\int_\alpha^\beta g(x)\, (|z|^2)_x \, dx = \text{Re}\left\{2i \lambda \int_\alpha^\beta g(x)\,  v_x\overline{z}dx\right\}- \text{Re}\left\{2\lambda^{-\ell}\,\int_\alpha^\beta  g(x)\, (f_2)_x\overline{z} dx\right\},
\end{equation*}
using by parts integration in the left hand side of above equation, we get
\begin{equation*}
\left[ g(x)\, |z|^2\right]^{\beta}_{\alpha} =\int_\alpha^\beta g'(x)\, |z|^2 \, dx+ \text{Re}\left\{2i \lambda \int_\alpha^\beta g(x)\,  v_x\overline{z}dx\right\}- \text{Re}\left\{2\lambda^{-\ell}\,\int_\alpha^\beta  g(x)\, (f_2)_x\overline{z} dx\right\},
\end{equation*}
consequently,
\begin{equation}\label{NG-E(2.111)}
 |z(\beta)|^2+|z(\alpha)|^2 \leq c_{g'}\int_\alpha^\beta  |z|^2 \, dx+ 2 \lambda \, c_g \int_\alpha^\beta   |v_x|\left|{z}\right|dx+2\lambda^{-\ell} \, c_g\,\int_\alpha^\beta   \left|(f_2)_x\right|\left|{z}\right|\, dx.
\end{equation}
On the other hand, we have 
\begin{equation*}
2 \lambda \, c_g |v_x| |z|\leq \frac{\lambda^{\frac{1}{2}}|z|^2}{2}+2\lambda^{\frac{3}{2}} \, c_g^2\,|v_x|^2\ \ \ \text{and} \ \ \ 2 \lambda^{-\ell} c_g |(f_2)_x| |z|\leq  c_{g'} \, |z|^2+\frac{c_g ^ 2 \, \lambda^{-2\ell} }{c_{g'}}|(f_2)_x|^2.
\end{equation*}
Inserting the above equation in \eqref{NG-E(2.111)}, then using \eqref{NG-E(2.98)} and the fact that $(f_2)_x\to 0$ in $L^2(\alpha,\beta)$, we get
\begin{equation*}
 |z(\beta)|^2+|z(\alpha)|^2 \leq\left(\frac{\lambda^{\frac{1}{2}}}{2}+2 \, c_{g'} \right)
 \int_\alpha^\beta  |z|^2 \, dx+o\left(\lambda^{-\min\left(2\ell,\ell+\frac{1}{2}\right)}\right),
\end{equation*}
hence, we get \eqref{NG-E(2.109)}.\\[0.1in]
\textbf{Step 2.} In this step, we prove the following asymptotic behavior estimate
\begin{equation}\label{NG-E(2.112)}
 \left|\kappa_2 v_x(\alpha) + \delta_1  z_x(\alpha)\right|^2 +\left|\kappa_2 v_x(\beta) + \delta_1  z_x(\beta)\right|^2 \leq \frac{ \lambda^{\frac{3}{2}}}{2} \int_\alpha^\beta  |z|^2 dx +o\left(\lambda^{-\ell+\frac{1}{2}}\right).
\end{equation}
First, multiplying \eqref{NG-E(2.91)} by $-2\, g \left(\kappa_2\, \overline{v}_x+\delta_1  \overline{z}_x\right)$ and integrating over $(\alpha,\beta),$ then taking the real part, we get
\begin{equation*}
\begin{array}{ll}

\displaystyle{
\int_\alpha^\beta g(x)\left(\left|\kappa_2 v_x + \delta_1  z_x\right|^2\right)_x\, dx  =2\text{Re}\left\{i \lambda  \int_\alpha^\beta g(x)  z\left(\kappa_2 \overline{v}_x + \delta_1  \overline{z}_x\right) \, dx\right\} }\nline \hspace{3cm} \displaystyle{- 2   \lambda^{-\ell}\,\text{Re}\left\{ \int_\alpha^\beta g(x) f_5\left(\kappa_2 \overline{v}_x + \delta_1  \overline{z}_x\right) dx\right\},}
\end{array}
\end{equation*}
using by parts integration in the left hand side of above equation, we get
\begin{equation*}
\begin{array}{lll}
\displaystyle{
\left[g(x) \left|\kappa_2 v_x + \delta_1  z_x\right|^2\right]_{\alpha}^{\beta}  =\int_\alpha^\beta g'(x)\left|\kappa_2 v_x + \delta_1  z_x\right|^2\, dx+2\text{Re}\left\{i \lambda \int_\alpha^\beta g(x)  \, z\left(\kappa_2 \overline{v}_x + \delta_1  \overline{z}_x\right) \, dx\right\}}\nline\hspace{3.5cm}
\displaystyle{ - 2   \lambda^{-\ell}\,\text{Re}\left\{ \int_\alpha^\beta g(x)  f_5\left(\kappa_2 \overline{v}_x + \delta_1  \overline{z}_x\right) dx\right\},}
\end{array}
\end{equation*}
consequently,
\begin{equation*}
\begin{array}{lll}

\displaystyle{
 \left|\kappa_2 v_x(\beta) + \delta_1  z_x(\beta)\right|^2+ \left|\kappa_2 v_x(\alpha) + \delta_1  z_x(\alpha)\right|^2  \leq c_{g'} \int_\alpha^\beta \left|\kappa_2 v_x + \delta_1  z_x\right|^2\, dx+2 \lambda\, c_g \int_\alpha^\beta  |z|\left|\kappa_2 {v}_x + \delta_1  {z}_x\right| \, dx}\nline\hspace{3.5cm}
\displaystyle{ +  2  \lambda^{-\ell}\, c_g \, \int_\alpha^\beta  |f_5|\left|\kappa_2 {v}_x + \delta_1  {z}_x\right| dx.}
\end{array}
\end{equation*}
Now, using Cauchy Schwarz inequality, Equations \eqref{NG-E(2.96)}, \eqref{NG-E(2.98)} and the fact that $f_5\to 0$ in $L^2(\alpha,\beta)$ in the right hand side of above equation, we get 
\begin{equation}\label{NG-E(2.113)}
 \left|\kappa_2 v_x(\beta) + \delta_1  z_x(\beta)\right|^2 +\left|\kappa_2 v_x(\alpha) + \delta_1  z_x(\alpha)\right|^2 \leq 2\lambda  c_g \int_\alpha^\beta  |z|\left|\kappa_2 {v}_x + \delta_1  {z}_x\right| \, dx +o\left(\lambda^{-\ell}\right) .
\end{equation}
On the other hand, we have
\begin{equation*}
2 \lambda c_g \, |z|\left|\kappa_2 {v}_x + \delta_1  {z}_x\right|\leq \frac{\lambda^{\frac{3}{2}}}{2}|z|^2+2\lambda^{\frac{1}{2} } c_g^2  \,  \left|\kappa_2 {v}_x + \delta_1  {z}_x\right|^2.
\end{equation*}
Inserting the above equation in \eqref{NG-E(2.113)}, then using Equations \eqref{NG-E(2.96)} and \eqref{NG-E(2.98)}, we get
\begin{equation*}
 \left|\kappa_2 v_x(\alpha) + \delta_1  z_x(\alpha)\right|^2 +\left|\kappa_2 v_x(\beta) + \delta_1  z_x(\beta)\right|^2 \leq \frac{ \lambda^{\frac{3}{2}}}{2}  \int_\alpha^\beta  |z|^2 dx +o\left(\lambda^{-\ell+\frac{1}{2}}\right) ,
\end{equation*}
hence, we get \eqref{NG-E(2.112)}.\\[0.1in]
\textbf{Step 3.}  In this step, we prove the asymptotic behavior  estimations of  \eqref{NG-E(2.106)}-\eqref{NG-E(2.108)}. First, multiplying  \eqref{NG-E(2.91)} by $-i \lambda^{-1} \overline{z}$ and integrating over $(\alpha,\beta),$ then taking the real part, we get
\begin{equation*}
\int_\alpha^\beta | z|^2\, dx  =- \text{Re}\left\{i \lambda^{-1}\int_\alpha^\beta\left(\kappa_2 v_x + \delta_1  z_x\right)_{x}\,\overline{z} \, dx\right\}-\text{Re}\left\{ i\,    \lambda^{-\ell-1}\int_\alpha^\beta f_5\overline{z} \, dx\right\},
\end{equation*}
consequently,
\begin{equation}\label{NG-E(2.114)}
\int_\alpha^\beta | z|^2\, dx  \leq  \lambda^{-1} \left|\int_\alpha^\beta\left(\kappa_2 v_x + \delta_1  z_x\right)_{x}\,\overline{z} \, dx\right|+    \lambda^{-\ell-1}\int_\alpha^\beta \left|f_5\right|\left|z \right| dx.
\end{equation}
From the fact that $z$  is uniformly bounded in   $L^2(\alpha,\beta)$ and $f_5\to 0$ in $L^2(\alpha,\beta)$, we get
\begin{equation}\label{NG-E(2.115)}
\lambda^{-\ell-1}\int_\alpha^\beta \left|f_5\right|\left|z \right| dx=o\left(\lambda^{-\ell-1}\right).
\end{equation}
On the other hand, using by parts integration and \eqref{NG-E(2.96)}, \eqref{NG-E(2.98)}, we get
\begin{equation}\label{NG-E(2.116)}
\begin{array}{ll}

\displaystyle{
\left|\int_\alpha^\beta\left(\kappa_2 v + \delta_1  z\right)_{xx}\,\overline{z} \, dx\right|=\left|\left[\left(\kappa_2 v_x + \delta_1  z_x\right) \overline{z}\right]^\beta_\alpha-\int_\alpha^\beta\left(\kappa_2 v_x + \delta_1  z_x\right) \overline{z}_x \, dx\right|}\nline

\displaystyle{
\leq \left|\kappa_2 v_x(\beta) + \delta_1  z_x(\beta)\right| \left|{z}(\beta)\right|+\left|\kappa_2 v_x(\alpha) + \delta_1  z_x(\alpha)\right| \left|{z}(\alpha)\right|+\int_\alpha^\beta\left|\kappa_2 v_x + \delta_1  z_x\right| \left|{z}_x\right| \, dx
}
\nline
\displaystyle{
\leq \left|\kappa_2 v_x(\beta) + \delta_1  z_x(\beta)\right| \left|{z}(\beta)\right|+\left|\kappa_2 v_x(\alpha) + \delta_1  z(\alpha)\right| \left|{z}(\alpha)\right|+o\left(\lambda^{-\ell}\right).
}
\end{array}
\end{equation}
Inserting \eqref{NG-E(2.115)} and \eqref{NG-E(2.116)} in \eqref{NG-E(2.114)}, we get
\begin{equation}\label{NG-E(2.117)}
\int_\alpha^\beta | z|^2\, dx  \leq  \lambda^{-1} \left|\kappa_2 v_x(\beta) + \delta_1  z_x(\beta)\right| \left|{z}(\beta)\right|+ \lambda^{-1} \left|\kappa_2 v_x(\alpha) + \delta_1  z_x(\alpha)\right| \left|{z}(\alpha)\right|+o\left(\lambda^{-\ell-1}\right).
\end{equation}
Now, for $\zeta=\beta$ or $\zeta=\alpha$,  we have
\begin{equation*}
\lambda^{-1} \left|\kappa_2 v_x(\zeta) + \delta_1  z_x(\zeta)\right| \left|{z}(\zeta)\right|\leq 
 \frac{\lambda^{-\frac{1}{2}}}{2}|z(\zeta)|^2+\frac{\lambda^{-\frac{3}{2}}}{2}\left|\kappa_2 v_x(\zeta) + \delta_1  z_x(\zeta) \right|^2.
\end{equation*}
Inserting the above equation in \eqref{NG-E(2.117)}, we get
\begin{equation*}
\int_\alpha^\beta | z|^2\, dx  \leq  \frac{\lambda^{-\frac{1}{2}}}{2}\left(|z(\alpha)|^2+|z(\beta)|^2\right)+\frac{\lambda^{-\frac{3}{2}}}{2}\left(\left|\kappa_2 v_x(\alpha) + \delta_1  z_x(\alpha) \right|^2+\left|\kappa_2 v_x(\beta) + \delta_1  z_x(\beta) \right|^2\right)+o\left(\lambda^{-\ell-1}\right).
\end{equation*}
Next, inserting Equations \eqref{NG-E(2.109)} and \eqref{NG-E(2.112)} in the above inequality, we obtain 
\begin{equation*}
\int_\alpha^\beta | z|^2\, dx \leq  \left(\frac{1}{2}+\frac{c_{g'}}{ \lambda^{\frac{1}{2}}  }\right)
 \int_\alpha^\beta  |z|^2 \, dx+o\left(\lambda^{-\min\left(2\ell+\frac{1}{2},\ell+1\right)}\right),
\end{equation*}
consequently,
\begin{equation*}
  \left(\frac{1}{2}-\frac{c_{g'}}{ \lambda^{\frac{1}{2}} } \right)
 \int_\alpha^\beta  |z|^2 \, dx\leq o\left(\lambda^{-\min\left(2\ell+\frac{1}{2},\ell+1\right)}\right).
\end{equation*}
Since $\lambda\to+\infty$, by choosing  $ \lambda >4   \, c_{g'}^2 $, we get 
\begin{equation*}
 0 < \left(\frac{1}{2}-\frac{c_{g'}}{ \lambda^{\frac{1}{2}} } \right)  \int_\alpha^\beta  |z|^2 \, dx\leq o\left(\lambda^{-\min\left(2\ell+\frac{1}{2},\ell+1\right)}\right),
\end{equation*}
hence, we get \eqref{NG-E(2.106)}. Finally,  inserting \eqref{NG-E(2.106)} in \eqref{NG-E(2.109)}  and \eqref{NG-E(2.112)} and using the first asymptotic estimates of \eqref{NG-E(2.104)}, we get 
\eqref{NG-E(2.107)} and \eqref{NG-E(2.108)}. Thus, the proof of the
lemma is complete.
\end{proof}
\begin{Remark}\label{NG-R-2.12}  
An example about $g$, we can take $g(x)=\cos\left(\frac{ (\beta-x)\pi}{\beta-\alpha}\right)$ to get 
\begin{equation*}
g(\beta)=-g(\alpha)=1,\quad g\in   C^1([\alpha,\beta]),\quad \max \limits_ {x \in [\alpha, \beta]} |g(x)|=  1,\quad \max \limits_ {x \in [\alpha, \beta]} |g'(x)| = \frac{\pi}{\beta-\alpha}.
\end{equation*}
Also, we can take  
\begin{equation*}
g(x)=\left(\frac{\beta-x}{\beta-\alpha}\right)^2-3\left(\frac{\beta-x}{\beta-\alpha}\right)+1.
\end{equation*}
\xqed{$\square$}
\end{Remark}
 \begin{lemma}\label{NG-L-2.13} 
Under hypothesis {\rm(H)},  for all $\ell\geq0$, the solution $(u,v,w, y,z , \phi,\eta(L,\cdot))\in D(\mathcal{A})$ of Equations \eqref{NG-E(2.87)}-\eqref{NG-E(2.93)} satisfies the following asymptotic behavior  estimations
\begin{equation} \label{NG-E(2.118)}
\int_{0 }^{ \alpha } |y|^2 dx = o\left(\lambda^{-\min\left(2\ell-1,\ell-\frac{1}{2}\right)}\right)\ \ \ \text{and} \ \ \ \int_{0 }^{ \alpha } |u_x|^2 dx =  o\left(\lambda^{-\min\left(2\ell-1,\ell-\frac{1}{2}\right)}\right).
\end{equation}
 \end{lemma}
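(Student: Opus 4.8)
The plan is to treat the interval $(0,\alpha)$, which carries the undamped wave equation \eqref{NG-E(2.90)} together with the Dirichlet condition $u(0)=0$, by a single piecewise multiplier identity analogous to the one used for $(\beta,L)$ in Lemma \ref{NG-L-2.10}: multiplying the first-order resolvent equation for the $u$-component by $2x\overline{u}_x$ and integrating by parts will produce the \emph{sum} $\kappa_1\int_0^\alpha|u_x|^2\,dx+\int_0^\alpha|y|^2\,dx$ in terms of boundary quantities at $x=\alpha$ that have already been estimated in Lemma \ref{NG-L-2.11}.

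First I would record two elementary consequences of \eqref{NG-E(2.85)}: $\|u_x\|_{L^2(0,\alpha)}=O(1)$ and $\|y\|_{L^2(0,\alpha)}=O(1)$, and --- rewriting \eqref{NG-E(2.87)} as $i\lambda u=y+\lambda^{-\ell}f_1$ and using $\|f_1\|_{L^2(0,\alpha)}=o(1)$ --- the refinement $\|u\|_{L^2(0,\alpha)}=O(\lambda^{-1})$. Evaluating \eqref{NG-E(2.87)} at $x=\alpha$, using the transmission relation $y(\alpha)=z(\alpha)$ from \eqref{NG-E(2.94)} and the trace convergence $f_1(\alpha)\to 0$ (valid since $f_1\to 0$ in $H^1(0,\alpha)$, because $(f_1)_x\to0$ in $L^2(0,\alpha)$ and $f_1(0)=0$), gives $\lambda|u(\alpha)|\le|z(\alpha)|+o(\lambda^{-\ell})$, hence by \eqref{NG-E(2.107)} that $\lambda^2|u(\alpha)|^2=o\big(\lambda^{-\min(2\ell,\,\ell+\frac12)}\big)$; similarly $\kappa_1 u_x(\alpha)=\kappa_2 v_x(\alpha)+\delta_1 z_x(\alpha)$ together with \eqref{NG-E(2.108)} yields $|u_x(\alpha)|^2=o\big(\lambda^{-\min(2\ell-1,\,\ell-\frac12)}\big)$.

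The main step is to multiply \eqref{NG-E(2.90)} by $2x\overline{u}_x$, integrate over $(0,\alpha)$, substitute $y=i\lambda u-\lambda^{-\ell}f_1$ into the resulting term $2i\lambda\int_0^\alpha xy\overline{u}_x\,dx$, take real parts and integrate by parts; since the factor $x$ vanishes at the origin this gives
\begin{equation*}
\lambda^2\!\int_0^\alpha |u|^2\,dx+\kappa_1\!\int_0^\alpha |u_x|^2\,dx=\alpha\lambda^2|u(\alpha)|^2+\alpha\kappa_1|u_x(\alpha)|^2+o(\lambda^{-\ell}).
\end{equation*}
Here the $f_4$-contribution is $o(\lambda^{-\ell})$ because $\|f_4\|_{L^2}\|u_x\|_{L^2}=o(1)$, while the $f_1$-contribution --- which a crude bound would make only $o(\lambda^{1-\ell})$ --- is reduced to $o(\lambda^{-\ell})$ by one further integration by parts in $\int_0^\alpha xf_1\overline{u}_x\,dx$, exploiting $\|f_1\|_{H^1(0,\alpha)}=o(1)$, $f_1(\alpha)\to 0$, $\|u\|_{L^2(0,\alpha)}=O(\lambda^{-1})$ and $|u(\alpha)|=o(\lambda^{-1})$. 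Finally, expanding $y=i\lambda u-\lambda^{-\ell}f_1$ and again using $\|u\|_{L^2(0,\alpha)}=O(\lambda^{-1})$ gives $\int_0^\alpha|y|^2\,dx=\lambda^2\int_0^\alpha|u|^2\,dx+o(\lambda^{-\ell})$, so the identity becomes
\begin{equation*}
\int_0^\alpha |y|^2\,dx+\kappa_1\!\int_0^\alpha |u_x|^2\,dx=\alpha\lambda^2|u(\alpha)|^2+\alpha\kappa_1|u_x(\alpha)|^2+o(\lambda^{-\ell}).
\end{equation*}
Since $\lambda^{-\ell}\le\lambda^{-\min(2\ell-1,\,\ell-\frac12)}$ and $\lambda^{-\min(2\ell,\,\ell+\frac12)}\le\lambda^{-\min(2\ell-1,\,\ell-\frac12)}$ for $\lambda$ large, the right-hand side is $o\big(\lambda^{-\min(2\ell-1,\,\ell-\frac12)}\big)$ by the boundary estimates above; as the left-hand side is a sum of two nonnegative terms, each of them is $o\big(\lambda^{-\min(2\ell-1,\,\ell-\frac12)}\big)$, which is precisely \eqref{NG-E(2.118)}.

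I expect the only genuinely delicate point to be the handling of the source terms $f_1,f_4$ on the right of the multiplier identity: the naive estimate of $\lambda^{1-\ell}\int_0^\alpha xf_1\overline{u}_x\,dx$ is too lossy, and the two ingredients that save it are that $i\lambda u=y+\lambda^{-\ell}f_1$ forces both $\|u\|_{L^2(0,\alpha)}$ and $|u(\alpha)|$ to be $O(\lambda^{-1})$, and that $f_1\to 0$ in $H^1(0,\alpha)$ (not merely in $L^2$), which makes the extra integration by parts admissible. Everything else is routine integration by parts and bookkeeping of powers of $\lambda$, feeding in the already-proved trace estimates \eqref{NG-E(2.107)} and \eqref{NG-E(2.108)} at $x=\alpha$.
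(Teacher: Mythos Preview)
Your proof is correct and uses the same multiplier $x\overline{u}_x$ as the paper, feeding in the same trace estimates \eqref{NG-E(2.107)}--\eqref{NG-E(2.108)} at $x=\alpha$. The one noteworthy difference is in how the term $i\lambda\int_0^\alpha x\,y\,\overline{u}_x\,dx$ is processed: you substitute $y=i\lambda u-\lambda^{-\ell}f_1$, which produces $\lambda^2\int|u|^2$ and the ``delicate'' remainder $\lambda^{1-\ell}\int xf_1\overline{u}_x$ that you then tame by an extra integration by parts together with $\|u\|_{L^2}=O(\lambda^{-1})$ and $f_1(\alpha)\to 0$. The paper instead substitutes in the \emph{other} direction, writing $i\lambda\overline{u}_x=-\overline{y}_x-\lambda^{-\ell}(\overline{f_1})_x$; this immediately gives $-\int_0^\alpha x\,y\,\overline{y}_x\,dx$, which integrates by parts to $\tfrac12\int|y|^2-\tfrac{\alpha}{2}|y(\alpha)|^2$, and the only source terms left are $\lambda^{-\ell}\int x\,y\,(\overline{f_1})_x$ and $\lambda^{-\ell}\int x\,f_4\,\overline{u}_x$, each trivially $o(\lambda^{-\ell})$ since $y,\,u_x$ are uniformly bounded and $(f_1)_x,\,f_4\to 0$ in $L^2$. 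Thus the paper's choice of substitution sidesteps entirely the issue you flag as delicate, at the cost of nothing; it is worth internalizing this trick, since the same simplification is used in Lemma~\ref{NG-L-2.10} and again in Section~\ref{NG-S-3}.
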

 \begin{proof}
Multiply Equation \eqref{NG-E(2.90)} by $x \overline{u}_x$ and integrating over $(0,\alpha),$ we get
\begin{equation}\label{NG-E(2.119)}
i \lambda  \int_{0}^{\alpha} x y \overline{u}_x dx - \kappa_1 \int_{0}^{\alpha} x u_{xx} \overline{u}_x dx =  \lambda^{-\ell} \int_{0}^{\alpha} x\, f_4\, \overline{u}_x dx.
\end{equation}
From \eqref{NG-E(2.87)}, we deduce that
\begin{equation*}
i \lambda \overline{u}_x = -\overline{y}_x - \lambda^{-\ell} (\overline{f_1})_x .
\end{equation*}
Inserting the above result in \eqref{NG-E(2.119)}, then using the fact that $u_x,\, y$ are uniformly bounded in $L^2(0,\alpha)$ and $(f_1)_x,\, f_4$ converge  to zero  in $L^2(0, \alpha)$ gives
\begin{equation*}
-  \int_{0}^{\alpha} x y \overline{y}_xdx  - \kappa_1 \int_{0}^{\alpha} x u_{xx} \overline{u}_x dx = o\left(\lambda ^{-\ell}\right).
\end{equation*}
Taking the real part in the above equation, then using by parts integration, we get 
\begin{equation}\label{NG-E(2.120)}
\frac{1}{2} \int_{0}^{\alpha} |y|^2 dx + \frac{\kappa_1}{2} \int_{0}^{\alpha} |u_x|^2 dx - \frac{\alpha}{2} \left ( \kappa_1 |u_x (\alpha)|^2 + |y(\alpha)|^2 \right )   = o\left(\lambda ^{-\ell}\right).
\end{equation}
Inserting the boundary conditions  \eqref{NG-E(2.94)} at $x=\alpha$  in \eqref{NG-E(2.120)} gives
\begin{equation*}
\frac{1}{2} \int_{0}^{\alpha} |y|^2 dx + \frac{\kappa_1}{2} \int_{0}^{\alpha} |u_x|^2 dx = \frac{\alpha}{2} \left(\kappa_1^{-1}  |\kappa_2 v_x (\alpha) + \delta_1  z_x(\alpha)|^2 + |z(\alpha)|^2\right)   + o\left(\lambda ^{-\ell}\right).
\end{equation*}
Inserting  \eqref{NG-E(2.107)}-\eqref{NG-E(2.108)} in the above equation, we obtain the first and the second  asymptotic estimates of \eqref{NG-E(2.118)}.  The proof is thus complete.
 \end{proof}
\noindent \textbf{Proof of Theorem \ref{NG-T-2.7}.} From Lemma \ref{NG-L-2.8}, Lemma \ref{NG-L-2.9},  Lemma \ref{NG-L-2.10}, Lemma \ref{NG-L-2.11}   and Lemma \ref{NG-L-2.13}, we get
\begin{equation*}
\begin{array}{lll}

\left\|U\right\|_{\mathcal{H}}^2=\kappa_1\int_{0 }^{ \alpha } |u_x|^2 dx+\kappa_2\int_{\alpha}^{\beta} |v_x|^2 dx +\kappa_3\int_{\beta }^{ L } |w_x|^2 dx +\int_{0 }^{ \alpha } |y|^2 dx \nline\hspace{1cm}

+\int_{\alpha}^{ \beta} |z|^2 dx+\int_{\beta }^{ L} |\phi|^2 dx+\tau\int_0^1  |\eta(L,\rho)|^2d\rho=o\left(\lambda^{-\min\left(2\ell-1,\ell-\frac{1}{2}\right)}\right).

\end{array}
\end{equation*}
To obtain $\| \mathbb{ U}  \|_{{\mathcal{H}}} = o(1)$, we need $\min\left(2\ell-1,\ell-\frac{1}{2}\right)\geq 0$, so we choose $\ell=\frac{1}{2}$ as the optimal value. Hence, we obtain that $\| \mathbb{ U}  \|_{{\mathcal{H}}} = o(1)$ which contradicts \eqref{NG-E(2.85)}. Therefore, the energy of System \eqref{NG-E(2.8)}-\eqref{NG-E(2.21)} satisfies estimation \eqref{NG-E(2.83)} for all initial data $\mathbb{U}_0 \in D(\mathcal{A}).$ \xqed{$\square$}
\subsection{Wave equation with local  Kelvin-Voigt damping  near the boundary and boundary delay feedback}\label{NG-S-2.2} 
\noindent In this subsection, we study the stability of System \eqref{NG-E(2.1)}, but in the case that the Kelvin-Voigt damping is near the boundary, {\it i.e.} $\alpha=0$ and $0<\beta<L$ (see Figure \ref{Fig2}).
For this aim, we denote the longitudinal displacement by $U$ and this displacement is divided into two parts
\begin{equation*}
U(x,t)= \left\{
        \begin{array}{ll}
            v(x,t), & (x,t) \in \ (\alpha,\beta)\times (0, + \infty) , \nline
            w(x,t), &  (x,t) \in \ (\beta,L)\times (0, + \infty).
        \end{array}
    \right.
\end{equation*}
In this case, System \eqref{NG-E(2.1)} is equivalent to the following system
\begin{equation}\label{NG-E(2.121)}
\left\{
\begin{array}{lll}
v_{tt} -\left(\kappa_2  v_{x} +\delta_1  v_{xt}\right)_x = 0, &(x,t) \in (0, \beta ) \times (0, + \infty),\nline
 w_{tt} -\kappa_3  w_{xx} = 0, &(x,t) \in  ( \beta, L )  \times (0, + \infty),\nline
 \tau \eta_t(L,\rho,t)+\eta_\rho(L,\rho,t)=0,& (\rho,t)\in (0,1)\times (0, + \infty),\nline
v(0,t) = 0, & t\in (0,+\infty),\nline
w_x(L,t) = - \delta_3 w_{t}(L,t) - \delta_2 \eta(L,1,t),& t\in (0, + \infty),\nline
v (\beta , t) = w (\beta , t), &t\in  (0, + \infty),\nline
\kappa_2 v_x (\beta , t) + \delta_1 v_{xt} (\beta,t ) =\kappa_3 w_x (\beta ,t), &t\in  (0, + \infty),\nline
\left(v(x, 0 ),v_t(x,0)  \right) =\left(v_0(x), v_1 (x)\right), \quad &x\in  (\alpha,\beta),\nline
\left(w(x, 0 ),w_t(x,0)  \right) =\left(w_0(x), w_1 (x)\right), \quad &x\in   (\beta,L),\nline
\eta(L,\rho,0)=f_0(L,-\rho\, \tau),& \rho\in (0,1),

\end{array}\right.
\end{equation}
where the initial data $(v_0,v_1,w_0,w_1,f_0)$ belongs to a suitable space.  Similar to Section \ref{NG-S-2.1}, we define
\begin{equation*}
\begin{array}{ll}
\displaystyle{ \mathbb{X}^m =   H^m (0, \beta) \times H^m (\beta, L), \quad m = 1 , 2, }\nline 
\displaystyle{\mathbb{X}^0 =  L^2 (0, \beta) \times L^2(\beta,L)},\quad
\displaystyle{  \mathbb{X}^1_L = \{(v,w) \in \mathbb{X}^1\ |  \  v (0)=0, \ v(\beta) = w(\beta) \},}
\end{array}
\end{equation*}
where the Hilbert space $\mathbb{X}^0$ is equipped  with the norm:
\begin{equation*}
\left\|(v,w)\right\|^2_{\mathbb{X}^0}=  \int_0^\beta |v|^2dx+\int_\beta^L |w|^2dx.
\end{equation*}
Moreover, it is easy to check that the space $\mathbb{X}^1_L$ is Hilbert space over $\mathbb{C}$ equipped  with the norm:
\begin{equation*}
\left\|(v,w)\right\|^2_{\mathbb{X}^1_L}=\kappa_2 \int_0^\beta |v_x|^2dx+\kappa_3 \int_\beta^L |w_x|^2dx.
\end{equation*}
In addition, by Poincaré inequality we can easily  verify that there exists $C>0$, such that
\begin{equation*}
\left\|(v,w)\right\|_{\mathbb{X}^0}\leq C \left\|(v,w)\right\|_{\mathbb{X}^1_L}.
\end{equation*}
\noindent We now  define the Hilbert energy space  by
\begin{equation*}
{\mathcal{H}_1} = \mathbb{X}^1_L  \times \mathbb{X}^0 \times L^2(0,1)
\end{equation*}
\noindent  equipped with the following inner product
\begin{equation*}
\langle \mathtt{U},\tilde{\mathtt{U}}\rangle _{{\mathcal{H}_1 }} = \kappa_2 \int_{0}^{\beta} v_x \overline{\tilde{v}}_x dx + \kappa_3 \int_{\beta}^{L} w_x \overline{\tilde{w}}_x dx+  \int_{0}^{\beta} z \overline{\tilde{z}} dx +  \int_{\beta}^{L} \phi \overline{\tilde{\phi}} dx +\tau \int_{0}^{1} \eta(L,\rho)\, \overline{\tilde{\eta}}(L,\rho) d \rho,
\end{equation*}
where $\mathtt{U}=( v,w ,z, \phi, \eta(L,\cdot) )\in{\mathcal{H}_1}$ and  $\tilde{\mathtt{U}}= ( \tilde{ v}, \tilde{w},  \tilde{z}, \tilde{\phi}, \tilde{\eta}(L,\cdot)) \in{\mathcal{H}_1}$. We use  $\|\mathtt{U}\|_{\mathcal{H}_1}$ to denote the corresponding norm. We  define the linear unbounded operator $\mathcal{A}_1: D(\mathcal{A}_1) \subset {\mathcal{H}_1}\longrightarrow {\mathcal{H}_1} $ by:
\begin{equation*}
\begin{array}{lll}
\displaystyle{D(\mathcal{A}_1) = \bigg\{\mathtt{U}=(v,w,z,\phi, \eta(L,\cdot))\in {\mathbb{X}_L^1 \times \mathbb{X}_L^1 \times H^1 (0,1) }\ |  \ ( \kappa_2 v  +\delta_1 z , w) \in \mathbb{X}^2,} 
\\ \noalign{\medskip}\hspace{1.9cm}
\displaystyle{ \kappa_2 v_x (\beta) +\delta_1  z_x (\beta)= \kappa_3 w _x (\beta),\ w_x (L) = - \delta_3 \eta(L,0) - \delta_2 \eta (L,1), \ \phi(L) = \eta(L,0)\bigg\}}
\end{array}
\end{equation*}
and for all $\mathtt{U}=(v,w,z, \phi, \eta(L,\cdot))\in D(\mathcal{A}_1)$
\begin{equation*}
\mathcal{A}_1\mathtt{U} = \left(  z ,\phi, \left(\kappa_2  v_{x} +\delta_1 z_{x}\right)_x , \kappa_3 w_{xx} , - \tau^{-1} \eta_{\rho} (L,\cdot)\right).
\end{equation*}
If $\mathtt{U}=(v,w,v_t,w_t, \eta(L,\cdot))$ is a regular solution of System \eqref{NG-E(2.121)}, then we transform this system into the following initial value problem
\begin{equation} \label{NG-E(2.122)}
\begin{cases}
\mathtt{U}_t &= \mathcal{A}_1 \mathtt{U},\\
\mathtt{U}(0)&= \mathtt{U}_0,
\end{cases}
\end{equation}
where $\mathtt{U}_0 = (v_0 , w_0 , v_1, w_1, f_0(L,-\cdot \tau))\in{\mathcal{H}_1}.$ Note that $D(\mathcal{A}_1)$ is dense in $\mathcal{H}_1$ and that for all $\mathtt{U}\in D(\mathcal{A}_1)$, we have
\begin{equation}\label{NG-E(2.123)}
\text{Re}\left<\mathcal{A}_1 \mathtt{U},\mathtt{U}\right>_{\mathcal{H}_1}  \leq -\delta_1\int_0^\beta  |z_{x}|^2 dx-\left(\frac{1}{2}-\frac{\kappa_3|\delta_2|}{2p}\right)|\eta(L,1)|^2
-\left(\kappa_3\delta_3-\frac{1}{2}-\frac{\kappa_3|\delta_2|\, p}{2}\right)|\eta(L,0)|^2,
\end{equation}
where $p$ is defined in \eqref{NG-E(2.7)}. Consequently, under hypothesis {\rm(H)}, the system becomes dissipative. We can easily adapt the proof in Subsection \ref{NG-S-2.1.1} to prove the well-posedness of System \eqref{NG-E(2.122)}.
\begin{Theorem}\label{NG-T-2.14}  
Under hypothesis {\rm(H)}, for all initial data $\mathtt{U}_0 \in \mathcal{H}_1,$ the System \eqref{NG-E(2.121)}  is exponentially stable.
 \end{Theorem}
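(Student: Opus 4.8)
The plan is to apply the Huang--Pr\"uss frequency‑domain characterization of exponential stability for contraction semigroups: after checking (by a verbatim adaptation of Proposition \ref{NG-R-2.1} to the two‑piece configuration) that $\mathcal{A}_1$ generates a $C_0$‑semigroup of contractions, it suffices to prove that
(i) $i\mathbb{R}\subset\rho(\mathcal{A}_1)$, and
(ii) $\sup_{\lambda\in\mathbb{R}}\big\|(i\lambda I-\mathcal{A}_1)^{-1}\big\|_{\mathcal{L}(\mathcal{H}_1)}<\infty$.
Statement (i) is obtained exactly as in Lemmas \ref{NG-L-2.5} and \ref{NG-L-2.6}: injectivity of $i\lambda I-\mathcal{A}_1$ follows from \eqref{NG-E(2.123)}, which forces $z_x=0$ on $(0,\beta)$ and $\eta(L,0)=\eta(L,1)=0$, whence $\eta\equiv0$, $v$ is constant, $v(0)=0$ gives $v\equiv z\equiv0$, and then the transmission condition at $\beta$ together with $w(\beta)=v(\beta)=0$ leaves $w$ solving a second‑order ODE with vanishing Cauchy data at $\beta$, so $w\equiv0$; surjectivity is the Lax--Milgram plus Fredholm‑alternative argument of Lemma \ref{NG-L-2.6} carried out on $\mathbb{X}^1_L$ (one subinterval fewer), using $0\in\rho(\mathcal{A}_1)$.

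For (ii) I argue by contradiction in the spirit of Subsection \ref{NG-S-2.1.3} with $\ell=0$. If (ii) fails there exist $\lambda_n\in\mathbb{R}$ with $\lambda_n\to+\infty$ (without loss of generality, since $\mathcal{A}_1$ commutes with conjugation), $\mathbb{U}_n=(v_n,w_n,z_n,\phi_n,\eta_n(L,\cdot))\in D(\mathcal{A}_1)$ with $\|\mathbb{U}_n\|_{\mathcal{H}_1}=1$, and $\mathbb{F}_n:=(i\lambda_n I-\mathcal{A}_1)\mathbb{U}_n\to0$ in $\mathcal{H}_1$; write $\mathbb{F}_n=(f_{1,n},f_{2,n},f_{3,n},f_{4,n},f_{5,n})$ with $(f_{1,n},f_{2,n})\to0$ in $\mathbb{X}^1_L$, $(f_{3,n},f_{4,n})\to0$ in $\mathbb{X}^0$ and $f_{5,n}\to0$ in $L^2(0,1)$. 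Taking the inner product of $\mathbb{F}_n$ with $\mathbb{U}_n$ and invoking \eqref{NG-E(2.123)} (exactly as \eqref{NG-E(2.100)} with $\ell=0$) yields $\int_0^\beta|z_{n,x}|^2=o(1)$, $|\eta_n(L,0)|^2=|\phi_n(L)|^2=o(1)$ and $|\eta_n(L,1)|^2=o(1)$; hence $|w_{n,x}(L)|^2=o(1)$ from the boundary condition, and, as in Lemma \ref{NG-L-2.9}, $\int_0^1|\eta_n(L,\rho)|^2\,d\rho=o(1)$. Differentiating $i\lambda_n v_n-z_n=f_{1,n}$ and using $\|z_{n,x}\|_{L^2}=o(1)$, $\|(f_{1,n})_x\|_{L^2}=o(1)$ gives $\int_0^\beta|v_{n,x}|^2=o(\lambda_n^{-2})$. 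Finally the multiplier $x\overline{w}_{n,x}$ on $(\beta,L)$, applied verbatim as in Lemma \ref{NG-L-2.10} (with $f_{2,n}$ in the role of $f_3$, $f_{4,n}$ in the role of $f_6$ and $\ell=0$) and combined with $|w_{n,x}(L)|=|\phi_n(L)|=o(1)$, gives $\int_\beta^L|\phi_n|^2=o(1)$ and $\int_\beta^L|w_{n,x}|^2=o(1)$.

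The only term left to control is $\int_0^\beta|z_n|^2$, and this is precisely where the hypothesis $\alpha=0$ is used: it bypasses the delicate piecewise‑multiplier estimates of Lemma \ref{NG-L-2.11} (which on an interior interval $(\alpha,\beta)$ cost powers of $\lambda_n$). Since $v_n(0)=0$ (because $\mathbb{U}_n\in D(\mathcal{A}_1)\subset\mathbb{X}^1_L\times\mathbb{X}^1_L\times H^1(0,1)$) and $f_{1,n}(0)=0$ (because $(f_{1,n},f_{2,n})\in\mathbb{X}^1_L$), evaluating $i\lambda_n v_n-z_n=f_{1,n}$ at $x=0$ gives $z_n(0)=0$; together with $\int_0^\beta|z_{n,x}|^2=o(1)$, the fundamental theorem of calculus and Cauchy--Schwarz yield $\|z_n\|_{L^\infty(0,\beta)}\le\sqrt{\beta}\,\|z_{n,x}\|_{L^2(0,\beta)}=o(1)$, hence $\int_0^\beta|z_n|^2=o(1)$. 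Collecting everything,
\[
\|\mathbb{U}_n\|_{\mathcal{H}_1}^2=\kappa_2\!\int_0^\beta|v_{n,x}|^2+\kappa_3\!\int_\beta^L|w_{n,x}|^2+\int_0^\beta|z_n|^2+\int_\beta^L|\phi_n|^2+\tau\!\int_0^1|\eta_n(L,\rho)|^2\,d\rho=o(1),
\]
contradicting $\|\mathbb{U}_n\|_{\mathcal{H}_1}=1$. Thus (ii) holds, and Huang--Pr\"uss gives the exponential stability asserted in Theorem \ref{NG-T-2.14}.

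I expect the only genuinely non‑routine point to be the one just isolated: recognizing that, once the Kelvin--Voigt region abuts the clamped end $x=0$, the trace $z_n(0)$ vanishes outright, so no negative power of $\lambda_n$ is lost and the resolvent stays uniformly bounded — in contrast with Subsection \ref{NG-S-2.1}, where the interior elastic piece $(0,\alpha)$ forces $z_n(\alpha)$ and $\kappa_2v_{n,x}(\alpha)+\delta_1z_{n,x}(\alpha)$ to be estimated only up to powers of $\lambda_n$, producing merely polynomial decay. Everything else (well‑posedness, $i\mathbb{R}\subset\rho(\mathcal{A}_1)$, and the $(\beta,L)$‑multiplier) is a direct transcription of the arguments in Subsection \ref{NG-S-2.1} with one subinterval removed.
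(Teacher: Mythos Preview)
Your proof is correct and follows essentially the same route as the paper's: Huang--Pr\"uss, contradiction for the uniform resolvent bound, the dissipativity estimate \eqref{NG-E(2.123)} for $z_x$ and the boundary traces, the $x\overline{w}_x$ multiplier on $(\beta,L)$, and the transmission conditions at $\beta$. The one difference is the final step for $\int_0^\beta|z|^2$: the paper multiplies \eqref{NG-E(2.130)} by $-i\lambda^{-1}\overline{z}$ and integrates by parts (as in step~3 of Lemma \ref{NG-L-2.11}, using $z(0)=0$ to kill the boundary term at $0$ and the already-established smallness of $|\kappa_2 v_x(\beta)+\delta_1 z_x(\beta)|$ and $|z(\beta)|$), obtaining the sharper bound $\int_0^\beta|z|^2=o(\lambda^{-1})$. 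Your Poincar\'e argument from $z(0)=0$ and $\|z_x\|_{L^2(0,\beta)}=o(1)$ is more elementary and gives $\int_0^\beta|z|^2=o(1)$, which is all the contradiction needs. Both approaches hinge on the same crucial observation --- that $z(0)=0$ when the Kelvin--Voigt region abuts the clamped end --- and your version makes this dependence more transparent, at the cost of the (here irrelevant) extra factor $\lambda^{-1}$.
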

\noindent	According to Theorem \ref{NG-T-A.5} (part (i)), we have to check if the following conditions hold,
\begin{equation}\label{NG-E(2.124)}
	\ i\mathbb{R}\subseteq\rho\left(\mathcal{A}_1\right) 
\end{equation}
and
\begin{equation}\label{NG-E(2.125)}
	\sup_{\lambda\in\mathbb{R}}\left\|\left(i\lambda I-\mathcal{A}_1\right)^{-1}\right\|_{\mathcal{L}\left(\mathcal{H}_1\right)}=O\left(1\right).
\end{equation}
\begin{proof} First,  we can easily adapt the proof in Subsection  \ref{NG-S-2.1.2} to prove the strong stability $\left(\text{condition \eqref{NG-E(2.124)}}\right)$ of System \eqref{NG-E(2.121)}. Next, we will prove condition  \eqref{NG-E(2.125)} by a contradiction argument. Indeed, suppose  there exists 
 \begin{equation*}
 \left\{(\lambda_n,\mathtt{U}_n:=\left(v_n,w_n ,z_n,\phi_n,\eta_n(L,\cdot)\right))\right\}_{n\geq 1}\subset \mathbb{R}_{+}^*\times D\left(\mathcal{A}_1\right),
 \end{equation*}
 such that
\begin{equation}\label{NG-E(2.126)}
\lambda_n \to  + \infty, \quad \left\|\mathtt{U}_n\right\|_{{\mathcal{H}_1}} = 1
\end{equation}
and there exists sequence $\mathtt{G}_n:=(g_{1,n},g_{2,n},g_{3,n},g_{4,n},g_{5,n}(L,\cdot)) \in \mathcal{H}_1$, such that
\begin{equation}\label{NG-E(2.127)}
 (i \lambda_n I - \mathcal{A}_1)\mathtt{U}_n =\mathtt{G}_n \to  0 \ \textrm{in} \ {\mathcal{H}_1}.
\end{equation}
We will check condition \eqref{NG-E(2.125)}  by finding a contradiction with $\left\|\mathtt{U}_n\right\|_{{\mathcal{H}}_1} = 1$ such as $\left\| \mathtt{U}_n\right\|_{\mathcal{H}_1} =o(1).$
 From  now on, for simplicity, we drop the index $n$. By detailing Equation \eqref{NG-E(2.127)}, we get the following system
 \begin{eqnarray}
i \lambda v -z  =   g_1 &\textrm{in} \quad H^1(0,\beta), \label{NG-E(2.128)}\\
i \lambda w - \phi =   g_2 & \textrm{in}\quad  H^1(\beta,L), \label{NG-E(2.129)}\\
i \lambda  z -\left(\kappa_2 v_x + \delta_1 z_x\right)_{x}=    g_3& \textrm{in} \quad L^2(0,\beta), \label{NG-E(2.130)}\\
i \lambda  \phi - \kappa_3 w_{xx}=     g_4&\textrm{in} \quad L^2(\beta,L), \label{NG-E(2.131)}\\
 \eta_\rho(L,\cdot)+i\tau \lambda\eta(L,\cdot)= \tau  g_5(L,\cdot) &\textrm{in} \quad L^2(0,1).\label{NG-E(2.132)}
\end{eqnarray}
Remark that, since  $\mathtt{U}=(v,w,z, \phi,\eta(L,\cdot)) \in D(\mathcal{A}_1)$, we have the following boundary conditions  
\begin{eqnarray}
\left|w_x(\beta)\right| = \kappa_3 ^{-1}\left|\kappa_2 v_x(\beta) + \delta_1 z_x(\beta)\right|,&  \left|z(\beta)\right|= \left|\phi (\beta)\right|,\label{NG-E(2.133)}\\
w_x (L) = - \delta_3 \eta(L,0) - \delta_2 \eta (L,1), & \phi(L) = \eta(L,0).\label{NG-E(2.134)}
\end{eqnarray}
Taking the inner product of \eqref{NG-E(2.127)} with $\mathtt{U}$ in $\mathcal{H}_1$, then using  \eqref{NG-E(2.123)}, hypothesis  {\rm (H)} and the fact that $\mathtt{U}$ is uniformly bounded in $\mathcal{H}_1$, we obtain
\begin{equation}\label{NG-E(2.135)}
\int_0^\beta  |z_{x}|^2 =o\left(1\right),\quad |\phi(L)|^2=|\eta(L,0)|^2=o\left(1\right),\quad \quad |\eta(L,1)|^2=o\left(1\right).
\end{equation}
From \eqref{NG-E(2.128)}, then using the first asymptotic estimate  of \eqref{NG-E(2.135)} and the fact that $(g_1)_x\to 0$ in $L^2(0,\beta)$, we get
\begin{equation}\label{NG-E(2.136)}
\int_{0}^{\beta} |v_x|^2 dx = o\left(\lambda ^{-2}\right).
\end{equation}
From the first asymptotic estimate  of \eqref{NG-E(2.134)}, then using  the second  and the third asymptotic estimates of \eqref{NG-E(2.135)}, we obtain 
\begin{equation}\label{NG-E(2.137)}
|w_x (L)|^2=o\left(1\right).
\end{equation}
Similar to  Lemma \ref{NG-L-2.9}, with $\ell=0$, from  \eqref{NG-E(2.132)}, then using the second  and the third asymptotic estimates of \eqref{NG-E(2.135)}, we obtain 
\begin{equation}\label{NG-E(2.138)}
\int_0^1  |\eta(L,\rho)|^2d\rho =o\left(1\right).
\end{equation}
Similar to Lemma \ref{NG-L-2.10}, with $\ell=0$, multiplying Equation \eqref{NG-E(2.131)} by $x \overline{w}_x$ and integrating over $(\beta,L),$ after that using the  fact that $i \lambda \overline{w}_x = -\overline{\phi}_x - (\overline{g_2})_x$,  then using the fact that $\phi,\, w_x$ are uniformly bounded in $L^2(\beta,L)$ and $(g_2)_x,\, g_4$ converge  to zero  in $L^2(\beta,L)$ gives
\begin{equation*}
-  \int_{\beta }^{ L}  x\, \phi \overline{\phi}_xdx  - \kappa_3 \int_{\beta }^{ L}  x w_{xx} \overline{w}_x dx = o\left(1\right).
\end{equation*}
Taking the real part in the above equation, then using by parts integration, Equation \eqref{NG-E(2.137)} and the second  asymptotic estimate of \eqref{NG-E(2.135)}, we obtain 
\begin{equation*}
\frac{1}{2} \int_{\beta }^{ L} |\phi|^2 dx + \frac{\kappa_3}{2} \int_{\beta }^{ L} |w_x|^2 dx +\frac{\beta}{2} \left ( \kappa_3 |w_x (\beta)|^2 + |\phi(\beta)|^2 \right ) 
= o\left(1\right),
\end{equation*}
hence, we get
\begin{equation}\label{NG-E(2.139)}
 \int_{\beta }^{ L} |\phi|^2 dx=o(1),\quad  \int_{\beta }^{ L} |w_x|^2 dx=o(1),\quad   |w_x (\beta)|^2=o(1),\quad  |\phi(\beta)|^2= o\left(1\right).
\end{equation}
Inserting the third and the fourth  asymptotic estimates of \eqref{NG-E(2.139)} in \eqref{NG-E(2.133)}, we get
\begin{equation}\label{NG-E(2.140)}
\left|\kappa_2 v_x(\beta) + \delta_1 z_x(\beta)\right|=o(1),\quad  \left|z(\beta)\right|= o(1).
\end{equation}
Similar to step 3 of Lemma \ref{NG-L-2.11}, with $\alpha=0$ and $\ell=0$, multiplying  \eqref{NG-E(2.130)} by $-i \lambda^{-1} \overline{z}$ and integrating over $(0,\beta),$ taking the real part, then  using the fact that $z$  is uniformly bounded in   $L^2(0,\beta)$ and $g_3\to 0$ in $L^2(0,\beta)$, we get
\begin{equation}\label{NG-E(2.141)}
\int_0^\beta | z|^2\, dx  \leq  \lambda^{-1} \left|\int_0^\beta\left(\kappa_2 v_x + \delta_1  z_x\right)_{x}\,\overline{z} \, dx\right|+    o\left(\lambda^{-1}\right).
\end{equation}
On the other hand, using by parts integration, the fact that $z(0)=0$, and Equations   \eqref{NG-E(2.135)}-\eqref{NG-E(2.136)}, \eqref{NG-E(2.140)}, we get
\begin{equation}\label{NG-E(2.142)}
\begin{array}{lll}

\displaystyle{
\left|\int_0^\beta\left(\kappa_2 v_x + \delta_1  z_x\right)_{x}\,\overline{z} \, dx\right|}&=\displaystyle{\left|\left[\left(\kappa_2 v_x + \delta_1  z_x\right) \overline{z}\right]^\beta_0-\int_0^\beta\left(\kappa_2 v_x + \delta_1  z_x\right) \overline{z}_x \, dx\right|}\nline

&\displaystyle{
\leq \left|\kappa_2 v_x(\beta) + \delta_1  z_x(\beta)\right| \left|{z}(\beta)\right|+\int_0^\beta\left|\kappa_2 v_x + \delta_1  z_x\right| \left|{z}_x\right| \, dx=o(1)}.

\end{array}
\end{equation}
Inserting \eqref{NG-E(2.142)} in \eqref{NG-E(2.141)}, we get
\begin{equation}\label{NG-E(2.143)}
\int_0^\beta | z|^2\, dx =  o\left(\lambda^{-1}\right).
\end{equation}
Finally, from \eqref{NG-E(2.136)}, \eqref{NG-E(2.138)}, \eqref{NG-E(2.139)} and \eqref{NG-E(2.143)}, we get
\begin{equation*}
\left\| \mathtt{U}\right\|_{\mathcal{H}_1} =o(1),
\end{equation*}
 which contradicts \eqref{NG-E(2.126)}. Therefore, \eqref{NG-E(2.125)} holds and the result follows from Theorem  \ref{NG-T-A.5} (part (i)).
\end{proof}
\section{Wave equation with local internal Kelvin-Voigt damping and local internal delay feedback}\label{NG-S-3}
\noindent In this section, we study the stability of System \eqref{NG-E(1.2)}. We assume that there exists $\alpha$ and $\beta$ such that    $0<\alpha<\beta<L$, in this case, the Kelvin-Voigt damping and the time delay feedback  are  locally internal (see Figure \ref{Fig3}).  For this aim, we denote the longitudinal displacement by $U$ and this displacement is divided into three parts
\begin{equation*}
U(x,t)= \left\{
        \begin{array}{ll}
           u(x,t), &  (x,t) \in \ (0,\alpha)\times (0, + \infty), \nline
            v(x,t), & (x,t) \in \ (\alpha,\beta)\times (0, + \infty) , \nline
            w(x,t), &  (x,t) \in \ (\beta,L)\times (0, + \infty).
        \end{array}
    \right.
\end{equation*}
 Furthermore,  let us introduce the auxiliary unknown
\begin{equation*}
\eta(x,\rho,t)=v_t(x,t-\rho\, \tau),\quad x\in(\alpha,\beta),\ \rho\in(0,1),\ t>0.
\end{equation*}
In this case, System \eqref{NG-E(1.2)} is equivalent to the following system
\begin{eqnarray}
 u_{tt} - \kappa_1 u_{xx} = 0, &(x,t) \in  (0,\alpha) \times (0, + \infty), \label{NG-E(3.1)}\nline
 v_{tt} -\left(\kappa_2  v_{x} +  \delta_1 v_{xt} (x,t) + \delta_2 \eta_{x}(x,1,t)\right)_x = 0, &(x,t) \in (\alpha, \beta ) \times (0, + \infty),\label{NG-E(3.2)}\nline
 w_{tt} - \kappa_3 w_{xx} = 0, &(x,t) \in  ( \beta, L )  \times (0, + \infty), \label{NG-E(3.3)}\nline
 \tau \eta_t(x,\rho,t)+\eta_\rho(x,\rho,t)=0,& (x,\rho,t)\in (\alpha,\beta)\times (0,1)\times (0, + \infty),\label{NG-E(3.4)}
\end{eqnarray}
with  the Dirichlet  boundary conditions
\begin{equation}\label{NG-E(3.5)}
u(0,t)= w(L,t) = 0, \quad  t\in (0,+\infty),
\end{equation}
with the following transmission conditions 
\begin{equation}\label{NG-E(3.6)}
\left\{
\begin{array}{ll}

\displaystyle{

u(\alpha,t ) = v(\alpha,t), \quad v (\beta , t) = w (\beta , t)}, &t\in  (0, + \infty), \nline

\displaystyle{\kappa_1 u_x (\alpha ,t)=\kappa_2 v_x (\alpha , t) + \delta_1 v_{xt} (\alpha,t ) + \delta_2 \eta_{x}(\alpha ,1,t) }, &t\in  (0, + \infty),\nline

\displaystyle{ \kappa_3 w_x (\beta ,t)=\kappa_2 v_x (\beta , t) + \delta_1 v_{xt} (\beta,t ) + \delta_2 \eta_{x}(\beta ,1,t)}, &t\in  (0, + \infty),
\end{array}
\right.
\end{equation}
and with the following initial conditions
\begin{equation} \label{NG-E(3.7)}
\left\{
\begin{array}{ll}

\displaystyle{ \left(u(x, 0 ),u_t(x,0)  \right) =\left(u_0(x), u_1 (x)\right)}, \quad & x\in  (0,\alpha),

\nline

\displaystyle{ \left(v(x, 0 ),v_t(x,0)  \right) =\left(v_0(x), v_1 (x)\right)}, \quad &x\in  (\alpha,\beta),\nline

\displaystyle{ \left(w(x, 0 ),w_t(x,0)  \right) =\left(w_0(x), w_1 (x)\right)}, \quad &x\in   (\beta,L),\nline
\displaystyle{ \eta(x,\rho,0) = f_0(x,-\rho\tau),}  & (x,\rho) \in  (\alpha,\beta) \times (0, 1),

\end{array}
\right.
\end{equation}
where the initial data $(u_0,u_1,v_0,v_1,w_0,w_1,f_0)$ belongs to a suitable space. To a strong solution of  System  \eqref{NG-E(3.1)}-\eqref{NG-E(3.7)}, we associate the energy defined by
\begin{equation*}
\begin{array}{ll}

\displaystyle{E(t)=\frac{1}{2}\int_0^\alpha  \left(|u_t(x,t)|^2+\kappa_1|u_x(x,t)|^2\right) dx+\frac{1}{2}\int_\alpha^\beta  \left(|v_t(x,t)|^2+\kappa_2|v_x(x,t)|^2\right) dx}\nline
\hspace{1cm}
\displaystyle{ +\frac{1}{2}\int_\beta^L \left(|w_t(x,t)|^2+\kappa_3|w_x(x,t)|^2\right) dx+\frac{\tau|\delta_2|}{2}\int_0^1\int_\alpha^\beta|\eta_x(x,\rho,t)|^2 d\rho\, dx.}

\end{array}
\end{equation*}
 Multiplying \eqref{NG-E(3.1)}, \eqref{NG-E(3.2)}, \eqref{NG-E(3.3)} and \eqref{NG-E(3.4)}$_x$   by $u_t$, $y_t$, $w_t$ and $|\delta_2|\eta_x$, integrating over $(0,\alpha),\ (\alpha,\beta)$, $(\beta,L)$ and $(\alpha,\beta)\times (0,1)$ respectively, taking the sum, then using by parts integration and  the boundary conditions in \eqref{NG-E(3.5)}-\eqref{NG-E(3.6)}, we get
\begin{equation*}
E'(t)=\left(-\delta_1+\frac{|\delta_2|}{2}\right)\int_\alpha^\beta |v_{xt}(x,t)|^2dx-\frac{|\delta_2|}{2}\int_\alpha^\beta |\eta_x(x,1,t)|^2dx-\delta_2\int_\alpha^\beta v_{xt}(x,t)\, \eta_x(x,1,t)dx.
\end{equation*}
Using Young's inequality for the third term in the right, we get 
\begin{equation*}
E'(t)\leq\left(-\delta_1+|\delta_2|\right)\int_\alpha^\beta |v_{xt}(x,t)|^2dx.
\end{equation*}
In the sequel, the assumption on $\delta_1$ and $\delta_2$  will ensure that 
\begin{equation}\tag{H1}
\delta_1>0,\quad \delta_2\in\mathbb{R}^*,\quad |\delta_2|<\delta_1.
\end{equation}
In this case,  the energies of the strong solutions satisfy $E'(t)\leq0.$ Hence, the System \eqref{NG-E(3.1)}-\eqref{NG-E(3.7)}  is dissipative in the sense that its energy is non increasing with respect to the time $t.$
\subsection{Well-posedness of the problem}\label{NG-S-3.1}
We start this part by formulating System \eqref{NG-E(3.1)}-\eqref{NG-E(3.7)} as an abstract Cauchy problem. For this aim, let us define  
\begin{equation*}
\begin{array}{ll}
\displaystyle{\mathbb{L}_*^2 =  L^2 (0,\alpha) \times L^2_* (\alpha, \beta) \times L^2(\beta,L)},\nline
\displaystyle{  \mathbb{H}^1_* = \{(u,v,w) \in  H^1 (0,\alpha) \times H^1_* (\alpha, \beta) \times H^1 (\beta, L)\ | \ u (0) = 0, \ u(\alpha) = v (\alpha), \ v(\beta) = w(\beta),\ w(L)=0 \}},\nline
\displaystyle{ \mathbb{H}^2 =  H^2 (0,\alpha) \times H^2 (\alpha, \beta) \times H^2 (\beta, L). }
\end{array}
\end{equation*}
Here we consider
\begin{equation*}
L^2_*(\alpha,\beta)=\left\{z\in L^2(\alpha,\beta)\  |\ \int_\alpha^\beta z\, dx=0\right\}\ \ \ \text{and} \ \ \ H^1_*(\alpha,\beta)=H^1(\alpha,\beta)\cap L^2_*(\alpha,\beta).
\end{equation*}
The spaces $\mathbb{L}_*^2$ and $ \mathbb{H}^1_*$  are obviously a Hilbert spaces equipped respectively with the norms
\begin{equation*}
\left\|(u,v,w)\right\|^2_{\mathbb{L}_*^2}=\int_0^\alpha|u|^2dx+  \int_\alpha^\beta |v|^2dx+\int_\beta^L |w|^2dx
\end{equation*}
and
\begin{equation*}
\left\|(u,v,w)\right\|^2_{\mathbb{H}^1_*}=\kappa_1\int_0^\alpha|u_x|^2dx+\kappa_2 \int_\alpha^\beta |v_x|^2dx+\kappa_3 \int_\beta^L |w_x|^2dx.
\end{equation*}
In addition by Poincaré inequality we can easily  verify that there exists $C>0$, such that
\begin{equation*}
\left\|(u,v,w)\right\|_{\mathbb{L}_*^2}\leq C \left\|(u,v,w)\right\|_{\mathbb{H}^1_*}.
\end{equation*}
Let us define the energy Hilbert space $\mathcal{H}_2$ by
\begin{equation*}
\mathcal{H}_2=\mathbb{H}^1_*\times \mathbb{L}_*^2\times L^2\left((0,1),H^1_*(\alpha,\beta)\right)
\end{equation*}
\noindent equipped with the following inner product
\begin{equation*}
\begin{array}{ll}
\displaystyle{
\langle {U},\tilde{{U}}\rangle _{{\mathcal{H}_2}} = \kappa_1 \int_{0}^{\alpha} u_x \overline{\tilde{u}}_x dx + \kappa_2 \int_{\alpha}^{\beta} v_x \overline{\tilde{v}}_x dx + \kappa_3 \int_{\beta}^{L} w_x \overline{\tilde{w}}_x dx
 }\nline \hspace{1.5cm}
\displaystyle{+
   \int_{0}^{\alpha} y \overline{\tilde{y}} dx +  \int_{\alpha}^{\beta} z \overline{\tilde{z}} dx +  \int_{\beta}^{L} \phi \overline{\tilde{\phi}} dx +\tau|\delta_2| \int_{0}^{1}\int_\alpha^\beta  \eta_x(x,\rho)\, \overline{\tilde{\eta}}_x(x,\rho) dx\, d \rho,}
   \end{array}
\end{equation*}
where $U=( u,v,w,y ,z, \phi, \eta(\cdot,\cdot) )\in{\mathcal{H}_2}$ and  $\tilde{{U}}= ( \tilde{u}, \tilde{ v}, \tilde{w}, \tilde{y}, \tilde{z}, \tilde{\phi}, \tilde{\eta}(\cdot,\cdot)) \in{\mathcal{H}_2}$.  We use  $\|{U}\|_{\mathcal{H}_2}$ to denote the corresponding norm. We  define the linear unbounded operator $\mathcal{A}_2: D(\mathcal{A}_2) \subset {\mathcal{H}_2}\longrightarrow {\mathcal{H}_2} $ by:
\begin{equation*}
\begin{array}{lll}
\displaystyle{D(\mathcal{A}_2) = \bigg\{(u,v,w,y,z,\phi, \eta(\cdot,\cdot))\in\mathcal{H}_2\   |\ 
\left(y,z,\phi\right)\mathbb{H}_*^1  } 
\\ \noalign{\medskip}\hspace{1.9cm}
\displaystyle{\ (u , \kappa_2 v  +\delta_1 z+\delta_2\eta(\cdot,1)  , w) \in \mathbb{H}^2,\  \kappa_2 v_x (\alpha) +\delta_1 z_x (\alpha)+\delta_2 \eta_x(\alpha,1)=\kappa_1 u _x (\alpha),}
 \\ \noalign{\medskip}\hspace{1.9cm}
\displaystyle{ \kappa_2 v_x (\beta) +\delta_1  z_x (\beta)+\delta_2 \eta_x(\beta,1)= \kappa_3 w _x (\beta),\ \eta,\,\eta_\rho\in L^2\left((0,1),H^1_*(\alpha,\beta)\right), \ z(\cdot) = \eta(\cdot,0)\bigg\}}
\end{array}
\end{equation*}
and for all $\mathbb{U}=(u,v,w, y,z, \phi, \eta(\cdot,\cdot))\in D(\mathcal{A}_2)$
\begin{equation*}
\mathcal{A}_2\mathbb{U} = \left( y , z ,\phi, \kappa_1 u_{xx}, \left(\kappa_2  v_{x} +\delta_1 z_{x}+\delta_2\eta_{x}(\cdot,1)\right)_x , \kappa_3 w_{xx} , - \tau^{-1} \eta_{\rho} (\cdot,\cdot)\right).
\end{equation*}
If $U=(u,v,w,u_t,v_t,w_t, \eta(\cdot,\cdot))$ is a regular solution of System \eqref{NG-E(3.1)}-\eqref{NG-E(3.7)}, then we transform this system into the following initial value problem
\begin{equation} \label{NG-E(3.8)}
\begin{cases}
U_t &= \mathcal{A}_2 U,\\
U(0)&= U_0,
\end{cases}
\end{equation}
where ${U}_0 = (u_0, v_0 , w_0 , u_1, v_1, w_1, f_0(\cdot,-\cdot \tau))\in{\mathcal{H}_2}.$ We now use  semigroup approach to establish well-posedness result for the  System \eqref{NG-E(3.1)}-\eqref{NG-E(3.7)}. We prove the following proposition.
\begin{Proposition}\label{NG-P-3.1} 
Under hypothesis {\rm(H1)}, the unbounded linear operator $\mathcal{A}_2$ is m-dissipative in the energy space ${\mathcal{H}_2}$.
\end{Proposition}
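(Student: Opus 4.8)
The plan is to follow the proof of Proposition~\ref{NG-R-2.1} almost verbatim, the only genuinely new ingredient being the way hypothesis~{\rm(H1)} enters the coercivity of the resolvent problem: by the Lumer--Phillips theorem (see \cite{Pazy01}) it suffices to show that $\mathcal{A}_2$ is dissipative and that $R(I-\mathcal{A}_2)=\mathcal{H}_2$. For dissipativity, take $U=(u,v,w,y,z,\phi,\eta(\cdot,\cdot))\in D(\mathcal{A}_2)$ and compute $\mathrm{Re}\langle\mathcal{A}_2 U,U\rangle_{\mathcal{H}_2}$ by integration by parts in $x$ on $(0,\alpha)$, $(\alpha,\beta)$, $(\beta,L)$ and in $\rho$ on $(0,1)$. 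The boundary relations $y(0)=0$, $\phi(L)=0$, the matching conditions $y(\alpha)=z(\alpha)$, $z(\beta)=\phi(\beta)$ and the two transmission identities $\kappa_2 v_x(\alpha)+\delta_1 z_x(\alpha)+\delta_2\eta_x(\alpha,1)=\kappa_1 u_x(\alpha)$, $\kappa_2 v_x(\beta)+\delta_1 z_x(\beta)+\delta_2\eta_x(\beta,1)=\kappa_3 w_x(\beta)$ that are built into $D(\mathcal{A}_2)$ make all interface and boundary contributions cancel, while the $\rho$-integration together with $\eta_x(\cdot,0)=z_x$ produces $-\tfrac{|\delta_2|}{2}\int_\alpha^\beta|\eta_x(\cdot,1)|^2+\tfrac{|\delta_2|}{2}\int_\alpha^\beta|z_x|^2$. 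One is left with
\[
\mathrm{Re}\langle\mathcal{A}_2 U,U\rangle_{\mathcal{H}_2}=\Bigl(-\delta_1+\tfrac{|\delta_2|}{2}\Bigr)\int_\alpha^\beta|z_x|^2\,dx-\tfrac{|\delta_2|}{2}\int_\alpha^\beta|\eta_x(\cdot,1)|^2\,dx-\delta_2\,\mathrm{Re}\int_\alpha^\beta z_x\,\overline{\eta_x(\cdot,1)}\,dx,
\]
and a Young inequality on the last term gives $\mathrm{Re}\langle\mathcal{A}_2 U,U\rangle_{\mathcal{H}_2}\le(-\delta_1+|\delta_2|)\int_\alpha^\beta|z_x|^2\,dx\le0$ under~{\rm(H1)}; this is exactly the operator form of the energy identity established just before Subsection~\ref{NG-S-3.1}.

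For maximality, fix $F=(f_1,\dots,f_7)\in\mathcal{H}_2$ and look for $U\in D(\mathcal{A}_2)$ with $(I-\mathcal{A}_2)U=F$. The delay component solves $\eta_\rho+\tau\eta=\tau f_7$ with $\eta(\cdot,0)=z$, hence $\eta(x,\rho)=z(x)e^{-\tau\rho}+\tau\int_0^\rho e^{\tau(\sigma-\rho)}f_7(x,\sigma)\,d\sigma$, so in particular $\eta(\cdot,1)=z\,e^{-\tau}+\tau\int_0^1 e^{\tau(\sigma-1)}f_7(\cdot,\sigma)\,d\sigma$. Putting $y=u-f_1$, $z=v-f_2$, $\phi=w-f_3$, the flux $\kappa_2 v_x+\delta_1 z_x+\delta_2\eta_x(\cdot,1)$ becomes $c_\ast v_x+G_x$, where $c_\ast:=\kappa_2+\delta_1+\delta_2 e^{-\tau}$ and $G\in H^1_\ast(\alpha,\beta)$ is an explicit datum; the key observation is that $|\delta_2 e^{-\tau}|\le|\delta_2|<\delta_1$ by~{\rm(H1)}, so $c_\ast\ge\kappa_2+\delta_1-|\delta_2|>0$. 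Testing the elliptic system for $(u,v,w)$ against $(\varphi,\psi,\theta)\in\mathbb{H}^1_\ast$ and integrating by parts — the interface terms being absorbed by the constraints defining $\mathbb{H}^1_\ast$ — yields a variational identity whose bilinear form $\kappa_1\int_0^\alpha u_x\overline\varphi_x+c_\ast\int_\alpha^\beta v_x\overline\psi_x+\kappa_3\int_\beta^L w_x\overline\theta_x+\int_0^\alpha u\overline\varphi+\int_\alpha^\beta v\overline\psi+\int_\beta^L w\overline\theta$ is bounded and coercive on $\mathbb{H}^1_\ast\times\mathbb{H}^1_\ast$ and whose right-hand side is a bounded antilinear form; Lax--Milgram then gives a unique $(u,v,w)\in\mathbb{H}^1_\ast$. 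One reads off $(y,z,\phi)\in\mathbb{H}^1_\ast$ (the trace relations carried by $F$ transfer directly), recovers $\eta$ from the formula above and checks $\eta,\eta_\rho\in L^2((0,1),H^1_\ast(\alpha,\beta))$, and uses classical elliptic regularity to conclude that $\bigl(u,\kappa_2 v+\delta_1 z+\delta_2\eta(\cdot,1),w\bigr)=(u,c_\ast v+G,w)\in\mathbb{H}^2$ together with the two transmission conditions — precisely the membership defining $D(\mathcal{A}_2)$. Uniqueness follows from coercivity (or from dissipativity), and density of $D(\mathcal{A}_2)$ together with m-dissipativity then follow from \cite{Pazy01}.

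The main obstacle is not the dissipativity estimate, which copies the energy computation, but the maximality step: one must integrate the transport equation correctly to express $\eta$ and especially $\eta_x(\cdot,1)$ through $z=v-f_2$ and the data, carry the non-smooth coefficients $\kappa_1,\kappa_2,\kappa_3$ and the effective coefficient $c_\ast$ through the interface conditions, and — the point that is genuinely new relative to Proposition~\ref{NG-R-2.1} — verify that {\rm(H1)} is exactly what forces $c_\ast>0$ and hence the coercivity needed for Lax--Milgram. The mean-zero constraints built into $L^2_\ast$ and $H^1_\ast$ must be respected throughout, but since $\mathbb{H}^1_\ast$ is a closed subspace they add no real difficulty, and the regularity of the Bochner-type integrals defining $G$ and $\eta$ is immediate from $f_7\in L^2((0,1),H^1_\ast(\alpha,\beta))$.
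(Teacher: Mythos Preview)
Your dissipativity computation is identical to the paper's. For maximality, however, you take a different (but correct) route: you solve $(I-\mathcal{A}_2)U=F$, whereas the paper solves $-\mathcal{A}_2 U=F$, i.e.\ it shows directly that $0\in\rho(\mathcal{A}_2)$ and then perturbs to reach some $\lambda>0$. In the paper's setting the first three lines give $y=-f_1$, $z=-f_2$, $\phi=-f_3$ immediately, the transport equation becomes $\eta_\rho=\tau f_7$ with $\eta(\cdot,0)=-f_2$, so $\eta$ (and hence $\eta_x(\cdot,1)$) is expressed purely in terms of the data; the resulting variational problem for $(u,v,w)$ has as its bilinear form exactly the $\mathbb{H}^1_*$ inner product $\kappa_1\!\int u_x\overline{\varphi}_x+\kappa_2\!\int v_x\overline{\psi}_x+\kappa_3\!\int w_x\overline{\theta}_x$, which is coercive by definition without any appeal to {\rm(H1)}. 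In your approach $z=v-f_2$ stays coupled to the unknown $v$, so the flux becomes $c_\ast v_x+G_x$ with $c_\ast=\kappa_2+\delta_1+\delta_2 e^{-\tau}$, and you must invoke {\rm(H1)} a second time to guarantee $c_\ast>0$ for coercivity. Both arguments are sound; the paper's buys a cleaner Lax--Milgram step (and makes clear that {\rm(H1)} is needed only for dissipativity, not for surjectivity), while yours is the more textbook Lumer--Phillips formulation and avoids the small perturbation argument at the end.
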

\begin{proof}
For all $U = (u,v,w,y,z, \phi,\eta(\cdot,\cdot)) \in D(\mathcal{A}_2),$ we have
\begin{equation*}
\begin{array}{lll}
\displaystyle{\text{Re}\left<\mathcal{A}_2 {U},{U}\right>_{\mathcal{H}_2} =
\kappa_1\text{Re}\int_{0}^{\alpha} \left(y_x \overline{{u}}_x+u_{xx} \overline{{y}}\right) dx +  \text{Re}\int_{\alpha}^{\beta} \left(\kappa_2 z_x \overline{{v}}_x+ (\kappa_2 v_x+\delta_1z_x+\delta_2 \eta_x(\cdot,1) )_{x} \overline{{z}}\right) dx 
 }\nline

   \hspace{2.3cm}
 \displaystyle{  + \kappa_3 \text{Re}\int_{\beta}^{L} \left(\phi_x \overline{{w}}_x+ w_{xx}\overline{{\phi}}\right) dx -|\delta_2| \text{Re}\int_\alpha^\beta\int_{0}^{1} \eta_{x\rho}(x,\rho)\, \overline{{\eta}}_x(x,\rho)dx\, d \rho.

 }
\end{array}
\end{equation*}
 Using by parts integration in the above equation, we get 
\begin{equation} \label{NG-E(3.9)}
\begin{array}{lll}
\displaystyle{\text{Re}\left<\mathcal{A}_2 {U},{U}_2\right>_{\mathcal{H}_2} =
-\delta_1\int_\alpha^\beta|z_x|^2dx-\delta_2 \text{Re}\int_\alpha^\beta\eta_x(\cdot,1) \overline{{z}}_xdx
+\frac{|\delta_2|}{2}\int_\alpha^\beta|\eta_x(x,0)|^2dx
}\nline

   \hspace{2.4cm}
 \displaystyle{ -\frac{|\delta_2|}{2}\int_\alpha^\beta|\eta_x(x,1)|^2dx -\kappa_1\text{Re}\left(u_x(0)\overline{y}(0)\right)+\kappa_3\text{Re}\left(w_x(L)\overline{\phi}(L)\right)}\nline

   \hspace{2.4cm}
 \displaystyle{
 
  +\text{Re}\left(\kappa_1 u_x(\alpha)\overline{y}(\alpha)-\kappa_2 v_x(\alpha)\overline{z}(\alpha)-\delta_1z_x(\alpha)\overline{z}(\alpha)-\delta_2 \eta_x(\alpha,1)\overline{z}(\alpha)\right) }\nline 
   \hspace{2.4cm}

 \displaystyle{
+ \text{Re}\left(\kappa_2v_x(\beta)\overline{z}(\beta)+\delta_1 z_x(\beta)\overline{z}(\beta)+\delta_2 \eta_x(\beta,1)\overline{z}(\beta)-\kappa_3 w_x(\beta)\overline{\phi}(\beta)\right).

 }
\end{array}
\end{equation}
Since ${U}\in D(\mathcal{A}_2)$, we have
\begin{equation*}
\left\{
\begin{array}{lll}
\displaystyle{y(0)=\phi(0)=0,\ y(\alpha)=z(\alpha),\ z(\beta)=\phi(\beta),\ z(x) = \eta(x,0),}\nline
  \displaystyle{ \kappa_1 u_x(\alpha)-\kappa_2 v_x(\alpha)-\delta_1z_x(\alpha)-\delta_2\eta_x(\alpha,1)=0,\ \kappa_2v_x(\beta)+\delta_1 z_x(\beta)+\delta_2\eta_x(\beta,1)-\kappa_3w_x(\beta)=0.}

\end{array}
\right.
\end{equation*}
Substituting  the above boundary conditions in \eqref{NG-E(3.9)}, then using  Young's inequality, we get
\begin{equation}\label{NG-E(3.10)}
\begin{array}{lll}
\displaystyle{\text{Re}\left<\mathcal{A}_2 {U},{U}\right>_{\mathcal{H}_2}} &=
\displaystyle{\left(-\delta_1+\frac{|\delta_2|}{2}\right)\int_\alpha^\beta|z_x|^2dx-\frac{|\delta_2|}{2}\int_\alpha^\beta|\eta_x(x,1)|^2dx-\delta_2 \text{Re}\int_\alpha^\beta\eta_x(\cdot,1) \overline{{z}}_xdx}\nline

&\leq \displaystyle{\left(-\delta_1+|\delta_2|\right)\int_\alpha^\beta|z_x|^2dx},
\end{array}
\end{equation}
hence under hypothesis {\rm(H1)},  we get 
\begin{equation*}
\text{Re}\left<\mathcal{A}_2 {U},{U}\right>_{\mathcal{H}_2}  \leq 0,
\end{equation*}
which implies that $\mathcal{A}_2$ is dissipative. To prove that $\mathcal{A}_2$ is m-dissipative, it is enough to prove that $0 \in \rho (\mathcal{A}_2)$ since $\mathcal{A}_2$ is a closed operator and $\overline{D(\mathcal{A}_2)} ={\mathcal{H}_2}$. Let $F= (f_1,f_2,f_3,f_4,f_5,f_6,f_7(\cdot,\cdot)) \in {\mathcal{H}_2}.$ We should prove that there exists a unique solution $U= (u,v,w, y,z , \phi,\eta(\cdot,\cdot)) \in D(\mathcal{A}_2)$ of the equation
\begin{equation*}
-\mathcal{A}_2U=F.
\end{equation*}
\noindent Equivalently, we consider the following system
\begin{eqnarray}
-y &=& f_1, \label{NG-E(3.11)}\\
-z &=& f_2, \label{NG-E(3.12)}\\
-\phi &=& f_3,\label{NG-E(3.13)}\\
- \kappa_1 u_{xx} &=& f_4, \label{NG-E(3.14)}\\
- \left(\kappa_2 v_x +\delta_1  z_x+\delta_2\eta_x(\cdot,1)\right)_{x} &=&  f_5, \label{NG-E(3.15)}\\
-\kappa_3 w_{xx} &=&  f_6, \label{NG-E(3.16)}\\
\eta_\rho (x,\rho)&=&\tau f_7(x,\rho).\label{NG-E(3.17)}
\end{eqnarray}
In addition, we consider the following boundary conditions  
\begin{eqnarray}
u(0) = 0, \quad u( \alpha ) = v (\alpha), \quad v (\beta) = w(\beta),\quad w(L)=0,\label{NG-E(3.18)}\\
\kappa_2 v_x (\alpha) +\delta_1  z_x (\alpha)+\delta_2 \eta_x(\alpha,1)=\kappa_1  u _x (\alpha), \quad  \kappa_2 v_x (\beta) +\delta_1 z_x (\beta)+\delta_2 \eta_x(\beta,1)=\kappa_3 w _x (\beta), \label{NG-E(3.19)}\\
\eta(\cdot,0)=z(\cdot).\label{NG-E(3.20)}
\end{eqnarray}
From \eqref{NG-E(3.11)}-\eqref{NG-E(3.13)} and the fact that $F\in \mathcal{H}$, we obtain $(y,z,\phi)\in \mathbb{H}^1_*$. Next, from \eqref{NG-E(3.12)}, \eqref{NG-E(3.20)} and the fact that $f_2\in H^1_*(\alpha,\beta)$, we get
\begin{equation*}
\eta(\cdot,0)=z(\cdot)=-f_2(\cdot)\in H^1_*(\alpha,\beta).
\end{equation*}
From the above equation and Equation \eqref{NG-E(3.17)}, we can determine
\begin{equation}\label{NG-E(3.21)}
\eta(x,\rho)=\tau \int_0^{\rho}f_7(x,\xi)\, d\xi  -f_2(x).
\end{equation}
Since $f_2\in H^1_*(\alpha,\beta)$ and $f_7\in L^2\left((0,1),H^1_*(\alpha,\beta)\right)$, then it is clear that $\eta,\, \eta_\rho\in L^2((0,1),H^1_*(0,1))$. Now,  let $\left(\varphi, \psi, \theta\right)\in \mathbb{H}^1_*$.  Multiplying Equations \eqref{NG-E(3.14)}, \eqref{NG-E(3.15)}, \eqref{NG-E(3.16)} by $\overline{\varphi}$, $\overline{\psi}$, $\overline{\theta}$, integrating over $(0,\alpha),\ (\alpha,\beta)$ and $(\beta,L)$ respectively, taking the sum, then using by parts integration, we get
\begin{equation}\label{NG-E(3.22)}
\begin{array}{ll}
\displaystyle{
\kappa_1\int_0^{\alpha} u_x\overline{\varphi}_xdx+\int_{\alpha}^{\beta} (\kappa_2 v_x+\delta_1 z_x+\delta_2\eta_x(\cdot,1))\overline{\psi}_xdx+\kappa_3\int_{\beta}^L
w_x\overline{\theta}_xdx+\kappa_1u_x(0)\overline{\varphi}(0)-\kappa_3w_x(L)\overline{\theta}(L)}\nline

\displaystyle{-\kappa_1 u_x(\alpha)\overline{\varphi}(\alpha)+
(\kappa_2v_x(\alpha)+\delta_1z_x(\alpha)+\delta_2\eta_x(\alpha,1))\overline{\psi}(\alpha)
-(\kappa_2v_x(\beta)+\delta_1z_x(\beta)+\delta_2\eta_x(\beta,1))\overline{\psi}(\beta)
}\nline
\displaystyle{
+\kappa_3 w_x(\beta)\overline{\theta}(\beta)=\int_0^{\alpha} f_4\overline{\varphi}dx+\int_{\alpha}^{\beta} f_5\overline{\psi}dx+\int_{\beta}^L
f_6\overline{\theta}dx.}
\end{array}
\end{equation}
From the fact that $\left(\varphi, \psi, \theta\right)\in \mathbb{H}^1_*,$ we have
\begin{equation*}
\varphi(0)=0,\quad \varphi(\alpha)=\psi(\alpha),\quad \theta(\beta)=\psi(\beta),\quad \theta(L)=0.
\end{equation*}
Inserting the above equation  in \eqref{NG-E(3.22)}, then  using \eqref{NG-E(3.12)}, \eqref{NG-E(3.19)} and  \eqref{NG-E(3.21)}, we get
\begin{equation}\label{NG-E(3.23)}
\begin{array}{ll}
\displaystyle{
\kappa_1\int_0^{\alpha} u_x\overline{\varphi}_xdx+\kappa_2\int_{\alpha}^{\beta}  v_x\overline{\psi}_xdx+\kappa_3\int_{\beta}^L
w_x\overline{\theta}_xdx}\nline

\displaystyle{
=\int_0^{\alpha} f_4\overline{\varphi}dx+\int_{\alpha}^{\beta} f_5\overline{\psi}dx+\int_{\beta}^L
f_6\overline{\theta}dx+\int_{\alpha}^{\beta} \left((\delta_1+\delta_2) (f_2)_x-\delta_2\tau \int_0^{1}(f_7(\cdot,\xi))_x\, d\xi   \right)\overline{\psi}_xdx.}
\end{array}
\end{equation}
We can easily verify that the left hand side of  \eqref{NG-E(3.23)} is a bilinear continuous coercive form on $\mathbb{H}^1_* \times \mathbb{H}^1_*$,  and the right hand side of   \eqref{NG-E(3.23)} is a linear continuous form on $\mathbb{H}^1_*$. Then, using Lax-Milgram theorem, we deduce that there exists $(u,v,w) \in \mathbb{H}^1_*  $ unique solution of the variational Problem \eqref{NG-E(3.23)}. Using standard arguments, we can show that $(u,\kappa_2v+\delta_1z+\delta_2\eta(\cdot,1) , w) \in \mathbb{H}^2$. Thus, from \eqref{NG-E(3.11)}-\eqref{NG-E(3.13)}, \eqref{NG-E(3.21)} and applying the classical elliptic regularity we deduce that $U=(u,v,w,y,z,\phi,\eta(\cdot,\cdot))\in D({\mathcal{A}_2})$.  The proof is thus complete.
\end{proof}
\noindent Thanks to Lumer-Philips theorem (see \cite{Pazy01}), we deduce that $\mathcal{A}_2$ generates a $C_0-$semigroup of contractions $e^{t\mathcal{A}_2}$ in ${\mathcal{H}_2}$ and therefore Problem \eqref{NG-E(3.1)}-\eqref{NG-E(3.7)} is well-posed. 
\subsection{Polynomial Stability}\label{NG-S-3.2}
 The main result in this subsection is the following theorem.
\begin{Theorem}\label{NG-T-3.2}  
Under hypothesis {\rm(H1)}, for all initial data ${U}_0 \in D(\mathcal{A}_2),$ there exists a constant $C>0$ independent of ${U}_0$ such that the energy of System \eqref{NG-E(3.1)}-\eqref{NG-E(3.7)} satisfies the following estimation
 \begin{equation*}
 E(t) \leq  \frac{C}{t^4} \left\|{U}_0\right\|^2_{D(\mathcal{A}_2)}, \quad \forall t>0.
 \end{equation*}
 \end{Theorem}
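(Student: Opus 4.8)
The strategy mirrors exactly the proof of Theorem \ref{NG-T-2.7}: invoke the Borichev--Tomilov criterion (Theorem \ref{NG-T-A.5}, part (ii)) and show that for the generator $\mathcal{A}_2$ one has $i\mathbb{R}\subset\rho(\mathcal{A}_2)$ together with the resolvent growth estimate
\begin{equation*}
\sup_{\lambda\in\mathbb{R}}\left\|\left(i\lambda I-\mathcal{A}_2\right)^{-1}\right\|_{\mathcal{L}(\mathcal{H}_2)}=O\left(|\lambda|^{\frac{1}{2}}\right),
\end{equation*}
which yields the $t^{-4}$ decay rate. First I would establish $i\mathbb{R}\subset\rho(\mathcal{A}_2)$: since $0\in\rho(\mathcal{A}_2)$ by Proposition \ref{NG-P-3.1}, it remains to treat $\lambda\in\mathbb{R}^*$, and this splits into an injectivity lemma (no purely imaginary eigenvalues) and a surjectivity lemma, both proved as in Lemmas \ref{NG-L-2.5} and \ref{NG-L-2.6}. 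For injectivity, the dissipation identity \eqref{NG-E(3.10)} forces $z_x=0$ on $(\alpha,\beta)$; then the transport equation for $\eta$ gives $\eta_x(\cdot,\rho)=\eta_x(\cdot,0)e^{-i\lambda\tau\cdot}=z_x e^{-i\lambda\tau\cdot}=0$, so the delay term drops out and $v$ solves a plain elliptic ODE on $(\alpha,\beta)$ with, after using \eqref{NG-E(3.6)}, enough Cauchy data at $\alpha$ (or $\beta$) to conclude $v\equiv 0$; propagating through the transmission conditions into the elastic pieces $(0,\alpha)$ and $(\beta,L)$ with their Dirichlet ends kills $u$ and $w$ as well. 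For surjectivity one sets up the same Lax--Milgram / Fredholm-alternative argument: solve for $\eta$ explicitly from \eqref{NG-E(3.17)}-type relations, reduce to a variational problem on $\mathbb{H}^1_*$, split the sesquilinear form as $a_1+a_2$ with $a_1$ coercive and $a_2$ compact (the compactness again coming from the trace/embedding argument), and identify $\ker$ with $\ker(i\lambda I-\mathcal{A}_2)=\{0\}$.

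**The resolvent estimate.** This is the heart of the matter and again proceeds by contradiction: assume sequences $\lambda_n\to+\infty$, $\|U_n\|_{\mathcal{H}_2}=1$, and $\lambda_n^{1/2}(i\lambda_n I-\mathcal{A}_2)U_n=F_n\to 0$ in $\mathcal{H}_2$. Writing out the system componentwise (analogues of \eqref{NG-E(2.87)}--\eqref{NG-E(2.93)}), I would first extract from $\mathrm{Re}\langle(i\lambda-\mathcal{A}_2)U,U\rangle$ and \eqref{NG-E(3.10)} the baseline estimates $\int_\alpha^\beta|z_x|^2=o(\lambda^{-\ell})$ and, via the $\eta$-equation, $\int_0^1\int_\alpha^\beta|\eta_x(x,1)|^2\,dx\,d\rho$-type smallness and $\int_0^1\int_\alpha^\beta|\eta_x|^2=o(\lambda^{-\ell})$ (the analogue of Lemma \ref{NG-L-2.9}). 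From $i\lambda v_x - z_x = \lambda^{-\ell}(f_2)_x$ one gets $\int_\alpha^\beta|v_x|^2=o(\lambda^{-\ell-2})$. Then come the piecewise-multiplier estimates: multiply the $w$-equation by $x\overline{w}_x$ on $(\beta,L)$ and the $u$-equation by $x\overline{u}_x$ (or $(x-L)\overline{u}_x$, $(x-L)\overline{w}_x$ with the correct orientation so the Dirichlet ends are exploited) on $(0,\alpha)$, use the transmission conditions at $\alpha$ and $\beta$ to transfer boundary terms, and — exactly as in Lemmas \ref{NG-L-2.10}, \ref{NG-L-2.11}, \ref{NG-L-2.13} — bootstrap with the cut-off function $g\in C^1([\alpha,\beta])$, $g(\beta)=-g(\alpha)=1$, multiplying the $v$-equation by $g\overline{z}$, by $-g(\kappa_2\overline{v}_x+\delta_1\overline{z}_x+\delta_2\overline{\eta}_x(\cdot,1))$, and by $-i\lambda^{-1}\overline{z}$ to control $\int_\alpha^\beta|z|^2$, the boundary traces $|z(\alpha)|,|z(\beta)|$, and the flux traces. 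Chaining these (as in the final display of the proof of Theorem \ref{NG-T-2.7}) yields $\|U_n\|_{\mathcal{H}_2}^2=o\!\left(\lambda^{-\min(2\ell-1,\ell-1/2)}\right)$, and the choice $\ell=\tfrac12$ gives $\|U_n\|_{\mathcal{H}_2}=o(1)$, contradicting $\|U_n\|_{\mathcal{H}_2}=1$.

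**Where the difficulty lies.** The genuinely new feature compared with System \eqref{NG-E(1.1)} is that the delay is now \emph{internal} and \emph{strong} (it enters through $\delta_2\eta_x(\cdot,1)$, i.e.\ in the stress term inside $(\alpha,\beta)$), and the state space carries $\eta$ in $L^2((0,1),H^1_*(\alpha,\beta))$ with the quotient spaces $L^2_*$, $H^1_*$. So the main obstacle is handling the coupling term $\delta_2\eta_x(\cdot,1)$ in the multiplier identities: when I multiply \eqref{NG-E(3.2)} by $g\,\overline{z}$ or by the combined flux, the term $\delta_2\int_\alpha^\beta \eta_x(\cdot,1)\,(\cdots)$ must be absorbed using the dissipation bound on $\int|\eta_x(\cdot,1)|^2$ and a Young inequality, and one must be careful that the transport equation \eqref{NG-E(3.4)} only gives $\eta_x(x,1)=\eta_x(x,0)e^{-i\lambda\tau}+\tau\lambda^{-\ell}\int_0^1 e^{i\lambda\tau(\xi-1)}(f_7)_x\,d\xi$, so $\|\eta_x(\cdot,1)\|=\|z_x\|+o(\lambda^{-\ell})$ rather than being independently small — the smallness of $\|\eta_x(\cdot,1)\|$ is inherited from $\|z_x\|=o(\lambda^{-\ell})$, which is exactly what \eqref{NG-E(3.10)} provides. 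A secondary technical point is that the Poincaré-type inequalities and the coercivity of $a_1$ must be set up on the $H^1_*$ spaces, but this is routine once one notes the zero-mean constraint is compatible with the transmission conditions. Modulo these adaptations, every estimate transcribes verbatim from Subsection \ref{NG-S-2.1.3} with the same exponents, so the optimal choice remains $\ell=\tfrac12$ and the decay rate is $t^{-4}$.
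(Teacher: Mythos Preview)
Your multiplier strategy on $(\alpha,\beta)$, $(0,\alpha)$, $(\beta,L)$ and the bootstrap for $\int_\alpha^\beta|z|^2$ are essentially the paper's own argument (Lemmas~\ref{NG-L-3.4}--\ref{NG-L-3.7}), and the exponent bookkeeping leading to $\ell=\tfrac12$ is the same.  Two organizational differences with the paper are worth noting.

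First, the paper does \emph{not} prove $i\mathbb{R}\subset\rho(\mathcal{A}_2)$ via a separate injectivity/surjectivity (Fredholm) argument as you propose.  Instead it packages all the multiplier estimates into a single quantitative Proposition~\ref{NG-P-3.3} with explicit constants $K_j$ and tunable parameters $s_j,r_j$, yielding two inequalities: \eqref{NG-E(3.34)}, valid for all $\lambda\in\mathbb{R}^*$, is combined with Lemma~\ref{NG-L-A.3}/Remark~\ref{NG-R-A.4} (the contraction--mapping blow-up argument near a hypothetical $i\omega\notin\rho(\mathcal{A}_2)$) to force $\|U_n\|\to0$ along $\lambda_n\to\omega$; and \eqref{NG-E(3.35)}, for $|\lambda|\ge M$, yields the $O(|\lambda|^{1/2})$ resolvent bound.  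This unified route avoids a potential snag in your Fredholm surjectivity step: when you solve the transport equation and substitute $\eta_x(\cdot,1)=i\lambda v_x e^{-i\tau\lambda}+\ldots$, the principal sesquilinear form on $(\alpha,\beta)$ carries the coefficient $\kappa_2+i\lambda\delta_1+i\lambda\delta_2 e^{-i\tau\lambda}$, whose real part $\kappa_2+\lambda\delta_2\sin(\tau\lambda)$ need not be positive, so the Lax--Milgram coercivity used for $a_1$ in Lemma~\ref{NG-L-2.6} is no longer immediate.  One can still show the associated operator is an isomorphism (the coefficient is nonzero under (H1) since its imaginary part is $\lambda(\delta_1+\delta_2\cos(\tau\lambda))\neq0$, and the transmission problem is explicitly solvable), but it is not the one-line verification you suggest; the paper's approach sidesteps this entirely.

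Second, the dependency chain among the multiplier lemmas is reversed compared with Subsection~\ref{NG-S-2.1.3}.  There, boundary dissipation at $x=L$ made Lemma~\ref{NG-L-2.10} on $(\beta,L)$ the starting point; here there is no boundary dissipation, so the paper first bounds the traces at \emph{both} $\alpha$ and $\beta$ from the interior (Lemma~\ref{NG-L-3.5}), then uses $x\overline{u}_x$ on $(0,\alpha)$ and $(x-L)\overline{w}_x$ on $(\beta,L)$ simultaneously (Lemma~\ref{NG-L-3.6}) to control both elastic pieces in terms of $\int_\alpha^\beta|z|^2$, and only then closes the loop (Lemma~\ref{NG-L-3.7}).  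You allude to the correct multipliers, but the phrase ``exactly as in Lemmas~\ref{NG-L-2.10}, \ref{NG-L-2.11}, \ref{NG-L-2.13}'' understates this structural change.
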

\noindent	According to Theorem \ref{NG-T-A.5} (part (ii)), we have to check if the following conditions hold,
\begin{equation}\label{NG-E(3.24)}
	\ i\mathbb{R}\subseteq\rho\left(\mathcal{A}_2\right) 
\end{equation}
and
\begin{equation}\label{NG-E(3.25)}
\sup_{\lambda\in\mathbb{R}}\left\|\left(i\lambda I-\mathcal{A}_2\right)^{-1}\right\|_{\mathcal{L}\left(\mathcal{H}_2\right)}=O\left(|\lambda|^{\frac{1}{2}}\right).
\end{equation}
The next proposition is a technical result to be used in the proof of Theorem \ref{NG-T-3.2}   given below.
\begin{Proposition}\label{NG-P-3.3}
Under hypothesis {\rm(H1)}, let  $(\lambda,{U}:=\left(u,v,w ,y,z,\phi,\eta(\cdot,\cdot)\right))\in  \mathbb{R}^*\times D\left(\mathcal{A}_2\right),$
such that
\begin{equation}\label{NG-E(3.26)}
 (i \lambda I - \mathcal{A}_2){U} ={F}:=(f_1,f_2,f_3,f_4,f_5,f_6,g(\cdot,\cdot))\in \mathcal{H}_2,   
\end{equation}
{\it i.e. }
 \begin{eqnarray}
i \lambda u -y =  f_1&\textrm{in} \quad H^1(0,\alpha), \label{NG-E(3.27)}\\
i \lambda v -z  =  f_2 &\textrm{in} \quad H^1_*(\alpha,\beta), \label{NG-E(3.28)}\\
i \lambda w - \phi =   f_3 & \textrm{in}\quad  H^1(\beta,L), \label{NG-E(3.29)}\\
i \lambda  y - \kappa_1 u_{xx} =     f_4& \textrm{in} \quad L^2(0,\alpha), \label{NG-E(3.30)}\\
i \lambda  z -\left(\kappa_2 v_x + \delta_1 z_x+\delta_2\eta_x(\cdot,1)\right)_{x}=    f_5& \textrm{in} \quad L^2_*(\alpha,\beta), \label{NG-E(3.31)}\\
i \lambda  \phi - \kappa_3 w_{xx}=     f_6&\textrm{in} \quad L^2(\beta,L), \label{NG-E(3.32)}\\
 \eta_\rho(\cdot,\cdot)+i\tau \lambda\eta(\cdot,\cdot)= \tau  g(\cdot,\cdot)&\textrm{in} \quad L^2\left((0,1),H^1_*(\alpha,\beta)\right).\label{NG-E(3.33)}
\end{eqnarray}
Then, we have the following inequality 
\begin{equation}\label{NG-E(3.34)}
\left\|{U}\right\|_{{\mathcal{H}_2}}^2 \leq K_1 \lambda^{-4}\left(|\lambda|+1\right)^6\left(\left\|F\right\|_{\mathcal{H}_2}\left\|U\right\|_{\mathcal{H}_2}+\left\|F\right\|_{\mathcal{H}_2}^2\right).
\end{equation}
In addition, if $|\lambda|\geq M>0,$ then we have
{\begin{equation}\label{NG-E(3.35)}
\left\|{U}\right\|_{{\mathcal{H}_2}}^2\leq K_2\left(\sqrt{M}+\frac{1}{\sqrt{M}}\right)^2|\lambda|^{\frac{1}{2}}\left(1+ |\lambda|^{-\frac{1}{2} }\right)^8\left(\left\|F\right\|_{\mathcal{H}_2}\left\|U\right\|_{\mathcal{H}_2}+\left\|F\right\|_{\mathcal{H}_2}^2\right).
\end{equation}}
Here and below we denote  by $K_j$   a positive constant number independent of $\lambda$. 
\end{Proposition}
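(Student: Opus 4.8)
The plan is to run the same frequency-domain machinery used for Theorem \ref{NG-T-2.7}, but adapted to the internal-delay setting, so that all terms of $\|U\|_{\mathcal H_2}^2$ are controlled by $\|F\|_{\mathcal H_2}\|U\|_{\mathcal H_2}+\|F\|_{\mathcal H_2}^2$ up to an explicit power of $(|\lambda|+1)$. First I would take the inner product of \eqref{NG-E(3.26)} with $U$ and use the dissipation identity \eqref{NG-E(3.10)}; under hypothesis (H1), $\delta_1-|\delta_2|>0$, so $(\delta_1-|\delta_2|)\int_\alpha^\beta|z_x|^2\,dx = -\mathrm{Re}\langle\mathcal A_2 U,U\rangle = \mathrm{Re}\langle F,U\rangle$, giving $\int_\alpha^\beta|z_x|^2\,dx = O(\|F\|_{\mathcal H_2}\|U\|_{\mathcal H_2})$. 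Going back to the full identity \eqref{NG-E(3.9)}/\eqref{NG-E(3.10)}, the cross term $-\delta_2\mathrm{Re}\int_\alpha^\beta\eta_x(\cdot,1)\overline{z}_x\,dx$ together with $\int_\alpha^\beta|z_x|^2$ also controls $\int_\alpha^\beta|\eta_x(x,1)|^2\,dx = O(\|F\|_{\mathcal H_2}\|U\|_{\mathcal H_2})$. From \eqref{NG-E(3.28)}, $z_x = i\lambda v_x - (f_2)_x$, hence $\int_\alpha^\beta|v_x|^2\,dx = O\big(\lambda^{-2}(\|F\|_{\mathcal H_2}\|U\|_{\mathcal H_2}+\|F\|_{\mathcal H_2}^2)\big)$. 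Solving the transport equation \eqref{NG-E(3.33)} with boundary value $\eta(\cdot,0)=z(\cdot)$ and integrating, I get $\int_0^1\int_\alpha^\beta|\eta_x(x,\rho)|^2\,dx\,d\rho$ bounded in terms of $\int_\alpha^\beta|z_x|^2\,dx$ and $\|g\|^2$, i.e. $O(\|F\|_{\mathcal H_2}\|U\|_{\mathcal H_2}+\|F\|_{\mathcal H_2}^2)$ — this is the analogue of Lemma \ref{NG-L-2.9}.

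Next I would propagate the estimates to the two elastic pieces by the multiplier $x\overline{u}_x$ on $(0,\alpha)$ and $x\overline{w}_x$ on $(\beta,L)$, exactly as in Lemmas \ref{NG-L-2.10} and \ref{NG-L-2.13}: using \eqref{NG-E(3.27)}, \eqref{NG-E(3.29)} to replace $i\lambda\overline{u}_x,i\lambda\overline{w}_x$ by $-\overline{y}_x,-\overline{\phi}_x$ up to data, then taking real parts and integrating by parts. Because $w(L)=0$ here (Dirichlet at $L$, not a delayed Neumann condition), the boundary term at $L$ is harmless; the key boundary data at $x=\alpha$ and $x=\beta$ come from the transmission conditions in $D(\mathcal A_2)$, which read $\kappa_1 u_x(\alpha)=\kappa_2 v_x(\alpha)+\delta_1 z_x(\alpha)+\delta_2\eta_x(\alpha,1)$ and similarly at $\beta$. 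So I need to bound $|z(\alpha)|,|z(\beta)|$ and $|\kappa_2 v_x(\alpha)+\delta_1 z_x(\alpha)+\delta_2\eta_x(\alpha,1)|$, $|\kappa_2 v_x(\beta)+\delta_1 z_x(\beta)+\delta_2\eta_x(\beta,1)|$. For this I would adapt the three-step argument of Lemma \ref{NG-L-2.11}: multiply \eqref{NG-E(3.31)} by $2g\overline{z}$ (resp. $-2g(\kappa_2\overline v_x+\delta_1\overline z_x+\delta_2\overline\eta_x(\cdot,1))$, resp. $-i\lambda^{-1}\overline z$) with $g\in C^1([\alpha,\beta])$, $g(\beta)=-g(\alpha)=1$, integrate over $(\alpha,\beta)$, integrate by parts, and feed in the already-established bounds on $\int|z_x|^2$, $\int|v_x|^2$, $\int|\eta_x(\cdot,1)|^2$. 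This yields $\int_\alpha^\beta|z|^2\,dx$, $|z(\alpha)|^2$, $|z(\beta)|^2$ and the flux traces, each bounded by a fixed power of $(|\lambda|+1)$ times $(\|F\|_{\mathcal H_2}\|U\|_{\mathcal H_2}+\|F\|_{\mathcal H_2}^2)$; the extra factor $\delta_2\eta_x(\cdot,1)$ in the flux is new relative to \S\ref{NG-S-2.1.3} but is estimated just like $z_x$.

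Combining the piecewise estimates gives \eqref{NG-E(3.34)}: $\|U\|_{\mathcal H_2}^2 = \kappa_1\int_0^\alpha|u_x|^2+\kappa_2\int_\alpha^\beta|v_x|^2+\kappa_3\int_\beta^L|w_x|^2+\int_0^\alpha|y|^2+\int_\alpha^\beta|z|^2+\int_\beta^L|\phi|^2+\tau|\delta_2|\int_0^1\int_\alpha^\beta|\eta_x|^2$, and each summand carries at most the factor $\lambda^{-4}(|\lambda|+1)^6$ coming from bookkeeping the powers of $\lambda$ through the multiplier identities (the $\lambda^{-4}$ reflecting the $\lambda^{-2}$ gained twice — once in $\int|v_x|^2$ and once more in the final $-i\lambda^{-1}\overline z$ step — while intermediate steps lose powers of $\lambda$). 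For \eqref{NG-E(3.35)}, I would redo the bookkeeping keeping $|\lambda|$ large, choosing the splitting parameters (the Young's-inequality weights, which at the critical step scale like $|\lambda|^{1/2}$, cf. the choice $\lambda>4c_{g'}^2$ in Lemma \ref{NG-L-2.11}) so that the absorbing term $\big(\tfrac12-c_{g'}|\lambda|^{-1/2}\big)\int_\alpha^\beta|z|^2$ stays bounded below, producing the stated $|\lambda|^{1/2}(1+|\lambda|^{-1/2})^8$ bound with the constant $K_2(\sqrt M+1/\sqrt M)^2$ absorbing the lower-frequency correction. The main obstacle I anticipate is purely organizational: tracking the exact exponents of $(|\lambda|+1)$ through the chain of multiplier estimates so that the final powers $6$ (in \eqref{NG-E(3.34)}) and $8$ (in \eqref{NG-E(3.35)}) come out correctly, and making sure the extra delayed-flux term $\delta_2\eta_x(\cdot,1)$ never spoils the coercivity that lets us absorb $\int_\alpha^\beta|z|^2$ on the left-hand side.
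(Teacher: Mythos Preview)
Your approach matches the paper's proof: dissipation gives the damped-region bounds (Lemma \ref{NG-L-3.4}), cutoff multipliers give the trace estimates (Lemma \ref{NG-L-3.5}), the $x$-multipliers give the elastic pieces (Lemma \ref{NG-L-3.6}), the $-i\lambda^{-1}\overline z$ multiplier gives the absorbing estimate on $\int|z|^2$ (Lemma \ref{NG-L-3.7}), and two different choices of the Young-inequality parameters then yield \eqref{NG-E(3.34)} and \eqref{NG-E(3.35)}. Three small corrections: (i) on $(\beta,L)$ use the multiplier $2(x-L)\overline w_x$, not $x\overline w_x$---you only know $w(L)=\phi(L)=0$, not $w_x(L)$, so with $x\overline w_x$ an uncontrolled $L\kappa_3|w_x(L)|^2$ survives; (ii) for the trace $|z(\alpha)|^2+|z(\beta)|^2$ multiply the \emph{differentiated} relation \eqref{NG-E(3.28)}, i.e.\ $z_x=i\lambda v_x-(f_2)_x$, by $2g\overline z$, not \eqref{NG-E(3.31)}; (iii) the bound on $\int_\alpha^\beta|\eta_x(x,1)|^2\,dx$ does not come from the dissipation identity (the cross term there can cancel it) but from the explicit formula $\eta_x(x,1)=z_x(x)e^{-i\tau\lambda}+\tau\int_0^1 e^{i\tau\lambda(\xi-1)}g_x(x,\xi)\,d\xi$ obtained by solving \eqref{NG-E(3.33)}.
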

Before stating  the  proof of Proposition \ref{NG-P-3.3}, { let $h \in C^1\left([\alpha,\beta]\right)$ such that 
\begin{equation*}
h(\alpha)=-h(\beta) = 1,\quad \max \limits_ {x \in [\alpha, \beta]} |h(x)|= C_{h}\quad \text{and}\quad \max \limits_ {x \in [\alpha, \beta]} |h'(x)| = C_{h'},
\end{equation*}
where $C_{h}$ and $C_{h'}$ are strictly positive constant numbers independent of $\lambda$. An example about $h$, we can take $h(x)=-\frac{2\left(x-\alpha\right)}{\beta-\alpha}+1$ to get 
\begin{equation*}
h(\alpha)=-h(\beta)=1,\quad h\in   C^1([\alpha,\beta]),\quad C_{h}=  1,\quad C_{h'}= \frac{2}{\beta-\alpha}.
\end{equation*}}  For the proof of Proposition \ref{NG-P-3.3}, we need the following lemmas.
\begin{lemma}\label{NG-L-3.4}
 Under hypothesis {\rm(H1)},  the solution $(u,v,w, y,z , \phi,\eta(\cdot,\cdot))\in D(\mathcal{A}_2)$ of Equation \eqref{NG-E(3.26)} satisfies the following estimations
\begin{eqnarray}
\int_\alpha^\beta|z_x|^2dx\leq  K_3\left\|F\right\|_{\mathcal{H}_2}\left\|U\right\|_{\mathcal{H}_2},\label{NG-E(3.36)}\nline
\int_\alpha^\beta|v_x|^2dx\leq 
 K_4 \lambda^{-2}\left(\left\|F\right\|_{\mathcal{H}_2}\left\|U\right\|_{\mathcal{H}_2}+\left\|F\right\|_{\mathcal{H}_2}^2\right) ,\label{NG-E(3.37)}\nline
\int_\alpha^\beta\int_0^1|\eta_x(x,\rho)|^2dx\, d\rho\leq  K_5\left(\left\|F\right\|_{\mathcal{H}_2}\left\|U\right\|_{\mathcal{H}_2}+\left\|F\right\|_{\mathcal{H}_2}^2\right),\label{NG-E(3.38)}\\
 \int_\alpha^\beta\left|\kappa_2 {v}_x + \delta_1 {z}_x+\delta_2{\eta}_x(\cdot,1)\right|^2 dx\leq K_6\left(1+\lambda^{-2}\right)\left(\left\|F\right\|_{\mathcal{H}_2}\left\|U\right\|_{\mathcal{H}_2}+\left\|F\right\|_{\mathcal{H}_2}^2\right),\label{NG-E(3.39)}
\end{eqnarray}
where
\begin{equation*}
\left\{
\begin{array}{ll}
K_3=\left(\delta_1-|\delta_2|\right)^{-1},\quad K_4=2\max\left(K_3,\kappa_2^{-1}\right),\nline
K_5=2\max\left(K_3,\tau |\delta_2|^{-1}\right),\quad K_6=3\max\left(\kappa_2^2K_4,\delta_1^2K_3+\delta_2^2K_5\right).
\end{array}
\right.
\end{equation*}
\end{lemma}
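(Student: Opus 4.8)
The plan is to read off all four estimates from a single application of the dissipation identity together with the explicit representation of the transport variable $\eta$. First I would test Equation \eqref{NG-E(3.26)} against $U$ in $\mathcal{H}_2$: since $\mathrm{Re}\langle i\lambda U,U\rangle_{\mathcal{H}_2}=0$, taking real parts gives $-\mathrm{Re}\langle\mathcal{A}_2 U,U\rangle_{\mathcal{H}_2}=\mathrm{Re}\langle F,U\rangle_{\mathcal{H}_2}\le\|F\|_{\mathcal{H}_2}\|U\|_{\mathcal{H}_2}$. Combining this with inequality \eqref{NG-E(3.10)}, namely $\mathrm{Re}\langle\mathcal{A}_2 U,U\rangle_{\mathcal{H}_2}\le(-\delta_1+|\delta_2|)\int_\alpha^\beta|z_x|^2\,dx$, and using that hypothesis (H1) forces $\delta_1-|\delta_2|>0$, I obtain $(\delta_1-|\delta_2|)\int_\alpha^\beta|z_x|^2\,dx\le\|F\|_{\mathcal{H}_2}\|U\|_{\mathcal{H}_2}$, which is exactly \eqref{NG-E(3.36)} with $K_3=(\delta_1-|\delta_2|)^{-1}$.

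Next, differentiating \eqref{NG-E(3.28)} in $x$ yields $i\lambda v_x=z_x+(f_2)_x$, hence $\lambda^2|v_x|^2\le 2|z_x|^2+2|(f_2)_x|^2$; integrating over $(\alpha,\beta)$, inserting \eqref{NG-E(3.36)}, and using $\int_\alpha^\beta|(f_2)_x|^2\,dx\le\kappa_2^{-1}\|F\|_{\mathcal{H}_2}^2$ (immediate from the definition of the $\mathcal{H}_2$-norm) gives \eqref{NG-E(3.37)}. For \eqref{NG-E(3.38)} the key step is to solve the first-order ODE \eqref{NG-E(3.33)} in $\rho$ with the domain datum $\eta(x,0)=z(x)$, obtaining
\[
\eta(x,\rho)=z(x)e^{-i\tau\lambda\rho}+\tau\int_0^\rho e^{i\tau\lambda(\xi-\rho)}g(x,\xi)\,d\xi .
\]
Differentiating in $x$, applying Cauchy--Schwarz in $\xi$ and then $|a+b|^2\le 2|a|^2+2|b|^2$ gives the pointwise bound $|\eta_x(x,\rho)|^2\le 2|z_x(x)|^2+2\tau^2\int_0^1|g_x(x,\xi)|^2\,d\xi$ for every $\rho\in(0,1)$. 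Integrating over $(\alpha,\beta)\times(0,1)$ and using \eqref{NG-E(3.36)} together with $\int_0^1\int_\alpha^\beta|g_x|^2\,dx\,d\rho\le(\tau|\delta_2|)^{-1}\|F\|_{\mathcal{H}_2}^2$ gives \eqref{NG-E(3.38)}; specializing the same pointwise bound to $\rho=1$ gives the auxiliary estimate $\int_\alpha^\beta|\eta_x(x,1)|^2\,dx\le K_5(\|F\|_{\mathcal{H}_2}\|U\|_{\mathcal{H}_2}+\|F\|_{\mathcal{H}_2}^2)$, to be used at once.

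Finally, \eqref{NG-E(3.39)} follows from $|\kappa_2 v_x+\delta_1 z_x+\delta_2\eta_x(\cdot,1)|^2\le 3\kappa_2^2|v_x|^2+3\delta_1^2|z_x|^2+3\delta_2^2|\eta_x(\cdot,1)|^2$, integrating over $(\alpha,\beta)$ and inserting \eqref{NG-E(3.36)}, \eqref{NG-E(3.37)} and the auxiliary bound on $\eta_x(\cdot,1)$; the $\lambda$-free terms and the $\lambda^{-2}$ term are then collected into the factor $(1+\lambda^{-2})$, which fixes $K_6=3\max(\kappa_2^2K_4,\delta_1^2K_3+\delta_2^2K_5)$.

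I do not expect a genuine obstacle in this lemma: it is the energy-dissipation bookkeeping step, and every inequality is just the dissipation identity, Cauchy--Schwarz, or Young's inequality; the only mildly delicate point is keeping the constants consistent with the stated $K_j$. The real difficulty of Theorem \ref{NG-T-3.2} will arise in the subsequent lemmas, where one must recover control of $\int_0^\alpha|u_x|^2\,dx$, $\int_\beta^L|w_x|^2\,dx$ and of the transmission traces on the two purely elastic intervals by multiplier / piecewise-multiplier arguments; that is where the loss of $|\lambda|^{1/2}$ in the resolvent estimate \eqref{NG-E(3.25)} originates.
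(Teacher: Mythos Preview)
Your proposal is correct and follows essentially the same route as the paper: the dissipation identity \eqref{NG-E(3.10)} for \eqref{NG-E(3.36)}, differentiation of \eqref{NG-E(3.28)} for \eqref{NG-E(3.37)}, the explicit Duhamel formula for the transport equation \eqref{NG-E(3.33)} for \eqref{NG-E(3.38)} and the auxiliary $\eta_x(\cdot,1)$ bound, and the three-term Young inequality for \eqref{NG-E(3.39)}. Your assessment that this lemma is routine bookkeeping while the $|\lambda|^{1/2}$ loss originates in the later multiplier lemmas is also accurate.
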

\begin{proof} First, taking the inner product of \eqref{NG-E(3.26)} with ${U}$ in $\mathcal{H}_2$, then using  hypothesis  {\rm (H1)}, arguing in the same way as \eqref{NG-E(3.10)}, we obtain
\begin{equation*}
\int_\alpha^\beta|z_x|^2dx\leq -\frac{1}{\delta_1-|\delta_2|}\text{Re}\left<\mathcal{A}_2{U} , {U}\right>_{{\mathcal{H}_2}}=\frac{1}{\delta_1-|\delta_2|}\text{Re}\left<F, {U}\right>_{{\mathcal{H}_2}} \leq \frac{1}{\delta_1-|\delta_2|}\left\|F\right\|_{\mathcal{H}_2}\left\|U\right\|_{\mathcal{H}_2},
\end{equation*}
hence we get \eqref{NG-E(3.36)}. Next, from \eqref{NG-E(3.28)}, \eqref{NG-E(3.36)} and the fact that $\kappa_2 \int_\alpha^\beta|(f_2)_x|^2dx\leq \left\|F\right\|_{\mathcal{H}_2}^2$, we obtain
\begin{equation*}
\begin{array}{ll}

\int_\alpha^\beta|v_x|^2dx&\leq 2 \lambda^{-2}\int_\alpha^\beta|z_x|^2dx+2\lambda^{-2}\int_\alpha^\beta|(f_2)_x|^2dx\nline &\leq 
2 \lambda^{-2}\left( K_3\left\|F\right\|_{\mathcal{H}_2}\left\|U\right\|_{\mathcal{H}_2}+\kappa_2^{-1}\left\|F\right\|_{\mathcal{H}_2}^2\right)\leq 2 \lambda^{-2}\max\left(K_3,\kappa_2^{-1}\right)\left(\left\|F\right\|_{\mathcal{H}_2}\left\|U\right\|_{\mathcal{H}_2}+\left\|F\right\|_{\mathcal{H}_2}^2\right) .

\end{array}
\end{equation*}
therefore  we get \eqref{NG-E(3.37)}. Now, from \eqref{NG-E(3.33)} and using the fact that $U\in D\left(\mathcal{A}_2\right)\, (\text{{\it i.e. }} \eta(\cdot,0)=z(\cdot))$, we obtain 
\begin{equation}\label{NG-E(3.40)}
\eta(x,\rho)=z(x) e^{-i\tau\lambda\, \rho}+\tau \int_0^\rho e^{i\tau\lambda\left(\xi-\rho\right)} g(x,\xi)d\xi \quad (x,\rho)\in (\alpha,\beta)\times (0,1),
\end{equation}
consequently, we obtain
\begin{equation*}
\int_\alpha^\beta\int_0^1|\eta_x(x,\rho)|^2dx\, d\rho\leq2\int_\alpha^\beta|z_x|^2dx+2\tau^2 \int_\alpha^\beta\int_0^1  \left|g_x(x,\xi)\right|^2d\xi dx.
\end{equation*}
Inserting \eqref{NG-E(3.36)} in the above equation, then using the fact that $\tau |\delta_2|\int_\alpha^\beta\int_0^1  \left|g_x(x,\xi)\right|^2d\xi dx \leq \left\|F\right\|_{\mathcal{H}_2}^2$, we obtain
\begin{equation*}
\begin{array}{ll}
\int_\alpha^\beta\int_0^1|\eta_x(x,\rho)|^2dx\, d\rho&\leq 2K_3\left\|F\right\|_{\mathcal{H}_2}\left\|U\right\|_{\mathcal{H}_2}+2\tau|\delta_2|^{-1} \left\|F\right\|_{\mathcal{H}_2}^2
\nline&
\leq 2\max\left(K_3,\tau|\delta_2|^{-1}\right)\left(\left\|F\right\|_{\mathcal{H}_2}\left\|U\right\|_{\mathcal{H}_2}+\left\|F\right\|_{\mathcal{H}_2}^2\right),
\end{array}
\end{equation*}
hence we get \eqref{NG-E(3.38)}. On the other hand, from \eqref{NG-E(3.40)}, we get
\begin{equation*}
\eta_x(x,1)=z_x(x) e^{-i\tau\lambda}+\tau \int_0^1 e^{i\tau\lambda\left(\xi-1\right)} g_x(x,\xi)d\xi \quad x\in (\alpha,\beta).
\end{equation*}
From the above equation and \eqref{NG-E(3.36)}, we obtain
\begin{equation*}
\begin{array}{ll}

\int_\alpha^\beta|\eta_x(x,1)|^2dx\, &\leq2\int_\alpha^\beta|z_x|^2dx+2\tau^2 \int_\alpha^\beta\int_0^1  \left|g_x(x,\xi)\right|^2d\xi dx\nline
&\leq 2K_3\left\|F\right\|_{\mathcal{H}_2}\left\|U\right\|_{\mathcal{H}_2}+2\tau|\delta_2|^{-1} \left\|F\right\|_{\mathcal{H}_2}^2

\nline & \leq 2\max\left(K_3,\tau|\delta_2|^{-1}\right)\left(\left\|F\right\|_{\mathcal{H}_2}\left\|U\right\|_{\mathcal{H}_2}+\left\|F\right\|_{\mathcal{H}_2}^2\right).
\end{array}
\end{equation*}
Finally, from \eqref{NG-E(3.36)}, \eqref{NG-E(3.37)} and the above inequality, we get
\begin{equation*}
\begin{array}{lll}

\int_\alpha^\beta\left|\kappa_2 {v}_x + \delta_1 {z}_x+\delta_2{\eta}_x(\cdot,1)\right|^2 dx&\leq 3\kappa_2^2\int_\alpha^\beta|v_x|^2dx+3\delta_1^2\int_\alpha^\beta|z_x|^2dx+3\delta_2^2\int_\alpha^\beta|\eta_x(\cdot,1)|^2dx\nline
&\leq 3\left(\kappa_2^2K_4 \lambda^{-2}+\delta_1^2K_3+\delta_2^2K_5\right)\left(\left\|F\right\|_{\mathcal{H}_2}\left\|U\right\|_{\mathcal{H}_2}+\left\|F\right\|_{\mathcal{H}_2}^2\right)\nline
&\leq 3\max\left(\kappa_2^2K_4,\delta_1^2K_3+\delta_2^2K_5\right)\left(1+\lambda^{-2}\right)\left(\left\|F\right\|_{\mathcal{H}_2}\left\|U\right\|_{\mathcal{H}_2}+\left\|F\right\|_{\mathcal{H}_2}^2\right),

\end{array}
\end{equation*}
hence we get \eqref{NG-E(3.39)}.
\end{proof}
\begin{lemma}\label{NG-L-3.5}
 Under hypothesis {\rm(H1)},  for all $s_1,\, s_2\in\mathbb{R}$ and $r_1,\, r_2\in \mathbb{R}^*_+$, the solution $(u,v,w, y,z , \phi,\eta(\cdot,\cdot))\in D(\mathcal{A}_2)$ of Equation \eqref{NG-E(3.26)} satisfies the following estimations
\begin{equation}\label{NG-E(3.41)}
|z(\beta)|^2+|z(\alpha)|^2  \leq\left(C_{h'}+\frac{|\lambda|^{\frac{1}{2}-s_1}}{r_1}\right)\int_\alpha^\beta |z|^2dx +K_7 r_1 C_{h}^2|\lambda|^{-\frac{1}{2}+s_1} \left(\left\|F\right\|_{\mathcal{H}_2}\left\|U\right\|_{\mathcal{H}_2}+\left\|F\right\|_{\mathcal{H}_2}^2\right)
\end{equation}
and
\begin{equation}\label{NG-E(3.42)}
\begin{array}{lll}

\left|\kappa_2 {v}_x(\beta) + \delta_1 {z}_x(\beta)+\delta_2{\eta}_x(\beta,1)\right|^2+\left|\kappa_2 {v}_x(\alpha) + \delta_1{z}_x(\alpha)+\delta_2{\eta}_x(\alpha,1)\right|^2
\nline
\leq \frac{|\lambda|^{\frac{3}{2}-s_2}}{r_2}\int_\alpha^\beta|z|^2dx
+K_8 \left(C_{h'}+C_{h}+r_2C_{h}^2|\lambda|^{\frac{1}{2}+s_2 }\right)\left(1+\lambda^{-2}\right)\left(\left\|F\right\|_{\mathcal{H}_2}\left\|U\right\|_{\mathcal{H}_2}+\left\|F\right\|_{\mathcal{H}_2}^2\right),

\end{array}
\end{equation}
where
\begin{equation*}
K_7=2\left(K_4+\kappa_2^{-1}\right),\quad  K_8=K_6+1.
\end{equation*}
\end{lemma}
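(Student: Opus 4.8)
The plan is to carry out, in this delayed setting, the piecewise--multiplier computation of Lemma~\ref{NG-L-2.11} (Steps~1 and~2), with the combined flux $P:=\kappa_2 v_x+\delta_1 z_x+\delta_2\eta_x(\cdot,1)$ now playing the role that $\kappa_2 v_x+\delta_1 z_x$ had there, and with all error terms kept in the quantitative form $\|F\|_{\mathcal{H}_2}\|U\|_{\mathcal{H}_2}+\|F\|_{\mathcal{H}_2}^2$ instead of being sent to zero. Throughout I will use the cut--off function $h$ fixed just before the lemma together with the four estimates \eqref{NG-E(3.36)}--\eqref{NG-E(3.39)} of Lemma~\ref{NG-L-3.4}. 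Since $U\in D(\mathcal{A}_2)$ we have $\kappa_2 v+\delta_1 z+\delta_2\eta(\cdot,1)\in H^2(\alpha,\beta)$, hence $P\in H^1(\alpha,\beta)$; in particular the boundary traces $z(\alpha),z(\beta),P(\alpha),P(\beta)$ are well defined and every integration by parts below is legitimate.

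\emph{First estimate, \eqref{NG-E(3.41)}.} Differentiating \eqref{NG-E(3.28)} gives $z_x=i\lambda v_x-(f_2)_x$ in $L^2(\alpha,\beta)$. Multiplying this identity by $2h\overline z$, integrating over $(\alpha,\beta)$ and taking the real part converts the left--hand side into $\int_\alpha^\beta h\,(|z|^2)_x\,dx$; one integration by parts and the normalisation $h(\alpha)=-h(\beta)=1$ then yield
\begin{equation*}
|z(\beta)|^2+|z(\alpha)|^2\ \le\ C_{h'}\int_\alpha^\beta|z|^2\,dx+2|\lambda|\,C_h\int_\alpha^\beta|v_x|\,|z|\,dx+2C_h\int_\alpha^\beta|(f_2)_x|\,|z|\,dx .
\end{equation*}
I would then split the two cross integrals by Young's inequality, tuned so that together they leave the coefficient $|\lambda|^{1/2-s_1}/r_1$ on $\int_\alpha^\beta|z|^2\,dx$: the $v_x$--integral produces a weight of order $r_1C_h^2|\lambda|^{3/2+s_1}\int_\alpha^\beta|v_x|^2\,dx$, which \eqref{NG-E(3.37)} turns into $r_1C_h^2K_4|\lambda|^{-1/2+s_1}(\|F\|_{\mathcal{H}_2}\|U\|_{\mathcal{H}_2}+\|F\|_{\mathcal{H}_2}^2)$ up to a numerical factor, while the $(f_2)_x$--integral produces a weight of order $r_1C_h^2|\lambda|^{-1/2+s_1}\int_\alpha^\beta|(f_2)_x|^2\,dx\le r_1C_h^2\kappa_2^{-1}|\lambda|^{-1/2+s_1}\|F\|_{\mathcal{H}_2}^2$. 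Adding the $C_{h'}$--term and absorbing the numerical constants into $K_7=2(K_4+\kappa_2^{-1})$ gives \eqref{NG-E(3.41)}.

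\emph{Second estimate, \eqref{NG-E(3.42)}.} The scheme is identical with $P$ in place of $z$: from \eqref{NG-E(3.31)} one has $P_x=i\lambda z-f_5$ in $L^2(\alpha,\beta)$, and multiplying by $-2h\overline P$, integrating over $(\alpha,\beta)$ and taking the real part gives, after the analogous integration by parts,
\begin{equation*}
|P(\beta)|^2+|P(\alpha)|^2\ \le\ C_{h'}\int_\alpha^\beta|P|^2\,dx+2|\lambda|\,C_h\int_\alpha^\beta|z|\,|P|\,dx+2C_h\int_\alpha^\beta|f_5|\,|P|\,dx .
\end{equation*}
Here only the $z$--$P$ integral carries $|z|^2$, so it is split in full, $2|\lambda|C_h|z||P|\le \frac{|\lambda|^{3/2-s_2}}{r_2}|z|^2+r_2C_h^2|\lambda|^{1/2+s_2}|P|^2$, and every remaining $\int_\alpha^\beta|P|^2\,dx$ factor (from this split, from the $C_{h'}$--term, and from the $f_5$--term, the latter handled with $\int_\alpha^\beta|f_5|^2\,dx\le\|F\|_{\mathcal{H}_2}^2$) is replaced using \eqref{NG-E(3.39)}, $\int_\alpha^\beta|P|^2\,dx\le K_6(1+\lambda^{-2})(\|F\|_{\mathcal{H}_2}\|U\|_{\mathcal{H}_2}+\|F\|_{\mathcal{H}_2}^2)$. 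Collecting the weights $C_{h'}$, $C_h$, $r_2C_h^2|\lambda|^{1/2+s_2}$ and putting $K_8=K_6+1$ yields \eqref{NG-E(3.42)}.

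I do not expect a conceptual obstacle here: this is a routine multiplier estimate parallel to Lemma~\ref{NG-L-2.11}. The only delicate point — and the reason for the elaborate statement — is the bookkeeping of the four free parameters $(r_1,s_1)$ and $(r_2,s_2)$: the Young splittings must be arranged so that $\int_\alpha^\beta|z|^2\,dx$ is left with exactly the coefficients $C_{h'}+|\lambda|^{1/2-s_1}/r_1$ and $|\lambda|^{3/2-s_2}/r_2$, while every other contribution is uniform in $\lambda$ up to the displayed powers and is controlled solely through $\|F\|_{\mathcal{H}_2}$ and $\|U\|_{\mathcal{H}_2}$ by Lemma~\ref{NG-L-3.4}. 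This flexibility is precisely what will be exploited next, in the proof of Proposition~\ref{NG-P-3.3}, to optimise the power of $\lambda$ and obtain the resolvent estimates \eqref{NG-E(3.34)}--\eqref{NG-E(3.35)}.
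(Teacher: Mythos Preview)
Your proposal is correct and follows essentially the same approach as the paper: the same multiplier $2h\overline z$ applied to $z_x=i\lambda v_x-(f_2)_x$ for \eqref{NG-E(3.41)}, the same multiplier $2h\overline P$ applied to \eqref{NG-E(3.31)} for \eqref{NG-E(3.42)}, and the same Young splittings with parameters $(r_1,s_1)$, $(r_2,s_2)$ feeding into Lemma~\ref{NG-L-3.4}. The bookkeeping you sketch matches the paper's constants exactly.
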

\begin{proof} First,  from  Equation \eqref{NG-E(3.28)}, we have
\begin{equation*}
 -z_x  =  (f_2)_x-i \lambda v_x.
\end{equation*}
Multiplying the above equation by $2h \overline{z}$, integrating over $(\alpha,\beta)$ and taking the real parts, then using  by parts integration and the fact that $h(\alpha)=-h(\beta) = 1$, we get
\begin{equation}\label{NG-E(3.43)}
\begin{array}{lll}

\displaystyle{|z(\beta)|^2+|z(\alpha)|^2} & =\displaystyle{-\int_\alpha^\beta h'\,  |z|^2dx -2\text{Re}\left\{\int_\alpha^\beta h \left(i\lambda v_x-(f_2)_x\right) \overline{z}dx\right\}}
\nline
&\leq
\displaystyle{

C_{h' }\int_\alpha^\beta |z|^2dx +2C_{h }|\lambda|\int_\alpha^\beta |v_x||{z}|dx+2C_{h }\int_\alpha^\beta|(f_2)_x||{z}|dx
}.

\end{array}
\end{equation}
On the other hand, for all $s_1\in\mathbb{R}$ and $r_1\in\mathbb{R}^*_+$, we have 
\begin{equation*}
2 C_{h }|\lambda|    |v_x| |z|\leq \frac{|\lambda|^{\frac{1}{2}-s_1} |z|^2}{2r_1}+2r_1 C_{h }^2|\lambda|^{\frac{3}{2}+s_1} \, |v_x|^2\ \ \ \text{and} \ \ \ 2C_{h } |(f_2)_x| |z|\leq  \frac{|\lambda|^{\frac{1}{2}-s_1}|z|^2}{2r_1} +2r_1C_{h }^2 |\lambda|^{-\frac{1}{2}+s_1}|(f_2)_x|^2.
\end{equation*}
Inserting the above equation in \eqref{NG-E(3.43)}, then using \eqref{NG-E(3.37)} and the fact that $\int_\alpha^\beta|(f_2)_x|^2dx\leq \kappa_2^{-1} \left\|F\right\|_{\mathcal{H}_2}^2,$ we get
\begin{equation*}
\begin{array}{ll}

|z(\beta)|^2+|z(\alpha)|^2
\nline \leq \left(C_{h' }+\frac{|\lambda|^{\frac{1}{2}-s_1}}{r_1}\right)\int_\alpha^\beta |z|^2dx +2r_1C_{h }^2|\lambda|^{s_1}\left(|\lambda|^{\frac{3}{2}} \int_\alpha^\beta |v_x|^2dx+ |\lambda|^{-\frac{1}{2}}\int_\alpha^\beta|(f_2)_x|^2dx\right)

\nline  \leq\left(C_{h' }+\frac{|\lambda|^{\frac{1}{2}-s_1}}{r_1}\right)\int_\alpha^\beta |z|^2dx +2r_1C_{h }^2|\lambda|^{s_1-\frac{1}{2}}\left( K_4 \left(\left\|F\right\|_{\mathcal{H}_2}\left\|U\right\|_{\mathcal{H}_2}+\left\|F\right\|_{\mathcal{H}_2}^2\right)+ \kappa_2^{-1}\left\|F\right\|_{\mathcal{H}_2}^2\right)
\nline 
\leq\left(C_{h' }+\frac{|\lambda|^{\frac{1}{2}-s_1}}{r_1}\right)\int_\alpha^\beta |z|^2dx +2r_1C_{h }^2|\lambda|^{s_1-\frac{1}{2}}\left(K_4+\kappa_2^{-1}\right)\left(\left\|F\right\|_{\mathcal{H}_2}\left\|U\right\|_{\mathcal{H}_2}+\left\|F\right\|_{\mathcal{H}_2}^2\right),

\end{array}
\end{equation*}
hence we get \eqref{NG-E(3.41)}. Next, multiplying  Equation \eqref{NG-E(3.31)} by $2h \left(\kappa_2 \overline{v}_x + \delta_1 \overline{z}_x+\delta_2\overline{\eta}_x(\cdot,1)\right)$, integrating over $(\alpha,\beta)$ and taking the real parts, then using  by parts integration, we get
\begin{equation*}
\begin{array}{lll}

\left|\kappa_2 {v}_x(\beta) + \delta_1 {z}_x(\beta)+\delta_2{\eta}_x(\beta,1)\right|^2+\left|\kappa_2 {v}_x(\alpha) + \delta_1 {z}_x(\alpha)+\delta_2{\eta}_x(\alpha,1)\right|^2\nline
=-\int_\alpha^\beta h' \left|\kappa_2 {v}_x + \delta_1 {z}_x+\delta_2{\eta}_x(\cdot,1)\right|^2dx

\displaystyle{+2\text{Re}\int_\alpha^\beta h\left(f_5-i\lambda\, z\right)\left(\kappa_2 \overline{v}_x + \delta_1 \overline{z}_x+\delta_2\overline{\eta}_x(\cdot,1)\right)dx},

\end{array}
\end{equation*}
consequently, we have
\begin{equation}\label{NG-E(3.44)}
\begin{array}{lll}

\displaystyle{

\left|\kappa_2 {v}_x(\beta) + \delta_1{z}_x(\beta)+\delta_2{\eta}_x(\beta,1)\right|^2+\left|\kappa_2 {v}_x(\alpha) + \delta_1{z}_x(\alpha)+\delta_2{\eta}_x(\alpha,1)\right|^2

}\nline

\displaystyle{
\leq C_{h'}\int_\alpha^\beta \left|\kappa_2 {v}_x + \delta_1 {z}_x+\delta_2{\eta}_x(\cdot,1)\right|^2dx

+2C_{h}\int_\alpha^\beta|f_5|\left|\kappa_2 {v}_x + \delta_1 {z}_x+\delta_2{\eta}_x(\cdot,1)\right|dx

}

\nline

\ \ \ \displaystyle{
+2C_{h}|\lambda|\int_\alpha^\beta|z|\left|\kappa_2 {v}_x + \delta_1 {z}_x+\delta_2{\eta}_x(\cdot,1)\right|dx}.

\end{array}
\end{equation}
On the other hand, for all $s_2\in\mathbb{R}$ and $r_2\in\mathbb{R}^*_+$, we have
\begin{equation*}
\left\{
\begin{array}{ll}

2  C_{h} |f_5| |\left|\kappa_2 {v}_x + \delta_1 {z}_x+\delta_2{\eta}_x(\cdot,1)\right|\leq C_{h} |f_5|^2+ C_{h}   \left|\kappa_2 {v}_x + \delta_1 {z}_x+\delta_2{\eta}_x(\cdot,1)\right|^2,\nline
2C_{h} |\lambda|   |z||\left|\kappa_2 {v}_x + \delta_1 {z}_x+\delta_2{\eta}_x(\cdot,1)\right|\leq \frac{|\lambda|^{\frac{3}{2}-s_2}}{r_2}|z|^2+r_2C_{h}^2|\lambda|^{\frac{1}{2}+s_2 } |\left|\kappa_2 {v}_x + \delta_1 {z}_x+\delta_2{\eta}_x(\cdot,1)\right|^2.

\end{array}
\right.
\end{equation*}
Inserting the above equation in \eqref{NG-E(3.44)}, we get 
\begin{equation*}
\begin{array}{lll}

\displaystyle{

\left|\kappa_2 {v}_x(\beta) + \delta_1 {z}_x(\beta)+\delta_2{\eta}_x(\beta,1)\right|^2+\left|\kappa_2 {v}_x(\alpha) + \delta_1 {z}_x(\alpha)+\delta_2{\eta}_x(\alpha,1)\right|^2\leq \frac{|\lambda|^{\frac{3}{2}-s_2}}{r_2}\int_\alpha^\beta|z|^2dx

}\nline

\displaystyle{+\left(C_{h'}+C_{h}+r_2C_{h}^2|\lambda|^{\frac{1}{2}+s_2 }\right)\left[\int_\alpha^\beta|\left|\kappa_2 {v}_x + \delta_1 {z}_x+\delta_2{\eta}_x(\cdot,1)\right|^2dx+\int_\alpha^\beta|f_5|^2dx\right].

}

\end{array}
\end{equation*}
Inserting \eqref{NG-E(3.39)} in the above equation, then using  the fact that 
\begin{equation*}
\int_\alpha^\beta|f_5|^2dx\leq \left\|F\right\|^2_{\mathcal{H}_2}\leq \left(1+\lambda^{-2}\right)\left(\left\|F\right\|_{\mathcal{H}_2}\left\|U\right\|_{\mathcal{H}_2}+\left\|F\right\|_{\mathcal{H}_2}^2\right),
\end{equation*}
 we get 
 \begin{equation*}
\begin{array}{lll}

\left|\kappa_2 {v}_x(\beta) + \delta_1 {z}_x(\beta)+\delta_2{\eta}_x(\beta,1)\right|^2+\left|\kappa_2 {v}_x(\alpha) + \delta_1{z}_x(\alpha)+\delta_2{\eta}_x(\alpha,1)\right|^2\nline\leq \frac{|\lambda|^{\frac{3}{2}-s_2}}{r_2}\int_\alpha^\beta|z|^2dx

+ (K_6+1) \left(C_{h'}+C_{h}+r_2C_{h}^2|\lambda|^{\frac{1}{2}+s_2 }\right)\left(1+\lambda^{-2}\right)\left(\left\|F\right\|_{\mathcal{H}_2}\left\|U\right\|_{\mathcal{H}_2}+\left\|F\right\|_{\mathcal{H}_2}^2\right),

\end{array}
\end{equation*}
hence we get \eqref{NG-E(3.42)}.
\end{proof}
\begin{lemma}\label{NG-L-3.6}
 Under hypothesis {\rm(H1)},  for all $s_1,\, s_2\in\mathbb{R}$,  the solution $(u,v,w, y,z , \phi,\eta(\cdot,\cdot))\in D(\mathcal{A}_2)$ of Equation \eqref{NG-E(3.26)} satisfies the following estimation
 {\begin{equation}\label{NG-E(3.45)}
\begin{array}{ll}
\int_{0}^{\alpha}  |y|^2  dx+ \kappa_1 \int_{0}^{\alpha}  |u_{x}|^2  dx +\int_{\beta}^{L}  |\phi|^2  dx+ \kappa_3 \int_{\beta}^{L}  |w_{x}|^2  dx

\nline\leq K_9\left(1+|\lambda|^{\frac{1}{2}-s_1}+|\lambda|^{\frac{3}{2}-s_2}\right)\int_\alpha^\beta |z|^2dx

\nline+K_{10}\left(1+|\lambda|^{-\frac{1}{2}+s_1}+|\lambda|^{\frac{1}{2}+s_2 }\right)\left(1+\lambda^{-2}\right)\left(\left\|F\right\|_{\mathcal{H}_2}\left\|U\right\|_{\mathcal{H}_2}+\left\|F\right\|_{\mathcal{H}_2}^2\right),
\end{array}
\end{equation}
where
\begin{equation*}
K_9=\max\left[\max(\alpha,L-\beta),\max\left(\kappa^{-1}_1\alpha,\kappa^{-1}_3\,(L-\beta)\right)\right]\max\left(C_{h'},1\right)
\end{equation*}
and 
\begin{equation*}
K_{10}=2\max\left[K_7 C_{h}^2\max(\alpha,L-\beta) ,\ K_8\max\left(\kappa^{-1}_1\alpha, \kappa^{-1}_3\,(L-\beta)\right)\max\left(C_{h'}+C_{h},C_{h}^2\right),\ 4\left(\alpha\kappa_1^{-\frac{1}{2}}+(L-\beta)\kappa_3^{-\frac{1}{2}}\right)\right]. 
\end{equation*}}
\end{lemma}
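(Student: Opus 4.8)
The plan is to adapt, now on \emph{both} elastic zones $(0,\alpha)$ and $(\beta,L)$ simultaneously, the piecewise-multiplier computations already carried out for a single elastic piece in Lemmas \ref{NG-L-2.10} and \ref{NG-L-2.13}, and then to feed in the boundary estimates \eqref{NG-E(3.41)}--\eqref{NG-E(3.42)} of Lemma \ref{NG-L-3.5} together with \eqref{NG-E(3.37)}--\eqref{NG-E(3.39)} of Lemma \ref{NG-L-3.4}. Throughout I would use, for $F=(f_1,\dots,f_6,g)\in\mathcal H_2$, the elementary bounds $\|f_4\|_{L^2(0,\alpha)}$, $\|f_6\|_{L^2(\beta,L)}\leq\|F\|_{\mathcal H_2}$, $\|(f_1)_x\|_{L^2(0,\alpha)}\leq\kappa_1^{-\frac12}\|F\|_{\mathcal H_2}$, $\|(f_3)_x\|_{L^2(\beta,L)}\leq\kappa_3^{-\frac12}\|F\|_{\mathcal H_2}$, as well as $\|u_x\|_{L^2(0,\alpha)}\leq\kappa_1^{-\frac12}\|U\|_{\mathcal H_2}$, $\|w_x\|_{L^2(\beta,L)}\leq\kappa_3^{-\frac12}\|U\|_{\mathcal H_2}$ and $\|y\|_{L^2(0,\alpha)},\|\phi\|_{L^2(\beta,L)}\leq\|U\|_{\mathcal H_2}$.

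\emph{Step 1 (the zone $(0,\alpha)$).} Multiply \eqref{NG-E(3.30)} by $x\overline u_x$, integrate over $(0,\alpha)$, and replace $i\lambda\overline u_x=-\overline y_x-\overline{(f_1)}_x$ (from \eqref{NG-E(3.27)}) to remove the factor $i\lambda$ from the first term. Taking real parts and integrating by parts, the boundary contribution at $x=0$ vanishing because $u(0)=0$, yields
\begin{equation*}
\int_0^\alpha|y|^2dx+\kappa_1\int_0^\alpha|u_x|^2dx=\alpha\bigl(|y(\alpha)|^2+\kappa_1|u_x(\alpha)|^2\bigr)+2\,\mathrm{Re}\!\int_0^\alpha x\bigl(f_4\overline u_x+y\,\overline{(f_1)}_x\bigr)dx,
\end{equation*}
whose last term is bounded by $4\alpha\kappa_1^{-\frac12}\|F\|_{\mathcal H_2}\|U\|_{\mathcal H_2}$. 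Since $U\in D(\mathcal A_2)$ we have $y(\alpha)=z(\alpha)$ and $\kappa_1u_x(\alpha)=\kappa_2v_x(\alpha)+\delta_1z_x(\alpha)+\delta_2\eta_x(\alpha,1)$, hence $|y(\alpha)|^2+\kappa_1|u_x(\alpha)|^2=|z(\alpha)|^2+\kappa_1^{-1}|\kappa_2v_x(\alpha)+\delta_1z_x(\alpha)+\delta_2\eta_x(\alpha,1)|^2$.

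\emph{Step 2 (the zone $(\beta,L)$).} The same computation applied to \eqref{NG-E(3.32)}, but now with the multiplier $(x-L)\overline w_x$ — chosen so that the boundary term at $x=L$ is killed by $w(L)=0$ (this is the one point where one cannot simply copy Lemma \ref{NG-L-2.10}, which used $x\overline w_x$) — and with $i\lambda\overline w_x=-\overline\phi_x-\overline{(f_3)}_x$ from \eqref{NG-E(3.29)}, gives
\begin{equation*}
\int_\beta^L|\phi|^2dx+\kappa_3\int_\beta^L|w_x|^2dx=(L-\beta)\bigl(|\phi(\beta)|^2+\kappa_3|w_x(\beta)|^2\bigr)+2\,\mathrm{Re}\!\int_\beta^L(x-L)\bigl(f_6\overline w_x+\phi\,\overline{(f_3)}_x\bigr)dx,
\end{equation*}
the error term being $\leq 4(L-\beta)\kappa_3^{-\frac12}\|F\|_{\mathcal H_2}\|U\|_{\mathcal H_2}$, and $D(\mathcal A_2)$ again gives $\phi(\beta)=z(\beta)$, $\kappa_3w_x(\beta)=\kappa_2v_x(\beta)+\delta_1z_x(\beta)+\delta_2\eta_x(\beta,1)$.

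\emph{Step 3 (conclusion).} Adding the two identities and bounding $\alpha,L-\beta$ and $\kappa_1^{-1}\alpha,\kappa_3^{-1}(L-\beta)$ by their maxima reduces the left-hand side of \eqref{NG-E(3.45)} to a multiple of $|z(\alpha)|^2+|z(\beta)|^2$, a multiple of $|\kappa_2v_x(\alpha)+\delta_1z_x(\alpha)+\delta_2\eta_x(\alpha,1)|^2+|\kappa_2v_x(\beta)+\delta_1z_x(\beta)+\delta_2\eta_x(\beta,1)|^2$, plus an error $\lesssim\bigl(\alpha\kappa_1^{-\frac12}+(L-\beta)\kappa_3^{-\frac12}\bigr)\|F\|_{\mathcal H_2}\|U\|_{\mathcal H_2}$. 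Inserting \eqref{NG-E(3.41)} with $r_1=1$ for the first pair and \eqref{NG-E(3.42)} with $r_2=1$ for the second, and then sorting the resulting powers of $|\lambda|$ — $|\lambda|^{\frac12-s_1}$ and $|\lambda|^{\frac32-s_2}$ multiplying $\int_\alpha^\beta|z|^2dx$, and $|\lambda|^{-\frac12+s_1}$, $|\lambda|^{\frac12+s_2}$ multiplying $(1+\lambda^{-2})\bigl(\|F\|_{\mathcal H_2}\|U\|_{\mathcal H_2}+\|F\|_{\mathcal H_2}^2\bigr)$, with the $C_{h'}$- and $C_h$-constants falling into the "$1$"-slot — produces exactly \eqref{NG-E(3.45)} with the stated $K_9,K_{10}$. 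The only genuinely delicate points are the sign bookkeeping in the integration-by-parts boundary terms (which dictates the multiplier $(x-L)\overline w_x$ on $(\beta,L)$) and the careful tracking of the $\lambda$-exponents carried over from Lemma \ref{NG-L-3.5}; everything else is routine.
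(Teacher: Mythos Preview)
Your proposal is correct and follows essentially the same route as the paper's proof: multiply \eqref{NG-E(3.30)} by $2x\overline{u}_x$ on $(0,\alpha)$ and \eqref{NG-E(3.32)} by $2(x-L)\overline{w}_x$ on $(\beta,L)$, replace $i\lambda\overline{u}_x$ and $i\lambda\overline{w}_x$ via \eqref{NG-E(3.27)} and \eqref{NG-E(3.29)}, integrate by parts, use the $D(\mathcal A_2)$ transmission conditions at $\alpha,\beta$, add the two resulting inequalities, and finally insert \eqref{NG-E(3.41)}--\eqref{NG-E(3.42)} with $r_1=r_2=1$. One tiny remark: the boundary contributions at $x=0$ and $x=L$ actually vanish already because the multipliers $x$ and $x-L$ vanish there (not because $u(0)=0$ or $w(L)=0$, which concern the functions rather than $y,\phi,u_x,w_x$), though $y(0)=\phi(L)=0$ would equally do the job.
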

\begin{proof} First, multiplying Equation \eqref{NG-E(3.30)} by $2x \overline{u}_x$, integrating over $(0,\alpha)$ and taking the real parts, then using  by parts integration, we get
\begin{equation}\label{NG-E(3.46)}
2\text{Re}\left\{i \lambda \int_{0}^{\alpha} x y \overline{u}_x dx\right\}+ \kappa_1 \int_{0}^{\alpha}  |u_{x}|^2  dx   =  \kappa_1\alpha |u_x(\alpha)|^2+2\text{Re}\left\{\int_{0}^{\alpha} x\, f_4\, \overline{u}_x dx\right\}.
\end{equation}
From \eqref{NG-E(3.27)}, we deduce that
\begin{equation*}
i \lambda \overline{u}_x = -\overline{y}_x - (\overline{f_1})_x .
\end{equation*}
Inserting the above result in \eqref{NG-E(3.46)}, then using  by parts integration, we get
\begin{equation*}
\int_{0}^{\alpha}  |y|^2  dx+ \kappa_1 \int_{0}^{\alpha}  |u_{x}|^2  dx   =  \kappa_1\alpha |u_x(\alpha)|^2+\alpha |y(\alpha)|^2+2\text{Re}\left\{\int_{0}^{\alpha} x\, f_4\, \overline{u}_x dx\right\}+2\text{Re}\left\{ \int_{0}^{\alpha} x\, y\,  (\overline{f_1})_x dx\right\},
\end{equation*}
consequently, we get
\begin{equation}\label{NG-E(3.47)}
\int_{0}^{\alpha}  |y|^2  dx+ \kappa_1 \int_{0}^{\alpha}  |u_{x}|^2  dx   \leq   \kappa_1\alpha |u_x(\alpha)|^2+\alpha |y(\alpha)|^2+2\alpha\left(\int_{0}^{\alpha}  |{u}_x| |f_4| dx+ \int_{0}^{\alpha} | y|  |({f_1})_x| dx\right).
\end{equation}
Using Cauchy Schwarz inequality, we get
\begin{equation}\label{NG-E(3.48)}
\int_{0}^{\alpha}  |{u}_x| |f_4| dx+ \int_{0}^{\alpha} | y|  |({f_1})_x| dx\leq 2\kappa_1^{-\frac{1}{2}}\left\|F\right\|_{\mathcal{H}_2}\left\|U\right\|_{\mathcal{H}_2}.
\end{equation}
On the other hand, since $U\in D\left(\mathcal{A}_2\right)$, we have
\begin{equation}\label{NG-E(3.49)}
\left\{\begin{array}{ll}

\displaystyle{

\kappa_1 |u _x (\alpha)|= |\kappa_2 v_x (\alpha) +\delta_1 z_x (\alpha)+\delta_2 \eta_x(\alpha,1)|,\ |y(\alpha)|=|z(\alpha)|,}\nline

\displaystyle{

\kappa_3 |w _x (\beta)|= |\kappa_2 v_x (\beta) +\delta_1 z_x (\beta)+\delta_2 \eta_x(\beta,1)|,\ |\phi(\beta)|=|z(\beta)| }.

 \end{array}\right.
\end{equation}
Substituting \eqref{NG-E(3.48)} and the boundary conditions \eqref{NG-E(3.49)} at $x=\alpha$  in \eqref{NG-E(3.47)},  we obtain  
\begin{equation}\label{NG-E(3.50)}
\int_{0}^{\alpha}  |y|^2  dx+ \kappa_1 \int_{0}^{\alpha}  |u_{x}|^2  dx   \leq 
\alpha \kappa_1^{-1}|\kappa_2 v_x (\alpha) +\delta_1 z_x (\alpha)+\delta_2 \eta_x(\alpha,1)|^2+\alpha  |z(\alpha)|^2+4\alpha \kappa_1^{-\frac{1}{2}}\left\|F\right\|_{\mathcal{H}_2}\left\|U\right\|_{\mathcal{H}_2}.
\end{equation}
Next,  by the same way, we multiply equation \eqref{NG-E(3.32)} by $2(x - L) \overline{w}_x$ and integrate over $(\beta, L),$ then we use \eqref{NG-E(3.29)}. Arguing in the same way as \eqref{NG-E(3.47)}, we get
\begin{equation*}
\int_{\beta}^{L}  |\phi|^2  dx+ \kappa_3 \int_{\beta}^{L}  |w_{x}|^2  dx \leq 
\kappa_3(L-\beta) |w_x(\beta)|^2+(L-\beta)  |\phi(\beta)|^2+4(L-\beta)\kappa_3^{-\frac{1}{2}}\left\|F\right\|_{\mathcal{H}_2}\left\|U\right\|_{\mathcal{H}_2}.
\end{equation*}
Substituting  the boundary conditions \eqref{NG-E(3.49)} at $x=\beta$  in the above equation,  we obtain
\begin{equation*}
\int_{\beta}^{L}  |\phi|^2  dx+ \kappa_3 \int_{\beta}^{L}  |w_{x}|^2  dx   \leq 
(L-\beta)\left[\kappa_3^{-1} |\kappa_2 v_x (\beta) +\delta_1 z_x (\beta)+\delta_2 \eta_x(\beta,1)|^2+|z(\beta)|^2+4\kappa_3^{-\frac{1}{2}}\left\|F\right\|_{\mathcal{H}_2}\left\|U\right\|_{\mathcal{H}_2}\right].
\end{equation*}
Now, adding the above equation and \eqref{NG-E(3.50)}, we get
\begin{equation*}
\begin{array}{ll}
\int_{0}^{\alpha}  |y|^2  dx+ \kappa_1 \int_{0}^{\alpha}  |u_{x}|^2  dx +\int_{\beta}^{L}  |\phi|^2  dx+ \kappa_3 \int_{\beta}^{L}  |w_{x}|^2  dx\leq \max(\alpha,L-\beta)\left(|z(\alpha)|^2+|z(\beta)|^2\right)
\nline+\max\left(\kappa^{-1}_1\alpha,\kappa^{-1}_3\,(L-\beta)\right)\left(|\kappa_2 v_x (\alpha) +\delta_1 z_x (\alpha)+\delta_2 \eta_x(\alpha,1)|^2+|\kappa_2 v_x (\beta) +\delta_1 z_x (\beta)+\delta_2 \eta_x(\beta,1)|^2\right)
\nline
+4\left(\alpha\kappa_1^{-\frac{1}{2}}+(L-\beta)\kappa_3^{-\frac{1}{2}}\right)\left\|F\right\|_{\mathcal{H}_2}\left\|U\right\|_{\mathcal{H}_2}.
\end{array}
\end{equation*}
Inserting \eqref{NG-E(3.41)} and \eqref{NG-E(3.42)} with $r_1=r_2=1$ in the above estimation, we get
\begin{equation*}
\begin{array}{ll}
\int_{0}^{\alpha}  |y|^2  dx+ \kappa_1 \int_{0}^{\alpha}  |u_{x}|^2  dx +\int_{\beta}^{L}  |\phi|^2  dx+ \kappa_3 \int_{\beta}^{L}  |w_{x}|^2  dx

\nline\leq \max\left[\max(\alpha,L-\beta),\max\left(\kappa^{-1}_1\alpha,\kappa^{-1}_3\,(L-\beta)\right)\right]\left(C_{h'}+|\lambda|^{\frac{1}{2}-s_1}+|\lambda|^{\frac{3}{2}-s_2}\right)\int_\alpha^\beta |z|^2dx

\nline+ \max(\alpha,L-\beta) K_7 C_{h}^2|\lambda|^{-\frac{1}{2}+s_1} \left(\left\|F\right\|_{\mathcal{H}_2}\left\|U\right\|_{\mathcal{H}_2}+\left\|F\right\|_{\mathcal{H}_2}^2\right)

\nline+\max\left(\kappa^{-1}_1\alpha,\kappa^{-1}_3\,(L-\beta)\right)K_8 \left(C_{h'}+C_{h}+C_{h}^2|\lambda|^{\frac{1}{2}+s_2 }\right)\left(1+\lambda^{-2}\right)\left(\left\|F\right\|_{\mathcal{H}_2}\left\|U\right\|_{\mathcal{H}_2}+\left\|F\right\|_{\mathcal{H}_2}^2\right)
\nline
+4\left(\alpha\kappa_1^{-\frac{1}{2}}+(L-\beta)\kappa_3^{-\frac{1}{2}}\right)\left\|F\right\|_{\mathcal{H}_2}\left\|U\right\|_{\mathcal{H}_2}.
\end{array}
\end{equation*}
In the above equation, using the fact that
\begin{equation*}
\left\{\begin{array}{ll}
\left\|F\right\|_{\mathcal{H}_2}\left\|U\right\|_{\mathcal{H}_2}\leq \left(1+\lambda^{-2}\right)\left(\left\|F\right\|_{\mathcal{H}_2}\left\|U\right\|_{\mathcal{H}_2}+\left\|F\right\|_{\mathcal{H}_2}^2\right),\nline

\left\|F\right\|_{\mathcal{H}_2}\left\|U\right\|_{\mathcal{H}_2}+\left\|F\right\|_{\mathcal{H}_2}^2\leq \left(1+\lambda^{-2}\right)\left(\left\|F\right\|_{\mathcal{H}_2}\left\|U\right\|_{\mathcal{H}_2}+\left\|F\right\|_{\mathcal{H}_2}^2\right),

\end{array}\right.
\end{equation*}
 we get 
 {\begin{equation*}
\begin{array}{ll}
\int_{0}^{\alpha}  |y|^2  dx+ \kappa_1 \int_{0}^{\alpha}  |u_{x}|^2  dx +\int_{\beta}^{L}  |\phi|^2  dx+ \kappa_3 \int_{\beta}^{L}  |w_{x}|^2  dx

\nline\leq \max\left[\max(\alpha,L-\beta),\max\left(\kappa^{-1}_1\alpha,\kappa^{-1}_3\,(L-\beta)\right)\right]\max\left(C_{h'},1\right)\left(1+|\lambda|^{\frac{1}{2}-s_1}+|\lambda|^{\frac{3}{2}-s_2}\right)\int_\alpha^\beta |z|^2dx

\nline+K_7 C_{h}^2 \max(\alpha,L-\beta) |\lambda|^{-\frac{1}{2}+s_1} \left(1+\lambda^{-2}\right)\left(\left\|F\right\|_{\mathcal{H}_2}\left\|U\right\|_{\mathcal{H}_2}+\left\|F\right\|_{\mathcal{H}_2}^2\right)

\nline+K_8\max\left(\kappa^{-1}_1\alpha,\kappa^{-1}_3\,(L-\beta)\right)\max\left(C_{h'}+C_{h},C_{h}^2\right) \left(1+|\lambda|^{\frac{1}{2}+s_2 }\right)\left(1+\lambda^{-2}\right)\left(\left\|F\right\|_{\mathcal{H}_2}\left\|U\right\|_{\mathcal{H}_2}+\left\|F\right\|_{\mathcal{H}_2}^2\right)
\nline
+4\left(\alpha\kappa_1^{-\frac{1}{2}}+(L-\beta)\kappa_3^{-\frac{1}{2}}\right)\left(1+\lambda^{-2}\right)\left(\left\|F\right\|_{\mathcal{H}_2}\left\|U\right\|_{\mathcal{H}_2}+\left\|F\right\|_{\mathcal{H}_2}^2\right),
\end{array}
\end{equation*}
 hence,  we get \eqref{NG-E(3.45)}.
 }
\end{proof}
\begin{lemma}\label{NG-L-3.7} 
Under hypothesis {\rm(H1)}, for all $s_1,\, s_2,\ s_3\in\mathbb{R}$ and $r_1,\, r_2,\, r_3\in \mathbb{R}^*_+$,  the solution $(u,v,w, y,z , \phi,\eta(\cdot,\cdot))\in D(\mathcal{A}_2)$ of Equation \eqref{NG-E(3.26)} satisfies the following estimations
{\begin{equation}\label{NG-E(3.51)}
\begin{array}{lll}
\left\|{U}\right\|_{{\mathcal{H}_2}}^2 \leq

K_{11}\left(1+|\lambda|^{\frac{1}{2}-s_1}+|\lambda|^{\frac{3}{2}-s_2}\right)\int_\alpha^\beta |z|^2dx 

\nline

\hspace{1.2cm}
+K_{12} \left(1+|\lambda|^{\frac{1}{2}+s_2 }+|\lambda|^{-\frac{1}{2}+s_1}\right)\left(1+\lambda^{-2}\right)\left(\left\|F\right\|_{\mathcal{H}_2}\left\|U\right\|_{\mathcal{H}_2}+\left\|F\right\|_{\mathcal{H}_2}^2\right)

\end{array}
\end{equation}
}
and
\begin{equation}\label{NG-E(3.52)}
R_{1,\lambda}\int_\alpha^\beta|z|^2dx\leq K_{13}\, R_{2,\lambda}\, \left(1+\lambda^{-2}\right)\left(\left\|F\right\|_{\mathcal{H}_2}\left\|U\right\|_{\mathcal{H}_2}+\left\|F\right\|_{\mathcal{H}_2}^2\right),
\end{equation}
such that
\begin{equation*}
\left\{
\begin{array}{ll}
R_{1,\lambda}=1- \frac{1}{2}\left(\frac{|\lambda|^{-s_3-s_2}}{r_2r_3}+\frac{r_3|\lambda|^{-s_1+s_3}}{r_1}+r_3C_{h'}|\lambda|^{s_3-\frac{1}{2}} \right),\nline
R_{2,\lambda}=C_{h}^2 |\lambda|^{-1 }(r_1 r_3 |\lambda|^{s_1+s_3}+r_2 r_3^{-1}|\lambda|^{s_2-s_3 })+ r_3^{-1}|\lambda|^{-s_3-\frac{3}{2}} (C_{h'}+C_{h})+|\lambda|^{-1},
\end{array}
\right.
\end{equation*}
where
\begin{equation*}
K_{11}=K_{9}+1,\quad K_{12}=K_{10}+\max\left(\kappa_2K_4,\tau|\delta_2|K_5\right),\quad K_{13}=\frac{\max\left(K_3+K_6+2,\max(K_7,K_8) \right)}{2}.
\end{equation*}
\end{lemma}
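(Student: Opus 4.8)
The plan is to prove the two estimates separately, both by repeating the piecewise–multiplier computations already used in Subsection \ref{NG-S-2.1.3}, but now carrying the constants explicitly in place of the $o(\cdot)$ bookkeeping of that section.

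Estimate \eqref{NG-E(3.51)} is essentially an assembly step. Writing out
\[
\|{U}\|_{\mathcal{H}_2}^2=\kappa_1\!\int_0^\alpha|u_x|^2dx+\kappa_2\!\int_\alpha^\beta|v_x|^2dx+\kappa_3\!\int_\beta^L|w_x|^2dx+\int_0^\alpha|y|^2dx+\int_\alpha^\beta|z|^2dx+\int_\beta^L|\phi|^2dx+\tau|\delta_2|\!\int_0^1\!\!\int_\alpha^\beta|\eta_x|^2dx\,d\rho,
\]
I would bound the four integrals over $(0,\alpha)$ and $(\beta,L)$ by Lemma \ref{NG-L-3.6}, the term $\kappa_2\int_\alpha^\beta|v_x|^2$ by \eqref{NG-E(3.37)}, the term $\tau|\delta_2|\int_0^1\!\int_\alpha^\beta|\eta_x|^2$ by \eqref{NG-E(3.38)}, and simply keep $\int_\alpha^\beta|z|^2$ as is. Adding these and collecting the powers of $|\lambda|$ produces \eqref{NG-E(3.51)} with $K_{11}=K_9+1$ (the $+1$ being the coefficient of $\int_\alpha^\beta|z|^2$) and $K_{12}=K_{10}+\max(\kappa_2 K_4,\tau|\delta_2|K_5)$. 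This part is pure bookkeeping.

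The substance of the lemma is \eqref{NG-E(3.52)}. I would multiply \eqref{NG-E(3.31)} by $-i\lambda^{-1}\overline z$, integrate over $(\alpha,\beta)$ and take the real part, exactly as in Step 3 of the proof of Lemma \ref{NG-L-2.11}. One integration by parts (the analogue of \eqref{NG-E(2.116)}) turns the principal term into a boundary contribution $|\lambda|^{-1}\big|\kappa_2 v_x(\zeta)+\delta_1 z_x(\zeta)+\delta_2\eta_x(\zeta,1)\big|\,|z(\zeta)|$ at $\zeta=\alpha,\beta$ plus an interior term $|\lambda|^{-1}\int_\alpha^\beta\big|\kappa_2 v_x+\delta_1 z_x+\delta_2\eta_x(\cdot,1)\big|\,|z_x|$, while the right-hand side contributes $|\lambda|^{-1}\int_\alpha^\beta|f_5|\,|z|$. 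The interior and forcing terms are handled by Cauchy--Schwarz together with \eqref{NG-E(3.36)} and \eqref{NG-E(3.39)}; these are what generate the $K_3+K_6+2$ constant inside $K_{13}$ and the $|\lambda|^{-1}$ summand of $R_{2,\lambda}$. For each boundary term I would apply Young's inequality in the form $|\lambda|^{-1}AB\le \tfrac{1}{2r_3}|\lambda|^{-s_3-\frac32}A^2+\tfrac{r_3}{2}|\lambda|^{s_3-\frac12}B^2$ with $A=\big|\kappa_2 v_x(\zeta)+\delta_1 z_x(\zeta)+\delta_2\eta_x(\zeta,1)\big|$ and $B=|z(\zeta)|$, then insert \eqref{NG-E(3.42)} for $\sum_\zeta A^2$ and \eqref{NG-E(3.41)} for $\sum_\zeta B^2$. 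The $\int_\alpha^\beta|z|^2$ contributions that come back out of \eqref{NG-E(3.41)}--\eqref{NG-E(3.42)} are precisely $\tfrac12\big(\tfrac{|\lambda|^{-s_3-s_2}}{r_2r_3}+\tfrac{r_3|\lambda|^{-s_1+s_3}}{r_1}+r_3C_{h'}|\lambda|^{s_3-\frac12}\big)\int_\alpha^\beta|z|^2$, which I move to the left to form the factor $R_{1,\lambda}$; everything left over regroups into $K_{13}\,R_{2,\lambda}\,(1+\lambda^{-2})\big(\|F\|_{\mathcal{H}_2}\|U\|_{\mathcal{H}_2}+\|F\|_{\mathcal{H}_2}^2\big)$ after using the trivial inequalities $\|F\|_{\mathcal{H}_2}\|U\|_{\mathcal{H}_2}\le(1+\lambda^{-2})(\|F\|_{\mathcal{H}_2}\|U\|_{\mathcal{H}_2}+\|F\|_{\mathcal{H}_2}^2)$.

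The main obstacle is not conceptual but the exponent-matching: the six free parameters $s_1,s_2,s_3,r_1,r_2,r_3$ must thread through \eqref{NG-E(3.41)}, \eqref{NG-E(3.42)} and the Young splitting so that exactly the three advertised terms appear inside $R_{1,\lambda}$ and the five inside $R_{2,\lambda}$, with the stated constants. I would keep $r_1,r_2,r_3$ and $s_1,s_2,s_3$ fully generic throughout the proof of this lemma and only pin them down later, in the proof of Proposition \ref{NG-P-3.3}, so that $R_{1,\lambda}$ stays bounded below away from $0$ and $R_{2,\lambda}$ has the order in $|\lambda|$ needed to feed into \eqref{NG-E(3.34)}--\eqref{NG-E(3.35)}.
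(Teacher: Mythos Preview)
Your proposal is correct and follows essentially the same approach as the paper: assemble \eqref{NG-E(3.51)} from Lemmas \ref{NG-L-3.4} and \ref{NG-L-3.6}, then obtain \eqref{NG-E(3.52)} by multiplying \eqref{NG-E(3.31)} by $-i\lambda^{-1}\overline z$, integrating by parts, bounding the interior and forcing terms via \eqref{NG-E(3.36)}, \eqref{NG-E(3.39)} and Cauchy--Schwarz, and handling the boundary terms with the $r_3,s_3$--Young splitting followed by \eqref{NG-E(3.41)}--\eqref{NG-E(3.42)}. The constant-tracking and exponent-matching you flag as the main bookkeeping burden is exactly what the paper carries out, with no additional ideas.
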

\begin{proof} First, from \eqref{NG-E(3.37)}, \eqref{NG-E(3.38)} and \eqref{NG-E(3.45)}, we get
\begin{equation*}
\begin{array}{lll}
\left\|{U}\right\|_{{\mathcal{H}_2}}^2\leq \left(K_9+1+K_9\left(|\lambda|^{\frac{1}{2}-s_1}+|\lambda|^{\frac{3}{2}-s_2}\right)\right)\int_\alpha^\beta |z|^2dx

\nline + K_{10}\left(1+|\lambda|^{\frac{1}{2}+s_2 }+|\lambda|^{-\frac{1}{2}+s_1}\right)\left(1+\lambda^{-2}\right)\left(\left\|F\right\|_{\mathcal{H}_2}\left\|U\right\|_{\mathcal{H}_2}+\left\|F\right\|_{\mathcal{H}_2}^2\right)\nline
+\max\left(\kappa_2K_4,\tau|\delta_2|K_5\right)\left(1+\lambda^{-2}\right) \left(\left\|F\right\|_{\mathcal{H}_2}\left\|U\right\|_{\mathcal{H}_2}+\left\|F\right\|_{\mathcal{H}_2}^2\right),

\end{array}
\end{equation*}
hence we get \eqref{NG-E(3.51)}. Next, multiplying  \eqref{NG-E(3.31)} by $-i \lambda^{-1} \overline{z}$ and integrating over $(\alpha,\beta),$ then taking the real part, then using  by parts integration, we get
\begin{equation*}
\begin{array}{lll}

\displaystyle{
\int_\alpha^\beta | z|^2\, dx  =-\text{Re}\left\{ i\,    \lambda^{-1}\int_\alpha^\beta f_5\overline{z} \, dx\right\}+ \text{Re}\left\{i \lambda^{-1}\int_\alpha^\beta\left(\kappa_2 v + \delta_1  z+\delta_2\eta(\cdot,1)\right)_{x}\,\overline{z}_x \, dx\right\}
}\nline

\displaystyle{
- \text{Re}\left\{i \lambda^{-1}\left(\kappa_2 v_x(\beta) + \delta_1  z_x(\beta)+\delta_2\eta_x(\beta,1)\right)\overline{z}(\beta)  \right\}
+\text{Re}\left\{i \lambda^{-1}\left(\kappa_2 v_x(\alpha) + \delta_1  z_x(\alpha)+\delta_2\eta_x(\alpha,1)\right)\overline{z}(\alpha)  \right\}

},
\end{array}
\end{equation*}
consequently,
\begin{equation}\label{NG-E(3.53)}
\begin{array}{lll}

\displaystyle{
\int_\alpha^\beta | z|^2\, dx \leq    |\lambda|^{-1}\int_\alpha^\beta |f_5||{z}| \, dx+ |\lambda|^{-1}\int_\alpha^\beta\left|\kappa_2 v_x + \delta_1  z_x+\delta_2\eta_x(\cdot,1)\right||{z}_x| \, dx
}\nline

\displaystyle{
+ |\lambda|^{-1}\left|\kappa_2 v_x(\beta) + \delta_1  z_x(\beta)+\delta_2\eta_x(\beta,1)\right||{z}(\beta) | + |\lambda|^{-1}\left|\kappa_2 v_x(\alpha) + \delta_1  z_x(\alpha)+\delta_2\eta_x(\alpha,1)\right||{z}(\alpha)|

}.
\end{array}
\end{equation}
Using Cauchy Schwarz inequality, we have
\begin{equation}\label{NG-E(3.54)}
|\lambda|^{-1}\int_\alpha^\beta |f_5||{z}| \, dx\leq |\lambda|^{-1}\left\|F\right\|_{\mathcal{H}_2}\left\|U\right\|_{\mathcal{H}_2}\leq |\lambda|^{-1}  \left(1+\lambda^{-2}\right)\left(\left\|F\right\|_{\mathcal{H}_2}\left\|U\right\|_{\mathcal{H}_2}+\left\|F\right\|_{\mathcal{H}_2}^2\right).
\end{equation}
From  \eqref{NG-E(3.36)} and \eqref{NG-E(3.39)}, we get 
\begin{equation*}
\begin{array}{lll}

\displaystyle{
 \int_\alpha^\beta\left|\kappa_2 v_x + \delta_1  z_x+\delta_2\eta_x(\cdot,1)\right||{z}_x| \, dx}\nline \leq\displaystyle{
\frac{1}{2}\int_\alpha^\beta|z_x|^2dx+\frac{1}{2}\int_\alpha^\beta\left|\kappa_2 v_x + \delta_1  z_x+\delta_2\eta_x(\cdot,1)\right|^2dx
}\nline

\leq \frac{K_3}{2}\left\|F\right\|_{\mathcal{H}_2}\left\|U\right\|_{\mathcal{H}_2}+\frac{K_6}{2}\left(1+\lambda^{-2}\right)\left(\left\|F\right\|_{\mathcal{H}_2}\left\|U\right\|_{\mathcal{H}_2}+\left\|F\right\|_{\mathcal{H}_2}^2\right)\nline
\leq

\frac{K_3+K_6}{2}\left(1+\lambda^{-2}\right)\left(\left\|F\right\|_{\mathcal{H}_2}\left\|U\right\|_{\mathcal{H}_2}+\left\|F\right\|_{\mathcal{H}_2}^2\right).

\end{array}
\end{equation*}
Inserting \eqref{NG-E(3.54)} and the above estimation  in \eqref{NG-E(3.53)}, we get
\begin{equation}\label{NG-E(3.55)}
\begin{array}{lll}

\displaystyle{
\int_\alpha^\beta | z|^2\, dx \leq   \frac{K_3+K_6+2}{2}  |\lambda|^{-1}\left(1+\lambda^{-2}\right)\left(\left\|F\right\|_{\mathcal{H}_2}\left\|U\right\|_{\mathcal{H}_2}+\left\|F\right\|_{\mathcal{H}_2}^2\right)}\nline

\displaystyle{
+ |\lambda|^{-1}\left|\kappa_2 v_x(\beta) + \delta_1  z_x(\beta)+\delta_2\eta_x(\beta,1)\right||{z}(\beta) | + |\lambda|^{-1}\left|\kappa_2 v_x(\alpha) + \delta_1  z_x(\alpha)+\delta_2\eta_x(\alpha,1)\right||{z}(\alpha)|

}.

\end{array}
\end{equation}
Now, for all $s_3\in\mathbb{R}$, $r_3\in\mathbb{R}^*_+$  and for $\zeta=\alpha$ or $\zeta=\beta$, we get
\begin{equation*}
|\lambda|^{-1} \left|\kappa_2 v_x(\zeta) + \delta_1  z_x(\zeta)+\delta_2\eta_x(\zeta,1)\right| \left|{z}(\zeta)\right|\leq 
 \frac{r_3|\lambda|^{s_3-\frac{1}{2}}}{2}|z(\zeta)|^2+\frac{|\lambda|^{-s_3-\frac{3}{2}}}{2r_3}\left|\kappa_2 v_x(\zeta) + \delta_1  z_x(\zeta)+\delta_2\eta_x(\zeta,1) \right|^2.
\end{equation*}
From the above inequality, we get
\begin{equation*}
\begin{array}{lll}

\displaystyle{
|\lambda|^{-1}\left|\kappa_2 v_x(\beta) + \delta_1  z_x(\beta)+\delta_2\eta_x(\beta,1)\right||{z}(\beta) | + |\lambda|^{-1}\left|\kappa_2 v_x(\alpha) + \delta_1  z_x(\alpha)+\delta_2\eta_x(\alpha,1)\right||{z}(\alpha)|}
\nline

\leq\frac{|\lambda|^{-s_3-\frac{3}{2}}}{2r_3}\left(\left|\kappa_2 v_x(\beta) + \delta_1  z_x(\beta) +\delta_2\eta_x(\beta,1)\right|^2+\left|\kappa_2 v_x(\alpha) + \delta_1  z_x(\alpha)+\delta_2\eta_x(\alpha,1) \right|^2\right)
\nline+ \frac{r_3\, |\lambda|^{s_3-\frac{1}{2}}}{2} \left(|z(\alpha)|^2+|z(\beta)|^2\right).
\end{array}
\end{equation*}
Inserting  \eqref{NG-E(3.41)} and \eqref{NG-E(3.42)} in the above estimation, we obtain
\begin{equation*}
\begin{array}{lll}

\displaystyle{
|\lambda|^{-1}\left|\kappa_2 v_x(\beta) + \delta_1  z_x(\beta)+\delta_2\eta_x(\beta,1)\right||{z}(\beta) | + |\lambda|^{-1}\left|\kappa_2 v_x(\alpha) + \delta_1  z_x(\alpha)+\delta_2\eta_x(\alpha,1)\right||{z}(\alpha)|}
\nline

\leq \frac{1}{2}\left(\frac{|\lambda|^{-s_3-s_2}}{r_2r_3}+\frac{r_3|\lambda|^{-s_1+s_3}}{r_1}+r_3C_{h'}|\lambda|^{s_3-\frac{1}{2}} \right)\int_\alpha^\beta|z|^2dx\nline

\ \ \ +\frac{\max(K_7,K_8)  }{2}\, R_{3,\lambda}\,\left(1+\lambda^{-2}\right)\left(\left\|F\right\|_{\mathcal{H}_2}\left\|U\right\|_{\mathcal{H}_2}+\left\|F\right\|_{\mathcal{H}_2}^2\right),

\end{array}
\end{equation*}
where
\begin{equation*}
R_{3,\lambda} =C_{h}^2 |\lambda|^{-1 }(r_1 r_3 |\lambda|^{s_1+s_3}+r_2 r_3^{-1}|\lambda|^{s_2-s_3 })+ r_3^{-1}|\lambda|^{-s_3-\frac{3}{2}} \left(C_{h'}+C_{h}\right).
\end{equation*}
Finally, inserting the above equation in \eqref{NG-E(3.55)}, we get
\begin{equation*}
\begin{array}{lll}

\left[
1- \frac{1}{2}\left(\frac{|\lambda|^{-s_3-s_2}}{r_2r_3}+\frac{r_3|\lambda|^{-s_1+s_3}}{r_1}+r_3C_{h'}|\lambda|^{s_3-\frac{1}{2}} \right)\right]\int_\alpha^\beta|z|^2dx

\nline
\leq

\frac{\max\left(K_3+K_6+2,\max(K_7,K_8) \right)}{2}\left(R_{3,\lambda}+|\lambda|^{-1}\right)\left(1+\lambda^{-2}\right)\left(\left\|F\right\|_{\mathcal{H}_2}\left\|U\right\|_{\mathcal{H}_2}+\left\|F\right\|_{\mathcal{H}_2}^2\right),

\end{array}
\end{equation*}
 hence we get \eqref{NG-E(3.52)}.
\end{proof}
\noindent \textbf{Proof of Proposition  \ref{NG-P-3.3}.} We now divide the proof in two steps: \\[0.1in]
\textbf{Step 1.} In this step, we prove the  asymptotic behavior estimate of \eqref{NG-E(3.34)}.  Taking $s_3=s_1=-s_2=\frac{1}{2}$, $r_1=\frac{1}{C_{h'}},\ r_2=9 C_{h'}$ and $r_3=\frac{1}{3 C_{h'}}$ in Lemma \ref{NG-L-3.7},  we get
\begin{equation*}
\left\{
\begin{array}{ll}
\frac{1}{2}\int_\alpha^\beta|z|^2dx\leq K_{13}\lambda^{-4 }\left( \frac{C_{h}^2}{3C_{h'}^2}\lambda^{2 }+|\lambda| +3 C_{h'}\left(9 C_{h}^2\, C_{h'}+C_{h}+C_{h'}\right)\right)\left(\lambda^{2}+1\right)\left(\left\|F\right\|_{\mathcal{H}_2}\left\|U\right\|_{\mathcal{H}_2}+\left\|F\right\|_{\mathcal{H}_2}^2\right),\nline

\left\|{U}\right\|_{{\mathcal{H}_2}}^2 \leq 2K_{11}\left(\lambda^{2}+1\right)\int_\alpha^\beta |z|^2dx +3K_{12}\lambda^{-2} \left(\lambda^{2}+1\right)\left(\left\|F\right\|_{\mathcal{H}_2}\left\|U\right\|_{\mathcal{H}_2}+\left\|F\right\|_{\mathcal{H}_2}^2\right).

\end{array}
\right.
\end{equation*}
In the above equation, using the fact that 
\begin{equation*}
\begin{array}{ll}

\frac{C_{h}^2}{3C_{h'}^2}\lambda^{2 }+|\lambda| +3 C_{h'}\left(9 C_{h}^2\, C_{h'}+C_{h}+C_{h'}\right)\leq \max\left(\frac{C_{h}^2}{3C_{h'}^2},3 C_{h'}\left(9 C_{h}^2\, C_{h'}+C_{h}+C_{h'}\right),1\right)\left(\lambda^2+|\lambda|+1\right)\nline
\hspace{6.6cm}\leq \max\left(\frac{C_{h}^2}{3C_{h'}^2},3 C_{h'}\left(9 C_{h}^2\, C_{h'}+C_{h}+C_{h'}\right),1\right)\left(|\lambda|+1\right)^2
\end{array}
\end{equation*}
and 
\begin{equation*}
\lambda^{2}+1\leq \left(|\lambda|+1\right)^2,
\end{equation*}
we get
\begin{equation}\label{NG-E(3.57)}
\int_\alpha^\beta|z|^2dx\leq K_{14} \lambda^{-4 }\left( |\lambda|+1\right)^4\left(\left\|F\right\|_{\mathcal{H}_2}\left\|U\right\|_{\mathcal{H}_2}+\left\|F\right\|_{\mathcal{H}_2}^2\right)
\end{equation}
and
\begin{equation}\label{NG-E(3.58)}
\left\|{U}\right\|_{{\mathcal{H}_2}}^2 \leq 2K_{11}\left(|\lambda|+1\right)^2\int_\alpha^\beta |z|^2dx +3K_{12} \lambda^{-2}\left(|\lambda|+1\right)^2\left(\left\|F\right\|_{\mathcal{H}_2}\left\|U\right\|_{\mathcal{H}_2}+\left\|F\right\|_{\mathcal{H}_2}^2\right),
\end{equation}
where 
\begin{equation*}
K_{14}=2K_{13} \max\left(\frac{C_{h}^2}{3C_{h'}^2},3 C_{h'}\left(9 C_{h}^2\, C_{h'}+C_{h}+C_{h'}\right),1\right).
\end{equation*}
Inserting \eqref{NG-E(3.57)} in \eqref{NG-E(3.58)}, we get
\begin{equation*}
\begin{array}{ll}

\left\|{U}\right\|_{{\mathcal{H}_2}}^2 &\leq
 \left(2K_{11} K_{14}\left( |\lambda|+1\right)^4 +3K_{12} \lambda^{2 }\right) \lambda^{-4}\left(|\lambda|+1\right)^2\left(\left\|F\right\|_{\mathcal{H}_2}\left\|U\right\|_{\mathcal{H}_2}+\left\|F\right\|_{\mathcal{H}_2}^2\right),\nline 
 &\leq  \left(2K_{11} K_{14}+3K_{12}\right) 
  \lambda^{-4}\left(|\lambda|+1\right)^6\left(\left\|F\right\|_{\mathcal{H}_2}\left\|U\right\|_{\mathcal{H}_2}+\left\|F\right\|_{\mathcal{H}_2}^2\right),
 \end{array}
\end{equation*}
hence we get \eqref{NG-E(3.34)}.\\[0.1in]
\textbf{Step 2.} In this step, we prove the  asymptotic behavior estimate of \eqref{NG-E(3.35)}. Let $M\in\mathbb{R}^*$  such that $|\lambda|\geq M>0$. In this case,  taking $s_1=s_2=s_3=0$, $r_1=\frac{3\sqrt{M}}{2C_{h'}}$, $r_2=\frac{3C_{h'}}{\sqrt{M}}$  and $r_3=\frac{\sqrt{M}}{2C_{h'}}$ in Lemma \ref{NG-L-3.7},  we get
\begin{equation}\label{NG-E(3.60)}
\begin{array}{lll}
\left\|{U}\right\|_{{\mathcal{H}_2}}^2 \leq

K_{11}|\lambda|^{\frac{3}{2}}\left(1+|\lambda|^{-1}+|\lambda|^{-\frac{3}{2}}\right)\int_\alpha^\beta |z|^2dx 

\nline

\hspace{1.2cm}
+K_{12}| \lambda|^{\frac{1}{2}}\left(1+|\lambda|^{-\frac{1}{2}}+|\lambda|^{-1}\right)\left(1+\lambda^{-2}\right)\left(\left\|F\right\|_{\mathcal{H}_2}\left\|U\right\|_{\mathcal{H}_2}+\left\|F\right\|_{\mathcal{H}_2}^2\right)

\end{array}
\end{equation}
and
\begin{equation}\label{NG-E(3.61)}
\begin{array}{lll}

\frac{1}{2}\left(1- \frac{\sqrt{M}}{2|\lambda|^{\frac{1}{2}}} \right)\int_\alpha^\beta|z|^2dx

\nline\leq

K_{13}|\lambda|^{-1 }\left[1+\frac{3 C_{h}^2\, M }{4 C_{h'}^2}+\frac{6 C_{h}^2\, C_{h'}^2}{M}+\frac{2 C_{h'}\left(C_{h}+C_{h'}\right)|\lambda|^{-\frac{1}{2}}}{\sqrt{M}}\right]\left(1+\lambda^{-2}\right)\left(\left\|F\right\|_{\mathcal{H}_2}\left\|U\right\|_{\mathcal{H}_2}+\left\|F\right\|_{\mathcal{H}_2}^2\right).

\end{array}
\end{equation}
From  the fact that $|\lambda|\geq M$, we get
\begin{equation*}
\frac{1}{2}\left(1-\frac{\sqrt{M}}{2|\lambda|^{\frac{1}{2}}}\right)\geq \frac{1}{4}>0 .
\end{equation*}
 Therefore, from the above inequality and \eqref{NG-E(3.61)}, we get
\begin{equation}\label{K0g-122}
\int_\alpha^\beta | z|^2\, dx \leq 
 K_{15}|\lambda|^{-1 }\left(1+ M+\frac{1}{M}+\frac{|\lambda|^{-\frac{1}{2}}}{\sqrt{M}}\right)\left(1+\lambda^{-2}\right)\left(\left\|F\right\|_{\mathcal{H}_2}\left\|U\right\|_{\mathcal{H}_2}+\left\|F\right\|_{\mathcal{H}_2}^2\right).
\end{equation}
where 
\begin{equation*}
K_{15}=4K_{13}\max\left[1,\ \frac{3 C_{h}^2 }{4 C_{h'}^2},\ 6 C_{h}^2\, C_{h'}^2,\ 2 C_{h'}\left(C_{h}+C_{h'}\right)\right].
\end{equation*}
In Estimation \eqref{NG-E(3.61)}, using the fact that
 \begin{equation*}
  \left\{
 \begin{array}{ll}
1+ M+\frac{1}{M}\leq \left(\sqrt{M}+\frac{1}{\sqrt{M}}\right)^2,\quad
\frac{1}{\sqrt{M}}\leq \left(\sqrt{M}+\frac{1}{\sqrt{M}}\right)^2,\nline
1+\lambda^{-2}\leq \left(1+|\lambda|^{-\frac{1}{2}}\right)^{4}.
 \end{array}
 \right.
 \end{equation*}
 we get
\begin{equation}\label{NG-E(3.62)}
\int_\alpha^\beta | z|^2\, dx \leq 
K_{15}|\lambda|^{-1}\left(\sqrt{M}+\frac{1}{\sqrt{M}}\right)^2\left(1+ |\lambda|^{-\frac{1}{2} }\right)^5\left(\left\|F\right\|_{\mathcal{H}_2}\left\|U\right\|_{\mathcal{H}_2}+\left\|F\right\|_{\mathcal{H}_2}^2\right).
\end{equation}
Inserting \eqref{NG-E(3.62)} in \eqref{NG-E(3.60)}, then using the fact that
 \begin{equation*}
1+|\lambda|^{-1}+|\lambda|^{-\frac{3}{2}}\leq \left( 1+|\lambda|^{-\frac{1}{2} }\right)^3,\quad
 \left(1+|\lambda|^{-\frac{1}{2} }+|\lambda|^{-1}\right)\left(
  1+\lambda^{-2}\right)\leq \left(1+|\lambda|^{-\frac{1}{2}}\right)^{6}\leq \left(1+|\lambda|^{-\frac{1}{2}}\right)^{8},
 \end{equation*}
 we get
\begin{equation*}
 \begin{array}{ll}
\left\|{U}\right\|_{{\mathcal{H}_2}}^2 \leq
\max\left(K_{11} K_{15},K_{12}\right)|\lambda|^{\frac{1}{2}}\left(1+ |\lambda|^{-\frac{1}{2} }\right)^8\left[\left(\sqrt{M}+\frac{1}{\sqrt{M}}\right)^2+1\right]\left(\left\|F\right\|_{\mathcal{H}_2}\left\|U\right\|_{\mathcal{H}_2}+\left\|F\right\|_{\mathcal{H}_2}^2\right)\nline
\hspace{1.1cm}\leq
2\max\left(K_{11} K_{15},K_{12}\right)|\lambda|^{\frac{1}{2}}\left(1+ |\lambda|^{-\frac{1}{2} }\right)^8\left(\sqrt{M}+\frac{1}{\sqrt{M}}\right)^2\left(\left\|F\right\|_{\mathcal{H}_2}\left\|U\right\|_{\mathcal{H}_2}+\left\|F\right\|_{\mathcal{H}_2}^2\right),
 \end{array}
\end{equation*}
hence we get estimation of  \eqref{NG-E(3.35)}. The proof is thus complete. \xqed{$\square$}$\\[0.1in]$
\noindent \textbf{Proof of Theorem \ref{NG-T-3.2}.} First, we will prove condition   \eqref{NG-E(3.24)}. Remark that  it has been proved in Proposition \ref{NG-P-3.1} that $0\in \rho(\mathcal{A}_2).$ Now, suppose \eqref{NG-E(3.24)} is not true, then there exists $\omega\in\mathbb{R}^*$ such that $i\omega\not\in\rho(\mathcal{A}_2)$.  According to Lemma \ref{NG-L-A.3} and Remark \ref{NG-R-A.4},  there exists 
 \begin{equation*}
 \left\{(\lambda_n,{U}_n:=\left(u_n,v_n,w_n ,y_n,z_n,\phi_n,\eta_n(\cdot,\cdot)\right))\right\}_{n\geq 1}\subset \mathbb{R}^*\times D\left(\mathcal{A}_2\right),
 \end{equation*}
with $\lambda_n \to  \omega$ as $n\to\infty,$ $|\lambda_n|<|\omega|$ and $\left\|{U}_n\right\|_{{\mathcal{H}_2}} = 1$, such that
\begin{equation*}
 (i \lambda_n I - \mathcal{A}_2){U}_n ={F}_n:=(f_{1,n},f_{2,n},f_{3,n},f_{4,n},f_{5,6},f_{6,n},f_{7,n}(\cdot,\cdot)) \to  0 \ \textrm{in} \ {\mathcal{H}_2},\qquad\text{as }n\to\infty.
\end{equation*}
We will check condition \eqref{NG-E(3.24)}  by finding a contradiction with $\left\|{U}_n\right\|_{{\mathcal{H}}_2} = 1$ such as $\left\| {U}_n\right\|_{\mathcal{H}_2} \to0.$  According to  Equation \eqref{NG-E(3.34)}  in Proposition \ref{NG-P-3.3} with $U=U_n,\ F=F_n$ and $\lambda=\lambda_n$,  we obtain 
\begin{equation*}
0\leq \left\|{U}_n\right\|_{{\mathcal{H}_2}}^2 \leq K_1|\lambda_n|^{-4} \left(|\lambda_n|+1\right)^6\left(\left\|F_n\right\|_{\mathcal{H}_2}\left\|U_n\right\|_{\mathcal{H}_2}+\left\|F_n\right\|_{\mathcal{H}_2}^2\right),
\end{equation*}
as $n\to\infty,$ we get $\left\|{U}_n\right\|_{{\mathcal{H}_2}}^2\to0,$ which contradicts $\left\|{U}_n\right\|_{{\mathcal{H}}_2} = 1$. Thus,  condition  \eqref{NG-E(3.24)} is holds true. Next, we will prove condition \eqref{NG-E(3.25)}  by a contradiction argument. Suppose  there exists 
 \begin{equation*}
 \left\{(\lambda_n,{U}_n:=\left(u_n,v_n,w_n ,y_n,z_n,\phi_n,\eta_n(\cdot,\cdot)\right))\right\}_{n\geq 1}\subset \mathbb{R}^*\times D\left(\mathcal{A}_2\right),
 \end{equation*}
with $|\lambda_n| \geq 1$ without affecting the result, such that $|\lambda_n| \to  + \infty,$  and  $\left\|{U}_n\right\|_{{\mathcal{H}_2}} = 1$ and there exists a sequence ${G}_n:=(g_{1,n},g_{2,n},g_{3,n},g_{4,n},g_{5,6},g_{6,n},g_{7,n}(\cdot,\cdot)) \in \mathcal{H}_2$, such that
\begin{equation*}
 (i \lambda_n I - \mathcal{A}_2){U}_n =\lambda_n^{-\frac{1}{2}} G_n \to  0 \ \textrm{in} \ {\mathcal{H}_2}.
\end{equation*}
We will check condition  \eqref{NG-E(3.25)}  by finding a contradiction with $\left\|{U}_n\right\|_{{\mathcal{H}}_2} = 1$ such as $\left\| {U}_n\right\|_{\mathcal{H}_2} =o(1).$  According to Equation \eqref{NG-E(3.35)}  in Proposition \ref{NG-P-3.3} with $U=U_n,\ F=\lambda^{-\frac{1}{2}}G_n,\ \lambda=\lambda_n$ and $M=1$, we get
\begin{equation*}
\left\|{U}_n\right\|_{{\mathcal{H}_2}}^2\leq 4K_2 \left(1+ |\lambda_n|^{-\frac{1}{2} }\right)^8\left(\left\|G_n\right\|_{\mathcal{H}_2}\left\|U_n\right\|_{\mathcal{H}_2}+|\lambda_n|^{-\frac{1}{2}}\left\|G_n\right\|_{\mathcal{H}_2}^2\right),
\end{equation*}
as $|\lambda_n|\to\infty,$ we get $\left\|{U}_n\right\|_{{\mathcal{H}_2}}^2 = o(1),$  which contradicts $\left\|{U}_n\right\|_{{\mathcal{H}}_2} = 1$. Thus,  condition  \eqref{NG-E(3.25)}  is holds true.  The result follows from Theorem  \ref{NG-T-A.5} (part (ii)). The proof is thus complete. \xqed{$\square$}
\begin{Remark}\label{NG-R-3.8} 
In the case that $\alpha=0$ and $\beta\neq L$ or $\beta=L$ and $\alpha\neq0$, we can proceed similar to the proof of Theorem \ref{NG-T-3.2}  to check that the energy of System \eqref{NG-E(3.1)}-\eqref{NG-E(3.7)} decays  polynomially of order $t^{-4}$. \xqed{$\square$}
 \end{Remark}
\appendix
\section{Notions of stability and theorems used}\label{NG-S-A}
\noindent We introduce here the notions of stability that we encounter in this work.
\begin{defi}\label{NG-D-A.1}
{Assume that $A$ is the generator of a C$_0$-semigroup of contractions $\left(e^{tA}\right)_{t\geq0}$  on a Hilbert space  $H$. The  $C_0$-semigroup $\left(e^{tA}\right)_{t\geq0}$  is said to be
\begin{enumerate}
\item[1.]  strongly stable if 
\begin{equation*}
\lim_{t\to +\infty} \|e^{tA}x_0\|_{H}=0, \quad\forall \ x_0\in H;
\end{equation*}
\item[2.]  exponentially (or uniformly) stable if there exist two positive constants $M$ and $\epsilon$ such that
\begin{equation*}
\|e^{tA}x_0\|_{H} \leq Me^{-\epsilon t}\|x_0\|_{H}, \quad
\forall\  t>0,  \ \forall \ x_0\in {H};
\end{equation*}
\item[3.] polynomially stable if there exists two positive constants $C$ and $\alpha$ such that
\begin{equation*}
 \|e^{tA}x_0\|_{H}\leq C t^{-\alpha}\|Ax_0\|_{H},  \quad\forall\ 
t>0,  \ \forall \ x_0\in D\left(A\right).
\end{equation*}
\xqed{$\square$}
\end{enumerate}}
\end{defi}
\noindent For proving the strong stability of the $C_0$-semigroup $\left(e^{tA}\right)_{t\geq0}$, we will recall two methods, the first result obtained by Arendt and Batty in \cite{Arendt01}. 
\begin{Theorem}[Arendt and Batty in \cite{Arendt01}]\label{NG-T-A.2}
{Assume that $A$ is the generator of a C$_0-$semigroup of contractions $\left(e^{tA}\right)_{t\geq0}$  on a Hilbert space $H$. If $A$ has no pure imaginary eigenvalues and  $\sigma\left(A\right)\cap i\mathbb{R}$ is countable,
where $\sigma\left(A\right)$ denotes the spectrum of $A$, then the $C_0$-semigroup $\left(e^{tA}\right)_{t\geq0}$  is strongly stable.}\xqed{$\square$}
\end{Theorem}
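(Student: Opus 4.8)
This is the Arendt--Batty (equivalently Lyubich--V\~{u}) countable spectrum criterion for strong stability, and the plan is to recall its classical proof, which proceeds by contradiction via a reduction to a quotient semigroup. First I would introduce the \emph{stable subspace} $H_0 = \{x\in H : \lim_{t\to+\infty}\|e^{tA}x\|_H = 0\}$. Since the semigroup is contractive, $H_0$ is a closed, $e^{tA}$-invariant subspace of $H$ (closedness from the uniform bound $\|e^{tA}\|\le 1$; invariance is immediate). If $H_0 = H$ the conclusion holds, so assume $H_0 \neq H$ and pass to the quotient $\widetilde H = H/H_0$, equipped with the induced contraction semigroup $\widetilde T(t)$ and generator $\widetilde A$. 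One checks that $\sigma(\widetilde A)\subseteq\sigma(A)$, hence $\sigma(\widetilde A)\cap i\mathbb R$ is again countable; that $\widetilde A$ still has no pure imaginary eigenvalue (an eigenvector lifts to one of $A$ modulo $H_0$, ruled out by the hypothesis on $A$ together, in the Hilbert contraction setting, with a dissipativity argument); and that the stable subspace of $\widetilde T$ is trivial, $\widetilde H_0 = \{0\}$.

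Next I would derive a contradiction by showing $\widetilde H = \{0\}$, which is done by analysing $\sigma(\widetilde A)\cap i\mathbb R$. On one hand this set is nonempty: if it were empty then, $\widetilde T$ being bounded, $R(\lambda,\widetilde A)$ would be uniformly bounded on a strip $\{|\mathrm{Re}\,\lambda|<\varepsilon\}$, and the Gearhart--Pr\"uss--Huang theorem would force $\widetilde T(t)\to 0$ exponentially, i.e. $\widetilde H_0 = \widetilde H$, contradicting $\widetilde H_0 = \{0\}$ unless $\widetilde H = \{0\}$. On the other hand, $\sigma(\widetilde A)\cap i\mathbb R$ has no isolated points: this is the Arendt--Batty--Lyubich--V\~{u} lemma, asserting that for the generator of a bounded semigroup an isolated point $i\eta$ of $\sigma(\widetilde A)\cap i\mathbb R$ must be an eigenvalue of $\widetilde A^{*}$ (proved by a local Laplace-transform/residue computation using that $\sigma(\widetilde A)$ lies in the closed left half-plane), and such an eigenvalue lifts to a pure imaginary eigenvalue of $A^{*}$, hence an obstruction to the no-eigenvalue hypothesis. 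Thus $\sigma(\widetilde A)\cap i\mathbb R$ would be a nonempty perfect subset of $i\mathbb R$, hence uncountable, contradicting countability. Therefore $H_0 = H$, which is the assertion.

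The hard part is precisely this ABLV lemma on isolated points of the imaginary spectrum: one must show that such a point, if not an eigenvalue of the adjoint, cannot lie in the spectrum of the generator of a bounded semigroup; the left-half-plane geometry plus a careful contour argument (equivalently, a vector-valued Ingham--Karamata Tauberian theorem applied to the orbit $t\mapsto e^{tA}x_0$ together with a transfinite induction along the Cantor--Bendixson derived sets of the countable closed set $\sigma(A)\cap i\mathbb R$, which terminate at $\emptyset$ after countably many steps) is what drives the proof, while the remaining bookkeeping is routine. I would also remark that in the present paper only the degenerate instance of the theorem is invoked: Lemmas~\ref{NG-L-2.5}--\ref{NG-L-2.6} establish $i\mathbb R\subseteq\rho(\mathcal A)$, so $\sigma(\mathcal A)\cap i\mathbb R=\emptyset$ is trivially countable with no eigenvalues, and strong stability of $e^{t\mathcal A}$ follows at once.
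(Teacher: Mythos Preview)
The paper does not prove Theorem~\ref{NG-T-A.2}; it is stated in the appendix with a reference to Arendt and Batty and closed immediately with a $\square$, serving purely as a black-box tool invoked in the proof of Theorem~\ref{NG-T-2.4}. There is therefore no ``paper's own proof'' to compare against.

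Your sketch is the standard Arendt--Batty--Lyubich--V\~{u} argument and is correct in its broad lines. A couple of minor points worth tightening if you were to write this out in full: (i) the claim that an imaginary eigenvalue of the quotient generator $\widetilde A$ forces one for $A$ is not automatic in general Banach spaces (the classical statement assumes $\sigma_p(A^*)\cap i\mathbb R=\emptyset$), but in the Hilbert contraction setting you are in, dissipativity gives $\ker(i\lambda-A)=\ker(i\lambda-A^*)$, so your parenthetical remark does cover it; (ii) the use of Gearhart--Pr\"uss--Huang on the quotient is legitimate because $H/H_0$ is again a Hilbert space (isometrically the orthogonal complement $H_0^\perp$). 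Your closing observation is exactly right: in this paper the theorem is only applied in the degenerate case $\sigma(\mathcal A)\cap i\mathbb R=\emptyset$, so the countable-spectrum machinery is not actually exercised.
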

The second one is a classical method based on Arendt and Batty  theorem and the contradiction argument (see  page 25 in \cite{LiuZheng01}).
\begin{lemma}\label{NG-L-A.3}
 Assume that $A$ is the generator of a C$_0-$semigroup of contractions $\left(e^{tA}\right)_{t\geq0}$  on a Hilbert space $H$. Furthermore,  Assume that $0\in \rho(A).$ If there exists $\omega\in \mathbb{R}^*$, such that $i\omega\not\in \rho(A)$, then
 \begin{equation}\label{NG-E(A.1)}
 \left\{i\lambda\text{ such that }\lambda\in\mathbb{R}^* \text{ and } |\lambda|<\left\|A^{-1}\right\|^{-1}\leq|\omega|\right\}\subset\rho\left(A\right) \ \ \ \text{and} \ \ \ \sup_{|\lambda|<\left\|A^{-1}\right\|^{-1}\leq|\omega|}\left\|\left(i\lambda-A\right)^{-1}\right\|=\infty.
 \end{equation}
\end{lemma}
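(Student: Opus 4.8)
The statement splits into an inclusion claim and an unboundedness claim, and I would prove them in that order, using only that $A$ is a closed operator with $0\in\rho(A)$ (the contraction–semigroup hypothesis is not really needed here). For the inclusion, factor, for $\lambda\in\R^*$,
\[
 i\lambda I-A=-A\bigl(I-i\lambda A^{-1}\bigr),
\]
which is licit because $A^{-1}\in\mathcal{L}(H)$. If $|\lambda|<\|A^{-1}\|^{-1}$ then $\|i\lambda A^{-1}\|=|\lambda|\,\|A^{-1}\|<1$, so $I-i\lambda A^{-1}$ is invertible by the Neumann series and hence so is $i\lambda I-A$; thus $i\lambda\in\rho(A)$, with the quantitative bound $\|(i\lambda I-A)^{-1}\|\le \|A^{-1}\|\,\bigl(1-|\lambda|\,\|A^{-1}\|\bigr)^{-1}$. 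Applying the contrapositive at $\lambda=\omega$, where $i\omega\notin\rho(A)$, forces $\|A^{-1}\|^{-1}\le|\omega|$, which is the chain of inequalities appearing in \eqref{NG-E(A.1)}.

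For the unboundedness I would argue by contradiction with the help of the maximal good radius $\beta:=\sup\{r>0:\ i\mu\in\rho(A)\text{ for all }|\mu|<r\}$; by the first step $\beta\ge\|A^{-1}\|^{-1}>0$, and $\beta\le|\omega|<\infty$ since $i\omega\notin\rho(A)$. Suppose $M:=\sup_{|\lambda|<\beta}\|(i\lambda I-A)^{-1}\|<\infty$. Since $\rho(A)$ is open and the resolvent is analytic there, for any $\lambda_0$ with $|\lambda_0|<\beta$ the series
\[
(zI-A)^{-1}=\bigl(I+(z-i\lambda_0)(i\lambda_0 I-A)^{-1}\bigr)^{-1}(i\lambda_0 I-A)^{-1}
\]
converges whenever $|z-i\lambda_0|<\|(i\lambda_0 I-A)^{-1}\|^{-1}$, so the open disc $B(i\lambda_0,1/M)\subset\rho(A)$. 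Taking the union over all $|\lambda_0|<\beta$ yields $i\mu\in\rho(A)$ for every $|\mu|<\beta+1/M$, contradicting the maximality of $\beta$. Hence $\sup_{|\lambda|<\beta}\|(i\lambda I-A)^{-1}\|=\infty$, and since $\|A^{-1}\|^{-1}\le\beta\le|\omega|$ this is precisely the blow-up asserted in \eqref{NG-E(A.1)} on the relevant symmetric interval of the imaginary axis.

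The one point that needs care is the contradiction step: the enlargement $\beta\rightsquigarrow\beta+1/M$ must be justified for the whole interval $(-\beta,\beta)$ simultaneously, not merely near its endpoints, and this is exactly why I would fix the maximal radius $\beta$ from the outset and exploit the single uniform bound $M$ valid on all of $(-\beta,\beta)$, rather than attempting to iterate a purely local extension whose quantitative control degrades near the new endpoints. A minor auxiliary fact used silently is the standard estimate $\operatorname{dist}(z,\sigma(A))\ge\|(zI-A)^{-1}\|^{-1}$ for $z\in\rho(A)$ of a closed operator, which is what makes the disc $B(i\lambda_0,1/M)$ land inside $\rho(A)$; no other ingredient is required.
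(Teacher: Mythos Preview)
Your inclusion argument is exactly the paper's argument (you even correct its typo $-A^{-1}(I-i\lambda A)$ to the intended $-A(I-i\lambda A^{-1})$). For the unboundedness you take a genuinely different route. The paper does not introduce your maximal radius $\beta$ at all; instead it writes, for any $i\lambda\in\rho(A)$,
\[
 i\omega I-A=(i\lambda I-A)\bigl(I+i(\omega-\lambda)(i\lambda I-A)^{-1}\bigr),
\]
observes that the left side is not invertible while the first factor on the right is, so the second factor must fail the Neumann criterion, giving the explicit lower bound $\|(i\lambda I-A)^{-1}\|\ge 1/|\omega-\lambda|$, and then lets $\lambda\to\omega$. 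This is shorter and quantitative, but it tacitly needs $|\omega|$ to coincide with the radius of the supremum for $\lambda\to\omega$ to be admissible. Your contradiction argument via $\beta$ sidesteps that by locating the blow-up exactly at the first spectral obstruction on $i\mathbb{R}$, at the cost of being less explicit.

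One small mismatch remains in your write-up: you prove $\sup_{|\lambda|<\beta}\|(i\lambda I-A)^{-1}\|=\infty$ and then assert this ``is precisely'' the blow-up in \eqref{NG-E(A.1)}, whose supremum runs over the \emph{smaller} interval $|\lambda|<\|A^{-1}\|^{-1}$. Divergence on a larger interval does not imply divergence on a subinterval, so your last sentence overstates what you have shown. The paper's own final step shares the same imprecision (its ``$\sup\frac{1}{|\omega-\lambda|}=\infty$'' only holds when $|\omega|=\|A^{-1}\|^{-1}$), and what both arguments actually establish---blow-up as $\lambda$ approaches the minimal spectral point on $i\mathbb{R}$---is exactly the content extracted in Remark~\ref{NG-R-A.4} and used downstream.
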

\begin{proof}
Since $0\in\rho(A)$, for any real number $\lambda$ with $|\lambda|< \left\|A^{-1}\right\|^{-1}$, we deduce from the contraction mapping theorem that operator $i\lambda I-A=-A^{-1} \left(I-i\lambda A\right)$ is invertible. Therefore,  we get
\begin{equation*}
\left\{i\lambda\text{ such that }\lambda\in\mathbb{R}^* \text{ and } |\lambda|<\left\|A^{-1}\right\|^{-1}\right\}\subset\rho\left(A\right).
\end{equation*}
In addition, if there exists $\omega\in \mathbb{R}^*$, such that $i\omega\not\in \rho(A)$, then $\left\|A^{-1}\right\|^{-1}\leq |w|$, hence we get the first estimation of \eqref{NG-E(A.1)}. Next, since $i\omega \not\in\rho(A)$, then for all $|\lambda|<\left\|A^{-1}\right\|^{-1}\leq |\omega|$ the operator 
\begin{equation*}
i\omega I- A=\left(i\lambda I-A\right)\left(I+i(\omega-\lambda)\left(i\lambda I-A\right)^{-1}\right)
\end{equation*}
is not invertible,  hence from the contraction mapping theorem we deduce 
\begin{equation*}
\left\|\left(i\lambda I-A\right)^{-1}\right\|\geq \frac{1}{|\omega-\lambda|},\quad \forall\ |\lambda|<\left\|A^{-1}\right\|^{-1}\leq |\omega|,
\end{equation*}
therefore
\begin{equation*}
\sup_{|\lambda|<\left\|A^{-1}\right\|^{-1}\leq|\omega|}\left\|\left(i\lambda I -A\right)^{-1}\right\|\geq \sup_{|\lambda|<\left\|A^{-1}\right\|^{-1}\leq|\omega|}\frac{1}{|\omega-\lambda|}=\infty, 
\end{equation*}
hence we get second  estimation of \eqref{NG-E(A.1)}.   
\end{proof}
\begin{Remark}\label{NG-R-A.4} Condition  \eqref{NG-E(A.1)}  turns out that there exists  $\left\{(\lambda_n,{U}_n)\right\}_{n\geq 1}\subset \mathbb{R}^*\times D\left(A\right),$ with $\lambda_n \to  \omega$ as $n\to\infty,$ $|\lambda_n|<|\omega|$ and $\left\|{U}_n\right\|_{{\mathcal{H}_2}} = 1$, such that
\begin{equation*}
 (i \lambda_n I - A){U}_n ={F}_n \to  0 \ \textrm{in} \ {H},\qquad\text{as }n\to\infty.
\end{equation*}
Then, we will check condition  $i\mathbb{R}\subset\rho(A)$  by finding a contradiction with $\left\|{U}_n\right\|_{H} = 1$ such as $\left\| {U}_n\right\|_{H} =o(1).$\xqed{$\square$}
\end{Remark}
\noindent We now recall the following standard result which is stated in a comparable way (see  \cite{Huang01,pruss01} for part (i) and \cite{Batty01,Borichev01,Batty01} for  part (ii)).
\begin{Theorem}\label{NG-T-A.5} Assume that $A$ is the generator of a strongly continuous semigroup of contractions $\left(e^{tA}\right)_{t\geq0}$  on $H$. Assume that $i\mathbb{R}\subset  \rho(A).$ Then;
   \begin{enumerate}
\item[(i)] The semigroup $e^{tA}$ is exponentially stable if and only if 
\begin{equation*}
\overline{\lim_{\lambda\to \infty}}\left\|\left(i\lambda I-A\right)^{-1}\right\|<\infty.
\end{equation*}
\item[(ii)] The semigroup $e^{tA}$ is polynomially stable of order $\alpha>0$ if and only if
\begin{equation*}
\overline{\lim_{\lambda\to \infty}}|\lambda|^{-\frac{1}{\alpha}}\left\|\left(i\lambda-A\right)^{-1}\right\|<\infty.
\end{equation*}
\xqed{$\square$}
\end{enumerate}
\end{Theorem}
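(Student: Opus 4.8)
The statement bundles together two classical resolvent characterizations — part (i) is the Gearhart--Huang--Pr\"uss theorem and part (ii) is the Borichev--Tomilov theorem — so the plan is to treat the four implications separately. Throughout I write $R(s)=(sI-A)^{-1}$ and $T(t)=e^{tA}$, and I use that contractivity already gives $\{\text{Re}\,s>0\}\subset\rho(A)$ with $\|R(s)\|\le(\text{Re}\,s)^{-1}$. The two \emph{necessity} (``only if'') directions are routine and rest on the Laplace representation $R(s)x=\int_0^\infty e^{-st}T(t)x\,dt$, valid for $\text{Re}\,s>0$. For (i), if $\|T(t)\|\le Me^{-\varepsilon t}$ then this integral converges up to the axis and yields $\|R(i\lambda)\|\le M/\varepsilon$ uniformly in $\lambda$. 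For (ii), assuming $\|T(t)A^{-1}\|\le Ct^{-\alpha}$, I would insert $A^{-1}$ into the Laplace integral and split the time integral at $t=|\lambda|^{-1/\alpha}$, converting the rate $t^{-\alpha}$ into the growth $\|R(i\lambda)\|=O(|\lambda|^{1/\alpha})$ by a two-region estimate.

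The substantive content is the \emph{sufficiency} of (i). First I would upgrade the hypothesis $M:=\sup_{\lambda}\|R(i\lambda)\|<\infty$ to a bound on the closed right half-plane: the Neumann series applied to $R(\varepsilon+i\lambda)=(I+\varepsilon R(i\lambda))^{-1}R(i\lambda)$ gives a uniform bound on a strip $0\le\text{Re}\,s\le\delta$, while contractivity controls large $\text{Re}\,s$, so $K:=\sup_{\text{Re}\,s\ge0}\|R(s)\|<\infty$. The Hilbert-space heart is Plancherel's theorem: for $x\in H$ and $\varepsilon>0$ the function $t\mapsto e^{-\varepsilon t}T(t)x$ (extended by zero) has Fourier transform $\lambda\mapsto R(\varepsilon+i\lambda)x$, so that
\[
2\pi\int_0^\infty e^{-2\varepsilon t}\|T(t)x\|^2\,dt=\int_{\mathbb{R}}\|R(\varepsilon+i\lambda)x\|^2\,d\lambda .
\]
I would then show $\sup_{\varepsilon>0}\int_{\mathbb{R}}\|R(\varepsilon+i\lambda)x\|^2\,d\lambda\le C\|x\|^2$ by a Paley--Wiener/duality argument, using that the adjoint semigroup $T^\ast$ has generator $A^\ast$ with $R_{A^\ast}(\bar s)=R(s)^\ast$ and hence satisfies the identical bound. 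Letting $\varepsilon\downarrow0$ yields the uniform square-integrability $\int_0^\infty\|T(t)x\|^2\,dt\le C\|x\|^2$ for all $x\in H$, and Datko's theorem then converts this into exponential stability.

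For the sufficiency of (ii), set $\beta=1/\alpha$ and assume $\|R(i\lambda)\|=O(|\lambda|^\beta)$; the target is $\|T(t)A^{-1}\|=O(t^{-\alpha})$, equivalently $\|T(t)x\|\le Ct^{-\alpha}\|Ax\|$ on $D(A)$. As before, $i\mathbb{R}\subset\rho(A)$ together with the polynomial bound extends to an estimate of the form $\|R(s)\|\le C(1+|\text{Im}\,s|)^{\beta}$ on a region adjacent to the axis. The proof then follows Borichev--Tomilov: one studies $v(t)=T(t)A^{-1}x$ through a \emph{weighted} Plancherel identity and shows that the admissible weight is dictated precisely by the exponent $\beta$. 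Concretely, I would invoke their key quantitative lemma relating the growth $\|R(i\lambda)\|=O(|\lambda|^\beta)$ to the integral control of $\lambda\mapsto R(i\lambda)A^{-1}x$, and argue by contradiction along a hypothetical sequence $t_n\to\infty$ where $t_n^{\alpha}\|T(t_n)A^{-1}\|$ fails to stay bounded, combining the contour representation $T(t)A^{-1}=\tfrac{1}{2\pi i}\int e^{st}R(s)A^{-1}\,ds$ with the resolvent growth to produce a contradiction.

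The easy directions are pure Laplace-transform bookkeeping, so the \textbf{main obstacle} is the sufficiency in both parts, where the Hilbert-space structure is indispensable: Plancherel's identity has no Banach-space analogue, and it is exactly this that turns an axis resolvent bound into an $L^2$-in-time orbit estimate. Within this, part (ii) is harder than part (i) because one must match the \emph{sharp} polynomial weight $|\lambda|^{1/\alpha}$ to the rate $t^{-\alpha}$ for an arbitrary $\alpha>0$; this is the Borichev--Tomilov integral estimate and is the genuine technical crux of the whole statement.
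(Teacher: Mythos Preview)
The paper does not actually prove this theorem: it is stated in the appendix as a standard tool, with the $\square$ symbol in place of a proof and references to Huang, Pr\"uss for part~(i) and to Batty--Duyckaerts, Borichev--Tomilov for part~(ii). So there is no ``paper's own proof'' to compare against; the authors simply invoke the result.

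Your proposal goes well beyond this and sketches the actual arguments behind these classical results. The outline is broadly sound: the Laplace representation gives the easy directions, and for the sufficiency in~(i) the combination of a Neumann-series extension to a half-plane, Plancherel's identity relating $\int_0^\infty e^{-2\varepsilon t}\|T(t)x\|^2\,dt$ to $\int_{\mathbb R}\|R(\varepsilon+i\lambda)x\|^2\,d\lambda$, and Datko's theorem is exactly the Gearhart--Huang--Pr\"uss mechanism. For~(ii) you correctly identify the Borichev--Tomilov weighted estimate as the crux, though your description there (``invoke their key quantitative lemma'', ``argue by contradiction'') is more of a pointer to the literature than a proof sketch; the actual argument in Borichev--Tomilov is a fairly delicate Phragm\'en--Lindel\"of-type analysis in the half-plane, and your contour-integral contradiction outline would need substantial work to be made precise. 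But since the paper itself only cites these results, your sketch is already more than what is required here.
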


\end{document}